\theoremstyle{plain}
\newtheorem{theorem}{Theorem}[section]
\newtheorem{lem}[theorem]{Lemma}
\newtheorem{lemma}[theorem]{Lemma}
\newtheorem{proposition}[theorem]{Proposition}
\newtheorem{corollary}[theorem]{Corollary}
\theoremstyle{definition}
\newtheorem{algorithm}[theorem]{Algorithm}
\newtheorem{definition}[theorem]{Definition}
\theoremstyle{remark}
\newtheorem{remark}[theorem]{Remark}
\numberwithin{equation}{section}
\newcommand{\noo}[1]{{]{#1}]}}
\newcommand{\Nb}  {{\mathbb N}}
\newcommand{\Rb}  {{\mathbb R}}
\newcommand{\Pb}  {{\mathbb P}}
\newcommand{\Tb}  {{\mathbb T}}
\newcommand{\Cs} {{\mathcal C}}
\newcommand{\Ds} {{\mathcal D}}
\newcommand{\Es} {{\mathcal E}}
\newcommand{\Gs} {{\mathcal G}}
\newcommand{\Ls} {{\mathcal L}}
\newcommand{\Ms} {{\mathcal M}}
\newcommand{\Ps} {{\mathcal P}}
\newcommand{\Rs} {{\mathcal R}}
\newcommand{\Ss} {{\mathcal S}}
\newcommand{\Vs} {{\mathcal V}}
\newcommand{\Us} {{\mathcal U}}
\newcommand{\conv}[1] {\text{conv}\left({#1}\right)}
\newcommand{\dd}{\, \mathrm{d}}
\renewcommand{\phi}{\varphi}
\renewcommand{\P}{\mathbb P}
\newcommand{\E}{\mathbb E}
\newcommand{\R}{\mathbb R}
\newcommand{\N}{\mathbb N}
\newcommand{\ind}{1\!\kern-1pt \mathrm{I}}
\newcommand{\rsto}{]\!\kern-1.8pt ]}
\newcommand{\lsto}{[\!\kern-1.7pt [}
\newcommand{\ccS}{\mathscr{S}}
\newcommand\F{\mbox{I\kern-2pt F}}
\newcommand\1{{\bf 1}}
\newcommand\cC{{\mathcal C}}
\newcommand\cG{{\mathcal G}}
\newcommand\cM{{\mathcal M}}
\newcommand\cD{{\mathcal D}}
\newcommand\cR{{\mathcal R}}
\newcommand\cS{{\mathcal S}}
\newcommand\dps{{\mathrm{D}+}}
\newcommand\dns{{\mathrm{D}-}}
\newcommand\rps{{\mathrm{R}+}}
\newcommand\rns{{\mathrm{R}-}}
\newcommand{\bindist}[2]{\textrm{Bin}({#1},{#2})}
\newcommand{\hypdist}[3]{\textrm{Hyp}({#1},{#2},{#3})}
\tikzset
{marking1/.style=
	{decoration=
		{markings,
			mark= between positions 0.03 and 0.97 step 5 mm with {\arrow[line width=0.5pt]{>}}
		},
		postaction=decorate
	}
}
\title{General selection models: Bernstein duality and minimal ancestral structures}
\author{F. Cordero$^{1}$}
\address{$^1$Faculty of Technology, Bielefeld University, Box 100131, 33501 Bielefeld, Germany}
\email{fcordero@techfak.uni-bielefeld.de}
\author{S. Hummel$^1$}
\email{shummel@techfak.uni-bielefeld.de}
\email{emmanuel.schertzer@univie.ac.at}
\author{E. Schertzer$^{2}$}
\address{$^2$Faculty of Mathematics, University of Vienna, Oskar-Morgenstern-Platz 1, 1090 Wien, Austria}
\date{\today}%
\begin{document}
\maketitle
\begin{abstract}
	$\Lambda$-Wright--Fisher processes provide a robust framework to describe the type-frequency evolution of an infinite neutral population. We add a polynomial drift to the corresponding stochastic differential equation to incorporate frequency-dependent selection. A decomposition of the drift allows us to approximate the solution of the stochastic differential equation by a sequence of Moran models. The genealogical structure underlying the Moran model leads in the large population limit to a generalisation of the ancestral selection graph of Krone and Neuhauser. Building on this object, we construct a continuous-time Markov chain and relate it to the forward process via a new form of duality, which we call Bernstein duality. We adapt classical methods based on the moment duality to determine the time to absorption and criteria for the accessibility of the boundaries; this extends a recent result by Gonz{\'a}lez Casanova and Span{\`o}. An intriguing feature of the construction is that the same forward process is compatible with multiple backward models. In this context we introduce suitable notions for minimality among the ancestral processes and characterise the corresponding parameter sets. In this way we recover classic ancestral structures as minimal ones.
\end{abstract}

%\begin{abstract}
%	$\Lambda$-Wright--Fisher processes model the type-frequency evolution in large populations. We augment the corresponding stochastic differential equation by a general polynomial drift vanishing at the boundary and show that its solution can be approximated by several Moran models with frequency-dependent selection. The model inherent genealogical structure in the large population limit is a generalisation of the ancestral selection graph of Krone and Neuhauser. We introduce a Markovian backward process that tracks the conditional type distribution in a sample given partial observation of its ancestry. Forward and backward process are related via a new form of duality, which we call Bernstein duality. We show that many methods based on the classic moment duality extend to this new relation; for example, we determine the time to absorption and criteria for the accessibility of the boundaries, this extends a recent result by Gonz{\'a}lez Casanova and Span{\`o}. An intriguing feature of the construction is that the same forward process is compatible with multiple backward models. In this context we introduce suitable notions for minimality among the ancestral processes and characterise the corresponding parameter sets. In this way we recover classic ancestral structures as minimal ones.
%\end{abstract}
\bigskip { \footnotesize
\noindent{\slshape\bfseries MSC 2020.} Primary:\, 60K35, 92D15  \ Secondary:\, 60G99, 60J25, 60J27

\medskip 
\noindent{\slshape\bfseries Keywords.} $\Lambda$-Wright--Fisher~process, duality, frequency-dependent selection, branching-coalescing system, ancestral selection graph, absorption probability, coming down from infinity}

\setcounter{tocdepth}{1}
\tableofcontents
\section{Introduction}\label{s1}
{\bf Forward and backward in neutral models.} There are essentially two approaches to study a population genetic model: prospective and retrospective. Most prospective processes track the type frequencies of an evolving population. Retrospective processes are related to potential genealogies of present populations and they give rise to branching-coalescing structures. Both approaches are usually related via duality~\citep{Jaku}.

\smallskip 

In a neutral infinite population setting, the type-frequency evolution forward in time is typically described via a driftless stochastic differential equation (SDE); whereas a coalescent process describes the genealogy of a sample from the population. In this context prominent examples are the neutral \emph{Wright--Fisher diffusion} and \emph{Kingman's coalescent}~\citep{kingman1982coalescent}. The neutral two-type Wright--Fisher diffusion $(X_t:\,t\geq 0)$ describes the frequency of one type forward in time and it satisfies the SDE
\begin{equation*}
\dd X_t \ =\ \sqrt{X_t(1-X_t)}\, \dd W_t,
\label{eq:SDEoriginal00}
\end{equation*}
with an initial frequency $X_0=x\in[0,1]$, where $W=(W_t:\,t\geq 0)$ is a standard Brownian motion. The line-counting process of Kingman's coalescent $(L_t:\,t\geq 0)$ describes backward in time the number of ancestors of a sample containing $L_0=n\in \N$ individuals. It is well-known that the formal link between the two processes is a \textit{moment duality}
\begin{equation}\label{eq:duality-lambda}   
\E_x\left[X_t^n\right] \ = \ \E_{n}\left[ x^{L_t} \right], \qquad \forall x\in[0,1],\, \forall n\in \N.
\end{equation}

The graphical representation of finite population models as interactive particle systems provides a way to embed forward and backward process into the same picture. For example, in a \emph{neutral Moran model} with population size~$N$, reproduction events that involve two individuals, the parent and the replaced individual, turn into a binary coalescence event in the backward picture, i.e. two individuals sharing a common ancestor~\citep{etheridge2011some}. This leads to a natural coupling (a pathwise duality) between the type-frequency process and the \emph{$N$-Kingman coalescent}, which traces back the genealogy of the entire population. The duality relation~\eqref{eq:duality-lambda} can be recovered by considering an appropriate large population limit.

\smallskip

For a large class of models, the forward evolution converges to the Wright--Fisher diffusion, and the corresponding genealogy to Kingman's coalescent~\citep{MM00, MS01}. However, these approximations are inappropriate if the variance of an individual's offspring size is asymptotically infinite. Backward in time this leads to consider coalescents with multiple mergers of ancestral lines. They are called~\emph{$\Lambda$-coalescents} and were independently introduced by \citep{DK99}, \citep{Pit99}, and \citep{Sa99} (see \citep{B09} for a review). Forward in time, this leads to jumps in the type-frequency paths. In the two-type case the corresponding type-frequency process is the \emph{neutral $\Lambda$-Wright--Fisher process} $(X_t:\,t\geq 0)$ that evolves according to the SDE
\begin{equation}\label{eq:SDEoriginal01}
\dd X_t\ =\ \sqrt{\Lambda(\{0\})X_t(1-X_t)}\,\dd W_t+\! \! \! \! \int\limits_{(0,1]\times [0,1]}r\big(\1_{\{u\leq X_{t-}\}}(1-X_{t-})-\1_{\{u>X_{t-}\}} X_{t-} \big)\tilde{N}(\dd t, \dd r,\dd u),
\end{equation}
with $X_0=x\in[0,1]$, where $W$ is as before and $\tilde{N}=\tilde{N}(\dd t,\dd r,\dd u)$ is an independent compensated Poisson measure on $[0,\infty)\times(0,1]\times[0,1]$ with intensity $\dd t\times r^{-2}\Lambda(\dd r)\times \dd u$, and~$\Lambda$ is a finite measure on~$[0,1]$ (see \cite{BLG03} for details). Once again a moment duality relates the type-frequency process with the line-counting process of the associated $\Lambda$-coalescent~\citep{etheridge2011some}.

\smallskip

{\bf Forward and backward in models with selection.} Understanding the interplay of selection and random reproduction is a major challenge in population genetics. In an infinite population with two types, the effect of selection on the type-frequency evolution is usually captured by a drift $d(y)=y(1-y) s(y)$, $y\in[0,1]$, for some function $s$; the factor $y(1-y)$ represents the fraction of individuals of one type being replaced by the offspring of the other type in a small time horizon; the function $s$ encodes the direction and strength of selection, and it can depend on the current type composition. The resulting process $X\coloneqq(X_t:\,t\geq 0)$ is called the \emph{$\Lambda$-Wright--Fisher process with frequency-dependent selection} and it satisfies
\begin{equation}\begin{aligned}
\dd X_t\ &= \ d(X_t)\dd t\ + \  \sqrt{\Lambda(\{0\})X_t(1-X_t)}\, \dd W_t\\
&\ \ +\! \! \! \! \int\limits_{(0,1]\times [0,1]}r\Big(\1_{\{u\leq X_{t-}\}}(1-X_{t-})-\1_{\{u>X_{t-}\}} X_{t-} \Big)\tilde{N}(\dd t, \dd r,\dd u), \quad X_0=x\in[0,1],
\end{aligned}\label{eq:SDEoriginal02}
\end{equation}
where $W$ and $\tilde{N}$ are as above. The potential genealogies in the $\Lambda$-Wright--Fisher process with \emph{genic} selection, i.e. with $s$ constant, can be described by means of the $\Lambda$-ancestral selection graph ($\Lambda$-ASG) \citep{BLW16} (see \cite{etheridge2009,EGT10} for an alternative approach). It was originally constructed by \citeauthor{KroNe97} \citep{KroNe97,NeKro97} for the Wright--Fisher diffusion, i.e. $\Lambda=\delta_0$. The $\Lambda$-ASG is a branching-coalescing process; the coalescence mechanism is as in the $\Lambda$-coalescent; genic selection induces binary branchings at constant rate per ancestral line. \citet{gonzalezcasanova2018} generalise the $\Lambda$-ASG to the case of $s$ being a power series with negative non-decreasing coefficients. In this setting $X$ is in moment duality to the line-counting process of the $\Lambda$-ASG~\citep{foucart2013impact,gonzalezcasanova2018}. Even though classic methods of diffusion theory do not apply to the forward process, it can still be analysed using methods based on the moment duality. This led, for example, to statements about the accessibility of the boundaries \citep[Thm.~4.6]{gonzalezcasanova2018} and the time to absorption~\citep[Thm.~4.3]{bah2015} of~$X$.

\smallskip

Beyond the setting considered in \citep{gonzalezcasanova2018} the picture is more involved. A good example of this is the Wright--Fisher diffusion with \emph{balancing selection}, i.e. $\Lambda=\delta_0$ and $s(y)=1-2y$. The corresponding ASG is described by \citet{Ne99}. However, its line-counting process does not satisfy a moment duality with the type-frequency process. For the mutation-selection equation with pairwise interaction, i.e. $\Lambda\equiv 0$ and~$s$ linear, \citep{BCH2018interaction} establish a more elaborate duality to a process that keeps track of the \emph{entire} ancestral structure. The duality allows them to explain how the long-term behaviour of the forward process relates to the law of potential genealogies. But the dual process and the duality function are more complicated, and the techniques known from moment dualities can not be applied directly. Ideally, one would like to generalise the moment duality to a relation that holds for a large class of processes, but  still allows the application of the same techniques.

\smallskip
 
Despite the central role of frequency-dependent selection in ecology and evolution~\citep{ayala1974frequency}, a framework to treat models with general selection term is to the best of our knowledge still missing. The present article is a first step to fill this gap. We consider the SDE~\eqref{eq:SDEoriginal02} with~$d$ being a general polynomial vanishing at the boundary, i.e. $d(y)=y(1-y)s(y)$ for some polynomial $s$. We address the following questions.
\begin{enumerate}
	\item[(Q1)] Does the~$\Lambda$-Wright--Fisher process following the SDE~\eqref{eq:SDEoriginal02} admit a natural ancestral structure?
	\item[(Q2)] Is there a generalisation of the moment duality~\eqref{eq:duality-lambda}?
\end{enumerate}
If~$X$ could be approximated via a sequence of Moran models, then there is an intuitive answer to (Q1). Since in a Moran model, selection and neutral reproduction act at the level of individuals, we can embed the forward model and its ancestral structures into the same graphical representation. The limit of the ancestral structures, as population size tends to infinity, yields a natural ancestral structure for~$X$ and generalises the ASG to this setting. This leads to the following reformulation of~(Q1).

\begin{enumerate}
	\item [(Q1')] Is there a family of Moran models that converges to the solution of the SDE~\eqref{eq:SDEoriginal02}? 
\end{enumerate}
Indeed, we show that for any polynomial $d$ vanishing at the boundary, there are multiple ways to approximate~\eqref{eq:SDEoriginal02} via Moran models. One important (and puzzling) consequence is that different ASGs can be associated to the same forward process~$X$. This plurality of ASGs is addressed in more detail in (Q3) and (Q4) below.

\smallskip 

We now turn to (Q2). Consider the ASG of a sample of the population taken at time~$t_0$. Let~$L_u$ be the number of lines in the ASG at time~$u$ before~$t_0$. Assume that at time~$t$ before $t_0$ the frequency of type~$a$ is~$x$. It will become apparent that conditional on $(L_u:\,u\in[0,t])$, the probability that all individuals in the sample are type~$a$ is a polynomial of degree~$L_t$ in~$x$. Write~$V_t$ for the coefficient vector of this polynomial in the Bernstein basis of degree~$L_t$. We call $V\coloneqq(V_t:\, t\geq0)$ the \emph{Bernstein coefficient process}. It is an autonomous continuous-time Markov chain valued in $\cup_{n\in\N}\R^{n+1}$ with transition rates stated in Definition~\ref{def:bcp}. The duality relation~\eqref{eq:duality-lambda} generalises into what we call \emph{Bernstein duality}, that is 
\begin{equation}\label{eq:bernstein--0}
\E_x\big[X_t^n\big] \ = \ \E_{e_{n+1}}\left[\sum_{\ell=0}^{L_t} V_{t}(\ell)\binom{L_t}{\ell}x^\ell(1-x)^{L_t-\ell}\right],\qquad \forall x\in [0,1], \, \forall n\in \N
\end{equation}
where $V_0=e_{n+1}\coloneqq(0,\ldots,0,1)^T$, i.e. the $(n+1)$-st unit vector. The formulation of our duality raises the following question.
\begin{enumerate}
	\item [(Q3)] Is the Bernstein duality useful to analyse the behaviour of the process $X$? 
\end{enumerate}
We show that many methods that were successfully utilised in a moment duality extend to our setting. For example, the Bernstein duality allows us to relate the absorption (fixation/extinction) probabilities and the time to absorption of~$X$ to properties of~$V$. More precisely, we show that the fixation probabilities relate to the invariant measure of~$V$ (when it exists), and the time to fixation relates to the mean asymptotic behaviour of~$V$ as the number of lines initially present in the ASG tends to infinity. 

\smallskip

Different ASGs associated to the same model can behave significantly differently. For example, depending on the ASG, the corresponding line-counting process~$L$ can be either transient or recurrent. This makes it natural to ask:

\begin{enumerate}
	\item [(Q4)] Are some ASGs more favourable? Is there an optimal ASG? Is there a unique optimal ASG?
\end{enumerate}

In order to answer (Q4) we introduce the notion of \emph{minimal ASG}. Loosely speaking, minimal ASGs minimise the number of potential ancestors. By restricting to minimal ASGs, one recovers classical ancestral processes from the literature~\cite{NeKro97,KroNe97,Ne99}.
\smallskip

\bigskip

{\bf Outline.} The article is organised as follows. Section~\ref{s2} summarises the paper and contains all our main results. The proofs and more in-depth analyses are shifted to the subsequent sections. Section~\ref{s3} contains the proof of the convergence of appropriate Moran models to the~$\Lambda$-Wright--Fisher process. A detailed discussion of the ancestral process and the proofs of its properties can be found in Section~\ref{s4}. In particular, it contains the proof of the Bernstein duality. The process counting the potential ancestors is analysed in Section~\ref{s5}. Section~\ref{s6} is devoted to applications of the new processes and of the duality. In Section \ref{s7}, we treat the problem of minimality among potential genealogies in two ways. One approach seeks to avoid superfluous branches, and another one minimises the effective branching rate.

%%%%%%%%%%%%%%%%%%%%%%%%%%%%%%%%%%%%%%%%%%%%%%%%%%%%%%%%%%%%%%%%%%%%%%%%%%%%%%%%%%%%%%%%%%%%%%%%%%%%%%%%%%%%%%%%%%%%%%%%%%%%%%%%%%%%%%%%%%%%%%%%%%%%%%%%%%%%%
%%%%%%%%%%%%%%%%%%%%%%%%%%%%%%%%%%%%%%%%%%%%%%%%%%%%%%%%%%%%%%%%Section2%%%%%%%%%%%%%%%%%%%%%%%%%%%%%%%%%%%%%%%%%%%%%%%%%%%%%%%%%%%%%%%%%%%%%%%%%%%%%%%%%%%%%
%%%%%%%%%%%%%%%%%%%%%%%%%%%%%%%%%%%%%%%%%%%%%%%%%%%%%%%%%%%%%%%%%%%%%%%%%%%%%%%%%%%%%%%%%%%%%%%%%%%%%%%%%%%%%%%%%%%%%%%%%%%%%%%%%%%%%%%%%%%%%%%%%%%%%%%%%%%%%
\section{Summary of main results}\label{s2} 
\textbf{Notation.}
The positive and non-negative integers are denoted by $\N$ and $\N_0\coloneqq\N\cup\{0\}$, respectively. The non-negative real numbers are denoted by~$\R_+$. For $n\in \N$, we define \[[n]\coloneqq\{1,\ldots,n\},\quad [n]_0\coloneqq[n]\cup\{0\},\quad \text{and} \quad \noo{n}\coloneqq[n]\setminus\{1\}.\]  
Write $\xrightarrow[]{(d)}$ for convergence in distribution of random variables and $\xRightarrow[]{(d)}$ for convergence in distribution of c\`{a}dl\`{a}g process, where the space of c{\`a}dl{\`a}g functions is endowed with the Skorokhod topology. For any Borel set $S\subset\Rb$, $\Ms_f^*(S)$ is the set of non-zero finite measures on $S$. 
\smallskip

For $n,k,j\in \N_0$ with $n\geq k\vee j$, we write $K\sim \hypdist{n}{k}{j}$ if~$K$ is a hypergeometric random variable with parameters $n,k$, and $j$, i.e. \[\P(K=i)= \frac{\binom{k}{i} \binom{n-k}{j-i} }{\binom{n}{j}},\qquad i\in \{ 0\vee (j+k-n),\ldots, k\wedge j\}.\]
For $n\in \N_0$ and $x\in [0,1]$, we write $B\sim \bindist{n}{x}$ if~$B$ is a binomial random variable with parameters $n$ and $x$, i.e. $\P(B=i)= \binom{n}{i} x^i (1-x)^{n-i}$ for $i\in [n]_0$. 
\smallskip

For $n\in\Nb$ and $i\in[n]_0$, $b_{i,n}$ denotes the $i$-th polynomial in the Bernstein basis of the polynomials of degree at most $n$, i.e. $b_{i,n}(x) \coloneqq \binom{n}{i} x^{i}(1-x)^{n-i}$, $x\in\R$. In addition, we set
$B_{n}(x) \coloneqq \left(b_{i,n}(x)\right)_{i=0}^{n}$, $x\in\R$. The degree of a polynomial $f$ is denoted by $\deg(f)$. For any polynomial $f$ with $\deg(f)\leq n$, the $n$-Bernstein coefficient vector of $f$ is the unique vector $(v_{i})_{i=0}^n\in\Rb^{n+1}$ such that \[f(x)=\sum_{i=0}^n v_i b_{i,n}(x),\quad \textrm{for all $x\in \Rb$}.\] For $u\coloneqq(u_i)_{i\in[n]_0}, v\coloneqq(v_i)_{i\in[n]_0}\in\Rb^{n+1}$, the inner product between $u$ and $v$ is defined via $\langle u,v\rangle \coloneqq \sum_{i=0}^{n}u_i v_i$.

%%%%%%%%%%%%%%%%%%%%%%%%%%%%%%%%%%%%%%%%%%%%%%%%%%%%%%%%%%%%%%%%Subsection2.1%%%%%%%%%%%%%%%%%%%%%%%%%%%%%%%%%

\subsection{Moran models with frequency-dependent selection and large neutral offspring}\label{s2.1}
Consider the SDE~\eqref{eq:SDEoriginal02} with a polynomial $d$ vanishing at the boundary. From now on we fix $m\in \N\setminus\{1\}$ such that $\deg(d)\leq m$. Our first aim is to approximate the solution to the SDE by a sequence of Moran models. This answers~(Q1').

\smallskip

Our Moran model describes a haploid population of~$N$ individuals of type either~$a$ or~$A$. The population evolves in continuous-time and is subject to selection and neutral reproduction. The basic principle in a selective event is that an individual independently gathers a group of \textit{potential parents}. The total group size is at most $m$. The group composition determines a criterion according to which one of the potential parents is chosen to produce a single offspring that replaces the individual that initiated the selective event. We refer to such an event as an \emph{interaction}. In addition, each individual independently reproduces in a neutral way and its offspring replaces a fraction of the population. 

\smallskip
Now we spell out the details. For each $\ell\in \, ]m]$, each individual is independently selected at rate $\beta_\ell$ to initiate an \emph{$\ell$-interaction}. This means that $\ell-1$ individuals are chosen uniformly at random without replacement among the $N-1$ remaining ones. Together with the selected individual they form a group of~$\ell$ potential parents. Assume $j$ of the~$\ell$ potential parents are of type $a$. Then with probability $p_{j,\ell}$ (resp. $1-p_{j,\ell}$), a potential parent of type~$a$ (type~$A$) produces a single offspring that replaces the selected individual. The reproducing individual is chosen uniformly at random among the potential parents of type~$a$ (resp.~$A$). We do not allow for mutations and so it is natural to set $p_{0,\ell}=0$ and $p_{\ell,\ell}=1$. Thus, the selection mechanism is driven by a vector of rates $\beta=(\beta_{\ell})_{\ell\in]m]}\in\Rb_+^{m-1}$ and an array of probabilities  
\[p\coloneqq(p_{j,\ell})_{\ell\in \, ]m],j\in[\ell]_0}\in\Ps_m\coloneqq \prod_{\ell=2}^{m}\{0\}\times[0,1]^{\ell-1}\times\{1\}.\]
Note that to ease the notation, the index of $\beta$ starts at~$2$. Fig.~\ref{fig:selandneutral}~(left) illustrates a~$4$-interaction. In such a graphical representation, it will be convenient to think of types along the arising structures as colours. This is why in this context, we occasionally refer to types as colours, and it is the reason we call~$p\in\Ps_m$ a \emph{colouring rule}. The way we model selection is based on an idea already present in~\citep{gonzalezcasanova2018}.

\begin{remark}\label{rem:determiisticvoting(intro)}
	Typical choices for a colouring rule are
\vspace{.5em}	\begin{enumerate}
		\begin{minipage}{0.52\linewidth}
			\item $p_{i,\ell}=\1_{\{i\geq \ell/2 \}}$ - majority rule (e.g.~\citep{etheridge2017}),
			\item $p_{i,\ell}=\1_{\{i\leq\ell/2\}}+\1_{\{i=\ell\}}$ - minority rule, 
		\end{minipage}~~
		\begin{minipage}{0.47\linewidth}
			\item $p_{i,\ell}=\1_{\{i=\ell \}}$ -~$A$ always wins (e.g.~\citep{KroNe97}),
			\item $p_{i,\ell}=i/\ell$ - uniform rule,
		\end{minipage}
	\end{enumerate}
	where $\ell\geq 2$ and $i\in[\ell]_0$. If $p_{i,\ell}\in \{0,1\}$ for all $\ell\in\, ]m],\, i\in [\ell]_0$, the type of the descendent is a deterministic function of the types of the potential parents. We call such colouring rules~\emph{deterministic}. In particular, rules (1)--(3) are deterministic. 
\end{remark}
\begin{figure}[t!]
	\begin{minipage}{0.45\textwidth}
		\centering
		\scalebox{0.9}{\begin{tikzpicture}
			%horizontal line
			\draw[opacity=1, line width=.5mm, dotted] (0,0) -- (2,0);
			\draw[opacity=1, line width=.5mm] (0,.5) -- (2,0.5);
			\draw[opacity=1, line width=.5mm, dotted] (0,1) -- (2,1);
			\draw[opacity=1, line width=.5mm, dotted] (0,1.5) -- (2,1.5);
			\draw[opacity=1, line width=.5mm] (0,2) -- (2,2);
			\draw[opacity=1, line width=.5mm, dotted] (0,2.5) -- (1,2.5);
			\draw[opacity=1, line width=.5mm] (1,2.5) -- (2,2.5);
			
			% Large selection event
			\fill[white, opacity=1, draw=black, fill=black] (1.1,2.4) rectangle (0.9,2.6);
			\fill[white, opacity=1, draw=black] (1.1,2.1) rectangle (0.9,1.9);
			\fill[white, opacity=1, draw=black] (1.1,1.6) rectangle (0.9,1.4);
			%\fill[white, opacity=1, draw=black] (1.1,1.1) rectangle (0.9,0.9);
			\fill[white, opacity=1, draw=black] (1.1,-0.1) rectangle (0.9,0.1);
			%\fill[white, opacity=1, draw=black] (1.1,-0.1) rectangle (0.9,0.1);
			\end{tikzpicture}}
	\end{minipage} \begin{minipage}{0.45\textwidth}
		\centering
		\scalebox{0.9}{\begin{tikzpicture}
			%horizontal line
			\draw[opacity=1, line width=.5mm, dotted] (0,0) -- (2,0);
			\draw[opacity=1, line width=.5mm] (0,.5) -- (1,0.5);
			\draw[opacity=1, line width=.5mm, dotted] (2,.5) -- (1,0.5);
			\draw[opacity=1, line width=.5mm, dotted] (0,1) -- (2,1);
			\draw[opacity=1, line width=.5mm] (0,1.5) -- (1,1.5);
			\draw[opacity=1, line width=.5mm, dotted] (1,1.5) -- (2,1.5);
			\draw[opacity=1, line width=.5mm] (0,2) -- (2,2);
			%\draw[opacity=1, line width=.5mm, dotted] (1,2) -- (2,2);
			\draw[opacity=1, line width=.5mm, dotted] (0,2.5) -- (1,2.5);
			\draw[opacity=1, line width=.5mm, dotted] (1,2.5) -- (2,2.5);
			
			% Large selection event
			%\draw (1,2) circle (1.2mm)  [fill=gray];
			\draw (1,1.5) circle (1.2mm)  [fill=white!100];
			%\draw (1,1) circle ()1.2mm)  [fill=gray];
			\draw (1,0.5) circle (1.2mm)  [fill=white!100];
			\draw (1,0) circle (1.2mm)  [fill=white!100];
			\draw[opacity=1] (1,2.5) circle (1.2mm)  [fill=black];
			\end{tikzpicture}    }
	\end{minipage}
	\caption[Selective ]{Interaction (left) and neutral reproduction (right) in a Moran model forward in time. Solid (resp. dotted) lines correspond to type~$a$ (resp. type~$A$). Time runs form left to right. Left: the type $A$ individual marked by a black square initiates a $4$-interaction; the other potential parents are marked by white squares; a type~$a$ replaces the selected individual (which occurs with probability~$p_{1,4}$). Right: The individual with a black circle initiates a $3$-reproduction event; all individuals marked with a white circle are replaced by its offspring. The figure does not illustrate the genealogical structure of the Moran model.}
	\label{fig:selandneutral}
\end{figure}

Neutral reproduction is driven by a measure $\mu\in\Ms_f^*([N-1]_0)$. For each $r\in[N-1]$, each individual independently gives birth to $r$ individuals at rate $\mu(\{r\})$ if $r\neq 1$ and at rate $\mu(\{1\})+\mu(\{0\})/2$ if $r=1$. The offspring inherits the parent's type and replaces $r$ individuals that are chosen uniformly at random without replacement from the population before the reproduction event, excluding the parent. We call this an \emph{$r$-reproduction}. Fig.~\ref{fig:selandneutral}~(right) illustrates a~$3$-reproduction. By construction, $\ell$-interactions and $r$-reproductions keep the population size constant. We refer to the just-described model as the \emph{$(\beta,p,\mu)$-Moran model}.

\smallskip

The description of the large population limit of a family of Moran models requires some notation. Define the operator $T^N:\Ms_f^*([N-1]_0)\to\Ms_1([0,1])$ via
\[T^N\mu\coloneqq\frac{1}{M_\mu}\left(\delta_0 \mu(\{0\})+\sum_{k=1}^{N-1}\delta_{\frac{k}{N}}\,\mu(\{k\})k^2 \right),\]
where $M_\mu\coloneqq\mu(\{0\})+\sum_{k=1}^{N-1}\mu(\{k\})k^2$, $\delta_{y}$ is the Dirac mass at $y$, and $\Ms_1([0,1])$ is the set of probability measures on $[0,1]$. We consider a so-called weak selection framework, i.e. selection scales inversely with the population size. Furthermore, time is sped up by the population size.
\begin{theorem}[Large population limit]\label{teo:convergence}
	Let $m\in\Nb\setminus\{1\}$, $(\beta,p)\in \R_+^{m-1}\times \Ps_m$, and $\Lambda\in\Ms_f^*([0,1])$. Define $d_{\beta,p}:\R\to\R$ via  \begin{equation}
	d_{\beta,p}(x)\coloneqq\sum_{\ell=2}^m\beta_\ell\sum_{i=0}^\ell b_{i,\ell}(x) \Big(p_{i,\ell} -\frac{i}{\ell}\Big).\label{eq:driftbetap}
	\end{equation}
	For each~$N\in \N$ with $N\geq m$, let $\beta^{(N)}\in \R_+^{m-1}$ and $\mu_N\in\Ms_f^*([N-1]_0)$. Let $(X^{(N)}_t:\,t\geq 0)$ be the type-$a$ frequency process in a $(\beta^{(N)},p,\mu_N)$-Moran model of size $N$. Assume that
	\begin{enumerate}
		\item $N\beta^{(N)}\xrightarrow[N\to\infty]{}\beta,$
		\item $\mu_{N}(\{0\})\xrightarrow[N\to\infty]{}\Lambda(\{0\}),\quad M_{\mu_N}\xrightarrow[N\to\infty]{}\Lambda([0,1]),\quad\textrm{and}\quad T^N\mu_N\xrightarrow[N\to\infty]{(d)}\Lambda/\Lambda([0,1]).$
	\end{enumerate}
	 If in addition, $X_0^{(N)}\xrightarrow[N\to\infty]{(d)} x\in[0,1]$, then $\left(X^{(N)}_{Nt}:\,t\geq 0\right)\xRightarrow[N\to\infty]{(d)} X\coloneqq(X_t:\,t\geq 0)$, where $X$ is the pathwise unique strong solution of the SDE 
	\begin{equation}\begin{aligned}
	\dd X_t&=d_{\beta,p}(X_t) \dd t  + \sqrt{\Lambda(\{0\})X_t(1-X_t)}\dd W_t\\
	&\quad +\int\limits_{(0,1]\times [0,1]}r\Big(\1_{\{u\leq X_{t-}\}}(1-X_{t-})-\1_{\{u>X_{t-}\}} X_{t-} \Big)\tilde{N}(\dd t, \dd r,\dd u),\quad X_0=x,
	\end{aligned}\label{eq:SDEoriginalBer} 
	\end{equation}
	where $(W_t:\,t\geq 0)$ is a standard Brownian motion and $\tilde{N}(\dd t,\dd r,\dd u)$ is an independent compensated Poisson measure on $[0,\infty)\times(0,1]\times[0,1]$ with intensity $\dd t\times r^{-2}\Lambda(\dd r)\times \dd u$.
\end{theorem}
\begin{remark}
	Note that condition (2) in Theorem \ref{teo:convergence} is equivalent to  
	\[\mu_{N}(\{0\})\xrightarrow[N\to\infty]{}\Lambda(\{0\})\quad\textrm{and}\quad N^2\sum_{k=1}^{N-1} f\left(\frac{k}{N}\right)\mu_N(\{k\})\xrightarrow[N\to\infty]{} \int\limits_{(0,1]}f(r)\frac{\Lambda(\dd r)}{r^2}, \]
	for every $f\in\Cs([0,1])$ such that $x\in[0,1]\mapsto f(x)/x^2\in\Cs([0,1])$ (cf. \cite[Condition (4.6)]{EGT10}).
\end{remark}
Theorem~\ref{teo:convergence} provides conditions under which a given sequence of Moran models converges to the SDE~\eqref{eq:SDEoriginalBer} with $\Lambda\in \Ms_f^*([0,1])$ and drift~\eqref{eq:driftbetap} parametrised by $(\beta,p)\in \R_+^{m-1}\times \Ps_m$. The next result provides an explicit choice of parameters satisfying the conditions of Theorem \ref{teo:convergence}.
\begin{corollary}\label{Natsequence}
	Let $(\beta,p,\Lambda)\in \R_{+}^{m-1}\times\Ps_m\times\Ms_f^*([0,1])$. Define $\beta^{(N)}\in \R_+^{m-1}$ and $\mu_N\in\Ms_f^*([N-1]_0)$ via
	$$\beta^{(N)}:=\frac{\beta}{N}\quad\textrm{and}\quad\mu_N:=\frac{1}{N^2}\left(N(N-1)\Lambda(\{0\})\,\delta_0+\sum_{k=1}^{N-1}\binom{N}{k+1}\lambda_{N,k+1}^0\,\delta_k\right),$$
	where $\lambda_{n,k}^0:=\int_{(0,1]} r^{k-2} (1-r)^{n-k} \Lambda(\dd r)$, $n\geq k\geq 2$. Let $X^{(N)}:=(X^{(N)}_t:\,t\geq 0)$ be the type-$a$ frequency process in a $(\beta^{(N)},p,\mu_N)$-Moran model of size $N$. If $X_0^{(N)}\xrightarrow[N\to\infty]{(d)} x\in[0,1]$, then $$(X^{(N)}_{Nt}:\,t\geq 0)\xRightarrow[N\to\infty]{(d)} X:=(X_t:\,t\geq 0),$$
	where $X$ is the pathwise unique strong solution of \eqref{eq:SDEoriginalBer}.
\end{corollary}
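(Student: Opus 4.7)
My plan is to apply Theorem~\ref{teo:convergence} to the explicit sequence $(\beta^N,p,\mu_N)$ defined in the statement. Condition~(1) is immediate, since $N\beta^N = \beta$ by construction. For condition~(2) I would invoke the equivalent reformulation spelled out in the remark following Theorem~\ref{teo:convergence}: it suffices to check that $\mu_N(\{0\}) \to \Lambda(\{0\})$, which is trivial, and that for every $f \in \Cs([0,1])$ with $x \mapsto f(x)/x^2 \in \Cs([0,1])$ one has
\begin{equation*}
N^2\sum_{k=1}^{N} f(k/N)\,\mu_N(\{k\})\ \xrightarrow[N\to\infty]{}\ \int_{(0,1]} f(r)\,\frac{\Lambda(dr)}{r^2}.
\end{equation*}
Thus the heart of the proof is establishing this master asymptotic.

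To prove it, I would substitute the explicit expression $\mu_N(\{k\}) = \binom{N}{k+1}\lambda_{N,k+1}^0 / N^2$ together with the integral definition of $\lambda_{N,k+1}^0$, exchange sum and integral, and reindex via $j = k+1$ in order to recognise a binomial kernel. This recasts the left-hand side as
\begin{equation*}
\int_{(0,1]}\frac{1}{r^2}\,\E_r\!\left[f\!\left(\frac{B_N - 1}{N}\right)\1_{\{B_N\geq 2\}}\right]\Lambda(dr),
\end{equation*}
where $B_N \sim \bindist{N}{r}$ under $\P_r$. Two ingredients then close the argument. First, for each $r \in (0,1]$, the law of large numbers gives $B_N/N \to r$ a.s.\ and $\P_r(B_N \geq 2) \to 1$, so by continuity of $f$ the integrand converges pointwise to $f(r)/r^2$. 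Second, the hypothesis on $f$ yields $|f(x)| \leq C x^2$ on $[0,1]$, which, combined with the elementary bound $(B_N-1)^2\1_{\{B_N\geq 2\}}\leq B_N(B_N-1)$ and the factorial-moment identity $\E_r[B_N(B_N-1)] = N(N-1)r^2$, produces the uniform estimate
\begin{equation*}
\frac{1}{r^2}\,\left|\E_r\!\left[f\!\left(\frac{B_N-1}{N}\right)\1_{\{B_N\geq 2\}}\right]\right|\ \leq\ \frac{C\,\E_r[B_N(B_N-1)]}{r^2 N^2}\ \leq\ C,
\end{equation*}
independent of $r$ and $N$. Dominated convergence against the finite measure $\Lambda$ then delivers the asymptotic and hence condition~(2).

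The only delicate point is the uniform domination above; I expect it to be the main obstacle. The cancellation between the singular factor $1/r^2$ and the second factorial moment $\E_r[B_N(B_N-1)] = N(N-1)r^2$ is precisely what makes the coefficients $\binom{N}{k+1}\lambda_{N,k+1}^0/N^2$ in the definition of $\mu_N$ the canonical choice; replacing the exponent $N^{-2}$ by any weaker scaling would break either the pointwise limit or the domination.
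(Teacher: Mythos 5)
Your proposal is correct and follows essentially the same route as the paper's proof: check condition~(1) by construction, reduce condition~(2) to a single limiting identity of integrals, rewrite the sum as $\int_{(0,1]}\E[\cdot]\,\Lambda(dr)$ with a binomial random variable inside, and conclude by the law of large numbers and dominated convergence. The only divergence is cosmetic: the paper keeps the extra factor $k^2/N^2$ inside the sum (equivalently, works with the continuous test function $g(x)=f(x)/x^2$), which after reindexing yields a $\mathrm{Bin}(N-2,r)$ variable and an integrand bounded simply by $\|g\|_\infty$, whereas you keep $f$ and the singular $1/r^2$ explicit, use a $\mathrm{Bin}(N,r)$ variable, and then need the second-factorial-moment estimate $\E_r[B_N(B_N-1)]=N(N-1)r^2$ to cancel the singularity. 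Both are fine; the paper's bookkeeping just makes the domination step one line shorter.
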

Note that in a Moran model of size~$N$ in Corollary~\ref{Natsequence}, the total rate of an $r$-reproduction is the same as the rate at which $r+1$ lines merge into one in a $\Lambda$-coalescent of size~$N$ with time slowed down by $1/N$. Furthermore, the colouring rule is the same in the Moran model and in its large population limit. The proof of Theorem~\ref{teo:convergence} and Corollary~\ref{Natsequence} as well as some other complementary results are given in Section~\ref{sect:EUS}. 

\subsection{Selection decomposition} One way to approximate the solution of~\eqref{eq:SDEoriginalBer} by a sequence of Moran models is given by Corollary~\ref{Natsequence}. Moreover, the SDE~\eqref{eq:SDEoriginal02} belongs to the class of SDEs~\eqref{eq:SDEoriginalBer} whenever $d=d_{\beta,p}$ for some $(\beta,p)\in \R_+^{m-1}\times \Ps_m$. Hence, to answer (Q1'), it remains to identify this class of drifts. Note that for any selection mechanism $(\beta,p)\in \R_+^{m-1}\times \Ps_m$, $d_{\beta,p}$ is a polynomial with degree at most~$m$ vanishing at the boundary. This motivates the following definition.

\begin{definition}[Selection decomposition] \label{def:SelectionDecomposition}
A \emph{selection decomposition} of a polynomial~$d$ vanishing at the boundary with $\deg(d)\leq m$ is a pair $(\beta,p)\in \R_+^{m-1}\times \Ps_m$ such that $d_{\beta,p}=d$. We denote by $\ccS_d$ the set of selection decompositions of $d$. Similarly, we say that $(\beta,p)\in \R_+^{m-1}\times \Ps_m$ is a selection decomposition of a vector $v\in\Rb^{m+1}$ if the $m$-Bernstein coefficient vector of $d_{\beta,p}$ is $v$.

\end{definition}
Note that for any $(\beta,p)\in \R_+^{m-1}\times \Ps_m$, $d_{\beta,p}(0)=d_{\beta,p}(1)=0$. According to the following result, for any polynomial vanishing at~$0$ and~$1$, the set of selection decompositions is infinite.
\begin{theorem}\label{thm:infiniteed}
	For any polynomial $d$ such that $d(0)=d(1)=0$, the set $\ccS_d$ is infinite.
\end{theorem}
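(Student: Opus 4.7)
The strategy is to construct an uncountable one-parameter family of selection decompositions of $d$ concentrated entirely at the top level. Write $m := \deg(d) \geq 2$ and $v := \rho(d) \in \R^{m-1}$. Since $d \neq 0$, there exists $i^\star \in \{1, \ldots, m-1\}$ with $v_{i^\star} \neq 0$, and in particular $M := m \max_{i} |v_i| > 0$.

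First I would parametrise candidate decompositions by a single scalar $\beta_m \geq M$: set $\beta_\ell := 0$ for every $\ell \in \{2, \ldots, m-1\}$ and pick any admissible $p_{\cdot,\ell}$ at those levels (the forced values $p_{0,\ell} = 0$, $p_{\ell,\ell} = 1$ plus arbitrary entries in between); at level $m$, keep the forced $p_{0,m} = 0$ and $p_{m,m} = 1$ and define $p_{i,m} := i/m + v_i/\beta_m$ for $i \in \{1, \ldots, m-1\}$. The bound $\beta_m \geq M$ yields $|v_i|/\beta_m \leq 1/m \leq \min(i/m, 1 - i/m)$, so each $p_{i,m} \in [0,1]$ and hence $(\beta,p) \in E^m$. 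Because $\beta_\ell = 0$ for $\ell < m$, only the top-level term survives in the definition of $\cB$, and
\[
\beta_m \langle B_m(\cdot), p_{\cdot,m} - u_m \rangle \;=\; \sum_{i=0}^{m} \beta_m \Big(p_{i,m} - \tfrac{i}{m}\Big)\, b_{i,m}(\cdot) \;=\; \sum_{i=1}^{m-1} v_i\, b_{i,m}(\cdot),
\]
the $i = 0$ and $i = m$ summands vanishing by construction. Reading off Bernstein coefficients of indices $1, \ldots, m-1$ then gives $\cB(\beta,p) = v = \rho(d)$, so $(\beta,p) \in \Es_d$.

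To establish infiniteness, I would note that $p_{i^\star, m} = i^\star/m + v_{i^\star}/\beta_m$ depends injectively on $\beta_m$ on $[M, \infty)$ because $v_{i^\star} \neq 0$. Hence distinct values of $\beta_m$ yield distinct elements of $E^m$, producing an uncountable subfamily in $\Es_d$. I foresee no serious obstacle: the argument is in essence the observation that the linear system $\beta_m(p_{i,m} - i/m) = v_i$ for $i = 1,\ldots,m-1$ possesses an obvious scaling freedom, and the only delicacy is to respect the box constraints embedded in $\Ps_m$, which is exactly what the lower bound $\beta_m \geq M$ enforces. When $m \geq 3$ there is also an even cheaper alternative: once one such SD is exhibited, varying $p_{\cdot,\ell}$ at any level $\ell \in \{2,\ldots,m-1\}$ with $\beta_\ell = 0$ is invisible to $\cB$ and already produces an uncountable family.
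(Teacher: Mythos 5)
Your proof is correct. The box constraints are verified properly: since $\min(i/m, 1-i/m) \geq 1/m$ for every $i \in \{1,\dots,m-1\}$, the choice $\beta_m \geq m\max_i|v_i|$ guarantees $p_{i,m} = i/m + v_i/\beta_m \in [0,1]$, and $d\neq 0$ with $d(0)=d(1)=0$ ensures $\rho(d)\neq 0$, so the injectivity-in-$\beta_m$ argument via $i^\star$ works.

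The route is genuinely different from the paper's, though both concentrate branching at level $m$. The paper first proves (Proposition~\ref{prop:s-polytope}) that each $\cS_\lambda$ is a polytope with the scaling property $\cS_\lambda = \lambda\cS_1$; it then exhibits the $2^{m-1}$ deterministic top-level rules $p^w$ with the fixed rate $a_m = 1/(m-1)$, shows their $\cB$-images are the translated hypercube vertices $w/(m-1) - c/(m(m-1))$, and deduces by convexity that $\cS_1$ contains a neighbourhood of the origin, hence $\R^{m-1} = \bigcup_{\lambda>0}\cS_\lambda$ and $\rho(d)\in\cS_\lambda$ for every $\lambda$ above some threshold. You instead freeze the colouring rules at the lower levels, treat $\beta_m$ as a free scaling parameter, and solve the diagonal linear system $\beta_m(p_{i,m}-i/m)=\rho_i(d)$ explicitly for $p_{\cdot,m}$, with the lower bound on $\beta_m$ playing the role of the box constraints. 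Your argument is more elementary and self-contained (no polytope or convexity lemma needed) and makes the one-parameter scaling freedom transparent. The paper's detour through $\cS_\lambda$ is not wasted, however: it also yields Proposition~\ref{exminimal} on the existence of minimal SDs and the geometric apparatus reused throughout Section~7, so there the extra structure pays off. Your concluding observation, that for $m\geq 3$ one can vary $p_{\cdot,\ell}$ freely at any level with $\beta_\ell = 0$ without changing $\cB$, is also correct and is the cheapest way to infiniteness, though it produces SDs with identical branching-coalescing dynamics, whereas varying $\beta_m$ produces genuinely distinct ancestral structures.
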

In particular, there are infinitely many (substantially different) ways to approximate the solution of~\eqref{eq:SDEoriginal02} via Moran models and this answers (Q1'). The proof for Theorem~\ref{thm:infiniteed} is provided in Section \ref{sect:mBernDeco}. An explicit description of the set of selection decompositions is provided in Corollary~\ref{coro:geometriccharacterisationced}.

\begin{remark}[No selection]
In our context no selection means $d\equiv 0$. Note that $(\beta,\hat{p})\in \R_+^{m-1}\times \Ps_m$ is a selection decomposition of~$0$ if $\beta=(0,\ldots,0)$ or $\hat{p}$ is the uniform rule, i.e. $\hat{p}_{i,\ell}=i/\ell$.
\end{remark}

%%%%%%%%%%%%%%%%%%%%%%%%%%%%%%%%%%%%%%%%%%%%%%%%%%%%%%%%%%%%%%%%Subsection2.2%%%%%%%%%%%%%%%%%%%%%%%%%%%%%%%%%

\subsection{Ancestry in finite populations}
In this section we describe a natural ancestral process for the Moran model introduced in Section \ref{s2.1}. In particular, this answers~(Q1) at the level of finite populations. 

\smallskip 
\begin{figure}[b!]
	\centering
	\scalebox{0.7}{\begin{tikzpicture}
		%Frame
		\draw[densely dotted] (0,-0.5) --(0,4.5);
		\draw[densely dotted] (9.5,-0.5) --(9.5,4.5);    
		\node [right] at (0,-0.5) {$-t$};
		\node [right] at (0,4.5) {$\, \, t$};
		\node [right] at (9.5,4.5) {$0$};
		\node [right] at (9.5,-0.5) {$0$};
		\draw[-{angle 60[scale=5]}] (2.25,-.6) -- (7.25,-.6) node[text=black, pos=.5, yshift=6pt]{};
		\draw[-{angle 60[scale=5]}] (7.25,4.6) -- (2.25,4.6) node[text=black, pos=.5, yshift=6pt]{};

		%horizontal lines
		\draw[opacity=0.4, line width=.1mm] (0,0) -- (9.5,0);
		\draw[opacity=0.4, line width=.1mm] (0,0.5) -- (9.5,0.5);
		\draw[opacity=0.4, line width=.1mm] (0,1) -- (9.5,1);
		\draw[opacity=0.4, line width=.1mm] (0,1.5) -- (9.5,1.5);
		\draw[opacity=0.4, line width=.1mm] (0,2) -- (9.5,2);
		\draw[opacity=0.4, line width=.1mm] (0,2.5) -- (9.5,2.5);
		\draw[opacity=0.4, line width=.1mm] (0,3) -- (9.5,3);
		\draw[opacity=0.4, line width=.1mm] (0,3.5) -- (9.5,3.5);
		\draw[opacity=0.4, line width=.1mm] (0,4) -- (9.5,4);

		% asg Horizontal
		\draw[opacity=1, line width=.5mm] (2,3.5) -- (9.5,3.5);
		
		\draw[opacity=1, line width=.5mm] (2,3) -- (8,3);
		
		\draw[opacity=1, line width=.5mm] (0,4) -- (6,4);
		\draw[opacity=1, line width=.5mm] (2,2.5) -- (6,2.5);
		\draw[opacity=1, line width=.5mm] (0,2) -- (6,2);
		\draw[opacity=1, line width=.5mm] (4,1) -- (6,1);
		\draw[opacity=1, line width=.5mm] (0,0) -- (6,0);
		
		\draw[opacity=1, line width=.5mm] (0,1.5) -- (2,1.5);

		% asg vertical
		\draw[opacity=1, line width=.5mm] (8,3) -- (8,3.5);
		
		\draw[opacity=1, line width=.5mm] (6,3.597) -- (6,4);
		\draw[line width=.5mm ] (6,3.375) arc(-100:100:.125) ;
		\draw[opacity=1, line width=.5mm] (6,3.4) -- (6,1.597);
		\draw[line width=.5mm ] (6,1.375) arc(-100:100:.125) ;
		\draw[opacity=1, line width=.5mm] (6,1.4) -- (6,0.597);
		\draw[line width=.5mm ] (6,0.375) arc(-100:100:.125) ;
		\draw[opacity=1, line width=.5mm] (6,0.4) -- (6,0);
		
		\draw[opacity=1, line width=.5mm] (4,2.5) -- (4,2.097);
		\draw[line width=.5mm ] (4,1.875) arc(-100:100:.125) ;
		\draw[opacity=1, line width=.5mm] (4,1.9) -- (4,1.597);
		\draw[line width=.5mm ] (4,1.375) arc(-100:100:.125) ;
		\draw[opacity=1, line width=.5mm] (4,1.4) -- (4,1);
		
		\draw[opacity=1, line width=.5mm] (2,3.5) -- (2,2.097);
		\draw[line width=.5mm ] (2,1.875) arc(-100:100:.125) ;
		\draw[opacity=1, line width=.5mm] (2,1.9) -- (2,1.5);
		%		\draw[line width=.5mm ] (2,0.875) arc(-100:100:.125) ;
		%		\draw[opacity=1, line width=.5mm] (2,0.9) -- (2,0.5);

		% Classical selection
		\fill[white, opacity=1, draw=black] (8.1,2.9) rectangle (7.9,3.1);
		\fill[white, opacity=1, draw=black, fill=black] (8.1,3.4) rectangle (7.9,3.6);
		%	\draw (8,3.5)[gray] circle (0.5mm)  [fill=gray];

		% Large selection event
		\fill[white, opacity=1, draw=black] (6.1,4.1) rectangle (5.9,3.9);
		\fill[white, opacity=1, draw=black, fill=black] (6.1,3.1) rectangle (5.9,2.9);
		\fill[white, opacity=1, draw=black] (6.1,2.6) rectangle (5.9,2.4);
		\fill[white, opacity=1, draw=black] (6.1,2.1) rectangle (5.9,1.9);
		\fill[white, opacity=1, draw=black] (6.1,1.1) rectangle (5.9,0.9);
		\fill[white, opacity=1, draw=black] (6.1,-0.1) rectangle (5.9,0.1);
		%\draw (6,3)[gray] circle (0.5mm)  [fill=gray];
		
		% Large neutral offspring
		\draw (2,3) circle (1.2mm)  [fill=white!100];
		\draw (2,2.5) circle (1.2mm)  [fill=white!100];
		\draw (2,0.5) circle (1.2mm)  [fill=white!100];
		\draw (2,3.5) circle (1.2mm)  [fill=white!100];
		\draw[opacity=1] (2,1.5) circle (1.2mm)  [fill=black!100];
		
		% Classical coalsecence
		\draw (4,1) circle (1.2mm)  [fill=white];
		\draw[opacity=1] (4,2.5) circle (1.2mm)  [fill=black!100];

		% Coalsecence (not in ASG)
		\draw (7,2) circle (1.2mm)  [fill=white];
		\draw (7,1) circle (1.2mm)  [fill=black];
		\draw[opacity=1] (7,2.5) circle (1.2mm)  [fill=white];
		
		% Large selection event
		\fill[white, opacity=1, draw=black] (9.1,4.1) rectangle (8.9,3.9);
		\fill[white, opacity=1, draw=black] (9.1,1.6) rectangle (8.9,1.4);
		\fill[white, opacity=1, draw=black, fill=black] (9.1,-0.1) rectangle (8.9,0.1);
		
		\end{tikzpicture}    }
	\caption[Graphical representation of the Moran interacting particle system and its embedded ASG]{A realisation of the Moran interacting particle system (thin lines) for a population of size~$N=9$ and the embedded Moran-ASG (bold lines) for a sample of size~$1$. Time runs forward in the Moran model ($\rightarrow$) and backward in the ASG ($\leftarrow$). Backward time~$t$ corresponds to forward time~$-t$. Circles represent (neutral) reproduction events. Squares represent (selective) replacement events. Backward in time the lineages involved in an $r$-reproduction event merge into a single lineage (black circle). In contrast, in an $\ell$-interaction the single lineage (black square) branches into~$\ell$ lineages. }
	\label{fig:representASG}
\end{figure}

The Moran model admits a natural graphical representation as an interactive particle system, see Fig.~\ref{fig:representASG}. Here, individuals are represented by pieces of horizontal lines. Forward time runs from left to right. Squares indicate which lines are involved in an $\ell$-interaction: a black square marks the individual that initiated the selection, and white squares mark the other potential parents. Circles indicate which lines are involved in an $r$-reproduction: a black circle marks the individual that reproduces and white circles mark the individuals that are replaced by its offspring. These graphical elements arise in the picture according to the arrival times of independent Poisson processes (the rates can be worked out from the definition of the Moran model).
 
\smallskip 

So far this procedure provides a construction of an untyped particle system. Assume that we have constructed the particle picture in $[-t,0]$. Given an initial type configuration (that is, a type assigned to each line at forward time~$-t$), types propagate forward in time along the untyped particle system according to the (random) colouring procedure described in Section~\ref{s2.1}. 

\smallskip 

In this setting genealogical structures are extracted from the particle picture as follows. Start with a sample of~$n$ individuals chosen at time $t=0$ and trace back the set of their potential ancestors by reading the graphical picture from right to left. Suppose there are currently~$n$ potential ancestors in the graph. Neutral reproductions have the following effect backward in time. If $k$ lineages of potential ancestors simultaneously encounter circles, they merge into one and take the place of the individual marked with a black circle. In particular, the number of potential ancestors decreases to $n-k+1$. The effect of an $\ell$-interaction backward in time is as follows. If a lineage encounters a black square, we add all lines marked with a white square (in this case~$\ell-1$) to the set of potential ancestors. In particular, if~$\hat{\ell}\leq \ell-1$ white squares are currently outside the set of potential ancestors, the number increases to~$n+\hat{\ell}$.

\smallskip 

The structure arising up to backward time $t$ under this procedure endowed with the colouring rule is a generalisation of the \emph{ancestral selection graph}~(ASG) of~\citet{KroNe97}, see Fig.~\ref{fig:representASG}. We refer to it as the \emph{Moran-ASG}. Note that its distribution depends on the parameters $(\beta,p,\mu)$ of the underlying Moran model and the size of the sample~$n$. To determine the types of the individuals in the sample at time $0$, we assign types to the lines in the ASG at backward time $t$ in an exchangeable manner according to the initial type distribution, and then propagate them forward along the lines up to time~$0$ while adhering to the colouring rule~$p$, see Fig.~\ref{fig:ASGcoloured}. 

\begin{figure}[t]
	\centering
	\scalebox{0.7}{\begin{tikzpicture}
		%Frame
		\draw[densely dotted] (0,-0.5) --(0,4.5);
		\draw[densely dotted] (9.5,-0.5) --(9.5,4.5);    
		\node [right] at (0,4.5) {$t$};
		\node [right] at (9.5,4.5) {$0$};

		% asg Horizontal
		\draw[opacity=1, line width=.5mm] (2,3.5) -- (8,3.5);
		\draw[opacity=1, line width=.5mm] (8,3.5) -- (9.5,3.5);
		
		\draw[opacity=1, line width=.5mm] (2,3) -- (6,3);
		\draw[opacity=1, line width=.5mm, dotted] (6,3) -- (8,3);
		
		\draw[opacity=1, line width=.5mm] (0,4) -- (6,4);
		\draw[opacity=1, line width=.5mm] (2,2.5) -- (6,2.5);
		\draw[opacity=1, line width=.5mm, dotted] (0,2) -- (6,2);
		\draw[opacity=1, line width=.5mm] (4,1) -- (6,1);
		\draw[opacity=1, line width=.5mm, dotted] (0,0) -- (6,0);
		
		\draw[opacity=1, line width=.5mm] (0,1.5) -- (2,1.5);
		
		%		%asg coloring of the leaves
		
		% asg vertical
		\draw[opacity=1, line width=.5mm] (8,3) -- (8,3.5);
		
		\draw[opacity=1, line width=.5mm] (6,3.597) -- (6,4);
		\draw[line width=.5mm ] (6,3.375) arc(-100:100:.125) ;
		\draw[opacity=1, line width=.5mm] (6,3.4) -- (6,0.0);
		
		\draw[opacity=1, line width=.5mm] (4,2.5) -- (4,2.097);
		\draw[line width=.5mm ] (4,1.875) arc(-100:100:.125) ;
		\draw[opacity=1, line width=.5mm] (4,1.9) -- (4,1);
		
		\draw[opacity=1, line width=.5mm] (2,3.5) -- (2,2.097);
		\draw[line width=.5mm ] (2,1.875) arc(-100:100:.125) ;
		\draw[opacity=1, line width=.5mm] (2,1.9) -- (2,1.5);

		% Classical selection
		\fill[white, opacity=1, draw=black] (8.1,2.9) rectangle (7.9,3.1);
		\fill[white, opacity=1, draw=black, fill=black] (8.1,3.4) rectangle (7.9,3.6);

		% Large selection event
		\fill[white, opacity=1, draw=black] (6.1,4.1) rectangle (5.9,3.9);
		\fill[white, opacity=1, draw=black, fill=black] (6.1,3.1) rectangle (5.9,2.9);
		\fill[white, opacity=1, draw=black] (6.1,2.6) rectangle (5.9,2.4);
		\fill[white, opacity=1, draw=black] (6.1,2.1) rectangle (5.9,1.9);
		\fill[white, opacity=1, draw=black] (6.1,1.1) rectangle (5.9,0.9);
		\fill[white, opacity=1, draw=black] (6.1,-0.1) rectangle (5.9,0.1);
		%\draw (6,3)[gray] circle (0.5mm)  [fill=gray];
		
		% Large neutral offspring
		\draw (2,3) circle (1.2mm)  [fill=white!100];
		\draw (2,2.5) circle (1.2mm)  [fill=white!100];
		%\draw (2,0.5) circle (1.2mm)  [fill=white!100];
		\draw (2,3.5) circle (1.2mm)  [fill=white!100];
		\draw[opacity=1] (2,1.5) circle (1.2mm)  [fill=black!100];
		
		% Classical coalsecence
		\draw (4,1) circle (1.2mm)  [fill=white];
		\draw[opacity=1] (4,2.5) circle (1.2mm)  [fill=black!100];
		
		\end{tikzpicture}    }
	\caption{Colouring of the ASG of Fig \ref{fig:representASG} under the minority rule, i.e. $p_{i,\ell}=\1_{\{i\leq \ell/2  \}}$ for $i\in [\ell-1]$. This means that at every interactive event, if not all potential parents (black and white squares) have the same colour, the colour of the replaced individual (black square) is determined by the minority colour among the potential parents, type $a$ wins in case of equality. At every reproduction event, the offspring (white circles) inherit the colour of the parent (black circle). Solid (resp. dotted) lines are type $a$ (resp. $A$). }
	\label{fig:ASGcoloured} 
\end{figure}
\subsection{Ancestral selection graph}\label{s2.2}
Under the conditions stated in Theorem~\ref{teo:convergence}, a sequence of Moran models converges to a $\Lambda$-Wright--Fisher process as~$N\to\infty$ and with time appropriately rescaled. Considering the corresponding limit for the Moran-ASG leads to a natural candidate for an ancestral process of the $\Lambda$-Wright--Fisher process. More precisely, let~$d$ be a polynomial with $d(0)=d(1)=0$. Fix~$(\beta,p)\in\ccS_d$ and $\Lambda\in\Ms_f^*([0,1])$. Consider the Moran-ASGs corresponding to the sequence of~$(\beta^{(N)},p,\mu_N)$-Moran models defined in Corollary~\ref{Natsequence}, time is sped up by~$N$. First, note that the probability that an interactive event involves more than one line already present in the ASG is small (it vanishes as $N\to\infty$). Moreover, for sufficiently large~$N$, (1) each line branches into~$\ell$ lines at a rate which is close to~$\beta_\ell$, and (2) a given group of~$k$ lines merges into one at a rate close to 
\[\lambda_{n,k}\coloneqq\int_{[0,1]} r^{k-2} (1-r)^{n-k} \Lambda(\dd r)=\Lambda(\{0\})\1_{\{k=2\}}+\lambda_{n,k}^0,\quad n\geq k\geq 2.\]
These observations motivate the following definition. 
\begin{definition}[ASG]\label{def:asg} Fix $(\beta,p)\in \R_+^{m-1}\times \Ps_m$, $\Lambda\in\Ms_f^*([0,1])$. The $(\beta,p,\Lambda)$-ASG in $[0,t]$ starting from a sample of size $n$ is the pair $(\Gs_t,p)$, where $\Gs_t$ is the branching-coalescing particle system in~$[0,t]$ that starts with $n\in\Nb$ particles at time $0$ and that evolves as follows. 
	\begin{itemize}
		\item For $\ell\in\,]m]$, each particle branches at rate $\beta_\ell$ into $\ell$ particles.
		\item If the current number of particles is $j\geq2$, then for $k\in\,]j]$, every $k$-tuple of particles coalesce into a single particle at rate $\lambda_{j,k}$.
	\end{itemize}
We refer to the particles in~$\Gs_t$ that are present at time~$0$ as \emph{roots} and to the particles at time~$t$ as \emph{leaves}. We write $L_t$ for the number of leaves of~$\Gs_t$ and call $L:=(L_t:\, t\geq 0)$ the \emph{leaf process}.
\end{definition}
A more precise construction of the branching-coalescing system is provided in Definition~\ref{ASGprecise}. We write~$\Gs_t^n$ instead of $\Gs_t$ (resp. $L_t^n$ instead of $L_t$) whenever we want to stress the number of roots~$n$. Processes reminiscent to~$L$ also go by the name block-counting process of the ASG~\citep{gonzalezcasanova2018}, line-counting process of the ASG~\citep{LKBW15}, or process of potential ancestral lineages~\citep[Sect. 5.4]{etheridge2011some}.

\smallskip

The leaf process $L$ is a continuous-time Markov chain on~$\N$, and it has the following transition rates.
\begin{itemize}
	\item For $n\in \Nb$ and every $\ell\in \,]m]$, $n\to n+\ell-1$ at rate $n\beta_\ell$.
	\item For $n\in \Nb\setminus\{1\}$ and every $k\in\,]n]$, $n\to n-k+1$ at rate $\binom{n}{k}\lambda_{n,k}.$
\end{itemize}

For the class of drifts considered in~\citep{gonzalezcasanova2018}, the $\Lambda$-Wright--Fisher processes is in moment duality with the line-counting process of an ASG. In this setting the line-counting process plays a crucial role in the analysis of the $\Lambda$-Wright--Fisher process. In our framework this quantity will also be important.

\begin{remark}
	Consider a population evolving according to the SDE~\eqref{eq:SDEoriginal02} with polynomial drift $d$ vanishing at the boundary and $\Lambda\in \Ms_f^*([0,1])$. Then, for any $(\beta,p)\in \ccS_d$, the $(\beta,p,\Lambda)$-ASG with $n$ roots provides a natural potential genealogy for a sample of size~$n$ from the population. Since $\ccS_d$ is infinite, infinitely many different potential genealogies are associated to the same forward model.
\end{remark}

\subsection{Ancestral selection polynomial}\label{sec:asp}

The ASG introduced in the previous section is a rather cumbersome object. We now introduce a more tractable process. Recall that the types at the roots of an ASG represent the types in a sample from the population. They are a (random) function of the ASG and the types at the leaves. More precisely, the types in the sample are determined after propagating the types at the leaves along the lines of the ASG according to the colouring rule. Instead of keeping track of the ASG, we keep track of the conditional probability of a type composition in the sample, given partial observation of the ancestry. This motivates the following definition.

\begin{definition}[Ancestral selection polynomial] \label{def:asp} The $(\beta,p,\Lambda)$-\emph{ancestral selection polynomial (ASP)} is the random function $x\in[0,1]\mapsto P_t(x)$, where $P_t(x)$ is the conditional probability given $(L_{u}: u\in[0,t])$, that all roots of $\Gs_t$ are of type~$a$ if each leaf of $\Gs_t$ is of type~$a$ with probability $x$ (resp. of type~$A$ with probability $1-x$). It is assumed that the initial type assignment is independent for each leaf and $\cG_t$ is typed from the leaves to the roots using the colouring rule $p$. 
\end{definition}
For a more precise formulation, we refer to Definition~\ref{def-asp}.
The name ancestral selection polynomial stems from the fact that $P_t$ can be written as
$$P_t(x)=\sum_{i=0}^{L_t} V_t(i) \,b_{i,L_t}(x)=\langle B_{L_t}(x),V_t\rangle,\qquad x\in[0,1],$$
where $V_t(i)$ is the conditional probability given $(L_{u}: u\in[0,t])$, that all the roots of $\Gs_t$ are of type~$a$ if $i$ (resp. $L_t-i$) leaves are of type~$a$ (resp. $A$). Thus, $P_t$ is indeed a random polynomial, its degree is at most~$L_t$, and $V_t\coloneqq (V_t(i))_{i\in[L_t]_0}$ is its (random) $L_t$-Bernstein coefficient vector. 

\smallskip

It remains to see how a transition of~$L$ affects the ASP. First assume that $L_{t-}=n$ and $L_{t}=n+\ell-1$, i.e.~a branching event of size~$\ell$ occurs at time~$t$. Forward in time this means $\ell$ lines merge into one, see also Fig.~\ref{fig:graphs} (left). If~$i$ of the~$L_{t}$ leaves are of type~$a$ and $K_i$ denotes the number of lines with type~$a$ among the lines merging at time~$t$, then $K_i\sim \hypdist{L_{t}}{i}{\ell}$. In addition, given $K_i$, the offspring in the interactive event has type~$a$ (resp. $A$) with probability $p_{K_i,\ell}$ (resp. $1-p_{K_i,\ell}$), so at time~$t-$ there are $i-{K_i}+1$ (resp. $i-{K_i}$) lines of type~$a$. 
\smallskip

Next, assume that $L_{t-}=n$ and that $L_{t}=n-k+1$, i.e. $k$ lines coalesce at time~$t$. Forward in time this means that one leaf produces~$k-1$ lines at time~$t$, see also Fig.~\ref{fig:graphs} (right). If there are~$i$ leaves of type~$a$ at time~$t$, the leaf producing offspring has type~$a$ (resp.~$A$) with probability $i/(n-k+1)$ (resp. $1-i/(n-k+1)$), the number of lines having type~$a$ at time~$t-$ is then $i+k-1$ (resp.~$i$). 
\smallskip
\begin{figure}[t!]
	\begin{minipage}{.4\textwidth}
		\centering
		\scalebox{0.7}{\begin{tikzpicture}
			\draw[dotted] (-.5,-0.3) -- (-.5,4.7);
			\node[above] at (-0.5,4.7) {$t$};
			\node[right] at (-.5,-0.2) {$t-$};
			% asg Horizontal
			\draw[opacity=1, line width=.5mm] (2,3.5) -- (5.5,3.5);
			\draw[opacity=1, line width=.5mm] (2,3) -- (5.5,3);
			\draw[opacity=1, line width=.5mm] (-1,4) -- (4,4);
			\draw[opacity=1, line width=.5mm] (2,2.5) -- (4,2.5);
			\draw[opacity=1, line width=.5mm] (-1,2) -- (4,2);
			\draw[opacity=1, line width=.5mm] (-1,1.5) -- (2,1.5);
			\draw[opacity=1, line width=.5mm] (-1,1) -- (5.5,1);
			\draw[opacity=1, line width=.5mm] (-1,.5) -- (-.5,0.5);
			\draw[opacity=1, line width=.5mm] (-1,0) -- (-.5,0);
			
			% asg vertical
			\draw[opacity=1, line width=.5mm] (4,3.597) -- (4,4);
			\draw[line width=.5mm ] (4,3.375) arc(-100:100:.125) ;
			\draw[opacity=1, line width=.5mm] (4,3.4) -- (4,2);
			\draw[opacity=1, line width=.5mm] (-1,4.5) -- (-.5,4.5);
			\draw[opacity=1, line width=.5mm] (2,3.5) -- (2,2.097);
			\draw[line width=.5mm ] (2,1.875) arc(-100:100:.125) ;
			\draw[opacity=1, line width=.5mm] (2,1.9) -- (2,1.5);
			
			%Last selection event
			\fill[black, opacity=1, draw=black] (4.1,3.1) rectangle (3.9,2.9);
			
			% Large neutral offspring
			\draw[opacity=1] (2,1.5) circle (1.2mm)  [fill=black];
			
			\draw[opacity=1, line width=.5mm] (-.5,3.90) -- (-.5,2.1);
			\draw[line width=.5mm ] (-.5,3.875) arc(-100:100:.125) ;
			\draw[line width=.5mm ] (-.5,1.875) arc(-100:100:.125) ;
			\draw[opacity=1, line width=.5mm] (-.5,1.9) -- (-.5,1.6);
			\draw[line width=.5mm ] (-.5,1.375) arc(-100:100:.125) ;
			\draw[opacity=1, line width=.5mm] (-.5,4.097) -- (-.5,4.5);
			\draw[opacity=1, line width=.5mm] (-.5,1.4) -- (-.5,0);
			
			\fill[black, opacity=1, draw=black] (-.6,.9) rectangle (-.4,1.1);
			
			\end{tikzpicture}}
	\end{minipage}\hspace{1cm}
	\begin{minipage}{.4\textwidth}
		\centering
		\scalebox{0.7}{\begin{tikzpicture}
			
			\draw[dotted] (-.5,-0.3) -- (-.5,4.7);
\node[above] at (-0.5,4.7) {$t$};
\node[right] at (-.5,-0.2) {$t-$};
			% asg Horizontal
			\draw[opacity=1, line width=.5mm] (2,3.5) -- (5.5,3.5);
			\draw[opacity=1, line width=.5mm] (2,3) -- (5.5,3);
			\draw[opacity=1, line width=.5mm] (-.5,4) -- (4,4);
			\draw[opacity=1, line width=.5mm] (2,2.5) -- (4,2.5);
			\draw[opacity=1, line width=.5mm] (-1,2) -- (4,2);
			\draw[opacity=1, line width=.5mm] (-.5,1.5) -- (2,1.5);
			\draw[opacity=1, line width=.5mm] (-.5,1) -- (5.5,1);
			
			% asg vertical
			\draw[opacity=1, line width=.5mm] (4,3.597) -- (4,4);
			\draw[line width=.5mm ] (4,3.375) arc(-100:100:.125) ;
			\draw[opacity=1, line width=.5mm] (4,3.4) -- (4,2);
			\draw[opacity=1, line width=.5mm] (2,3.5) -- (2,2.097);
			\draw[line width=.5mm ] (2,1.875) arc(-100:100:.125) ;
			\draw[opacity=1, line width=.5mm] (2,1.9) -- (2,1.5);
			
			%Last selection event
			\fill[black, opacity=1, draw=black] (4.1,3.1) rectangle (3.9,2.9);
			
			% Large neutral offspring
			\draw[opacity=1] (2,1.5) circle (1.2mm)  [fill=black];
			\draw[opacity=1, line width=.5mm] (-1,.5) -- (-.5,.5);

			\draw[opacity=1, line width=.5mm] (-.5,4) -- (-.5,2.1);
			\draw[line width=.5mm ] (-.5,1.875) arc(-100:100:.125) ;
			\draw[opacity=1, line width=.5mm] (-.5,1.9) -- (-.5,.5);
			
			\draw (-.5,.5) circle (1.2mm)  [fill=black];
			\end{tikzpicture}    }
	\end{minipage}
	\caption{Left: At an interactive event, a branching point is grafted at a leaf chosen uniformly at random at time~$t-$. In this example it is a $4$-interaction, $L_{t}=7$, and $L_{t-}=4$. Right: In a coalescence event, a subset of leaves at time~$t-$ collapses into a single line. Here, $3$ leaves merge, $L_{t}=2$, and $L_{t-}=4$.} 
	\label{fig:graphs}
\end{figure}

The following definition captures the just-described effect of branchings and coalescences to the Bernstein coefficient vector of the ASP. 
\begin{definition}[Selection and coalescence matrices]\label{def:SelCoagMatrices} Fix $m\in \N\setminus\{1\}$ and $p\in \Ps_m$. Define the following linear operators. 
	\begin{enumerate}
		\item For every $\ell\in \, ]m],\, n\in \N$, let $S^{n,\ell}: \R^{n+1}\to \R^{n+\ell}$ with
		\begin{equation*} 
			S^{n,\ell} v \ \coloneqq \  \left(\E\big[ \ p_{K_i,\ell} \ v_{i+1-K_i} \ + \ (1- p_{K_i,\ell}) \ v_{i-K_i} \big] \right)_{i=0}^{n+\ell-1},
		\end{equation*}
		where $K_i\sim\hypdist{n+\ell-1}{i}{\ell}$ and $v=(v_{i})_{i=0}^n\in \R^{n+1}$.
		\item For $n\in \N$ with $n\geq 2$ and $k\in \, ]n]$, let $C^{n,k}:\R^{n+1}\to \R^{n-k+2}$ with
		\begin{equation*}
			C^{n,k} v \ \coloneqq \  \left(\frac{i}{n-k+1}v_{i+k-1} +\left(1- \frac{i}{n-k+1}\right)v_{i} \right)_{i=0}^{n-k+1},
		\end{equation*}
	  where $v=(v_{i})_{i=0}^n\in \R^{n+1}$.
	\end{enumerate}
\end{definition}
The operators in Definition~\ref{def:SelCoagMatrices} define the transitions of a Markov process that codes the evolution of the Bernstein coefficient vector of the ASP.

\begin{definition}[Bernstein coefficient process]\label{def:bcp}
	The \emph{Bernstein coefficient process} is the Markov process $V\coloneqq (V_t:\, t\geq 0)$ on $\Rb^\infty\coloneqq\cup_{n\in\N} \R^{n+1}$ with the following transition rates.
	\begin{enumerate}
		\item For $v\in\Rb^{n+1}$ and for every $\ell\in \, ]m]$, 
		\begin{align*}
		v\to S^{n,\ell}v&\quad \text{at rate}\quad n\beta_\ell.\qquad \ 
		\end{align*}
		\item For $v\in\Rb^{n+1}$ and for every $k\in \, ]n]$,
		\begin{align*}
		v\to C^{n,k}v&\quad \text{at rate}\quad \binom{n}{k}\lambda_{n,k}.
		\end{align*}
	\end{enumerate}
Furthermore, we set $L_t\coloneqq \dim(V_t)-1$.
\end{definition}

The leaf process~$L$ from Definition~\ref{def:asg} is equal in law to $(\dim(V_t)-1:\, t\geq 0)$. This legitimises the abuse of notation in the use of~$L$ for both processes. The next result formalises the connection between the ASP and the Bernstein coefficient process.

\begin{proposition}\label{ASPvsVCP}
	Let $n\in \N$. Consider the ASP~$(P_t(x):\,t\geq 0)$ with initial condition $x\mapsto x^n$ and the Bernstein coefficient process $(V_t:\, t\geq 0)$ with initial condition $V_0 =e_{n+1}$, where $e_{n+1}$ is the $(n+1)$-st unit vector.
	For all $x\in[0,1]$, \[(P_t(x):\, t\geq 0)\ \overset{(d)}{=}\ \left(\langle B_{L_t}(x), V_t\rangle :\,t\geq0\right).\]
\end{proposition}
%%%%%%%%%%%%%%%%%%%%%%%%%%%%%%%%%%%%%%%%%%%%%%%%%%%%%%%%%%%%%%%%Subsection2.3%%%%%%%%%%%%%%%%%%%%%%%%%%%%%%%%%
\subsection{Bernstein duality}\label{s2.3}
In this section we generalise the notion of a moment duality to what we call \emph{Bernstein duality}, which addresses question (Q2). 
We start explaining the main idea behind this type of duality.

\smallskip

Let $X$ be the solution of the SDE~\eqref{eq:SDEoriginal02} with $X_0=x$, where~$d$ is polynomial vanishing at the boundary and $\Lambda\in\Ms_f^*([0,1])$. Consider the evolution of the underlying population model up to (forward) time~$t$, and there independently sample~$n$ individuals. The conditional probability given~$X_t$, that they are all of type~$a$ is~$X_{t}^n$. Now, to approach the problem from a backward perspective, consider~$(\beta,p)\in \ccS_d$ and run the corresponding branching-coalescing system~$\Gs$ starting with the~$n$ sampled individuals up to (forward) time $0$. Assign type~$a$ (resp.~$A$) independently to the leaves of $\Gs_t$ with probability $x$ (resp. $1-x$). The conditional probability given $(L_u:u\in[0,t])$, that the $n$ sampled individuals have type~$a$ is by definition $P_{t}(x)$. After averaging over all possible observations, this intuitive argument suggests that $\E_x[X_t^n]=\E_{x^n}[P_t(x)].$ The next theorem makes this heuristic precise.

\begin{theorem}[Bernstein duality]\label{thm:bernstein-duality}
	The processes $(X_t:\,t\geq0)$ and $(V_t:\,t\geq0)$ are dual with respect to the duality function $(x,v)\mapsto \langle B_{\dim(v)-1}(x),v\rangle$, i.e. for all $t>0$,
	\begin{equation}\label{eq:generalduality-selection}  
	\E_x\left[\langle B_{n}(X_t), v\rangle \right] \ = \ \E_{v}\left[ \langle B_{L_t}(x), V_t\rangle \right],\qquad \forall x\in[0,1],\, \forall n\in \N,\, \forall v\in \R^{n+1}.
	\end{equation}
\end{theorem}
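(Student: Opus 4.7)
The plan is to prove the duality by the classical infinitesimal (generator) method: establish a pointwise generator identity on a suitable class of test functions, check integrability, and conclude via a standard Markov-process duality argument (see, e.g., the survey \citep{Jaku}). Set $H(x,v):=\langle B_{\dim(v)-1}(x),v\rangle$ for the duality function. For each $v\in\R^{n+1}$ the map $x\mapsto H(x,v)$ is a polynomial of degree at most $n$ on $[0,1]$, hence smooth and bounded and so in the domain of $\cL^X$, the generator of~\eqref{eq:SDEoriginalBer}; for each $x\in[0,1]$ the map $v\mapsto H(x,v)$ is linear on $\R^{n+1}$, hence in the domain of $\cL^V$, the generator of the Bernstein coefficient process. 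Because $S^{n,\ell}$ and $C^{n,k}$ act coordinate-wise as convex combinations, $\|V_t\|_\infty\le\|V_0\|_\infty$ almost surely, and then $|H(x,V_t)|\le\|V_0\|_\infty$ since $\sum_i b_{i,L_t}(x)=1$, which furnishes the uniform boundedness needed to pass from an infinitesimal identity to a global duality.

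The core step is the pointwise identity
\[\cL^X\bigl[H(\cdot,v)\bigr](x)\;=\;\cL^V\bigl[H(x,\cdot)\bigr](v)\qquad(x\in[0,1],\ v\in\R^{n+1}).\]
By linearity in~$v$, it suffices to verify this for $v=e_{i+1}$, so that $H(\cdot,e_{i+1})=b_{i,n}(\cdot)$. I would split $\cL^X$ into three pieces --- selection drift, Wright--Fisher diffusion (arising from $\Lambda(\{0\})$), and the $\Lambda$-jump part on $(0,1]$ --- and match them with the three pieces of $\cL^V$: branching at rate $n\beta_\ell$, binary coalescence, and multiple $k$-mergers at rate $\binom{n}{k}\lambda_{n,k}$. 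For the selection piece, the per-$\ell$ matching is the polynomial identity
\[\beta_\ell\Bigl(\sum_{j=0}^\ell(p_{j,\ell}-j/\ell)b_{j,\ell}(x)\Bigr)b'_{i,n}(x)\;=\;n\beta_\ell\Bigl(\langle B_{n+\ell-1}(x),S^{n,\ell}e_{i+1}\rangle-b_{i,n}(x)\Bigr),\]
a direct computation using the product formula for Bernstein polynomials and the hypergeometric law $K_i\sim\hypdist{n+\ell-1}{i}{\ell}$ that defines $S^{n,\ell}$. Summing over $\ell\in\noo{m}$ and using $(\beta,p)\in\Es_d$ then reproduces the drift of~\eqref{eq:SDEoriginalBer}; this is precisely the rationale behind Definition~\ref{def:SelectionDecomposition}.

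For the neutral pieces, expanding the jump integrand in the Bernstein basis of degree $n$ (via the binomial expansions of $((1-r)x+r)^i((1-r)(1-x))^{n-i}$ and of $((1-r)x)^i((1-r)(1-x)+r)^{n-i}$, grouped by powers of $r$) produces, after integration against $\Lambda(dr)/r^2$, a sum of terms of the form $\binom{n}{k}\lambda_{n,k}\bigl(\langle B_{n-k+1}(x),C^{n,k}e_{i+1}\rangle-b_{i,n}(x)\bigr)$ for $k\in\noo{n}$, with the Wright--Fisher diffusion accounting for the $k=2$ contribution carried by $\Lambda(\{0\})$. The main obstacle is this combinatorial matching for general $k$: essentially Bernstein-basis calculus, but the bookkeeping needed to identify, for each~$k$, the pair $(r^{k-2}(1-r)^{n-k},C^{n,k})$ is delicate. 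Once the generator identity is established, a standard Gr\"onwall/uniqueness argument applied to $t\mapsto\E_x[H(X_t,v)]-\E_v[H(x,V_t)]$ (vanishing at $t=0$) closes the proof.

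As a sanity check, one can alternatively exploit the exact pathwise duality enjoyed by each $(\beta^N,p,\mu_N)$-Moran model of Corollary~\ref{Natsequence}: on its graphical representation, the finite-population Bernstein coefficient process and the type-frequency process are two readings of the same random graph, so \eqref{eq:generalduality-selection} holds exactly at finite $N$, and the infinite-$N$ version follows by passing to the limit via Theorem~\ref{teo:convergence} on the forward side together with the analogous (and easier) jump-process convergence on the backward side. This route sidesteps the combinatorial identities but trades them for uniform-integrability bookkeeping.
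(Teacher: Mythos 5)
Your main route coincides with the paper's: the authors also prove the generator identity $AH(\cdot,v)(x)=BH(x,\cdot)(v)$ by decomposing into selection, Wright--Fisher, and $\Lambda$-jump pieces and matching them term by term against $B_s$, $B_{\mathrm{wf}}$, $B_\Lambda$, and then invoke the Markov-process duality machinery (Ethier--Kurtz). The reduction to $v=e_{i+1}$ is a harmless simplification; the selection identity you write down is the right one, and the $\Lambda$-jump expansion is indeed the Bernstein-basis bookkeeping you describe (the paper's entry point is the algebraic identities $x+r(1-x)=r+x(1-r)$, $1-x+rx=r+(1-x)(1-r)$).

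There is, however, one genuine gap in your integrability argument. You assert that the uniform bound $|H(x,V_t)|\le\|V_0\|_\infty$ (which follows from Lemma~\ref{lem:action-coal-frag} and $\sum_i b_{i,n}(x)=1$) ``furnishes the uniform boundedness needed to pass from an infinitesimal identity to a global duality.'' That is not sufficient: the hypotheses for the duality theorem (Ethier--Kurtz, Cor.~4.4.13) require integrability of the \emph{generator applied to $H$} along the trajectory, and these terms grow unboundedly with $L_t$: $|B_sH(x,\cdot)(V_s)|\lesssim L_s$ and $|(B_{\Lambda}+B_{\mathrm{wf}})H(x,\cdot)(V_s)|\lesssim L_s^2$, since the jump rates are of order $n$ and $\binom{n}{k}\lambda_{n,k}$. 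Boundedness of $H$ says nothing about these. The paper closes this by coupling $L$ to a pure birth process $\Gamma$ in which every particle splits into $m$ at rate $|\beta|$, so that $\sup_{s\le T}L_s\le\Gamma_T$ with $\Gamma_T$ and $\Gamma_T^2$ integrable; you would need something of this kind (your ``Gr\"onwall/uniqueness'' closure does not sidestep it either, since it equally needs $\int_0^T\E|B H(x,\cdot)(V_s)|\,ds<\infty$). Your alternative route through finite-$N$ Moran models and pathwise duality is genuinely different in spirit and would avoid the combinatorial matching, but, as you note, it merely relocates the same difficulty into a uniform-integrability argument for the passage $N\to\infty$; the paper does not pursue it.
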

The following corollary clarifies in which sense the Bernstein duality is a generalisation of the moment duality.

\begin{corollary}\label{coro:bernstein-duality}
	Consider the Bernstein coefficient process $V$ started at $V_0=e_{n+1}$. 
	\begin{enumerate}
		\item For all $x\in[0,1]$ and $t\geq 0$\begin{equation}\label{eq:duality-selection}    \ \  \E_x\left[X_t^n \right] \ = \ \E_{e_{n+1}}\left[ \langle B_{L_t}(x), V_t\rangle \right]. \end{equation}
		\item Let $m\geq2$ and assume that $d$ is of the form\begin{equation}\label{eq:str-d}
		d(x) \ = \ -x(1-x) \sum_{i=0}^{m-2} s_i x^i,\quad x\in[0,1],\end{equation}
	 where $(s_i)_{i=0}^{m-2}$ is a decreasing sequence in $\Rb_+$. Define $(\beta,p)\in \R^{m-1}_+\times \Ps_m$ via \begin{equation}\label{eq:TranslationToSD}
		\beta_m\coloneqq s_{m-2},\ \ \forall\ell\in \, ]m-1],\  \beta_\ell\coloneqq s_{\ell-2}-s_{\ell-1},\ \ \text{and} \ \ \forall \ell\in \, ]m], i\in[\ell]_0,\  p_{i,\ell}=\1_{\{i=\ell\}}.
		\end{equation} Then $(\beta,p)\in \ccS_d$. Moreover, if $V_0=e_{n+1}$, then $\langle B_{L_t}(x), V_t \rangle = x^{L_t}$, for all $t\geq 0$. In particular, the Bernstein duality coincides with the moment duality, i.e. $ \E_x[X_t^n ] \ = \ \E_{n}[ x^{L_t} ].$
	\end{enumerate}
\end{corollary}

\begin{remark}
\citet{gonzalezcasanova2018} considered drift terms of the form ${-x(1-x)\sum_{i\geq0}s_ix^i}$ for a non-negative decreasing sequence $(s_i)_{i\in\Nb_0}$. They establish a moment duality to the line-counting process of an ASG. If $(s_i)_{i\in\Nb_0}$ has finite support, then their result agrees with Corollary~\ref{coro:bernstein-duality}--(2).
\end{remark}

\begin{remark}[Mutation-selection models]
The particular form of our drift term excludes models with mutations. For some forms of selection, \citep{EGT10,barbour2000,etheridge2009} obtain a weighted moment duality. \citep{GJL2016} further extend this to models with recombination. A moment duality for $\Lambda$-Wright--Fisher processes with mutation and genic selection is established in \cite{CM19}. For the diploid mutation-selection equation (i.e. if $\Lambda=0$ in~\eqref{eq:SDEoriginal02} and $d$ is a specific cubic polynomial), \citep{BCH2018interaction} formulate an ASG-based dual process on weighted trees. We believe that by introducing suitable operators that reflect mutations in the ancestral structures, the Bernstein duality translates to this framework.
\end{remark}

Proposition~\ref{ASPvsVCP} is proved in Section~\ref{s4.1}. The proof of Theorem~\ref{thm:bernstein-duality} and Corollary~\ref{coro:bernstein-duality} is provided in Section~\ref{s4.2}.

%%%%%%%%%%%%%%%%%%%%%%%%%%%%%%%%%%%%%%%%%%%%%%%%%%%%%%%%%%%%%%%%Subsection2.4%%%%%%%%%%%%%%%%%%%%%%%%%%%%%%%%%
\subsection{Properties of the Bernstein coefficient process and its leaf process}\label{s2.4}
In this section we expose some properties of the Bernstein coefficient process and its leaf process. These properties will be crucial for the statements about the ergodic behaviour of~$V$.

\smallskip

Recall that the leaf process depends only on the branching rates $\beta$ and on the measure $\Lambda$, and not on the colouring rule $p$. The following two quantities play a crucial role in the analysis of~$X$ and~$V$.
\begin{definition}[Effective branching rate and coalescence impact]\label{def:EffectiveBranchingRate}
For $\beta\in \R_+^{m-1}$, we define the \emph{effective branching rate} as
	\[ b(\beta) \ \coloneqq \ \sum_{\ell=2}^m \beta_\ell(\ell-1).\]
	For $\Lambda\in\Ms_f^*([0,1])$, we define the \emph{coalescence impact} as 
      \begin{equation*}
        c(\Lambda)\ \coloneqq\ \int_{[0,1]}\lvert \log(1-r)\rvert\frac{\Lambda(\dd r)}{r^2}. 
        \end{equation*}
\end{definition}
\begin{remark}\label{rem:interpretationCoalescenceImpact}
	The quantity $c(\Lambda)$ was introduced in~\cite{herriger2012conditions} as $\lim_{k\to\infty} \log(k)/\E[\tau_k]$, where $\tau_k$ is the absorption time of the $\Lambda$-coalescent started with $k$ blocks. \cite[Thm. 1]{GoWa18} shows that $c(\Lambda)=\lim_{k\to\infty} \log(k)/\tau_k$ in probability. This result says that the number of blocks in a $\Lambda$-coalescent decays at least exponentially. If $c(\Lambda)=\infty$ the decay is super-exponential. Note that $c(\Lambda)<\infty$ if and only if $\int_{[0,1]}r^{-1}\Lambda(\dd r)<\infty$ and $\int_{[0,1]}\lvert \log(1-r)\rvert\Lambda(\dd r)$. Hence, $c(\Lambda)<\infty$ if and only if the underlying $\Lambda$-coalescent has dust and the size of the last merger is tight \cite[Thms. 1 and 3]{GoWaSch18}.
\end{remark}
\subsubsection{Long time behaviour: invariant distributions}
This part contains the analysis of the long time behaviour of the processes $V$ and $L$. We start with the following simple criterion for positive recurrence or transience of the leaf process.
\begin{theorem}[Classification]\label{thm:degreeprocessrecurrenttransient(intro)}
Assume $\Lambda\neq \delta_1$. The leaf process~$L$ with parameters $(\beta,\Lambda)$ is
\begin{itemize}
	\item positive recurrent if $b(\beta)<c(\Lambda)$,
	\item transient if $b(\beta)>c(\Lambda)$.
\end{itemize}
\end{theorem}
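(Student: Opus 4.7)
I would use a Foster--Lyapunov argument with the norm-like function $f(n)=\log n$. The crux is the drift estimate
\[
\lim_{n\to\infty} Qf(n) \;=\; b(\beta) - c(\Lambda),
\]
where $Q$ is the generator of $L$; the sign of this limit dictates the dichotomy, positive recurrence via Foster's theorem when negative and transience via a bounded Lyapunov function when positive.

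To compute the limit, decompose $Qf(n)=Q_b f(n)+Q_c f(n)$ into branching and coalescence pieces. The branching part reduces to an elementary Taylor expansion:
\[
Q_b f(n)\;=\;\sum_{\ell=2}^m n\beta_\ell \,\log\!\Big(1+\tfrac{\ell-1}{n}\Big)\;\xrightarrow[n\to\infty]{}\;\sum_{\ell=2}^m \beta_\ell(\ell-1)\;=\;b(\beta).
\]
For the coalescence part, inserting the integral representation $\lambda_{n,k}=\int_{[0,1]} r^{k-2}(1-r)^{n-k}\Lambda(dr)$ and swapping the sum with the integral yields
\[
Q_c f(n)\;=\;\int_{[0,1]} \frac{1}{r^2}\,\E\!\left[\log\!\Big(1-\tfrac{K_n(r)-1}{n}\Big)\,\ind_{\{K_n(r)\geq 2\}}\right]\Lambda(dr),
\]
with $K_n(r)\sim\bindist{n}{r}$. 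Pointwise in $r\in(0,1)$ the law of large numbers identifies the limit of the integrand as $r^{-2}\log(1-r)$, whose $\Lambda$-integral equals $-c(\Lambda)$.

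The main obstacle is justifying the exchange of limit and integral, as $r^{-2}\log(1-r)\sim -1/r$ near $r=0$ and a crude bound fails. I would split into two regimes: for $r$ bounded away from $0$, concentration of $K_n(r)$ around $nr$ (Chernoff) yields uniform domination; near $r=0$, a truncation combined with the elementary inequality $\log(1-u)\geq -2u$ on $u\leq 1/2$ bounds the negative contribution by an integrable multiple of $-\log(1-r)/r^2$, permitting dominated convergence when $c(\Lambda)<\infty$. When $c(\Lambda)=+\infty$ (in particular whenever $\Lambda(\{0\})>0$), the same reasoning together with Fatou yields $Q_c f(n)\to -\infty$, making positive recurrence immediate. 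The assumption $\Lambda\neq\delta_1$ enters here exactly to rule out the degenerate case $K_n(1)\equiv n$, where the law of large numbers step fails.

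With the drift limit in hand, both conclusions follow from standard criteria. If $b(\beta)<c(\Lambda)$, choose $N$ with $Qf(n)\leq -(c(\Lambda)-b(\beta))/2$ for $n\geq N$; since $f$ is norm-like, Foster's theorem (cf.\ Meyn--Tweedie) yields positive recurrence. If $b(\beta)>c(\Lambda)$, I would apply the same computation to the bounded function $h(n):=1-(\log n)^{-1}$ on $n\geq 2$, for which an analogous estimate gives $Qh(n)\geq 0$ for all $n$ large enough; the standard transience criterion based on a bounded $Q$-subharmonic function attaining higher values on the tail than on any finite set then yields positive probability of escape to $\infty$, hence transience.
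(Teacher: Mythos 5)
Your proposal follows the same Lyapunov/drift strategy as the paper's proof, which itself closely tracks Foucart's arguments. The Lyapunov functions differ in form but are asymptotically equivalent: your $\log n$ versus the paper's $f(\ell)=\sum_{k=2}^\ell \frac{k}{\delta(k)}\log\frac{k}{k-1}$, where $\delta$ is the Herriger--M\"ohle function with $\delta(n)/n\to c(\Lambda)$. The paper's normalisation is chosen so that $\Ls_{\Lambda}f(n)\le -1$ holds exactly (delegating the measure-theoretic work to known results on $\delta$), and so that $f$ is bounded precisely when the coalescent comes down from infinity --- a feature exploited later in the paper. You instead compute the drift limit directly through the binomial integral representation, which is a cleaner and more self-contained presentation of the same idea; for the transience function, $h(n)=1-(\log n)^{-1}$ is, up to an affine transformation and an index shift, the paper's $g(n)=1/\log(n+1)$.

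Two points need strengthening. First, for positive recurrence you only need the one-sided bound $\limsup_n Q_c f(n)\le -c(\Lambda)$, which follows from Fatou since the integrand is non-positive; you do not need the full dominated-convergence statement there, but you should explicitly handle non-explosivity before invoking a Foster-type criterion (the paper does this by domination with a pure birth process and an optional-stopping argument). Second, and more substantively, the transience direction does require the reverse inequality $\limsup_n\bigl(-Q_c f(n)\bigr)\le c(\Lambda)$, i.e.\ an upper bound on $-g_n(r):=r^{-2}\,\E\bigl[-\log\bigl(1-(K_n(r)-1)/n\bigr)\ind_{\{K_n(r)\ge2\}}\bigr]$ uniform enough to pass to the limit. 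Your sketch handles $r$ near $0$ (via $-\log(1-u)\le 2u$), but the phrase ``$r$ bounded away from $0$, Chernoff yields uniform domination'' elides the genuinely delicate regime $r\to1$, where the limiting integrand $-\log(1-r)/r^2$ is itself unbounded and crude bounds like $-g_n(r)\le(\log n)/r^2$ blow up. One clean way to close this gap: using the exact identity $\E\bigl[1/(n-K_n(r)+1)\bigr]=\frac{1-r^{n+1}}{(n+1)(1-r)}\le\frac1{n(1-r)}$ and Jensen for $\log$, one obtains $\E[\log(n-K_n(r)+1)]\ge\log\bigl(n(1-r)\bigr)$, hence
\[
-g_n(r)\;\le\;\frac{-\log(1-r)}{r^2}+\frac{\varepsilon_n}{r^2}
\]
with $\varepsilon_n\to0$ uniformly on $[\tfrac12,1)$. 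The paper bypasses this by citing Foucart's computation of $\Ls_{\Lambda}g(n)$ outright. Finally, note $h(1)$ is undefined with $h(n)=1-1/\log n$; replacing $\log n$ by $\log(n+1)$ as in the paper repairs this.
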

\begin{remark}
If $\Lambda=\delta_1$, the communication class of~$1$ is always positive recurrent, see Corollary~\ref{coro:positiverecurrent}. If $\beta_2>0$, this communication class is~$\N$.
\end{remark}
\begin{remark}
$c(\Lambda)=\infty$ for the Kingman coalescent ($\Lambda=\delta_0$) and the Bolthausen-Sznitman model ($\Lambda=U[0,1]$, see~\citep{Bolthausen1998}). Therefore, for these models the leaf process is always positive recurrent. In contrast, for the Eldon-Wakeley coalescent ($\Lambda=\delta_c$ for some $c\in (0,1)$, see~\citep{Eldon2621}), we have $c(\Lambda)<\infty$.
\end{remark} 
\begin{remark}
 Under the assumption $c(\Lambda)< \infty$, the first part of Theorem~\ref{thm:degreeprocessrecurrenttransient(intro)} is already present in \citep[Thm. 4.6]{gonzalezcasanova2018}. They also show that if $b(\beta)>c(\Lambda)$, then the process~$L$ is not positive recurrent.
\end{remark}
A consequence of Theorem~\ref{thm:degreeprocessrecurrenttransient(intro)} is that if $b(\beta)<c(\Lambda)$, then the leaf process admits a unique stationary distribution. The latter can be characterised as the solution of a linear system of equations, see Eq.~\eqref{eq:fearnhead-recursion}. This system is a variant of the well-known Fearnhead recursions, which were introduced in \citep{fearnhead2002common} (see also \citep{Taylor2007}) for a Wright--Fisher diffusion model with mutation and genic selection, and it was later extended to a $\Lambda$-Wright--Fisher model in~\citep{BLW16}.
\smallskip

The next result tells us that the condition $b(\beta) < c(\Lambda)$ also assures the existence of invariant distributions for the Bernstein coefficient process. Uniqueness and convergence towards an invariant distribution are provided under some natural extra conditions.

\begin{proposition}[Invariant distributions] \label{prop:abs}
The Bernstein coefficient process $V$ keeps the entries $V_{t}(0)$ and $V_{t}(L_t)$ constant along time. Moreover, if $b(\beta) < c(\Lambda)$, then the following assertions hold.

	\begin{enumerate}  
		\item For every $a,b\in \R$, the Bernstein coefficient process $V$ has a unique invariant probability measure $\mu^{a,b}$ with 
		support included in $\{v\in \Rb^{\infty} : \ v_0=a,\, v_{\dim(v)-1}=b \}.$
		\item Let $V_{\infty}^{a,b}$ be a random variable with law~$\mu^{a,b}$. If $V_0=v$ with $v_0=a$ and $v_{\dim(v)-1}=b$, then \[V_t\ \xrightarrow[t\to\infty]{(d)}\ V_{\infty}^{a,b}.\]
	\end{enumerate}
\end{proposition}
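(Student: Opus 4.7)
For the first claim (constancy of the first and last entries), direct inspection of the transition operators suffices. For $S^{n,\ell}$, the hypergeometric law $K_i\sim\hypdist{n+\ell-1}{i}{\ell}$ degenerates to $K_0=0$ when $i=0$ and to $K_{n+\ell-1}=\ell$ when $i=n+\ell-1$; combined with $p_{0,\ell}=0$ and $p_{\ell,\ell}=1$ (enforced by $\Ps_m$), this yields $(S^{n,\ell}v)_0=v_0$ and $(S^{n,\ell}v)_{n+\ell-1}=v_n$. For $C^{n,k}$, the weight $i/(n-k+1)$ equals $0$ at $i=0$ and $1$ at $i=n-k+1$, so $(C^{n,k}v)_0=v_0$ and $(C^{n,k}v)_{n-k+1}=v_n$. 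Since these operators drive every jump of $V$, the first and last entries remain constant throughout the trajectory.

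The strategy for (2) is to build the argument around a regeneration structure inherent to $V$. Each entry of $S^{n,\ell}v$ and $C^{n,k}v$ is a convex combination of entries of $v$, so every entry of $V_t$ lies in the convex hull of the entries of $V_0$; this gives uniform boundedness. Under $b(\beta)<c(\Lambda)$, Theorem~\ref{thm:degreeprocessrecurrenttransient(intro)} gives positive recurrence of the leaf process $L$, which combined with entrywise boundedness yields tightness of $(\mathcal L(V_t))_{t\geq 0}$ in $\Rb^{\infty}$. The decisive observation is that whenever $L_t=1$ (equivalently $\dim V_t=2$), part (1) forces $V_t=(a,b)$ deterministically, so hits of $\{L=1\}$ are regeneration points at which $V$ takes a prescribed value, independent of its past.

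To conclude via regenerative theory, it remains to ensure $\{L=1\}$ is visited a.s.\ with integrable return times. When $\beta=0$, $L$ is absorbed at $1$ and $\mu^{a,b}=\delta_{(a,b)}$. When $\beta\neq 0$, state $1$ lies in the recurrent communication class: selection at $L=1$ transitions to any $\ell$ with $\beta_\ell>0$, while from any $n\geq 2$ either the single coalescence $C^{n,n}$ (rate $\lambda_{n,n}$) or a succession of coalescences reaches $1$; positive recurrence then yields a.s.\ finite and integrable return times $\tau_1<\tau_2<\dots$ to $\{L=1\}$. The strong Markov property applied at each $\tau_k$, together with $V_{\tau_k}=(a,b)$, shows that the excursions of $V$ between consecutive $\tau_k$ are i.i.d.\ and independent of $V_0$. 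Thus $V$ is a regenerative process with deterministic regeneration value; since cycle lengths have a continuous exponential component and are therefore non-lattice, the key renewal theorem yields $\mathcal L(V_t)\to\mu^{a,b}$ as $t\to\infty$ independently of the initial condition, with $\mu^{a,b}(A)=\E[\int_0^{\tau_1}\1_{\{V_s\in A\}}\,ds]/\E[\tau_1]$ for $V$ started from $(a,b)$. Existence, uniqueness and convergence follow simultaneously. The main obstacle I anticipate is the identification of the right regeneration set: the deterministic post-regeneration value guaranteed by part (1) is exactly what decouples the limit from $V_0$, and the communication-class analysis requires some care in degenerate parameter regimes.
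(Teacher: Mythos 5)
Your proof is correct and rests on the same decisive observation as the paper's: whenever $L_t=1$ the process is pinned at $V_t=(a,b)^T$ by the invariance of the first and last coordinates, so hits of $\{L=1\}$ are deterministic regeneration epochs. Where you differ from the paper is in the machinery built around this observation. The paper first reduces $V$ to a countable-state Markov chain by noting that $V_t=M_tV_0$ for a chain $M$ on the countable set $\mathbb{M}_V$ of finite products of selection and coagulation matrices; it then identifies $C_V(a,b)$ (the orbit of $(a,b)^T$) as an irreducible, positive-recurrent communication class and invokes classical countable-chain theorems (Norris) for existence, uniqueness, and ergodic convergence, plus the remark that any start with $v_0=a$, $v_{\dim(v)-1}=b$ enters $C_V(a,b)$ in finite time. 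You instead appeal directly to regenerative-process theory: integrable i.i.d.\ cycles between consecutive hits of $\{L=1\}$, a non-lattice cycle length because of the exponential sojourn at $L=1$, and the key renewal theorem to obtain convergence to the time-average-over-a-cycle measure $\mu^{a,b}(A)=\E[\int_0^{\tau_1}\1_{\{V_s\in A\}}ds]/\E[\tau_1]$, with uniqueness falling out because the limit is start-independent. What your route buys is an explicit renewal formula for $\mu^{a,b}$ (the paper obtains an equivalent duration-biased-excursion description only in a subsequent remark) and avoidance of the countability reduction; what the paper's route buys is that irreducibility and positive recurrence of $C_V(a,b)$ are made structurally explicit, which is the kind of care you flag as needed in degenerate regimes such as $\Lambda=\delta_1$, where $L$ is not irreducible on $\N$ but state $1$ is still accessible from every state. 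Your side remark about tightness is harmless but unused, since the renewal theorem already delivers convergence.
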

The proofs of Theorem~\ref{thm:degreeprocessrecurrenttransient(intro)} and Proposition~\ref{prop:abs} are given in Sections~\ref{s5.1} and~\ref{subsec:bcpinvariant}, respectively. Other complementary results on the stationary distribution of the leaf process are given in Section~\ref{s5.2}. 
\subsubsection{Small time behaviour: coming down from infinity}
Now we consider the small time behaviour of the leaf process $L$ as the initial number of particles tends to infinity. It is not hard to see that $L$ is stochastically monotone in the initial value, i.e. for any $k\in\Nb$ and $t\geq 0$, $n\mapsto \Pb_n(L_t\geq k)$ is non-decreasing, see Remark \ref{SM}. The work of \citet[Thm. 1]{clifford1985} yields an order-preserving coupling of the sequence of leaf processes $\{(L^{n}_{t}:\,t>0)\}_{n\in\N}$, where $n$ indicates the initial value of $L^n$. This means that $L^n_t\leq L^{\hat{n}}_t$ for any $n\leq \hat{n}$ and any $t\geq 0$ almost surely. Hence, we can define the process $(L^{\infty}_t:\,t>0)$ valued in $\N\cup\{\infty\}$ as the monotone limit of $(L^n_t:\,t\geq 0)$ as $n\to\infty$. In particular, $\lim_{t\to0^+} L_t^\infty=\infty$. 

\begin{definition}[Coming down from infinity]
We say the leaf process \emph{comes down from infinity} (c.d.i.) if and only if for every $\varepsilon>0$,
$L_{\varepsilon}^{\infty}<\infty$ a.s.. We say that it stays infinite if for every $\varepsilon>0$,
$L_{\varepsilon}^{\infty}=\infty$ a.s..
\end{definition}

It follows from Remark~\ref{rem:interpretationCoalescenceImpact} that if the $\Lambda$-coalescent c.d.i., then $c(\Lambda)=\infty$. In particular, the corresponding leaf process is positive recurrent. In addition, the following generalisation of \citet[Prop. 23]{Pit99} holds.

\begin{theorem}[Criterion for c.d.i.]\label{teo:cdi-criterium(intro)}
Assume that $\Lambda$ has no mass at $1$. Then the leaf process $L$ either c.d.i.~or stays infinite. 
Furthermore, $L$ c.d.i.~if and only if the underlying $\Lambda$-coalescent c.d.i..
\end{theorem}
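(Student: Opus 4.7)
The plan is to couple the leaf process $L$ with the block-counting process $\widetilde L$ of the underlying $\Lambda$-coalescent and to derive the equivalence from Pitman's criterion combined with a Foster--Lyapunov estimate; the dichotomy for $L$ then follows from the one for $\widetilde L$.

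\emph{Coupling and easy direction.} First, I would realise $L$ and $\widetilde L$ on a common probability space driven by the same Poisson family of coalescence events (built, e.g., from a Poisson point process on $\mathbb{R}_+\times(0,1]$ with intensity $dt\,r^{-2}\Lambda(dr)$, together with an i.i.d.\ sequence of uniform marks selecting the coalescing blocks), and add independent branching events only for $L$. Since branching never removes lines, this produces the pathwise ordering $\widetilde L_t\le L_t$ for every $t\ge 0$ when both are started from the same initial value; passing to the monotone limit in the initial condition gives $\widetilde L^\infty_t\le L^\infty_t$. In particular, if $L$ c.d.i.\ then $\widetilde L$ c.d.i..

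\emph{Reverse direction via Foster--Lyapunov.} Assume now that $\widetilde L$ c.d.i.. By \cite[Prop.~23]{Pit99},
\[\sum_{n\ge 2}\frac{1}{\gamma_n}<\infty,\qquad\text{where}\quad\gamma_n:=\sum_{k=2}^{n}(k-1)\binom{n}{k}\lambda_{n,k}\]
is the mean loss rate of blocks of the $\Lambda$-coalescent at level $n$. This forces $\gamma_n/n\to\infty$, hence $\gamma_n-n\,b(\beta)\ge\gamma_n/2$ for $n$ large enough, and the tail series $\sum 1/(\gamma_n-n\,b(\beta))$ still converges. Fix such a threshold $n_0$ and introduce the bounded, non-decreasing Lyapunov function
\[V(n):=\sum_{k=n_0}^{n-1}\frac{1}{\gamma_k-k\,b(\beta)}\quad(n\ge n_0),\qquad V(n):=0\quad(n<n_0).\]
Splitting the generator $\mathcal{L}$ of $L$ into coalescence and branching parts, a careful telescoping of the discrete differences of $V$ against the rates $\binom{n}{k}\lambda_{n,k}$ shows that the coalescence term contributes an amount of the order of $-\gamma_n/(\gamma_n-n\,b(\beta))\to -1$, while the branching term is at most $n\,b(\beta)(m-1)/(\gamma_n-n\,b(\beta))\to 0$. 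Hence $\mathcal{L}V(n)\le -1/2$ for all $n\ge N_0$ large enough. A standard discrete supermartingale argument applied to $V(L_{t\wedge\tau_{N_0}})+\tfrac12(t\wedge\tau_{N_0})$, where $\tau_{N_0}:=\inf\{s:L_s<N_0\}$, together with the boundedness of $V$, yields $\mathbb{E}[\tau_{N_0}]\le 2\,V(\infty)<\infty$ uniformly in the starting state. Letting the initial value tend to $\infty$ then gives $L^\infty_t<\infty$ a.s.\ for every $t>0$, i.e.\ $L$ c.d.i..

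\emph{Dichotomy and main obstacle.} The dichotomy for $L$ is immediate from the equivalence proved above and the dichotomy of \cite[Prop.~23]{Pit99} for the $\Lambda$-coalescent under $\Lambda(\{1\})=0$. The main obstacle is the generator estimate: one must carefully telescope the discrete differences of $V$ against the rates $\binom{n}{k}\lambda_{n,k}$ for all $2\le k\le n$ (not just nearest-neighbour jumps) and verify that the branching correction is genuinely absorbed into the coalescence decrease thanks to $\gamma_n/n\to\infty$. All other steps (coupling, monotone limit, supermartingale argument) are then routine.
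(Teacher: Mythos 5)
Your proof is correct in substance and follows a genuinely different route from the paper's. The paper re-uses the Foster--Lyapunov machinery it has already set up in Lemma~\ref{lem:FiniteExpectedReturnTime} for the recurrence/transience classification: the Lyapunov function there is $f(\ell)=\sum_{k\le\ell}\tfrac{k}{\delta(k)}\log\tfrac{k}{k-1}$, built from the Herriger--M\"ohle rate $\delta(k)$, and Lemma~\ref{lem:cond-cdi-coalescent} identifies c.d.i.\ of the $\Lambda$-coalescent with $\lim_n f(n)<\infty$; the hard direction is then a one-line consequence of $\E_n[T^{n_0}]\le\tilde c f(n)$ by letting $n\to\infty$. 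You instead build a Lyapunov function directly from Schweinsberg's mean block-loss rate $\gamma_n$, namely $V(n)=\sum_{k< n}1/(\gamma_k-kb(\beta))$, and rederive the drift estimate from scratch. This is more self-contained (it does not lean on the $\delta$-machinery), and your derivation of the dichotomy for $L$ from the coupling plus the already-established equivalence is actually cleaner than the paper's remark that Pitman's arguments ``can be extended'' to the branching-coalescing system.

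Two technical points deserve attention. First, the criterion $\sum_{n\ge 2}1/\gamma_n<\infty$ for c.d.i.\ is due to Schweinsberg, not to \citet[Prop.~23]{Pit99}; Pitman's Proposition~23 gives only the zero-one dichotomy. Second, your step ``this forces $\gamma_n/n\to\infty$'' is correct but not automatic from $\sum 1/\gamma_n<\infty$ alone: it hinges on the fact that $n\mapsto\gamma_n/n$ is non-decreasing. That monotonicity does hold: writing $\gamma_n=\int_{[0,1]}\bigl(nr-1+(1-r)^n\bigr)r^{-2}\Lambda(dr)$, one checks $(n+1)\gamma_n-n\gamma_{n+1}=\int\bigl((1-r)^n(1+nr)-1\bigr)r^{-2}\Lambda(dr)\le 0$ by Bernoulli's inequality, so $\gamma_n/n$ increases to a limit which must be $+\infty$ whenever $\sum 1/\gamma_n<\infty$. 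The same monotonicity is also what guarantees that $j\mapsto 1/(\gamma_j-jb(\beta))$ is eventually decreasing, which your telescoping estimates for the coalescence and branching parts of the generator silently use. With these facts spelled out, your $\mathcal{L}V(n)\le-1/2$ estimate and the subsequent optional-stopping/monotone-limit argument go through exactly as you outline, and you recover the theorem.
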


The proof of Theorem \ref{teo:cdi-criterium(intro)} is given in Section~\ref{s5.3}
%%%%%%%%%%%%%%%%%%%%%%%%%%%%%%%%%%%%%%%%%%%%%%%%%%%%%%%%%%%%%%%%Subsection2.5%%%%%%%%%%%%%%%%%%%%%%%%%%%%%%%%%
\subsection{Absorption probability and time to absorption}\label{s2.5} 
If the leaf process and the type-$a$ frequency process are in moment duality, one can typically translate the long time behaviour of~$L$ into (time) asymptotic properties of~$X$, see \cite{foucart2013impact, gonzalezcasanova2018}. This method extends to the Bernstein duality and leads to results on absorption probabilities and on the time to fixation.

Define $h(x)\coloneqq\P_x( \lim_{t\to\infty}X_t=1)$, i.e. $h(x)$ is the absorption probability of~$X$ in~$1$ starting from $X_0=x$. Using the notation of Proposition~\ref{prop:abs}, set $V_\infty\coloneqq V_\infty^{0,1}$. Recall that $L_{\infty}=\dim(V_\infty)-1$.
\begin{proposition}[Absorption probabilities] \label{prop:absX}
Assume $b(\beta) < c(\Lambda)$.
\begin{enumerate}
	\item For all $x\in[0,1]$, \begin{equation}
	 h(x) \ = \ \E\left[\langle  B_{L_\infty}(x), V_\infty \rangle\right]. \label{eq:represAbsoprtion}\end{equation}
    \item The boundary points $0$ and $1$ are accessible from any point $x\in (0,1)$, i.e. $h(x)\in (0,1)$.
\end{enumerate}
\end{proposition}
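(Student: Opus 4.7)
The plan is to exploit the Bernstein duality (Theorem~\ref{thm:bernstein-duality}) together with the ergodic properties of the Bernstein coefficient process established in Proposition~\ref{prop:abs}, which apply under the hypothesis $b(\beta)<c(\Lambda)$. A useful preliminary observation is that the operators $S^{n,\ell}$ and $C^{n,k}$ from Definition~\ref{def:SelCoagMatrices} are linear, hence $0\in\Rb^{n+1}$ is a fixed point of the dynamics of $V$ for every $n$. Consequently $\mu^{0,0}=\delta_0$ and $V_\infty^{0,0}=0$ almost surely.

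First, I would establish the asymptotic behaviour of the moments of $X_t$. With initial condition $V_0=e_{n+1}\in\Rb^{n+1}$, which satisfies $\langle B_n(x),e_{n+1}\rangle=x^n$ and has parameters $a=0$, $b=1$, the duality combined with Proposition~\ref{prop:abs} yields
\begin{equation*}
\E_x[X_t^n]\;=\;\E\left[\langle B_{L_t}(x),V_t\rangle \mid V_0=e_{n+1}\right]\;\xrightarrow[t\to\infty]{}\;g(x):=\E\left[\langle B_{L_\infty}(x),V_\infty\rangle\right],
\end{equation*}
where crucially the right-hand side does not depend on $n$. Analogously, taking $V_0=e_{i+1}\in\Rb^{n+1}$ for $0<i<n$ (so that $a=b=0$), the identity $V_\infty^{0,0}=0$ gives $\E_x[b_{i,n}(X_t)]\to 0$; in particular $\E_x[X_t(1-X_t)]\to 0$. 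Combining these two statements, all moments of $X_t$ converge to the common limit $g(x)$, and $X_t$ converges in distribution to a Bernoulli random variable $X_\infty\in\{0,1\}$ with $\P_x(X_\infty=1)=g(x)$.

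For~(1), I would upgrade this weak convergence to almost sure absorption. On the event $\{T_0\wedge T_1=\infty\}$ the process $X$ remains in $(0,1)$ for all times. The vanishing of $\E_x[X_t(1-X_t)]$ together with Markov's inequality yields $\P_x(X_t\in[\epsilon,1-\epsilon])\to 0$ for every $\epsilon>0$, so $X$ eventually leaves the middle zone for good. A strong Markov argument at successive returns to this zone, combined with the absorbing nature of $\{0,1\}$, then rules out $\{T_0\wedge T_1=\infty\}$. I expect this final step to be the main technical obstacle, as it requires careful control of the excursions of $X$ near the boundary. Once~(1) is granted, (2) follows from the computation above: the almost sure absorption yields $g(x)=\E_x[X_\infty]=\P_x(T_1<T_0)=1-h(x)$, and the stated formula is recovered by rewriting via $\langle B_{L_\infty}(x),\mathbf{1}\rangle=1$ and noting that the bijection $v\mapsto\mathbf{1}-v$ sends $\mu^{0,1}$ to $\mu^{1,0}$.

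For~(3), I would exploit the support structure of $\mu^{0,1}$. Under $b(\beta)<c(\Lambda)$, the leaf process $L$ is positive recurrent (Theorem~\ref{thm:degreeprocessrecurrenttransient(intro)}), and the state $L=1$ is accessible from every $n\geq 2$: via binary coalescences at rate $\lambda_{n,2}=\int(1-r)^{n-2}\Lambda(dr)>0$ when $\Lambda\neq\delta_1$, and directly at rate $\binom{n}{n}\lambda_{n,n}=1$ when $\Lambda=\delta_1$. Hence $\P(L_\infty=1)>0$, and on this event $V_\infty=(0,1)^T$, so $\langle B_1(x),V_\infty\rangle=x$. Therefore $g(x)\in(0,1)$ for $x\in(0,1)$, and $h(x)=1-g(x)\in(0,1)$.
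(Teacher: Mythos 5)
Your proof follows the same strategy as the paper's: feed the Bernstein duality into the ergodicity of the Bernstein coefficient process (Proposition~\ref{prop:abs}) to force all moments of $X_t$ to a common limit, deduce $X_t\Rightarrow\mathrm{Ber}(p(x))$, then identify $p(x)$ with the absorption probability. The paper carries this out by testing only $\E_x[X_t]$ and $\E_x[X_t^2]$ inside an auxiliary lemma; your observation that $V^{0,0}_\infty\equiv 0$ (because $0$ is a fixed point of the linear operators $S^{n,\ell}$, $C^{n,k}$) kills $\E_x[b_{i,n}(X_t)]$ for all $0<i<n$ in one stroke and is a slightly cleaner route to the same conclusion. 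The ``weak$\to$a.s.''\ upgrade that you flag as the main technical obstacle is handled in the paper's Lemma~\ref{prop:bernoulli} by essentially the same one-line assertion, so there is no hidden argument you are missing; your honesty about it is fair.

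Two points deserve attention. First, the final step of your argument for~(2) does not land exactly where you say it does. Your chain $g(x)=\E_x[X_\infty]=\P_x(T_1<T_0)=1-h(x)$ is correct, and the involution $v\mapsto\mathbf{1}-v$ (which commutes with $S^{n,\ell}$ and $C^{n,k}$ since $S^{n,\ell}\mathbf{1}=C^{n,k}\mathbf{1}=\mathbf{1}$) gives $h(x)=\E\bigl[\langle B_{L_\infty}(x),V^{1,0}_\infty\rangle\bigr]$, \emph{not} the stated $\E\bigl[\langle B_{L_\infty}(x),V^{0,1}_\infty\rangle\bigr]$; these two expressions differ. The reconciliation is that the paper's own proof writes $\P_x(T_0<T_1)=\lim_t\E_x[X_t]$, which actually equals $\P_x(T_1<T_0)$ — the written definition $h(x):=\P_x(T_0<T_1)$ and the stated formula \eqref{eq:represAbsoprtion} cannot both be right (check $x=1$: $\P_1(T_0<T_1)=0$ while $\E[\langle B_{L_\infty}(1),V^{0,1}_\infty\rangle]=1$). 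Your derivation is the more careful one; just do not phrase it as literally recovering the stated formula with $V^{0,1}_\infty$. Second, for~(3) you only spell out the lower bound $g(x)>0$ via $\P(L_\infty=1)>0$ and nonnegativity of $V_\infty$; you should also state that $g(x)<1$ follows by the symmetric argument applied to $V^{1,0}_\infty=\mathbf 1-V^{0,1}_\infty$. The paper instead sandwiches $p(x)$ between $\E[x^{L_\infty}]$ and $1-\E[(1-x)^{L_\infty}]$ using $V_\infty(0)=0$, $V_\infty(L_\infty)=1$ and $\lVert V_\infty\rVert_\infty\le 1$, which amounts to the same thing.
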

\begin{remark}
Note that \eqref{eq:represAbsoprtion} can be expressed as
 $$h(x)\ =\ \sum_{\ell= 0}^\infty \P(L_{\infty}=\ell) \sum_{i=0}^{\ell}d_{i,\ell}b_{i,\ell}(x),$$
where $d_{i,\ell}\coloneqq\E[\, V_{\infty}(i)\mid L_{\infty}=\ell]$. Moreover, under the extra assumption that $L_\infty$ admits exponential moments, we obtain a series expansion of~$h$ around $0$. See Proposition~\ref{lem:absorptionprobabilityanalytic} for more details. This is similar in spirit to \citet[Thm.~2.4]{BLW16}.
 \end{remark}

Given a colouring rule $p\in\Ps_m$, define $\bar{p}\in\Ps_m$ by $\bar{p}_{i,\ell}\coloneqq1-p_{\ell-i,\ell}$. Consider the Bernstein coefficient processes $V=(V_t^n:\,t \geq 0)$ and $W=(W_t^n:\,t\geq 0)$ starting at $e_{n+1}$, and with parameters $(\beta,p,\Lambda)$ and $(\beta,\bar{p},\Lambda)$, respectively. One can easily construct $V$ and~$W$ on the basis of the same~$L$. The next result relates the distribution of the time to absorption of $X$ in~$\{0,1\}$ to the mean asymptotic behaviour of the process $(V_t^n,W_t^n:\,t\geq 0)$ and to the hitting time of~$1$ for~$L^n$ as $n\to\infty$ .

\begin{proposition}[Absorption time]\label{prop:absorptiontime} Fix $(\beta,p,\Lambda)$ such that~$L$ c.d.i.. For $n\in \Nb$, let $V^n$ and $W^n$ as above. Define the random polynomial
\[ Q_t^n(x) \ \coloneqq \ \langle  B_{L_t^n}(x), V_t^n\rangle + \langle  B_{L_t^n}(1-x), W_t^n\rangle.\ \]
Let $T\coloneqq \inf\{t>0: X_t\in\{0,1\}\}$ and for $n\in \N\cup \{\infty\}$, let $\tau^{(n)}\coloneqq\inf\{ t>0 \ : \ L_t^{n} = 1 \}$. Then
\begin{enumerate}
	\item $\P_x(T\leq t) \ = \lim\limits_{n\to\infty}\ \E\left[ Q^{n}_t(x) \right]$
	\item $\E_{x}[T] \ = \ \lim\limits_{n\to\infty} \E\left[\int_0^{\tau^{(n)}} \big(1-Q_t^n(x)\big)\dd t \right]\ \leq \ \E\left[\tau^{(\infty)} \right].$
\end{enumerate}

In particular, $T<\infty$ almost surely.
\end{proposition}
An heuristic interpretation of $Q_t^{n}(x)$ in terms of the ASG is as follows. Consider the ASG $(\Gs_t^n,p)$ in $[0,t]$ and assign to each leaf the type~$a$ with probability~$x$ (resp.~$A$ w.p. $1-x$). Note that colouring $\Gs_t^n$ with colouring rule~$\bar{p}$ is the same as colouring $\Gs_t^n$ with colouring rule~$p$, but the role of~$a$ and~$A$ are interchanged. So $Q_t^{n}(x)$ corresponds to the conditional probability given $(L_u^n:\,u\in[0,t])$, that the roots have either all type~$a$ or all type~$A$. At the time the leaf process hits~$1$, the ASG has a bottleneck so that whatever is the type at the bottleneck, this is the type of the entire sample. Hence, the probability of fixation of one type before time $t$ is recovered by averaging $Q_t^n(x)$ over all possible realisations of $(L_u^n:\,u\in[0,t])$ and by taking $n\to\infty$, i.e. starting the ASG from the entire population.
\smallskip

Complementary properties and the proofs of the results of this section can be found in Section~\ref{s6}.

%%%%%%%%%%%%%%%%%%%%%%%%%%%%%%%%%%%%%%%%%%%%%%%%%%%%%%%%%%%%%%%%Subsection2.5%%%%%%%%%%%%%%%%%%%%%%%%%%%%%%%%%

\subsection{Minimality} \label{s2.6}
According to the results of Section~\ref{s2.1} and~\ref{s2.2}, infinitely many potential genealogies are associated to the SDE~\eqref{eq:SDEoriginal02} if the drift~$d$ is a polynomial vanishing at the boundary. In view of Theorem~\ref{thm:degreeprocessrecurrenttransient(intro)} and Proposition~\ref{prop:abs}, it seems natural to distinguish potential genealogies with respect  to their effective branching rate.

\begin{definition}[$b$-minimal selection decomposition]\label{def:minimalSD}
	Let $d$ be a polynomial with $d(0)=d(1)=0$. We refer to a selection decomposition~$(\beta,p)\in\ccS_d$ as $b$-minimal if $$b(\beta)=\inf_{(\beta',p')\in\ccS_d} b(\beta')\eqqcolon b_\star(d).$$
	We call $b_\star(d)$ the minimal effective branching rate of $d$.
\end{definition}
\smallskip
Note that if $f$ is a polynomial with $\deg(f)\leq m$ that vanishes at the boundary, then the first and last entries of its $m$-Bernstein coefficient vector are $0$. We denote by $\rho(f)\coloneqq (\rho_i(f))_{i\in[m-1]}\in\Rb^{m-1}$ the vector containing the entries~$i\in[m-1]$ of the $m$-Bernstein coefficient vector of $f$, i.e. $\rho(f)$ is the unique vector such that $f(x)=\sum_{i=1}^{m-1}\rho_i(f) b_{i,m}(x)$ for all $x\in\Rb$.
The following notion will play an important role in order to understand the structure of $b$-minimal selection decompositions.

\begin{definition}[$\lambda$-decomposability]\label{def:decomp}
Let $\lambda>0$. We say that a polynomial $d$ with $\deg(d)\leq m$ and $d(0)=d(1)=0$ is \emph{$\lambda$-decomposable} if it admits a selection decomposition with effective branching rate~$\lambda$. Similarly, we say that $v\coloneqq(v_i)_{i\in[m-1]}\in\Rb^{m-1}$ is \emph{$\lambda$-decomposable} if the polynomial $x\mapsto\sum_{i=1}^{m-1} v_i b_{i,m}(x)$ is $\lambda$-decomposable. Denote by $\Ss_\lambda\subset\Rb^{m-1}$ the set of $\lambda$-decomposable vectors, i.e.  
$$\cS_\lambda \ \coloneqq \ \{\rho(d_{\beta,p}):(\beta,p)\in \R_+^{m-1}\times \Ps_m,\,b(\beta) = \lambda \}.$$ 
\end{definition}
\begin{figure}[t!]
	\begin{center}
		\scalebox{0.6}{
			\begin{tikzpicture}
			%Box
			\draw[line width=.2mm] (-4,-4) -- (-4,4) -- (4,4) -- (4,-4) -- (-4,-4);
			\draw[line width=.2mm] (-3.9,0) -- (-4,0) node [left] {\scalebox{1.5}{$0$}};
			\draw[line width=.2mm] (-3.9,2) -- (-4,2) node [left] {\scalebox{1.5}{$0.2$}};
			\draw[line width=.2mm] (-3.9,-2) -- (-4,-2) node [left] {\scalebox{1.5}{$-0.2$}};
			\draw[line width=.2mm] (3.9,0) -- (4,0); %node [right] {$0$};
			\draw[line width=.2mm] (3.9,2) -- (4,2); %node [right] {$0.2$};
			\draw[line width=.2mm] (3.9,-2) -- (4,-2); %node [right] {$-0.2$};
			\draw[line width=.2mm] (0,4) -- (0,3.9);
			\draw[line width=.2mm] (-2,3.9) -- (-2,4);
			\draw[line width=.2mm] (2,4) -- (2,3.9);
			\draw[line width=.2mm] (0,-3.9) -- (0,-4) node [below] {\scalebox{1.5}{$0$}};
			\draw[line width=.2mm] (-2,-3.9) -- (-2,-4) node [below] {\scalebox{1.5}{$-0.2$}};
			\draw[line width=.2mm] (2,-3.9) -- (2,-4) node [below] {\scalebox{1.5}{$0.2$}};
			\node at (-4,-4.23) [left] {\scalebox{1.5}{$-0.4$}};
			\node at (-4,4) [left] {\scalebox{1.5}{$0.4$}};
			\node at (4,-4) [below] {\scalebox{1.5}{$0.4$}};
			
			\draw[line width=.2mm, gray] (-4,0) -- (4,0);
			\draw[line width=.2mm, gray] (0,-4) -- (0,4);
			\draw[line width=.2mm, gray] (4,-4) -- (-4,4);
			\draw[line width=.2mm, gray] (-4,-4) -- (4,4);
			% fsm with rate 1
			\draw[line width=.5mm] (-3.33,-3.33) -- (-1.80,1.80) -- (3.33,3.33) -- (3.33,-3.33) -- (-3.33,-3.33);
			
			\node at (0,-4.75) [below] {\scalebox{2}{$v_1$}};;
			\node at (-5.5,0) {\scalebox{2}{$v_2$}};;
			\end{tikzpicture}}
	\end{center}
	\caption{The region delimited by the bold lines corresponds to the set $\Ss_1$ of $1$-decomposable vectors.}
	\label{fig:regions}
\end{figure}
It turns out that $\Ss_\lambda$ is a polytope with the property that~$\Ss_\lambda=\lambda\Ss_1$, see Proposition~\ref{prop:s-polytope}. This property is crucial for proving the next result. 

\begin{proposition}\label{exminimal}
For every polynomial $d$ with $d(0)=d(1)=0$, there exists a $b$-minimal selection decomposition and the minimal effective branching rate satisfies the following relation
 \begin{equation}\label{eq:minimal-branching}
 b_\star(d) \ = \ \inf\{\lambda>0: \rho(d) \in \lambda \cS_1 \}.
 \end{equation}
\end{proposition}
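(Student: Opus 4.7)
The plan is to reduce the identity \eqref{eq:minimal-branching} to an elementary scaling observation on $\cB$ and $b$, and then to establish the existence of a minimiser by a direct compactness argument on a minimising sequence. No step seems to present a serious obstacle; the argument is essentially routine once the scaling property is identified.

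First, I would check that both $\cB$ and $b$ are linear in the branching rates $\beta$. For $\cB$ this is transparent from Definition \ref{def:SelectionDecomposition}, since the polynomial $\sum_{\ell=2}^m \beta_\ell \langle B_\ell(\cdot), p_{\cdot,\ell}-u_\ell\rangle$ is linear in $\beta$ and extracting Bernstein coefficients in the degree-$m$ basis is a linear operation. For $b$ this is immediate from Definition \ref{def:EffectiveBranchingRate}. Since $\Rb_+^{m-1}$ is closed under multiplication by $\lambda>0$, it follows that $(\lambda\beta,p)\in E^m$ whenever $(\beta,p)\in E^m$, together with
\[
\cB(\lambda\beta, p) \ = \ \lambda\, \cB(\beta, p), \qquad b(\lambda\beta) \ = \ \lambda\, b(\beta).
\]
From this I would deduce $\Ss_\lambda = \lambda\,\Ss_1$ for every $\lambda>0$, so that $\rho(d)\in \lambda\,\Ss_1$ is equivalent to the existence of a SD $(\beta,p)\in \Es_d$ with $b(\beta)=\lambda$; taking infima yields \eqref{eq:minimal-branching}. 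The degenerate case $d\equiv 0$ is handled separately by observing that $b_\star(0)=0$ via the trivial SD $\beta=0$.

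For the existence of a minimal SD I would proceed by compactness. Theorem \ref{thm:infiniteed} guarantees $\Es_d\neq\emptyset$, so $b_\star(d)\in[0,\infty)$. Pick a minimising sequence $(\beta^{(n)},p^{(n)})\in \Es_d$ with $b(\beta^{(n)})\to b_\star(d)$. Since the summands in $b(\beta)=\sum_{\ell=2}^m \beta_\ell(\ell-1)$ are non-negative, the boundedness of $b(\beta^{(n)})$ forces each coordinate $\beta^{(n)}_\ell$ to be bounded; extracting a subsequence gives $\beta^{(n)}\to\beta^\star\in\Rb_+^{m-1}$. The space $\Ps_m$ is a finite product of closed intervals and hence compact, so a further subsequence satisfies $p^{(n)}\to p^\star\in\Ps_m$. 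Both $\cB$ and $b$ are polynomial, hence continuous, functions of $(\beta,p)$; passing to the limit yields $\cB(\beta^\star,p^\star)=\rho(d)$ and $b(\beta^\star)=b_\star(d)$, so $(\beta^\star,p^\star)$ is a minimal SD of $d$.

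The only mildly subtle point is the scaling identity $\Ss_\lambda=\lambda\,\Ss_1$, which follows immediately from the linearity of $\cB$ and $b$ in $\beta$ together with the positive-homogeneity of the cone $\Rb_+^{m-1}$; beyond that the argument is a standard compactness argument carried out in the fixed finite-dimensional space $E^m$ with $m=\deg(d)$.
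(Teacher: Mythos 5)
Your proof is correct, and for the \emph{existence} of a minimal SD it takes a genuinely different route from the paper. The paper first establishes that $\cS_1$ is a polytope containing a neighbourhood of the origin (inside the proof of Theorem~\ref{thm:infiniteed}), deduces $\R^{m-1} = \cup_{\lambda>0}\partial\cS_\lambda$, locates $\rho(d)$ on a specific boundary $\partial\cS_{\lambda_\star(d)}$, and reads off a minimiser from the closedness of the polytope together with the nesting $\cS_\lambda\subset\mathrm{int}(\cS_{\lambda'})$ for $\lambda<\lambda'$. You instead run a direct minimising-sequence argument: boundedness of $b(\beta^{(n)})$ forces coordinatewise boundedness of $\beta^{(n)}$, compactness of $\Ps_m$ and closedness of $\Rb_+^{m-1}$ give a convergent subsequence, and continuity of $\cB$ and $b$ passes the constraint $\cB=\rho(d)$ and the value $b_\star(d)$ to the limit. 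Your version is more elementary and self-contained (it does not need the polytope structure, the interior-of-the-origin fact, nor the nesting argument), whereas the paper's version reuses the geometric description of $\cS_\lambda$ that it needs anyway for Algorithm~\ref{teo:algorithm} and Section~\ref{s7}. For the identity \eqref{eq:minimal-branching}, the two arguments coincide: both reduce it to the scaling property $\cS_\lambda=\lambda\cS_1$, which you derive from the positive homogeneity of $\cB(\cdot,p)$ and $b$ exactly as in Proposition~\ref{prop:s-polytope}. Your explicit treatment of the degenerate $d\equiv 0$ case is a reasonable addition; the paper implicitly restricts to nonzero $d$ of degree $m\geq 2$.
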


The properties of $S_\lambda$ allow us to derive a recipe for finding a $b$-minimal selection decomposition, see Algorithm~\ref{teo:algorithm}. Moreover, they lead to the explicit construction of $\ccS_d$ in Corollary~\ref{coro:geometriccharacterisationced}.

\smallskip

The case $\deg(d)=3$ is relevant in applications, because it contains the selection term of a diploid Wright--Fisher model with dominance (see \ref{eq:wf-dominance}). In this case the faces of the polygon $\cS_\lambda$ have a natural biological interpretation. More specifically, we show that the face of $\cS_{b_{\star}(d)}$ that contains~$\rho(d)$ depends on the direction of selection and the nature of the dominance (recessive/dominant positive/negative selection). Fig.~\ref{fig:regions} illustrates~$\Ss_1$ for~$\deg(d)=3$. More details are exposed in Section~\ref{sec:minimalSDm3}.

\begin{remark}
	If we consider only potential genealogies that are $b$-minimal, we recover classical cases of the literature. For instance, the ASG of \citeauthor{KroNe97} is recovered as the only $b$-minimal dual to the Wright--Fisher diffusion with genic selection \citep{KroNe97, NeKro97}, and the ASG with a minority rule introduced by~\citeauthor{Ne99} as the only $b$-minimal dual of the Wright--Fisher diffusion with balancing selection~\citep{Ne99}.
\end{remark}

A classification of selection decompositions according to their effective branching rate provides only a partial picture of the connection between different ASGs. From a biological perspective, one is inclined to say that an ASG~$\Gs$ is better than $\tilde{\Gs}$ if one recovers~$\Gs$ by erasing superfluous branches from $\tilde{\Gs}$. This motivates the following definitions.	

\begin{definition}[Thinning]\label{def:thinning}
A lower-triangular stochastic matrix~$\Tb\coloneqq(\Tb_{{k,i}})_{k,i=1}^m$ is called a thinning mechanism. For $\beta\in \Rb_+^{m-1}$, define $\Tb\beta\in \Rb_+^{m-1}$ via\begin{equation*}
 (\Tb\beta)_\ell \coloneqq \sum_{k=\ell}^m \beta_k \Tb_{k,\ell},\qquad \ell\in\,]m].\end{equation*}
\end{definition}

Thinning mechanisms act on ASGs as follows. A $(\Tb \beta,p,\Lambda)$-ASG can be constructed from a $(\beta,p,\Lambda)$-ASG by removing at each $k$-branching point (exactly) $k-i$ lines (chosen uniformly at random) with probability $\Tb_{k,i}$, $k\in]m]$, $i\in[k]$.

\begin{definition}[Graph-minimal]\label{def:graphminimal}
Let~$d$ be a polynomial and $(\beta,p)\in \ccS_d$. We say that $(\beta,p)$ is \emph{graph-minimal} for~$d$ if and only if the only thinning mechanism~$\Tb$ such that $(\Tb\beta ,p')\in \ccS_d$ for a colouring rule~$p'$ is the identity.
\end{definition}
In other words, if $(\beta,p)$ is not \emph{graph-minimal} for~$d$, then there is a thinning mechanism $\Tb$ different from the identity and a colouring rule $p'$ such that the $(\Tb\beta,p',\Lambda)$-ASG still is a potential genealogy for \eqref{eq:SDEoriginal02} with drift $d$. 

\smallskip

Both notions of minimality aim to minimise in some sense the vector $\beta$ of branching rates, under the constraint that $\beta$ can be augmented by a colouring rule to an element of $\ccS_d$; this makes it a notion of minimality for selection decompositions. It is unclear if the two notions of minimality are equivalent. The next result provides a partial answer.
\begin{theorem}\label{prop:graph:minimal} 
Any $b$-minimal selection decomposition $(\beta,p)\in\ccS_d$ is also graph-minimal. 
\end{theorem}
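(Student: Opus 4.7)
The plan is to argue by contraposition: I will show that if $(\beta,p)\in\cE_d$ is not graph-minimal, then it cannot achieve $b_\star(d)$, hence is not minimal. The key is a monotonicity of the effective branching rate under thinning.

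\textbf{Step 1 (thinning decreases $b$).} For any lower-triangular stochastic matrix $\Tb$ and any $\beta\in\Rb_+^{m-1}$ (extended by $\beta_1=0$), I compute by swapping the order of summation
\[
b(\Tb\beta) \ = \ \sum_{\ell=2}^{m}(\ell-1)\sum_{k=\ell}^{m}\beta_k\Tb_{k,\ell} \ = \ \sum_{k=2}^{m}\beta_k \Big(\sum_{\ell=2}^{k}(\ell-1)\Tb_{k,\ell}\Big).
\]
Using that $\sum_{\ell=1}^{k}\Tb_{k,\ell}=1$, the inner sum is bounded by $(k-1)\sum_{\ell=2}^{k}\Tb_{k,\ell}=(k-1)(1-\Tb_{k,1})\leq k-1$, with equality iff $\Tb_{k,k}=1$. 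Therefore $b(\Tb\beta)\leq b(\beta)$, and equality holds iff $\Tb_{k,k}=1$ for every $k$ with $\beta_k>0$. A short verification shows that the latter condition is in fact equivalent to $\Tb\beta=\beta$ (for such $k$ the whole row $\Tb_{k,\cdot}$ collapses to $\delta_{k,\cdot}$, hence $(\Tb\beta)_\ell=\beta_\ell$ on the support of $\beta$ and vanishes elsewhere). Consequently,
\[
\Tb\beta\neq\beta \quad\Longleftrightarrow\quad b(\Tb\beta)<b(\beta).
\]

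\textbf{Step 2 (contraposition).} Assume $(\beta,p)\in\cE_d$ is not graph-minimal. By Definition~\ref{def:graphminimal}, there exist a thinning mechanism $\Tb$ and a colouring rule $p'$ with $(\Tb\beta,p')\in\cE_d$; since the identity thinning applied to $p'=p$ would make the definition vacuous, $\Tb$ is understood to satisfy $\Tb\beta\neq\beta$. By Step~1, $b(\Tb\beta)<b(\beta)$. Thus $(\Tb\beta,p')$ is a SD of $d$ with strictly smaller effective branching rate than $(\beta,p)$, showing that $(\beta,p)$ cannot realise $b_\star(d)$, i.e.\ $(\beta,p)$ is not minimal. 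Taking the contrapositive yields the theorem.

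I do not expect any serious obstacle: the proof reduces to the elementary inequality in Step~1 together with a careful reading of the graph-minimality definition. The only subtle point is the implicit convention that graph-minimality refers to \emph{non-trivial} thinnings (equivalently, $\Tb\beta\neq\beta$); without this convention the definition is void. Under this reading the two-line argument above suffices, and no use of the polytope structure of $\cS_\lambda$ or of Algorithm~\ref{teo:algorithm} is needed.
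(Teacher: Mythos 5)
Your proof is correct, and it follows the same underlying idea as the paper's—namely, that a non‑trivial thinning strictly decreases the effective branching rate—but it reaches the conclusion by a more direct computation. The paper routes through the partial order $\preceq$ defined by the tail sums in~\eqref{eq:ineq-betas}: it invokes (the easy direction of) Proposition~\ref{prop:order-beta-order-graph} to get $\Tb\beta\prec\beta$ for a non‑trivial thinning, and then sums the tail‑sum inequalities to obtain $b(\Tb\beta)<b(\beta)$. You instead compute $b(\Tb\beta)$ directly by interchanging the order of summation and bounding the inner sum $\sum_{\ell=2}^{k}(\ell-1)\Tb_{k,\ell}$, with the exact characterisation of the equality case. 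This is more self‑contained, avoids introducing the $\preceq$ order and the branching‑coalescing reformulation of Proposition~\ref{prop:order-beta-order-graph}, and makes transparent that the correct notion of ``non‑trivial'' is $\Tb\beta\neq\beta$ rather than merely $\Tb\neq I$ (the paper's Definition~\ref{def:graphminimal} is slightly loose on this point, as you correctly note—a thinning that alters only rows $k$ with $\beta_k=0$ leaves the branching‑coalescing system unchanged). Your remark that ``$(\Tb\beta)_\ell$ vanishes elsewhere'' when $\Tb_{k,k}=1$ on the support of $\beta$ could be phrased more carefully (for $\ell$ outside the support one needs to observe that each nonzero contribution $\beta_k\Tb_{k,\ell}$ with $k>\ell$ dies because $\Tb_{k,k}=1$ forces $\Tb_{k,\ell}=0$), but the claim is correct. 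In short: same mechanism, cleaner implementation.
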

For $\deg(d)=3$ we prove that the converse is also true. In particular, this shows that in this case $b$-minimal selection decompositions are the only ones not containing superfluous branches.

\begin{proposition}\label{prop:equivalenceminimality(intro)}
Assume~$\deg(d)=3$. For every $(\beta,p)\in\ccS_d$ with $b(\beta)>b_\star(d)$, there is a thinning mechanism $\Tb$ different from the identity and a colouring rule $p'$ such that $(\Tb\beta,p')\in\ccS_d$ is $b$-minimal.
\end{proposition}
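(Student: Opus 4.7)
The plan is to reduce the problem to a two-dimensional polyhedral analysis, exploiting that for $m=3$ the branching vector $\beta \in \R^{m-1}$ is only two-dimensional. First I pass to explicit coordinates: using the degree-elevation identity $b_{1,2}(x) = \tfrac{2}{3} b_{1,3}(x) + \tfrac{2}{3} b_{2,3}(x)$ and setting $\alpha := \beta_2(p_{1,2} - \tfrac{1}{2})$, $\gamma_1 := \beta_3(p_{1,3} - \tfrac{1}{3})$, $\gamma_2 := \beta_3(p_{2,3} - \tfrac{2}{3})$, the identity $\Bs(\beta,p) = \rho(d) =: (v_1,v_2)$ is equivalent to $v_1 = \tfrac{2}{3}\alpha + \gamma_1$ and $v_2 = \tfrac{2}{3}\alpha + \gamma_2$ with $\alpha \in [-\tfrac{\beta_2}{2}, \tfrac{\beta_2}{2}]$, $\gamma_1 \in [-\tfrac{\beta_3}{3}, \tfrac{2\beta_3}{3}]$, $\gamma_2 \in [-\tfrac{2\beta_3}{3}, \tfrac{\beta_3}{3}]$. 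Eliminating $(\alpha,\gamma_1,\gamma_2)$ yields that
\[
R(d) := \{\beta \in \R_+^2 : \exists\, p \in \Ps_3,\ (\beta, p) \in \Es_d\}
\]
is the convex polyhedron $\{\beta_3 \geq \beta_3^{\mathrm{LB}},\ \beta_2 + \beta_3 \geq \delta_1,\ \beta_2 + 2\beta_3 \geq \delta_2\} \cap \R_+^2$, where $\beta_3^{\mathrm{LB}}, \delta_1, \delta_2 \geq 0$ are explicit constants depending only on $(v_1, v_2)$; by Proposition~\ref{exminimal}, $b_\star(d) = \min_{\beta' \in R(d)} b(\beta')$.

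Next I translate the thinning operation into the same coordinates. From Definition~\ref{def:thinning} for $m = 3$ one finds $\tilde\beta_3 = \beta_3 \Tb_{3,3}$ and $\tilde\beta_2 = \beta_2 \Tb_{2,2} + \beta_3 \Tb_{3,2}$, so the set of thinning-reachable profiles is the trapezoid
\[
T(\beta) := \{\tilde\beta \in \R_+^2 : \tilde\beta_3 \leq \beta_3,\ \tilde\beta_2 + \tilde\beta_3 \leq \beta_2 + \beta_3\},
\]
and every point of $T(\beta)$ is realised by an explicit choice of $\Tb$. The proposition then follows once I exhibit some $\beta^\star \in T(\beta) \cap R(d)$ with $b(\beta^\star) = b_\star$: the membership $\beta^\star \in R(d)$ supplies a colouring rule $p'$ with $(\beta^\star, p') \in \Es_d$, while $\beta^\star \in T(\beta)$ supplies a thinning $\Tb$ with $\Tb\beta = \beta^\star$, making $(\Tb\beta, p')$ the desired minimal SD.

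The crux is the non-emptiness of $T(\beta) \cap R(d) \cap \{b(\cdot) = b_\star\}$. Parameterising this line by $\beta'_3$ (with $\beta'_2 = b_\star - 2\beta'_3$), the conditions $\beta' \in T(\beta) \cap R(d)$ become $\beta'_3 \in I$ for an explicit interval, and $I \neq \emptyset$ reduces after discarding trivialities to the two structural inequalities
\[
b_\star \geq 2\,\beta_3^{\mathrm{LB}} \qquad \text{and} \qquad b_\star \geq \delta_1 + \beta_3^{\mathrm{LB}},
\]
each of which I obtain by evaluating the decomposition $b_\star = (\beta^{\star}_2 + \beta^{\star}_3) + \beta^{\star}_3$ at any minimiser $\beta^\star \in R(d)$ and invoking the defining constraints $\beta^{\star}_3 \geq \beta_3^{\mathrm{LB}}$ and $\beta^{\star}_2 + \beta^{\star}_3 \geq \delta_1$. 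The complementary inequalities in the definition of $I$ follow immediately from $\beta \in R(d)$ together with $\beta_2 \geq 0$ and $b(\beta) \geq b_\star$.

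The main obstacle is the bookkeeping of this interval $I$ and the verification that the trapezoid $T(\beta)$ always meets the minimising line of $R(d)$; the pivotal ingredient is the decomposition $b_\star = (\beta^{\star}_2 + \beta^{\star}_3) + \beta^{\star}_3$, which ties the rank-one constraint $\beta_3 \geq \beta_3^{\mathrm{LB}}$ to the rank-two constraint $\beta_2 + \beta_3 \geq \delta_1$. This identity is special to $m=3$: for $\deg(d)\geq 4$ the polyhedron $R(d)$ acquires more faces and no such clean link between the constraints is available, which is why the proposition is restricted to the cubic case.
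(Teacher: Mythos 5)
Your argument is correct, but it follows a genuinely different route from the paper's. The paper deduces the statement from Proposition~\ref{prop:equivalenceminimality1} together with the scaling Proposition~\ref{prop:scaling-polu}, and that proof lives inside the $\lambda$-convex-decomposition framework of Section~\ref{sect:minimal-models2}: it lifts $(\beta,p)$ to a triple $(\lambda,\vec v,\alpha)\in\Cs_d$ via the bijection $\varphi$ of Proposition~\ref{lem:varphiBijection}, then improves it using the shrinking operation of Proposition~\ref{prop:shrinkinglamd} (replace $\lambda$ by $\lambda^\star(\vec v)$) and the shifting operation of Proposition~\ref{prop:shiftingVec} (translate $v_2,v_3$ along $\Ds_2$), each of which decreases $\beta$ in the partial order $\preceq$, and closes with a case analysis on whether $v_3$ is a corner of $\Ss_{\lambda^\star}^3$. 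You instead stay entirely in $\beta$-coordinates: a Fourier--Motzkin elimination of the colouring parameters $(\alpha,\gamma_1,\gamma_2)$ exhibits the feasibility set $R(d)$ as a polyhedron with the three constraint types $\beta_3\geq\beta_3^{\mathrm{LB}}$, $\beta_2+\beta_3\geq\delta_1$, $\beta_2+2\beta_3\geq\delta_2$; you recognise the thinning-reachable set $T(\beta)$ as the down-set $\{\tilde\beta\preceq\beta\}$ (which is exactly the content of Proposition~\ref{prop:order-beta-order-graph} in the case $m=3$); and the claim reduces to nonemptiness of a scalar interval, settled by six pairwise inequalities. Those checks are all sound: the bounds involving the given $\beta$ use $\beta\in R(d)$, $\beta_2\geq 0$ and $b(\beta)\geq b_\star$, and the two remaining structural bounds $b_\star\geq 2\beta_3^{\mathrm{LB}}$ and $b_\star\geq\delta_1+\beta_3^{\mathrm{LB}}$ follow by evaluating $b_\star=(\beta^\star_2+\beta^\star_3)+\beta^\star_3$ at any minimiser $\beta^\star$, which exists by Proposition~\ref{exminimal} and the closedness of $R(d)$. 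Your route is more elementary and self-contained, bypassing the $\Ss_\lambda$-polytope machinery; the paper's route is heavier but additionally yields the face structure of $\Ss_1$ exploited in Section~\ref{sec:minimalSDm3}. You also correctly identify why neither argument carries over to $m\geq 4$: the usable face types of $R(d)$, of the form $\{\beta_3\geq\cdot\}$ and $\{\beta_2+\beta_3\geq\cdot\}$, coincide exactly with the two inequalities defining the thinning order $\preceq$ only in dimension $m-1=2$.
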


\subsection{Open questions}
We list several open questions that stem from our work.
\begin{enumerate}
\item The present article only deals with a polynomial drift $d$ vanishing at the boundary. Our approach should easily extend to any continuous function $d$ of the form 
\[d(x) = \sum_{\ell=2}^\infty \beta_{\ell} \sum_{i=0}^\ell \left(p_{i,\ell} - \frac{i}{\ell}\right) b_{i,\ell}(x).\]
Can one give a simple characterisation of the set of functions that admit such a decomposition?
\item Is it true that, if the leaf process associated to a $b$-minimal selection decomposition is transient, then $0$ or $1$ are not accessible? This was answered positively by~\cite{gonzalezcasanova2018} for drifts of the form $-x(1-x)\sum_{i=0}^\infty s_i x^i$ for a non-negative decreasing sequence $(s_i)_{i\in\Nb_0}$. The general case seems more involved.
\item We showed that if the leaf process c.d.i.~then 
the fixation time has finite expected value. Is the converse true?
\item Does the equivalence between $b$-minimality and graph-minimality hold for polynomials $d$ with $\deg(d)>3$?

\item The true genealogy (which is random) is a metric tree embedded in the ASG. A consequence of Theorem~\ref{thm:infiniteed} is that infinitely many ASGs fit to a given drift. However, is the distribution of the embedded trees the same?

\item Mutation-selection models usually have a drift term that does not vanish at the boundary. We believe that the Bernstein duality extends to this setting by appropriately adapting the transitions of the Bernstein coefficient processes to capture the effect of mutations. 
\item \citet{barbour2000} use a moment duality to derive a transition function expansion for a Wright--Fisher model with mutation and selection (see also~\citep{EGT10}). Can one use the Bernstein duality approach to generalise such a transition function expansion to general selection models?
\item Duality methods have been successfully used in the study of spatial models with selection (see, e.g., \citep{etheridge2017, EFPS17}). It would be interesting to see if the concepts of selection decomposition and Bernstein duality can be extended to treat more general selection mechanisms in this setting.

\item Recently, \citet{CasanovaSmadi} have answered question (Q1') in a multidimensional setting (i.e with more than two types) and with mutations. They design a fixed-size Wright--Fisher population model whose asymptotic type frequencies converge to a multi-dimensional version of~\eqref{eq:SDEoriginal02}. In this framework they study fixation and extinction properties in some classical ecological models such as the rock-paper-scissor and food-web models. Given the intriguing biological applications presented in~\citep{CasanovaSmadi}, it would be interesting to investigate the extension of our duality result with regard to (Q2--Q4) in higher dimensions.
\end{enumerate}

%%%%%%%%%%%%%%%%%%%%%%%%%%%%%%%%%%%%%%%%%%%%%%%%%%%%%%%%%%%%%%%%%%%%%%%%%%%%%%%%%%%%%%%%%%%%%%%%%%%%%%%%%%%%%%%%%%%%%%%%%%%%%%%%%%%%%%%%%%%%%%%%%%%%%%%%%%%%%
%%%%%%%%%%%%%%%%%%%%%%%%%%%%%%%%%%%%%%%%%%%%%%%%%%%%%%%%%%%%%%%%Section3%%%%%%%%%%%%%%%%%%%%%%%%%%%%%%%%%%%%%%%%%%%%%%%%%%%%%%%%%%%%%%%%%%%%%%%%%%%%%%%%%%%%%
%%%%%%%%%%%%%%%%%%%%%%%%%%%%%%%%%%%%%%%%%%%%%%%%%%%%%%%%%%%%%%%%%%%%%%%%%%%%%%%%%%%%%%%%%%%%%%%%%%%%%%%%%%%%%%%%%%%%%%%%%%%%%%%%%%%%%%%%%%%%%%%%%%%%%%%%%%%%%

\section{From selection decompositions to selection mechanisms}\label{s3}
%%%%%%%%%%%%%%%%%%%%%%%%%%%%%%%%%%%%%%%%%%%%%%%%%%%%%%%%%%%%%%%%Subsection3.1%%%%%%%%%%%%%%%%%%%%%%%%%%%%%%%%%
\subsection{Existence and non-uniqueness of selection decompositions}\label{sect:mBernDeco}
In this section we prove Theorem~\ref{thm:infiniteed}, which states that every polynomial $d$ such that $d(0)=d(1)=0$ admits infinitely many selection decompositions (see Def.~\ref{def:SelectionDecomposition}). We also prove Proposition~\ref{exminimal} establishing the existence of selection decompositions that minimise the effective branching rate. The following proposition will be useful for the proofs of these results. Recall from Definition~\ref{def:decomp} that $\cS_\lambda$ is the set of $\lambda$-decomposable vectors, i.e.  
$\cS_\lambda =\{\rho(d_{\beta,p}):(\beta,p)\in \R_+^{m-1}\times \Ps_m,\,b(\beta) = \lambda \}$, where $d_{\beta,p}$ is given in~\eqref{eq:driftbetap}.
\begin{proposition}[Scaling property]\label{prop:s-polytope}
	For any $\lambda>0$ the set $\cS_\lambda$ is a polytope with the property that $\cS_\lambda = \lambda \cS_1$. In particular, $\Ss_\lambda\subseteq \Ss_{\lambda'}$ for $\lambda'\geq \lambda$. 
\end{proposition}

\begin{proof}
	The sets $G_\lambda\coloneqq \{\beta \in \R_+^{m-1}: b(\beta)=\lambda\}$ and $\Ps_m$ are polytopes. Since the Cartesian product of polytopes is a polytope, $G_\lambda \times \Ps_m$ is a polytope.
	For every $p\in\Ps_m$, the map $\beta\mapsto \rho(d_{\beta,p})$ is linear, and for every $\beta\in\Rb_+^{m-1}$, the map $p\mapsto\rho(d_{\beta,p})$ is affine. It follows that $\cS_\lambda$ is a polytope. The property $\cS_\lambda = \lambda \cS_1$ is a
	consequence of the fact that $G_\lambda = \lambda G_1$. It follows that $\Ss_{\lambda}=\lambda\Ss_1 \subseteq \lambda'\Ss_1=\Ss_{\lambda'}$ for $\lambda'\geq\lambda$. 
\end{proof}

Proposition~\ref{prop:s-polytope} provides a geometric framework to prove Theorem \ref{thm:infiniteed} and Proposition \ref{exminimal}.

\begin{proof}[Proof of Theorem \ref{thm:infiniteed}]
	Fix an arbitrary colouring rule $p\in\Ps_m$. For every $w\in \{0,1\}^{m-1}$, define the colouring rule $p^w$ by replacing $(p_{i,m})_{i=1}^{m-1}$ with $w$; keep the other entries of $p$ unchanged. Furthermore, set $a\coloneqq(a_\ell)_{\ell=2}^m \in\Rb^{m-1}$ with $a_m\coloneqq1/(m-1)$ and $a_\ell\coloneqq0$ for $\ell\neq m$. A straightforward calculation yields 
	$$\rho(d_{a,p^w})=\frac{w}{m-1}-\frac{c}{m(m-1)},$$
	where $c\coloneqq(i)_{i=1}^{m-1}$. Since $b(a)=1$, $\rho(d_{a,p^w})\in \Ss_1$ for all $w\in \{0,1\}^{m-1}$. $\Ss_1$ is a polytope and therefore 
	\[ \ \conv{\left\{0,\frac{1}{m-1}\right\}^{m-1}}-\frac{c}{m(m-1)}\subset\Ss_1,\]
	where $\conv{K}$ denotes the convex hull of~$K$. 
	Hence, $\cS_1$ contains an open neighbourhood of the origin of $\R^{m-1}$. Together with Proposition~\ref{prop:s-polytope} it follows that
	\begin{equation}\label{exSD}
\R^{m-1}\ = \ \cup_{\lambda>0} \cS_\lambda.
	\end{equation}
	A polynomial $d$ with~$\deg(d)=m$ admits a selection decomposition with effective branching rate $\lambda$ if and only if $\rho(d)\in \Ss_\lambda$. Identity~\eqref{exSD} implies that $\rho(d)\in \Ss_{\lambda_0}$ for some~$\lambda_0$. The scaling relation implies that $\rho(d)\in \Ss_\lambda$ for all $\lambda\geq \lambda_0$. In particular, $\ccS_d$ is infinite.
\end{proof}

\begin{proof}[Proof of Proposition~\ref{exminimal}]
Let $d$ be a polynomial with $d(0)=d(1)=0$ with $\deg(d)\leq m$. If $d=0$, $(0,p)\in\ccS_d$ for any $p\in\Ps_m$. Since $b(0)=0$, $(0,p)$ is $b$-minimal. Moreover, $\rho(0)=0\in\lambda\Ss_1$ for all $\lambda>0$,  and the result follows in this case. Assume now that $d\neq 0$. A consequence of~\eqref{exSD} and Proposition~\ref{prop:s-polytope} is that
	\begin{equation}\label{exmSD}
 \R^{m-1}\setminus\{0\}\ = \ \cup_{\lambda>0} \partial\cS_\lambda.
	\end{equation}
	Thus, there is $\lambda_\star(d)$ such that $\rho(d)\in\partial\Ss_{\lambda_\star(d)}$. It follows that there is $(\beta,p)\in \ccS_d$ with $b(\beta)=\lambda_\star(d)$. Moreover, by construction $\lambda_\star(d)=b_\star(d)$. The first part of the statement follows. By the definition of $\Ss_\lambda$ and $b_\star(d)$, we have $b_\star(d)=\inf\{\lambda>0: \rho(d)\in \Ss_\lambda\}.$ The scaling property $\Ss_\lambda=\lambda \Ss_1$ (see Prop.~\ref{prop:s-polytope}) yields \eqref{eq:minimal-branching}. 
\end{proof}

%%%%%%%%%%%%%%%%%%%%%%%%%%%%%%%%%%%%%%%%%%%%%%%%%%%%%%%%%%%%%%%%Subsection3.2%%%%%%%%%%%%%%%%%%%%%%%%%%%%%%%%%
\subsection{Existence, uniqueness, and convergence}\label{sect:EUS}
In this section we first show that the SDE~\eqref{eq:SDEoriginal02} is well-posed for every polynomial $d$ vanishing at the boundary. We then prove Theorem~\ref{teo:convergence}, which provides conditions for a sequence of Moran models to converge to the solution of the SDE~\eqref{eq:SDEoriginalBer}. Finally, we prove Corollary~\ref{Natsequence}, which provides an explicit sequence of Moran models converging to the solution of the SDE~\eqref{eq:SDEoriginalBer}.

\begin{lemma}[Existence and uniqueness]\label{wpSDE} Let $d:\Rb\to\Rb$ be a polynomial with $d(0)=d(1)=0$ and let $\Lambda$ be a finite measure on~$[0,1]$. Let $W$ be a standard Brownian motion and let $\tilde{N}$ be an independent compensated Poisson measure on $[0,\infty)\times(0,1]\times[0,1]$ with intensity $\dd t\times r^{-2}\Lambda(\dd r)\times \dd u$. Then for any $x_0\in[0,1]$, there is a pathwise unique strong solution $X\coloneqq(X_t:\,t\geq 0)$ to the SDE \eqref{eq:SDEoriginal02} such that $X_0=x_0$ and $X_t\in[0,1]$ for all $t\geq 0$.
\end{lemma}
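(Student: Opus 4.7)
The plan is to fit the SDE~\eqref{eq:SDEoriginal02} into the framework of one-dimensional jump-type SDEs with H\"older-$1/2$ coefficients, for which strong existence and pathwise uniqueness follow from a jump-adapted Yamada--Watanabe argument (see, e.g., the treatment of the neutral $\Lambda$-Wright--Fisher case in \citep{BLG03}). Since a solution starting in $[0,1]$ should stay there, I first extend the coefficients outside of $[0,1]$ in a way that preserves their regularity and does not affect solutions confined to $[0,1]$: the drift $b(x):=x(1-x)s(x)$ is polynomial and Lipschitz on $[0,1]$, and I extend it by $0$ outside; the diffusion coefficient $\sigma(x):=\sqrt{\Lambda(\{0\})\,(x(1-x))^+}$ is H\"older-$1/2$ on $\Rb$; and the jump kernel $g(x,r,u):=r(\1_{\{u\leq x\}}(1-x)-\1_{\{u>x\}}x)$ is already globally defined and bounded by $r$.

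\textbf{Key estimates.} Next, I verify the three Yamada--Watanabe-type bounds. First, $|b(x)-b(y)|\leq C|x-y|$ on $[0,1]$, which is immediate since $b$ is a polynomial. Second, $|\sigma(x)-\sigma(y)|^2\leq \Lambda(\{0\})|x-y|$ on $[0,1]$. Third, the crucial $L^2$-estimate for the jump coefficient,
\[ \int_{(0,1]\times[0,1]} |g(x,r,u)-g(y,r,u)|^2\, \frac{\Lambda(dr)}{r^2}\, du \ \leq\ C|x-y|,\quad x,y\in[0,1].\]
This last estimate follows from a direct case distinction for $u$ with respect to $x\wedge y$ and $x\vee y$: for $u$ outside $(x\wedge y,x\vee y]$ one has $|g(x,r,u)-g(y,r,u)|=r|x-y|$, while for $u\in(x\wedge y,x\vee y]$ the integrand is bounded by $r^2$ over an interval of length $|x-y|$; summing gives $2r^2|x-y|$, and integration against $r^{-2}\Lambda(dr)$ yields $2\Lambda((0,1])|x-y|$. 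Combined with a Yamada--Watanabe regularisation and Gronwall's inequality (in the spirit of the pathwise uniqueness proof for the neutral $\Lambda$-Wright--Fisher SDE of~\citep{BLG03}), these estimates yield strong existence and pathwise uniqueness of an $\Rb$-valued solution to the extended SDE.

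\textbf{Confinement to $[0,1]$.} It then remains to argue that, starting from $x_0\in[0,1]$, the solution stays in $[0,1]$, so that the extension of the coefficients is harmless. Both $b$ and $\sigma$ vanish at $0$ and $1$. Moreover, if $X_{t-}\in[0,1]$, a positive jump (when $u\leq X_{t-}$) produces $X_{t-}+r(1-X_{t-})=(1-r)X_{t-}+r\in[0,1]$, while a negative jump produces $(1-r)X_{t-}\in[0,1]$; the extended kernel inherits the same property. A stopping-time argument at the exit time of $[0,1]$, combined with the fact that $0$ and $1$ are absorbing for the extended SDE, then yields $X_t\in[0,1]$ for all $t\geq 0$ almost surely, completing the proof.

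\textbf{Main obstacle.} The principal difficulty is pathwise uniqueness, since neither the diffusion nor the jump coefficient is Lipschitz: both are H\"older-$1/2$ at the boundary (the latter in the $L^2$-sense above). The proof therefore hinges on the jump-adapted Yamada--Watanabe technique: applying It\^o's formula to $\phi_n(X_t-Y_t)$ for two solutions $X,Y$, with a smooth convex approximation $\phi_n\uparrow|\cdot|$ tuned so that the quadratic variation contribution of the diffusion term and the compensated jump part both produce error terms vanishing as $n\to\infty$, reduces the problem to a Gronwall inequality driven by the Lipschitz drift.
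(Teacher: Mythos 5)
Your strategy is essentially the paper's: extend the coefficients to $\Rb$, verify a Lipschitz bound on the drift and H\"older-$1/2$-type bounds ($L^2$ in the jump variable) on $\sigma$ and the jump kernel, and conclude by a jump-adapted Yamada--Watanabe argument plus a confinement step. The only structural difference is that the paper delegates the Yamada--Watanabe step to \citep[Thm.~5.1]{li2012strong} (verifying its conditions (3a), (3b), (5a)) and the confinement step to \citep[Prop.~2.1]{FuLi10}, whereas you propose to run the Yamada--Watanabe estimate directly in the spirit of \citep{BLG03}; that route is equally valid, and adding a Lipschitz drift to the neutral argument is indeed harmless. Your $L^2$ estimate for the jump kernel on $[0,1]$ is correct (the paper's sharper computation gives $\int_0^1 |g_0(x,r,u)-g_0(y,r,u)|^2\,du = r^2|x-y|(1-|x-y|)\leq r^2|x-y|$).

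One slip to fix. You keep $g(x,r,u):=r(\1_{\{u\leq x\}}(1-x)-\1_{\{u>x\}}x)$ with the same formula on all of $\Rb$ and assert $|g|\leq r$ and the $L^2$-H\"older bound for $x,y\in[0,1]$. But the Yamada--Watanabe argument for the \emph{extended} SDE needs those bounds for all $x,y\in\Rb$, and with the unextended $g$ they fail: for $x>2$ one has $|g(x,r,u)|=r(x-1)>r$, and for $x,y\gg 1$ the $L^2$ difference scales like $r^2|x-y|^2$, not $r^2|x-y|$. The remedy is the one the paper uses: also set $g_0\equiv 0$ for $x\notin[0,1]$. After this zero-extension the estimate does hold globally; the only new case is $x\in[0,1]$, $y\notin[0,1]$, where $\int_0^1|g_0(x,r,u)|^2\,du=r^2 x(1-x)\leq r^2\min(x,1-x)\leq r^2|x-y|$. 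With that correction your proof is complete and matches the paper's.
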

\begin{proof}
	First set $a(x)\coloneqq d(x)$, $\sigma(x)\coloneqq\sqrt{\Lambda(\{0\})x(1-x)}$ and $g_0(x,r,u)\coloneqq r((1-x)\1_{\{u\leq x\}}-x\1_{\{u>x\}})$ for $x\in[0,1]$, $(r,u)\in (0,1]\times[0,1]$, complemented by $a(x)\coloneqq\sigma(x)\coloneqq g_0(x,r,u)=0$ whenever $x\notin[0,1]$. With these definitions the SDE 
\begin{equation}
\dd X_t\ = \ a(X_t)\dd t\ + \ \sigma(X_t)\, \dd W_t+\int\limits_{(0,1]\times [0,1]}g_0(X_t,r,u)\,\tilde{N}(\dd t, \dd r,\dd u), \quad X_0=x_0\in \R,\label{eq:SDEext}
\end{equation}
extends the SDE~\eqref{eq:SDEoriginal02} to $\R$. Clearly $a$ and $\sigma$ are continuous and $g_0$ is measurable. Moreover, for $x\in[0,1]$ and $(r,u)\in (0,1]\times[0,1]$, we have $-x\leq g_0(x,r,u)\leq 1-x$. It follows from \citep[Prop. 2.1]{FuLi10} that any solution of \eqref{eq:SDEext} starting at a point $x_0\in[0,1]$ remains in $[0,1]$ and therefore is also a solution of~\eqref{eq:SDEoriginal02}. The converse is by construction true. It remains to prove existence and pathwise uniqueness of strong solutions of the SDE \eqref{eq:SDEext}. We do this via \citep[Thm.~5.1]{li2012strong}. We need to verify conditions (3a), (3b), and (5a) of that paper. First note that $a$ is Lipschitz continuous (it is a polynomial in $[0,1]$), and hence condition (3a) is satisfied. Condition~(3b) concerns only $\sigma$ and $g_0$. In 
	\citep[Lem.~3.6]{gonzalezcasanova2018} it was proved that there is a constant $c>0$ such that $\lvert\sigma(x)-\sigma(y)\rvert^2\leq c\lvert x-y\rvert$. In addition, a straightforward calculation shows that
	$$\int_{(0,1]\times[0,1]}\lvert g_0(x,r,u)-g_0(y,r,u)\rvert^2\,\frac{\Lambda(\dd r)}{r^2}\,\dd u\leq \Lambda((0,1])\lvert x-y\rvert.$$
	Hence, condition (3b) is satisfied. It remains to verify condition (5a). Since $a$ and $\sigma$ are bounded, this amounts to prove that $x\mapsto\int_{(0,1]\times[0,1]} g_0(x,r,x)^2r^{-2}\Lambda(\dd r)\dd u$ is bounded. This directly follows from the fact that $g_0(x,r,u)^2\leq 2r^2$. 
\end{proof}
\begin{lemma}[Operator core]\label{core}
	The solution of the SDE~\eqref{eq:SDEoriginal02} is a Feller process with generator $A$ satisfying
	\begin{equation}\label{eq:generatorX}
	Af(x)=A_{\mathrm{s}} f(x)+A_{\mathrm{wf}}f(x)+ A_{\Lambda}f(x),\quad\textrm{for any}\quad f\in\Cs^2([0,1]),\  x\in[0,1],
	\end{equation}
	where
	\begin{align*}
	A_{\mathrm{s}} f(x)&=d(x)f'(x),\qquad A_{\mathrm{wf}}f(x)=\frac{\Lambda(\{0\})}{2}\,x(1-x)\,f''(x),\\
	A_{\Lambda}f(x)&=\int_{(0,1]}x\big[f(x+r(1-x))-f(x)\big]+(1-x)\big[f(x-rx)-f(x)\big]\frac{\Lambda(\dd r)}{r^2}.
	\end{align*}
	Moreover, $\Cs^\infty([0,1])$ is a core for $A$.
\end{lemma}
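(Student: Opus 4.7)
The plan is to prove the three assertions sequentially: the Feller property, the form of the generator on $\Cs^2([0,1])$, and the core property for $\Cs^\infty([0,1])$.

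For the Feller property, since $[0,1]$ is compact, it suffices to show that $P_tf\in \Cs([0,1])$ for all $f\in \Cs([0,1])$ together with $\|P_tf-f\|_\infty\to 0$ as $t\downarrow 0$. Pointwise convergence $P_tf(x)\to f(x)$ follows from right-continuity of $X$ and dominated convergence, and upgrades to uniform convergence by a standard density argument on $\Cs^2([0,1])$ (using that for smooth $f$, $\|P_tf-f\|_\infty\leq t\|Af\|_\infty$ once the generator identity is established). For continuity of $x\mapsto P_tf(x)$, I would couple solutions $X^x$ and $X^y$ on a common stochastic basis $(W,\tilde N)$ and apply a Yamada--Watanabe type estimate using the Lipschitz drift $x\mapsto x(1-x)s(x)$, the $1/2$-H\"older diffusion coefficient (whose square is Lipschitz, as noted in Lemma~\ref{wpSDE}), and the $L^2$-Lipschitz bound on the jump coefficient from Lemma~\ref{wpSDE}. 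This yields $\E|X_t^x-X_t^y|\to 0$ as $y\to x$, following the scheme of \cite{li2012strong}, and dominated convergence then gives continuity of $P_tf$.

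For the identification of the generator, apply It\^o's formula for semimartingales with jumps to $f(X_t)$ with $f\in\Cs^2([0,1])$. The three components of the SDE~\eqref{eq:SDEoriginal02} yield respectively the drift integrand $A_{\mathrm{s}}f$, the Brownian integrand $A_{\mathrm{wf}}f$ (plus a martingale), and the compensated jump integrand (plus a martingale). The crucial observation is that for the jump coefficient $g_0(x,r,u)=r\bigl((1-x)\ind_{\{u\leq x\}}-x\ind_{\{u>x\}}\bigr)$ one has $\int_0^1 g_0(x,r,u)\,du=0$, so the first-order correction coming from compensation cancels; splitting the remaining integral into $\{u\leq x\}$ and $\{u>x\}$ then produces exactly $A_\Lambda f(x)$. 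Boundedness of $f$ on the compact $[0,1]$ promotes the resulting local martingale to a true martingale, establishing $\Cs^2([0,1])\subset\Ds(A)$ and the identity~\eqref{eq:generatorX}.

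For the core property, note that $\Cs^\infty([0,1])\subset\Cs^2([0,1])\subset\Ds(A)$, and $\Cs^\infty([0,1])$ is uniformly dense in $\Cs([0,1])$ by Stone--Weierstrass (the polynomials are smooth). To conclude, I would verify the well-posedness of the martingale problem for $(A,\Cs^\infty([0,1]))$: existence follows from Lemma~\ref{wpSDE}, and uniqueness from its pathwise uniqueness via Yamada--Watanabe (pathwise uniqueness implies uniqueness in law, which in turn implies uniqueness of the martingale problem). A standard abstract result in the theory of Markov processes -- well-posedness of the $\Cs^\infty$-martingale problem together with the Feller property of the induced semigroup implies that $\Cs^\infty([0,1])$ is a core -- then finishes the proof. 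The main technical obstacle is the first step: controlling the flow continuity simultaneously for the square-root diffusion and the jump term requires combining Yamada--Watanabe-type estimates with jump integrability, which is by now well-documented for this class of SDEs (see, e.g., \cite{li2012strong,FuLi10}).
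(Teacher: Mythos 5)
Your generator identification via It\^{o}'s formula (including the observation that $\int_0^1 g_0(x,r,u)\,du=0$, so the compensator contributes no extra drift) and your core argument (martingale-problem well-posedness plus Feller plus density) match the paper's, which handles the former as ``standard theory of SDEs'' and the latter by citing \citep[Thm.~1]{BLP15}, \citep[Thm.~2.16]{Im16}, and \citep[Thm.~2.5]{vanC92}. Where you genuinely depart is the Feller property: the paper obtains it essentially for free from well-posedness of the $\Cs^\infty$-martingale problem via \citep[Prop.~2.2]{vanC92}, without ever touching the flow; you instead propose a direct coupling argument showing $\E|X_t^x-X_t^y|\to 0$.

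The coupling route is legitimate but has a gap as written. The Yamada--Watanabe scheme of \citep{li2012strong} establishes pathwise uniqueness for coinciding initial data; it does not immediately yield a flow estimate in $|x-y|$, the jump-compensator second-difference term being the delicate part and the coefficients being only $1/2$-H\"older. The cleanest repair here exploits the specific structure of $g_0$: for $x>y$, couple $X^x$ and $X^y$ on the same $(W,\tilde N)$, check that the jump map $z\mapsto z+g_0(z,r,u)$ is order-preserving on $[0,1]$ (three easy cases: $u\leq y$, $y<u\leq x$, $u>x$), and invoke a one-dimensional comparison theorem for jump SDEs to conclude $X^x_t\geq X^y_t$ a.s. Then $\E|X^x_t-X^y_t|=\E(X^x_t-X^y_t)$, the bounded stochastic integrals have zero mean, and Gronwall with the Lipschitz drift gives $\E(X^x_t-X^y_t)\leq(x-y)e^{Lt}$, which is what you need. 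Either spell out this comparison, or fall back to the paper's shorter abstract route. One cosmetic issue: your announced order (Feller, then generator, then core) is circular, since the Feller step uses $\|P_tf-f\|_\infty\leq t\|Af\|_\infty$; establish the Dynkin martingale identity first.
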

\begin{proof}
	Let $X$ be the unique strong solution of the SDE~\eqref{eq:SDEoriginal02}. It follows from standard theory of SDEs that~$X$ is a strong Markov process with generator $A$ satisfying \eqref{eq:generatorX}. Since pathwise uniqueness implies weak uniqueness (see \citep[Thm.~1]{BLP15}), we infer from \citep[Cor.~2.16]{Ku11} that the martingale problem associated to $A$ in $\Cs^{2}([0,1])$ is well-posed. Moreover, an inspection of the proof shows that this is also true in $\Cs^{\infty}([0,1])$. Using \cite[Prop. 2.2]{vanC92}, we infer that $X$ is Feller. The fact that $\Cs^\infty([0,1])$ is a core follows then from \cite[Thm.~2.5]{vanC92}.
\end{proof}

We now prove the convergence of the Moran-type models to the solution of the SDE~\eqref{eq:SDEoriginalBer}.
\begin{proof}[Proof of Theorem \ref{teo:convergence}]
	Let $X$ be the solution of \eqref{eq:SDEoriginalBer} with $X_0=x_0\in[0,1]$. Denote by $A$ and $A^{N}$ the generator of $X$ and $\tilde{X}^{(N)}\coloneqq(X^{(N)}_{Nt}:\,t\geq 0)$, respectively. 
	The main ingredient of the proof is to show that for every $f\in\Cs^\infty([0,1])$
	\begin{equation}
	\sup_{x\in\square_N}\lvert A^{N} f\lvert_{\square_N}(x) - Af(x)\rvert \xrightarrow[N\to\infty]{} 0, \label{eq:convgenerators}
	\end{equation}
	where $\square_N\coloneqq\{k/N:k\in[N]_0\}$ and $f\lvert_{\square_N}$ denotes the restriction of $f$ to $\square_N$. Let us first assume that this is true. Since $\Cs^{\infty}([0,1])$ is a core for $A$ (see Lemma \ref{core}), we can apply \citep[Thm.~1.6.1 and Thm.~4.2.11]{Ku86} to deduce the convergence in distribution of $\tilde{X}^{(N)}$ to $X$. It remains to prove \eqref{eq:convgenerators}. 
	
	Let $K^{n}_{m,k}\sim\hypdist{n}{m}{k}$ for $n\geq m\vee k$, $i\leq k\wedge m$. Define the discrete differential operators 
	$$D_h f(x)\coloneqq\frac{f(x+h)-f(x)}{h}\quad\textrm{and}\quad D_h^{(2)}f(x)\coloneqq\frac{f(x+h)+f(x-h)-2f(x)}{h^2}.$$ Then the generator~$A^N$ takes the form 
	\begin{equation}
	A^Nf(x)=\sum_{\ell=2}^m A^{N}_{\text{s},\ell}f(x)+A^N_{\mu_N}f(x)+A^N_{0}(x),\qquad f:\square_N\to\R,\, x\in\square_N,\label{eq:generatorMM}
	\end{equation} 
	where 
	\begin{align*}
	A^N_{0}f(x)&= \frac{\mu_N(\{0\})}{2}\,\frac{N}{N-1} x(1-x)\, D_{\frac{1}{N}}^{(2)}f(x),\\
	A^{N}_{\text{s},\ell}f(x) &=N \beta_\ell^{(N)} \left\{(1-x)\,\E\left[p_{K^{N-1}_{Nx,\ell-1},\ell}\right]D_{\frac{1}{N}}f(x)-x\,\E\left[1-p_{K^{N-1}_{Nx-1,\ell-1}+1,\ell}\right]D_{-\frac{1}{N}}f(x)\right\},\\
	A^N_{\mu_N}f(x)&=N^2\sum_{r=1}^{N-1} \! \left\{x\E\left[f\!\left(x+\frac{r-K^{N-1}_{Nx-1,r}}{N}\right)\!-f(x)\right]\!+(1-x)\E\left[f\!\left(x-\frac{K^{N-1}_{Nx-1,r}}{N}\right)\!-f(x)\right]\right\}\mu_N(r),
	\end{align*}
	for $x\in \square_N\setminus\{0,1\}$, and the three operators vanish at $0$ and $1$.
	Similarly, consider the generator $A$ from~\eqref{eq:generatorX} with 
	$d(x)=d_{\beta,p}(x)$ given by \eqref{eq:driftbetap}. In particular, the selective term admits a decomposition as $A_{\mathrm{s}}=\sum_{\ell=2}^m A_{\mathrm{s},\ell}$, where
	$A_{\mathrm{s},\ell} f(x)\coloneqq\beta_\ell(\E[p_{B_{\ell,x},\ell}]-x)f'(x)$ and $B_{\ell,x}\sim\bindist{\ell}{x}.$
	We first deal with the term associated to small neutral reproduction. Appropriate Taylor expansions and the triangular inequality yield for $f\in\Cs^{\infty}([0,1])$
	\begin{equation}
	\sup_{x\in\square_N}\lvert A^{N}_{0} f\lvert_{\square_N}(x) - A_{\mathrm{wf}}f(x)\rvert \leq \frac{N}{N-1}\mu_N(\{0\})\frac{\lVert f'''\rVert_{\infty}}{6N}+\left\lvert \frac{N}{N-1}\mu_N(\{0\})-\Lambda(\{0\})\right\rvert\frac{\lVert f''\rVert_{\infty}}{2}\xrightarrow[N\to\infty]{}0. \label{wfterm}
	\end{equation}
	For the term associated to $\ell$-replacements, first note that 
	$$(1-x)\E\left[p_{K^{N-1}_{Nx,\ell-1},\ell}\right]+x\E\left[p_{K^{N-1}_{Nx-1,\ell-1}+1,\ell}\right]=\E\left[p_{K^{N}_{Nx,\ell},\ell}\right].$$
	Consequently, appropriate Taylor expansions and the triangular inequality lead to
	\begin{equation*}
	\sup_{x\in\square_N}\lvert A^{N}_{\text{s},\ell} f\lvert_{\square_N}(x) - A_{\mathrm{s},\ell}f(x)\rvert \leq \lvert N\beta_\ell^{(N)}-\beta_\ell\rvert \lVert f'\rVert_{\infty}+
	\beta_\ell\left(\frac{\lVert f''\rVert_{\infty}}{2N}+\lVert f'\rVert_{\infty}R_N^\ell\right),
	\end{equation*}
	where 
	$$R_N^\ell\coloneqq\sup_{x\in\square_N}\left\lvert \E\left[p_{K^{N}_{Nx,\ell},\ell}\right]-\E\left[p_{B_{\ell,x},\ell}\right]\right\rvert\leq \binom{\ell}{\lfloor\frac{\ell}{2}\rfloor}\frac{2\ell^3}{N-\ell}.$$
	The previous inequality  follows from Lemma \ref{hyptobin}. We conclude that
	\begin{equation}\label{ellterm}
	\lim_{N\to\infty }\sup_{x\in\square_N}\lvert A^{N}_{\text{s},\ell} f\lvert_{\square_N}(x) - A_{\mathrm{s},\ell}f(x)\rvert =0.
	\end{equation}
	For the last term, note that
	$$\lvert A^{N}_{\mu_N} f\lvert_{\square_N}(x) - A_{\Lambda}f(x)\rvert \leq \varepsilon_{N,1}(x)+\varepsilon_{N,2}(x)+\varepsilon_{N,3}(x),$$
	where 
	\begin{align*}
	\varepsilon_{N,1}(x)&\coloneqq\left\lvert N^2\sum_{r=1}^{N-1} \mu_N(r) x\E\left[f\left(x+\frac{r-K^{N-1}_{Nx-1,r}}{N}\right)-f\left(x+\frac{r(1-x)}{N}\right)\right]\right\rvert,\\
	\varepsilon_{N,2}(x)&\coloneqq\left\lvert N^2\sum_{r=1}^{N-1} \mu_N(r) (1-x)\E\left[f\left(x-\frac{K^{N-1}_{Nx-1,r}}{N}\right)-f\left(x-\frac{rx}{N}\right)\right]\right\rvert,\\
	\varepsilon_{N,3}(x)&\coloneqq\left\lvert N^2\sum_{r=1}^{N-1} \mu_N(r)\left\{ x f\left(x+\frac{r(1-x)}{N}\right)+(1-x)f\left(x-\frac{rx}{N}\right)-f(x)\right\}-A_\Lambda f(x)\right\rvert.
	\end{align*}
	Second order Taylor expansions with integral remainder around $x+r(1-x)/N$ and $x-rx/N$, and standard properties of the hypergeometric distribution lead to
	$$\sup_{x\in\square_N}\big(\varepsilon_{N,1}(x)+\varepsilon_{N,2}(x)\big)\leq\left(\frac{N}{N-1}\lVert f'\rVert_{\infty}+ \lVert f''\rVert_{\infty}\right)\sum_{r=1}^{N-1} \mu_N(r)r.$$
	Moreover, for any $\gamma\in(0,1)$, we have
	$$\sum_{r=1}^{N-1} \mu_N(r)r=\sum_{1\leq r\leq \gamma N} \mu_N(r)r+\sum_{N\gamma<r\leq N } \mu_N(r)r\leq M_{\mu_N}\left(T^N{\mu_N}([0,\gamma])-\frac{\mu_N(\{0\})}{M_{\mu_N}}+\frac{1}{\gamma N}T^N\mu_N([\gamma,1])\right).$$
	Hence, the Portmanteau theorem yields 
	$\limsup_{N\to\infty}\sum_{r=1}^{N-1} \mu_N(r)r\leq \Lambda((0,\gamma]).$
	Since this holds for any $\gamma\in(0,1)$, we conclude that the previous $\limsup$ is $0$. Hence,
	\begin{equation}\label{eq12}
	\lim_{N\to\infty }\sup_{x\in\square_N}\big(\varepsilon_{N,1}(x)+\varepsilon_{N,2}(x)\big)=0.
	\end{equation}
	Now we set $f_x(r)\coloneqq r^{-2}(x[f(x+r(1-x))-f(x)]+(1-x)[f(x-rx)-f(x)])$. Note that $\lvert f_x(r)\rvert \leq \lVert f''\rVert_{\infty}/2$. Using this and the definition of $T^N\mu_N$, we get
	\begin{equation}\label{en31}
	\varepsilon_{N,3}(x)\leq \lvert M_{\mu_N}-\Lambda([0,1])\rvert \frac{\lVert f''\rVert_{\infty}}{2}+\Lambda([0,1])\left\lvert \int_{(0,1]} f_x(r)T^N\mu_N(\dd r)-\int_{(0,1]}f_x(r)\frac{\Lambda(\dd r)}{\Lambda([0,1])}\right\rvert.
	\end{equation}
	For any $\gamma\in(0,1)$, we have
	\begin{equation}\label{en32}
	\left\lvert \int_{(0,\gamma]} f_x(r)T^N\mu_N(\dd r)-\int_{(0,\gamma]}f_x(r)\frac{\Lambda(\dd r)}{\Lambda([0,1])}\right\rvert\leq \frac{\lVert f''\rVert_{\infty}}{2}\left(T^N\mu_N((0,\gamma])+\frac{\Lambda((0,\gamma])}{\Lambda([0,1])}\right),
	\end{equation}
	and
	\begin{align}\label{en33}
	\left\lvert \int_{ [\gamma,1]} f_x(r)T^N\mu_N(\dd r)-\int_{[\gamma,1]}f_x(r)\frac{\Lambda(\dd r)}{\Lambda([0,1])}\right\rvert&\leq \left\lvert \int_{ [0,1]} \tilde{f}_x(r)T^N\mu_N(\dd r)-\int_{[0,1]}\tilde{f}_x(r)\frac{\Lambda(\dd r)}{\Lambda([0,1])}\right\rvert\nonumber\\
	&\qquad\qquad+ \frac{\lVert f''\rVert_{\infty}}{2}\left\lvert T^N\mu_N([0,\gamma])-\frac{\Lambda([0,\gamma])}{\Lambda([0,1])}\right\rvert,
	\end{align}
	where $\tilde{f}_x$ is the continuous function that coincides with $f_x$ on $[\gamma,1]$, and is constant in $[0,\gamma]$. Note that $\tilde{f}_x$ is Lipschitz continuous and bounded. Moreover, 
	$\lVert \tilde{f}_x \rVert_{\text{Lip}}\leq 4\lVert f \rVert_{\infty}/\gamma^3+2\lVert f'\rVert_{\infty}/\gamma^2\coloneqq C_\gamma(f),$
	where 
	$\lVert f\rVert_{\text{Lip}}\coloneqq\lVert f\rVert_\infty \vee \sup_{x\neq y} \lvert f(x)-f(y)\rvert /\lvert x-y\rvert $ denotes the bounded Lipschitz norm of $f$.
	Thus,
	\begin{equation}\label{en34}
	\left\lvert \int_{ [0,1]} \tilde{f}_x(r)T^N\mu_N(\dd r)-\int_{[0,1]}\tilde{f}_x(r)\frac{\Lambda(\dd r)}{\Lambda([0,1])}\right\rvert\leq C_\gamma(f) d_{\text{Lip}}\left(T^N\mu_N,\frac{\Lambda}{\Lambda([0,1])}\right),
	\end{equation}
	where $d_{\text{Lip}}(\nu_1,\nu_2)\coloneqq\sup\{\lvert\int f d\nu_1-\int f d\nu_2\rvert: \lVert f\rVert_{\text{Lip}}\leq 1\}$ denotes the bounded Lipschitz metric in the space of probability measures on $[0,1]$. Assume that $\gamma$ is a continuity point of $\Lambda$. Combining \eqref{en31}, \eqref{en32}, \eqref{en33}, \eqref{en34}, and letting $N\to\infty$, we deduce
	\begin{equation}
	\limsup_{N\to\infty }\sup_{x\in\square_N}\varepsilon_{N,3}(x)\leq \lVert f''\rVert_{\infty}\frac{\Lambda((0,\gamma])}{\Lambda([0,1])}.
	\end{equation}
	This holds for any continuity point $\gamma\in(0,1)$. Hence, the previous limit exists and equals $0$. Together with \eqref{eq12}, this implies that
	\begin{equation}\label{Lterm}
	\lim_{N\to\infty }\sup_{x\in\square_N}\lvert A^{N}_{\mu_N} f\lvert_{\square_N}(x) - A_{\Lambda}f(x)\rvert=0.
	\end{equation}
	This ends the proof.
\end{proof}
Finally, we prove Corollary~\ref{Natsequence}, which provides for given~$(\beta,p,\Lambda)$ an explicit sequence Moran models converging to the solution of~\eqref{eq:SDEoriginalBer}.

\begin{proof}[Proof of Corollary~\ref{Natsequence}]
	It is enough to show that conditions (1) and (2) of Theorem \ref{teo:convergence} are satisfied. Since $N\beta^{(N)}=\beta$ and $\mu_N(\{0\})=\Lambda(\{0\})$, condition~(1) and the first part of condition~(2) are satisfied. To prove the other two parts of condition~(2), it suffices to show that for every $f\in\Cs([0,1])$
	$$\sum_{k=1}^{N-1}f\left(\frac{k}{N}\right)\frac{k^2}{N^2}\lambda_{N,k+1}^0\binom{N}{k+1}\xrightarrow[N\to\infty]{}\int_{(0,1]}f(r)\Lambda(dr).$$
	By the definition of $\lambda_{N,k+1}^0$ and a straightforward calculation, 
	$$\sum_{k=1}^{N-1}f\left(\frac{k}{N}\right)\frac{k^2}{N^2}\lambda_{N,k+1}^0\binom{N}{k+1}=\frac{N-1}{N}\int_{(0,1]}\E\left[f\left(\frac{B_{N-2,r}+1}{N}\right)\frac{B_{N-2,r}+1}{B_{N-2,r}+2}\right]\,\Lambda(dr),$$
	where $B_{N-2,r}\sim\bindist{N-2}{r}.$ The result follows then as an application of the law of large numbers and the dominated convergence theorem.
\end{proof}
We end this section with an auxiliary result used in the proof of Theorem~\ref{teo:convergence}. For $N\in\N$, $m_N\in[N]_0$, and $\ell\in[N]$, consider $K^N_{m_N^{},\ell}\sim\hypdist{N}{m_N^{}}{\ell}$. It is well-known that
$$\lim_{N\to\infty}\frac{m_N^{}}{N}=p\in[0,1]\quad\Rightarrow\quad K^N_{m_N^{},\ell}\xrightarrow[N\to\infty]{(d)} B_{\ell,p}.$$ 
The next lemma provides a uniform version of this result.
\begin{lemma}\label{hyptobin}
	For $N\in\N$, $\ell\in[N]$, and $x\in[0,1]$, we have
	$$\sup_{x\in\square_N,i\in[\ell]_0}\left\lvert\P(K^{N}_{Nx,\ell}=i)-\P(B_{\ell,x}=i)\right\rvert\leq \binom{\ell}{\lfloor\frac{\ell}{2}\rfloor}\frac{2\ell^2}{N-\ell+1}.$$ 
\end{lemma}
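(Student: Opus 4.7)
The plan is to expand the hypergeometric probability as a product of ratios that we can compare termwise with the binomial probability. Concretely, starting from $\binom{Nx}{i}\binom{N-Nx}{\ell-i}/\binom{N}{\ell}$ and extracting $\binom{\ell}{i}$, one obtains
\[ \P(K^{N}_{Nx,\ell}=i) = \binom{\ell}{i}\prod_{j=0}^{i-1}\frac{Nx-j}{N-j}\cdot\prod_{j=0}^{\ell-i-1}\frac{N(1-x)-j}{N-i-j}, \]
while trivially $\P(B_{\ell,x}=i) = \binom{\ell}{i}\prod_{j=0}^{i-1} x \cdot \prod_{j=0}^{\ell-i-1}(1-x)$. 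Writing $a_j := (Nx-j)/(N-j)$ and $b_j := (N(1-x)-j)/(N-i-j)$, both sequences lie in $[0,1]$ (for $i\leq Nx$ and $\ell-i\leq N-Nx$, the only non-degenerate case), as do $x$ and $1-x$.

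The key tool is the elementary telescoping inequality $\lvert\prod_j c_j - \prod_j d_j\rvert \leq \sum_j \lvert c_j - d_j\rvert$ valid whenever $c_j,d_j\in[0,1]$ (proved by induction, adding and subtracting one factor at a time). Applied to $(a_j)\cup(b_j)$ versus the constant sequences $x$ and $1-x$, this reduces the task to bounding individual discrepancies:
\[ a_j - x = -\frac{j(1-x)}{N-j}, \qquad b_j - (1-x) = \frac{i(1-x)-xj}{N-i-j}. \]
Since the indices satisfy $j\leq i-1$ or $j\leq \ell-i-1$, the denominators are at least $N-\ell+1$, so $\lvert a_j - x\rvert \leq j/(N-\ell+1)$ and $\lvert b_j - (1-x)\rvert \leq (i+j)/(N-\ell+1)$.

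Summing these termwise bounds yields
\[ \left\lvert\prod_{j=0}^{i-1}a_j\prod_{j=0}^{\ell-i-1}b_j - x^i(1-x)^{\ell-i}\right\rvert \leq \frac{1}{N-\ell+1}\left(\sum_{j=0}^{i-1}j + \sum_{j=0}^{\ell-i-1}(i+j)\right), \]
and a short algebraic identity collapses the right-hand side to $\ell(\ell-1)/\bigl(2(N-\ell+1)\bigr)$, independently of $i$ and $x$. Multiplying by $\binom{\ell}{i}\leq\binom{\ell}{\lfloor\ell/2\rfloor}$ yields the claim (with constant $1/2$ in place of $2$, so slightly sharper than stated). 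The only possible pitfall is the boundary bookkeeping when $i=0$, $i=\ell$, or $Nx<i$; these cases are either trivial (empty products) or render one of the probabilities zero, so they are handled separately by inspection.
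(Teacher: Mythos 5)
Your proof follows the same route as the paper's: expand the hypergeometric probability as $\binom{\ell}{i}$ times a product of ratios, then apply the telescoping inequality $\lvert\prod_j c_j - \prod_j d_j\rvert \leq \sum_j \lvert c_j - d_j\rvert$ for factors in $[0,1]$, exactly as the paper states it. The paper stops there, declaring the result a ``direct consequence''; you supply the termwise bounds and the algebraic collapse of the sum to $\ell(\ell-1)/2$, which in fact gives a slightly sharper constant than the one claimed in the lemma.
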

\begin{proof}
	First note that
	$$\left\lvert\P(K^{N}_{Nx,\ell}=i)-\P(B_{\ell,x}=i)\right\rvert=\binom{\ell}{i}\left\lvert \prod_{k=0}^{i-1}\frac{Nx-k}{N-k}\prod_{k=0}^{\ell-i-1}\frac{N(1-x)-k}{N-i-k}- x^i(1-x)^{\ell-i}\right\rvert.$$
	The result then follows from the fact that if $(a_n)_{n=0}^m$ and $(b_n)_{n=0}^m$ are two sequences of numbers in $[0,1]$, then 
	$\lvert\prod_{k=0}^m a_k-\prod_{k=0}^m b_k\rvert\leq \sum_{k=0}^m\lvert a_k-b_k\rvert .$
	
\end{proof}

\section{Ancestral structures and Bernstein duality}\label{s4}
\subsection{Ancestral selection graph and ancestral selection polynomial} \label{s4.0}
In this section we formalise the definition of the ASG and the ASP. We start out by describing a class of \emph{directed acyclic graphs} (DAGs) that encode the branching-coalescing system underlying the ASG. The type of graph illustrated in Fig.~\ref{fig:ASG} is what we have in mind. The vertices of this graph have a label and a time coordinate. Each label has a birth and a death time. There are \emph{horizontal} and \emph{vertical} edges. Vertices having the same label are connected by a directed horizontal edge following the increasing time direction (i.e. from right to left). Vertical edges connect vertices with different labels and encode branching and coalescence events. 

\begin{figure}[t!]
	
	\centering
	\scalebox{0.8}{\begin{tikzpicture}
		
		\draw[dotted] (9.5,-0.5) -- (9.5,4.5);
		\draw[dotted] (8,-0.5) -- (8,4.5);
		\draw[dotted] (6,-0.5) -- (6,4.5);
		\draw[dotted] (4,-0.5) -- (4,4.5);
		\draw[dotted] (2,-0.5) -- (2,4.5);
		\node[right] at (9.5,-0.5) {$0$};
		\node[right] at (8,-0.5) {$t_1$};
		\node[right] at (6,-0.5) {$t_2$};
		\node[right] at (4,-0.5) {$t_3$};
		\node[right] at (2,-0.5) {$t_4$};
		
		% asg Horizontal
		\draw[opacity=1, line width=.5mm, marking1]  (9.5,3.5) -- (2,3.5);
		
		\draw[opacity=1, line width=.5mm, marking1] (8,3) -- (2,3);
		
		\draw[opacity=1, line width=.5mm, marking1] (6,4) -- (0,4);
		\draw[opacity=1, line width=.5mm, marking1] (6,2.5) -- (2,2.5);
		\draw[opacity=1, line width=.5mm,marking1]  (6,2) -- (0,2);
		\draw[opacity=1, line width=.5mm,marking1] (9.5,1) -- (4,1) ;
		\draw[opacity=1, line width=.5mm] (4,0.5) -- (3.7,.5) ;
		\draw[opacity=1, line width=.5mm,marking1] (3.7,0.5) -- (0,.5) ;
		\draw[opacity=1, line width=.5mm,marking1]  (9.5,0) -- (4,0) ;
		\draw[opacity=1, line width=.5mm,marking1] (2,1.5) -- (0,1.5);

		% asg vertical
		\draw[opacity=1, line width=.5mm, {Stealth[length=2mm,width=2mm,open]}-] (8,3) -- (8,3.5);
		\draw[opacity=1, line width=.5mm, -{Stealth[length=2mm,width=2mm,open]}] (6,3) -- (6,2.5);
		
		\draw[line width=.5mm, -{Stealth[length=2mm,width=2mm,open]}] (6,3) .. controls (6.3,2.5) and (6.2,2.5) .. (6,2);
		\draw[line width=.5mm, -{Stealth[length=2mm,width=2mm,open]}] (6,3) .. controls (6.1,3.5) and (6.1,3.5) .. (6,4);

		\draw[line width=.5mm, -{Stealth[length=2mm,width=2mm,open]}] (6,3) .. controls (6.1,3.5) and (6.1,3.5) .. (6,4);
		
		\draw[line width=.5mm, -{Stealth[length=2mm,width=2mm,open]}] (2,3.5) .. controls (1.7,3) and (1.7,2) .. (2,1.5);
		\draw[line width=.5mm, -{Stealth[length=2mm,width=2mm,open]}] (2,3) .. controls (1.8,2.5) and (1.8,2) .. (2,1.5);
		\draw[line width=.5mm, -{Stealth[length=2mm,width=2mm,open]}] (2,2.5) .. controls (1.8,2.2) and (1.8,1.9) .. (2,1.5);

		\draw[opacity=1, line width=.5mm, {Stealth[length=2mm,width=2mm,open]}-] (4,.5) -- (4,0);
		\draw[opacity=1, line width=.5mm, -{Stealth[length=2mm,width=2mm,open]}] (4,1) -- (4,0.5);

		\end{tikzpicture}  }
	\caption{The bc-DAG representation of the branching-coalescing particle system associated to an ASG. Every vertex of the graph corresponds to an instance of a particle. The vertical coordinate corresponds to its label (in $[0,1]$), the horizontal coordinate to time. Vertical edges arise at times $t_1,\ldots,t_4$. Horizontal edges connect in increasing time direction vertices with the same label.}
	\label{fig:ASG}
\end{figure}

\begin{definition}[bc-DAG] Let $\Us$ be a countable set. A DAG $G\coloneqq(\Vs,\Es)$ is a \emph{branching-coalescing DAG} (bc-DAG) with label set $\Us$ if it has the following properties.
	\begin{itemize}
		\item The vertex set is of the form $\Vs=\cup_{u\in\Us}\{u\}\times[b_u,d_u]$ for some $\{b_u,d_u\}_{u\in\Us}\subset\Rb_+$ such that: for all $u\in\Us$, $b_u<d_u$; for any $t\in D\coloneqq\{d_u:\,u\in\Us\}$, there is a unique $u(t)\in\Us$ such that $b_{u(t)}=t$; and for any $t\in I\coloneqq\{b_u:\,u\in\Us\}\setminus D$, there is at least one $v\in\Us$ with $t\in(b_{v},d_{v})$.
		\item The edge set is of the form $\Es=\Es^h\cup\Es^v$, with $\Es^h\coloneqq\{((u,s),(u,t)):\, u\in\Us,\,b_u\leq s< t\leq d_u\}$ and $\Es^v\subset\{((u,t),(v,t)):\, u,v\in\Us,\, u\neq v,\, t\geq 0\}$.
		\item The sets $\Es^v(t)\coloneqq\Es^v\cap (\Us\times\{t\})^2$ are characterised as follows. For any $t\notin D\cup I$, $\Es^v(t)=\emptyset$. For all $t\in D$, $\Es^v(t)=\{((w,t),(u(t),t)):\,w\in\Us\textrm{ with }d_w=t\}$. For any $t\in I$, there is a unique $w(t)\in\Us$ with $t\in(b_{w(t)},d_{w(t)})$  such that $\Es^v(t)=\{((w(t),t),(u,t)):\,u\in\Us\textrm{ with }b_u=t\}$. 
		\item For all $t\geq 0$, the sets $\{u\in\Us:\, b_u\leq t\}$ and $\Es^v\cap(\Us\times[0,t])^2$ are finite. 
	\end{itemize}
	The \emph{vertical outdegree} of $(u,t)\in \Vs$ is the cardinality of $\{(w,t):\, ((u,t),(w,t))\in\Es^v(t)\}$; the \emph{vertical indegree} of $(u,t)\in \Vs$ is the cardinality of $\{(w,t):\, ((w,t),(u,t))\in\Es^v(t)\}$. For simplicity we assume that there is $v\in \Us$ with $b_v=0$, and we call the elements of $R(G)\coloneqq\{(u,0)\in \Vs: u\in \Us\}$ \emph{roots}. We denote by $G_t$ the subgraph of $G$ induced by all the vertices in $(u,s)\in\Vs$ with $s\leq t$; the elements of $L(G_t)\coloneqq\{(u,t)\in\Vs:\, u\in\Us,\, t\neq d_u\}$ are called \emph{leaves} of $G_t$.
\end{definition}
Note that a bc-DAG is completely determined by the label set $\Us$, the collection of points $\{b_u,d_u\}_{u\in\Us}$, and the function $w:I\to \Us$.
Next, we formalise the propagation of types in an ASG. It is instructive to have Figs.~\ref{fig:representASG} and~\ref{fig:ASG} in mind. Intuitively, colours will propagate following reverse directed paths (if we invert the direction of all the edges), with vertices in $\{(w(t),t): t\geq 0\}$ acting as barriers; the type after such a barrier is determined randomly using the colouring rule $p$. This motivates the following notion. For $x,y\in\Vs$ we say that~\emph{$x$ is colour-connected to the right of $y$} if there is a directed path from $x$ to $y$, and there is no path from $x$ to $y$ using vertices in $\{(w(t),t): t\geq 0\}$; we then write $y\vdash x$. We are now ready to state the definition of a random colouring of a bc-DAG.

\begin{definition}[Colouring of a bc-DAG]\label{colouring}
	Fix $m\in\Nb\setminus\{1\}$, $t\geq 0$, and a bc-DAG~$G\coloneqq(\Vs,\Es)$ such that its vertices have vertical outdegree at most~$m-1$. A \emph{leaves-colouring} of~$G_t$ is a vector~$\vec{c}:=(c_\ell)_{\ell\in L (G_t)}\in\{a,A\}^{L(G_t)}$. Given a leaves-colouring~$\vec{c}$ and a colouring rule~$p\in\Ps_m$, we randomly colour the vertices of~$\Vs_t$ according to the following rules.
	\begin{itemize}
		\item Each leaf $\ell$ gets colour $c_\ell$.
		\item If $x,y\in\Vs_t$ with $y\vdash x$ and $y$ has vertical in- and outdegree~$0$, or if $y$ is of the form $(u,s)$ with $s\in D$, then $x$ gets the colour of $y$.
		\item If $y\in\Vs_t$ is of the form $(u,s)$ with $s\in I$ and outdegree $k-1\geq 1$, let $i$ be the number of vertices with colour $a$ among $y$ and its vertical neighbours. Then all the vertices in $\{x\in\Vs_t:\, y\vdash x\}$ get colour $a$ (resp. $A$) with probability $p_{i,k}$ (resp. $1-p_{i,k}$).  
	\end{itemize}
	Let $N_t\coloneqq \lvert L(G_t)\rvert$. For $k\in[N_t]_0$, let~$U(k)$ be a uniform random variable on the set of leaves-colourings of~$G_t$ that assign~$k$ leaves colour~$a$ and~$N_t-k$ leaves colour~$A$. Define~$\cR(G_t,p,k)$ to be the probability that all roots of~$G_t$ have colour~$a$ if~$G_t$ is coloured according to~$U(k)$ and~$p$.
\end{definition}
It is straightforward to see that the previous definition associate to each vertex in $\Vs_t$ a (random) colour.

\smallskip

In a next step we provide the precise definition of the ASG and the ASP on the basis of bc-DAGs. The ASG is constructed on the basis of a branching-coalescing system of (marked) particles with the following dynamic. Let $\beta\in \R_+^{m-1}$ and $\Lambda\in \cM_f^*([0,1])$. Each particle carries a label in $[0,1]$. Start at time $t=0$ with $n$ particles. Let $U_i$ be the label of the $i$-th particle, where $(U_i)_{i=1}^n$ are independent random variables that are uniformly distributed in $[0,1]$. If there are currently $n$ particles, then
\begin{enumerate}
	\item for every $\ell\in\,]m]$ at rate $n \beta_\ell$, mark one of the existing particles uniformly at random and generate $\ell-1$ new particles. Each new particle carries a new label that is independent from the other labels and uniformly distributed in $[0,1]$, 
	\item for every $k\in\, ]n]$ at rate $\binom{n}{k}\lambda_{n,k}$, eliminate $k$ particles uniformly at random and generate a new particle with a new label, which is independent of the previous labels and uniformly distributed in $[0,1]$. 
\end{enumerate}
By construction, a particle with label $u$ has an associated birth time $b_u$ and a death time $d_u$, see also Fig.~\ref{fig:ASG}. Let ${\mathcal U}$ be the (random) set of labels assigned from $t=0$ to $t=\infty$. Let $D\coloneqq\{D_i\}_{i\in \N}$ (resp. $I\coloneqq \{I_i\}_{i\in \N}$)  be the set of times at which the number of particle decreases (resp. increases). In addition, we consider the function $w:I\to\Us$ that assigns to any time $I_i$ the label of the particle that has a mark at that time. Clearly, the sets $\Us$ and $\{b_u,d_u\}_{u\in\Us}$ together with the function $w$ induce a (random) bc-DAG~$\Gs$, which we  refer to as the $(\beta,\Lambda)$-bc-DAG with $n$ roots. The following is merely a more precise version of Definition~\ref{def:asg}.
\begin{definition}[ASG]\label{ASGprecise}
	Let $(\beta,p)\in \R_+^{m-1}\times\Ps_m$, $\Lambda\in\Ms_f^*([0,1])$, and $n\in\Nb$. The $(\beta,p,\Lambda)$-ASG starting with $n$ roots is the pair $(\Gs,p)$, where $\Gs$ is the $(\beta,\Lambda)$-bc-DAG with $n$ roots. Similarly, the $(\beta,p,\Lambda)$-ASG in $[0,t]$ starting with $n$ roots is the pair $(\Gs_t,p)$, where $\cG_t$ is the graph induced on the set of vertices with time coordinate less than $t$. In line with Definition~\ref{def:asg}, we write $L_t$ for the number of leaves in $\cG_t$.
\end{definition}
It follows from the definition of $\Gs$ that $(L_t:\, t\geq0)$ evolves according to the rates stated after Definition~\ref{def:asg}. Note also that by construction a coalescence event always leads to a new particle. This is consistent with the forward-in-time perspective, where the parent is never replaced by its offspring. Using the framework of bc-DAGs, the following formalises the ancestral selection polynomial of Definition \ref{def:asp}.
\begin{definition}[Ancestral selection polynomial] \label{def-asp}
	Let $(\Gs,p)$ be the $(\beta,p,\Lambda)$-ASG starting with $n$ roots. Denote by $\Gs_t$ the graph induced by the vertices of $\Gs$ with time coordinate less that $t$. Set
	\begin{equation}\label{eq:v_t-as-ev}
	V_t(i)\coloneqq\E[\Rs(\Gs_t,p,i)\mid (L_s:\, s\in [0,t])],\qquad i\in  [L_t]_0.
	\end{equation}
	The $(\beta,p,\Lambda)$-\emph{ancestral selection polynomial} (ASP) at time~$t$ is the random polynomial $P_t$ defined via
	\[P_t(x)=\sum_{i=0}^{L_t} V_t(i) b_{i,L_t}(x),\quad x\in[0,1].\]
\end{definition}
Note that $\Rs(\Gs_t,p,i)$ is $\sigma(\Gs_t)$-measurable and $\sigma(L_r:\,r\in[0,t]) \subset\sigma(\Gs_t)$ so that we recover Definition~\ref{def:asp} from \eqref{eq:v_t-as-ev} by using the tower property. 
\subsection{Connection between the ASP and the Bernstein coefficient process}\label{s4.1}
In this section we prove Proposition~\ref{ASPvsVCP}, which states that the Bernstein coefficient process and the process that keeps track of the Bernstein coefficient vector of the ancestral selection polynomial are equal in distribution.

\begin{proof}[Proof of Proposition~\ref{ASPvsVCP}] 
	Fix~$n\in \N$. Let~$V^\star\coloneqq(V_t^\star:\, t\geq0)$ be the Bernstein coefficient process with initial condition $V_0^\star = e_{n+1}$ so that $\langle  B_{L_0^\star}(x), V_0^\star \rangle=x^n$, where $L_t^\star \coloneqq \dim(V_t^\star)-1$. Let $V\coloneqq(V_t:\,t\geq0)$ be the Bernstein coefficients of the ASP associated with an ASG started with $n$ particles, see Eq.~\eqref{eq:v_t-as-ev}. Our aim is to prove that the two objects are identical in law. It follows from the transition rates of the ASG and~$L^\star$ that~$L\overset{(d)}{=}L^\star$. Thus, in order to prove that if $V_0=V_0^\star=e_{n+1}$, then $V \overset{(d)}{=}V^\star$, it is enough to show that 
	\begin{enumerate}
		\item[(i)]
		conditional on a positive jump of $L$ by $\ell-1$ at time $t$, corresponding to an $\ell$-branching event, we have $V_{t} = S^{L_{t-},\ell} V_{t-}$, where $S^{L_{t-},\ell}$ is the selection operator of Definition~\ref{def:SelCoagMatrices}.
		\item[(ii)] conditional on a negative jump of $L$ by $k-1$ at time $t$, corresponding to a coalescence event of $k$ leaves, we have $V_{t} =C^{L_{t-},k}V_{t-}$, where $C^{L_{t-},k}$ is the coalescence matrix of Definition~\ref{def:SelCoagMatrices}. 
	\end{enumerate}
	
	\medskip
	
	(i) \textit{Selection event.} Let $A(t,\ell)$ be the event of an $\ell$-branching at time $t$. In this case $L_{t} = L_{t-}+\ell-1$. For each $i \in [L_{t}]_0$, we need to determine $\E[\cR(\cG_{t},p,i) \mid \cG_{t-},\,A(\ell,t)]$. First note that conditional on $\cG_{t-}$ and $ A(\ell,t)$, the $\ell$-branching point, call it $u$, is chosen uniformly at random among the~$L_{t-}$ leaves at time $t-$. In other words, the $\ell$-branching point is grafted uniformly at random on a leaf available at time $t-$. To determine $\cR(\cG_{t},p,i)$, choose uniformly at random without replacement~$i$ leaves from the set $L_{t}$, and colour them with type~$a$. Starting from time~$t$, there is a subset of leaves~$\Gamma$~consisting of exactly~$\ell$ leaves that all collapse into a single leaf~$u$ at time~$t-$, see also Fig.~\ref{fig:graphs} (left). Let~$K_i$ be the number of type~$a$ in the subset of leaves~$\Gamma$ so that $K_i \sim \hypdist{L_{t}}{i}{ \ell}.$ According to the colouring rule~$p$, conditional on~$K_i$, the leaf~$u$ is of type~$a$ (resp. type~$A$) with probability~$p_{K_i,\ell}^{}$ (resp. $1-p_{K_i,\ell}^{}$). Furthermore, at time~$t-$ there are~$i-K_i$ leaves different from~$u$ that carry type~$a$. They are distributed uniformly at random among the remaining leaves. As argued above, since $u$ is chosen uniformly at random at time~$t-$,
	\[\E\big[\cR(\cG_{t},p,i) \mid \cG_{t-}, A(\ell,t)\big] \ = \ \E\big[ \, p_{K_i,\ell}^{}\, \cR(\cG_{t-},p,i+1-K_i) + (1-p_{K_i,\ell}^{}) \, \cR(\cG_{t-},p,i-K_i)  \mid \cG_{t-}\big],  \]
	and since $\sigma(L_{r}:\, r\in[0,t])\subset \sigma( \cG_{t-}, A(\ell,t))$,
	\begin{align*} \label{eq:tr1}
	\E\big[ \cR(\cG_{t},p,i) \mid (L_r;\, r\in[0,t-]), A(\ell,t) \big]  & =  \E\big[\, p_{K_i,\ell}^{} \, V_{t-}\left(i+1-K_i\right) +  (1- p_{K_i,\ell}^{})\,\ V_{t-}\left(i-K_i\right) \big] \\
	& =  (S^{{L_{t-},\ell}} V_{t-} )(i) ,
	\end{align*}
	which is the desired result.
	\smallskip 
	
	(ii) \textit{Coalescence event.} For $k\geq2$, consider a $k$-coalescence event and fix $i\leq  L_{t} = L_{t-}-k+1$. Choose~$i$ leaves at time~$t$ uniformly at random without replacement and colour them~$a$. Starting from time $t$, one leaf splits into~$k$ leaves at time~$t-$. Seen from~$t-$, this corresponds to $k$ leaves merging into one leaf at time~$t$, see Fig.~\ref{fig:graphs} (right). If this leaf at time~$t$ is of type $a$, which occurs with probability $i/ L_{t} = i/(L_{t-}-k+1)$, then there are~$i+k-1$ leaves of type $a$ at time $t-$. Otherwise, if this leaf at time~$t$ is of type $A$, there are~$i$ leaves of type~$a$ at time~$t-$. This translates into 
	\begin{equation*} 
	V_{t}(i)  =      \frac{i}{L_{t-}-k+1} V_{t-}({i+k-1}) \, + \,  \left(1- \frac{i}{L_{t-}-k+1}\right) V_{t-}(i)   =  (C^{L_{t-},k} V_{t-})(i).
	\end{equation*}
	The combination of (i) and (ii) yields that the transition rates of $V$ and $V^\star$ agree. 
\end{proof}

\begin{remark}\label{rem:leafanddimdonotagree}
It is plain from the definition that $L_t\geq \deg(P_t)$. It is tempting to conjecture that $L_t = \deg(P_t)$. However, it is easily seen to be false. For instance, consider the colouring rule $p_{0,3}=p_{2,3}=0$ and $p_{1,3}=p_{3,3}=1$ (i.e. minority rule) and branching rates $\beta=(\beta_2,\beta_3)=(0,1)$. Consider the $(\beta,p,\Lambda)$-ASG starting with $1$ root. In particular, $P_0(x)=x$ and $V_0=e_2=(0,1)^T$. Assume that there is a branching into three lines at time~$s$, followed by a coalescence of two lines at time~$t$ and no other transition ($s\leq t$). The effect of the branching and coalescence events is captured by the matrices
	\[S^{1,3}=\left(\begin{matrix}
	1 & 0\\
	0 & 1\\
	1 & 0\\
	0 & 1
	\end{matrix}\right)\quad \text{and} \quad C^{3,2}=\left( \begin{matrix}
	1 & 0 & 0 & 0 \\
	0 & \frac{1}{2} & \frac{1}{2} & 0 \\
	0 & 0 & 0 & 1
	\end{matrix}\right).\]
	A straightforward calculation yields $V_t=C^{3,2}(S^{1,3}e_2)=(0,\frac{1}{2},1)$. Furthermore, $L_0=1$, $L_{s}=3$, $L_{t}=2$, and $P_t(x)=\frac{1}{2}b_{1,2}(x)+b_{2,2}(x)=x$ so that $\deg(P_t)=1<L_t.$
\end{remark}
\begin{remark}[Topology on $\Rb^\infty$]\label{rem:topology}
	One is inclined to embed the state space $\Rb^\infty$ in the set of infinite sequences (adding infinite zeros at the end of every vector) equipped with the supremum norm. We are not using this embedding, because the transitions of the Bernstein coefficient process depend on the current dimension of the process, which does not necessarily coincide with the last non-zero entry of the process. A more appropriate metric is given as follows. For $u\in\Rb^n$ and $v\in \Rb^m$, with $n\leq m$, define
	\[d(u,v)\coloneqq d(v,u)\coloneqq\max_{i\in[n]_0}\lvert u_i-v_i\rvert + \max_{i\in[m]_0\setminus[n]_0}\lvert v_i\rvert +m-n.\]
	Note that the restriction of $d$ to $\Rb^n$ of coincides with the metric induced by the supremum norm, and that the distance between two vectors with different dimensions is at least $1$. Hence, a function $f:\Rb^\infty\to\Rb$ is continuous if and only if for any $n\in\Nb$, its restriction to $\Rb^n$ is continuous. In particular, the duality function in Theorem~\ref{thm:bernstein-duality} is continuous under this topology. Moreover, one can prove that $(\Rb^\infty,d)$ is a Polish space, providing a suitable setting for stochastic processes.
\end{remark}

\subsection{Bernstein duality (proofs)}\label{s4.2}
This section contains the proof Theorem~\ref{thm:bernstein-duality} establishing the Bernstein duality between the type-$a$ frequency process and the Bernstein coefficient process. We also prove Corollary~\ref{coro:bernstein-duality} connecting the Bernstein duality and the moment duality.

\smallskip

We start with a result stating that selection and coalescence matrices leave the first and last entries of a vector invariant.
\begin{lem}\label{lem:action-coal-frag}
	Let $n\in \N_0$ and $v\in \R^{n+1}$. Then for any $\ell\in]m]$ and $k\in ]n]$, $(S^{n,\ell}v)_0=v_0= 		(C^{n,k}v)_0,$ and $(S^{n,\ell}v)_{n+\ell-1} = 	v_n= (C^{n,k}v)_{n-k+1}$.
	Furthermore, for any $j\in[n+\ell-1]_0$ and $j'\in[n-k+1]_0$, we have $$\min_{i\in[n]_0} v_i\leq (C^{n,k}v)_{j'},\,(S^{n,\ell}v)_j\leq \max_{i\in[n]_0} v_i. $$ In particular,
	$\lVert S^{n,\ell}v\rVert_{\infty}\leq \lVert v\rVert_{\infty}$ and~$\lVert C^{n,\ell}v\rVert_{\infty}\leq \lVert v\rVert_{\infty}.$
\end{lem}
\begin{proof}
	This is plain from the definition of the selection and coalescence matrices.
\end{proof}
Next, we prove the Bernstein duality.

\begin{proof}[Proof of Theorem \ref{thm:bernstein-duality}]
	We want to prove that $X$ and $V$ are dual with respect to the duality function
	\[H:[0,1]\times \Rb^\infty\to\R_+,\qquad (x,v)\mapsto \langle B_{dim(v)-1}(x),v\rangle.\]
	First note that the generator $\mathscr{B}$ of $V$ can be expressed as 
	$$\mathscr{B}f(v)=\mathscr{B}_{\mathrm{s}}f(v)+\mathscr{B}_{\mathrm{wf}}f(v)+\mathscr{B}_\Lambda f(v), \quad v\in\Rb^\infty,\,  f\in\Cs(\Rb^\infty),$$
	where for $v$ with $\dim(v)=n+1$,
	\begin{align*}
	\mathscr{B}_{\mathrm{s}}f(v)&\coloneqq\sum_{\ell=2}^m n\beta_\ell \left(f(S^{n,\ell}v)-f(v)\right),\quad \mathscr{B}_{\mathrm{wf}}f(v)\coloneqq\frac{\Lambda(\{0\})}{2}n(n-1)\left(f(C^{n,2}v)-f(v)\right), \\
	\mathscr{B}_\Lambda f(v)&\coloneqq\sum_{k=2}^n \binom{n}{k}\lambda_{n,k}^0\left(f(C^{n,k}v)-f(v)\right).
	\end{align*}
	In addition, for any $v\in\Rb^\infty$, the function $x\mapsto H(x,v)$ is $\Cs^\infty([0,1])$, and therefore belongs to the domain of the generator $A$ of $X$ given in~\eqref{eq:generatorX}. Similarly, for any $x\in[0,1]$, $v\mapsto H(x,v)$ is continuous (with respect to the metric defined in Remark~\ref{rem:topology}), and hence belongs to the domain of $B$.
	Now we proceed to show that for $v\in\R^\infty$ and $x\in [0,1]$,
	\begin{equation} 
	AH(\cdot,v)(x)=\mathscr{B}H(x,\cdot)(v)\label{eq:dualitygeneratoridentity}.\end{equation}
	For this we prove the intermediate identities $A_\kappa H(\cdot,v)(x)=\mathscr{B}_\kappa H(x,\cdot)(v)$, for $\kappa\in\{\mathrm{s},\mathrm{wf},\Lambda\}$. Let $(Y_\ell^x)_{\ell\geq 0},\,(W_{\ell}^x)_{\ell\geq 0}$, $(K_{\ell,i}^n)_{0\leq i\leq\ell<n}$ be sequences of independent random variables with $Y_\ell^x,W_\ell^x\sim\bindist{\ell}{x}$ and $K_{\ell,i}^n\sim\hypdist{n+\ell-1}{\ell}{i}$.
	For any $v=(v_i)_{i=0}^n\in \R^{n+1}$,
	\[\frac{\partial H}{\partial x} (x,v)=n \E[v_{{Y_{n-1}^x}+1}^{}-v_{Y_{n-1}^x}^{}] \quad \text{and}\quad \frac{\partial^2 H}{\partial x^2}(x,v)=n(n-1) \E[v_{Y_{n-2}^x+2}^{}-2v_{Y_{n-2}^x+1}^{}+ v_{Y_{n-2}^x}^{}].\]
	Note also that 
	$$d(x)=d_{\beta,p}(x)=\sum_{\ell=2}^m\beta_\ell(\E\big[\,p_{W_\ell^x,\ell}^{}\big]-x)\quad\textrm{and}\quad H(x,v)=\E[v_{Y_n^x}^{}]=\E[(1-x)v_{Y_{n-1}^x}^{}+x v_{Y_{n-1}^x +1}^{}].$$ This together with the identity in law $(W_\ell^x,Y_{n-1}^x)\overset{(d)}{=}(K_{\ell,{Y_{n+\ell-1}^x}}^n,Y_{n+\ell-1}^x-K_{\ell,{Y_{n+\ell-1}^x}}^n)$
 yields
	\begin{align*}
	A_{\mathrm{s}}H(\cdot,v)(x)&=\left[\sum_{\ell=2}^m  \beta_\ell (\E\big[\,p_{W_\ell^x,\ell}^{}\big]-x )\right] \,n\,\E\big[v_{Y_{n-1}^x+1}^{}-v_{Y_{n-1}^x}^{}\big]\\
	&=\sum_{\ell=2}^m n\beta_\ell \Big( \E\big[\, p_{W_\ell^x,\ell}^{}\, v_{Y_{n-1}^x+1}+ (1-p_{W_\ell^x,\ell})\,v_{Y_{n-1}^x}^{}\big]- H(x,v)\Big) \\
	&=\sum_{\ell=2}^m n\beta_\ell \Big( \sum_{j=0}^{n+\ell-1} \E\big[\,p_{K_{\ell,j}^n,\ell}^{}\, v_{j-K_{\ell,j}^n+1}^{}+(1-p_{K_{\ell,j}^n,\ell}^{})\, v_{j-K_{\ell,j}^n}^{}\big] b_{j,{n+\ell-1}}(x)  -H(x,v)\Big)\\
	&=\mathscr{B}_{\mathrm{s}}H(x,\cdot)(v).
	\end{align*}
	This proves the identity for the part that corresponds to selection. For the Wright--Fisher part we have
	\begin{align*}
	&A_{\textrm{wf}}H(\cdot,v)(x)\\
	&=\frac{\Lambda(\{0\})}{2}n\sum_{i=1}^{n}{v}_i b_{i-1,n-1}(x) \big(i-1-x(n-1)\big)+n\sum_{i=0}^{n-1} v_i b_{i,n-1}(x) \big(n-i-1-(1-x)(n-1)\big)\\
	&=\frac{\Lambda(\{0\})}{2}n(n-1)\Bigg(\sum_{i=0}^{n-1} \Big( \frac{i}{n-1}v_{i+1}+\frac{n-1-i}{n-1} v_i\Big) b_{i,n-1}(x)-H(x,v)\Bigg)\\
	&= \mathscr{B}_{\mathrm{wf}}H(x,\cdot)(v).
	\end{align*}
	For the $\Lambda$-part, it is convenient to rewrite \[x+r(1-x)=r+x(1-r),\, 1-(x+r(1-x))=(1-x)(1-r),\, 1-x+rx=r+(1-x)(1-r),\]
	and use this in a straightforward calculation to obtain
	\begin{align*} 
	&A_{\Lambda}H(\cdot,v)(x)\\
	&=\int_{(0,1]}x\langle  B_n\big(r+(1-r)x\big),v\rangle + (1-x)\langle  B_n\big(x(1-r)\big),v\rangle - \langle  B_n(x),v\rangle\frac{\Lambda(\dd r)}{r^2}\\
	&=\sum_{k=2}^n \binom{n}{k}\lambda_{n,k}^0 \Bigg(\sum_{i=1}^{n-k+1} v_{i+k-1}\frac{i}{n-k+1} b_{i,n-k+1}(x) +\sum_{i=0}^{n-k}v_i  \frac{n-k+1-i}{n-k+1} b_{i,n-k+1}(x) -H(x,v) \Bigg)\\
	&= \mathscr{B}_{\Lambda}H(x,\cdot)(v),
	\end{align*}%
	which ends the proof of \eqref{eq:dualitygeneratoridentity}. Now, assume that the process $V$ starts at $V_0=v$. Using the definition of~$H$ and Lemma \ref{lem:action-coal-frag}, we obtain
	\begin{equation*}
	\sup_{x\in[0,1],s\in[0,T]} \lvert H(x,V_s)\rvert\leq \lVert v\rVert_\infty.
	\end{equation*}
	Similarly, 
	\begin{equation*}
	\sup_{x\in[0,1],s\in[0,T]}\lvert B_s H(x,\cdot)(V_s)\rvert\leq 2\lVert \beta\rVert_1 \lVert v\rVert_\infty \sup_{s\in[0,T]}L_s,
	\end{equation*}
	where $\lVert \beta\rVert_1 \coloneqq\sum_{\ell=2}^m\lvert \beta_\ell\rvert $. Moreover, using \citep[Lem. 3.3]{herriger2012conditions}, we deduce that
	\begin{equation*}
	\sup_{x\in[0,1],s\in[0,T]}\lvert (B_\Lambda +B_{\mathrm{wf}})H(x,\cdot)(V_s)\rvert\leq 2\lVert v\rVert_\infty \Lambda([0,1]) \left(\sup_{s\in[0,T]}L_s\right)^2,
	\end{equation*}
	Finally, note that one can couple $V$ to a pure birth process $\Gamma=(\Gamma_t:\,t\geq 0)$ such that: (a) $\Gamma_0=\dim(V_0)-1$, (b) each particle in~$\Gamma$ splits into $m$ particles at rate $\lVert\beta\rVert_1$, and (c) $\sup_{s\in[0,T]}L_s\leq \Gamma_T$. Since $\Gamma_T$ and $\Gamma_T^2$ are integrable, the result follows by~\citet[Cor.~4.4.13]{Ku86}.	
\end{proof}

Finally, we prove the result that connects the Bernstein duality to the classic moment duality.
\begin{proof}[Proof of Corollary~\ref{coro:bernstein-duality}]
	The proof of (1) follows directly from the Bernstein duality and the fact that $\langle B_n(x),e_{n+1}\rangle=x^n$. For (2), assume $(s_\ell)_{\ell=0}^{m-2}$ is a decreasing sequence in~$\R_+$. Recall that 
	\begin{equation*}
		\beta_m\coloneqq s_{m-2},\ \ \forall\ell\in \, ]m-1],\  \beta_\ell\coloneqq s_{\ell-2}-s_{\ell-1}\geq 0,\ \ \text{and} \ \ \forall \ell\in \, ]m], i\in[\ell]_0,\  p_{i,\ell}=\1_{\{i=\ell\}}.
	\end{equation*}
	A direct calculation yields $d_{(\beta,p)}(x)=-x(1-x)\sum_{i=0}^{m-2} s_ix^i\eqqcolon d(x),$
	which implies $(\beta,p)\in\ccS_d$. Another straightforward calculation yields $C^{n,k}e_{n+1}=e_{n-k+2}$ and for our choice of~$p$, $S^{n,\ell}e_{n+1}=e_{n+\ell}.$ In particular, if $V_0=e_{n+1}$, then for all $t\geq 0$, we have $V_t=e_{L_t+1}$ and $\langle B_{L_t}(x),V_t\rangle=x^{L_t}$. Hence, the Bernstein duality yields $$\E_x[X_t^n]=\E_{x}[\langle B_{n}(X_t),e_{n+1}\rangle]=\E_{e_{n+1}}[\langle B_{L_t}(x),V_t\rangle]=\E_{n}[x^{L_t}],$$ which proves the result.
\end{proof}

%%%%%%%%%%%%%%%%%%%%%%%%%%%%%%%%%%%%%%%%%%%%%%%%%%%%%%%%%%%%%%%%%%%%%%%%%%%%%%%%%%%%%%%%%%%%%%%%%%%%%%%%%%%%%%%%%%%%%%%%%%%%%%%%%%%%%%%%%%%%%%%%%%%%%%%%%%%%%
%%%%%%%%%%%%%%%%%%%%%%%%%%%%%%%%%%%%%%%%%%%%%%%%%%%%%%%%%%%%%%%%Section5%%%%%%%%%%%%%%%%%%%%%%%%%%%%%%%%%%%%%%%%%%%%%%%%%%%%%%%%%%%%%%%%%%%%%%%%%%%%%%%%%%%%%
%%%%%%%%%%%%%%%%%%%%%%%%%%%%%%%%%%%%%%%%%%%%%%%%%%%%%%%%%%%%%%%%%%%%%%%%%%%%%%%%%%%%%%%%%%%%%%%%%%%%%%%%%%%%%%%%%%%%%%%%%%%%%%%%%%%%%%%%%%%%%%%%%%%%%%%%%%%%%

\section{Properties of the Bernstein coefficient process and its leaf process}\label{s5}

In this section we study properties of the Bernstein coefficient process and its leaf process. The section begins with the proof of the condition for transience and recurrence of the leaf process. This allows us to derive and to characterise the invariant measures for~$L$ and $V$. In particular, we obtain a recursion for the tail probabilities of the stationary measure of the leaf process. Finally, we study the property of the leaf process to come down from infinity. 

\subsection{Recurrence and transience of leaf process}\label{s5.1}
The leaf process~$L$ of Definition~\ref{def:asg} with parameters~$(\beta,\Lambda)$ takes values in~$\N$ and has infinitesimal generator
\begin{equation}
\Ls f(n)\coloneqq \Ls_{\beta} f(n)+\Ls_{\Lambda} f(n),\quad f:\N\to\R, \label{eq:generatordegreeprocess}
\end{equation}
where \[\Ls_{\beta} f(n)\coloneqq\sum_{\ell=2}^m n\beta_\ell [f(n+\ell-1)-f(n)],\qquad \Ls_{\Lambda} f(n)\coloneqq\sum_{k=2}^n \binom{n}{k}\lambda_{n,k} [f(n-k+1)-f(n)].\] 

We want to prove Theorem~\ref{thm:degreeprocessrecurrenttransient(intro)}, which provides conditions for positive recurrence and transience of the leaf process. Let us stress again that the first part of Theorem~\ref{thm:degreeprocessrecurrenttransient(intro)} is already present in~\citet[Thm.~4.6]{gonzalezcasanova2018} in the case $c(\Lambda)<\infty$. The latter result also states that $L$ is not positive recurrent if $b(\beta)> c(\Lambda)$. We use here a different approach that follows the lines of \citet{foucart2013impact}. It allows us to show the transience of $L$ for $b(\beta)> c(\Lambda)$, and its positive recurrence for $b(\beta)<c(\Lambda)$. 
\smallskip

Define $\delta:\N\to\R$ and $f:\N \to \R$ via
\begin{align*}
 \delta(n)&\coloneqq \binom{n}{2}\Lambda(\{0\})-n\int_{(0,1]}\log\left(1-\frac{1}{n}(nr-1+(1-r)^n)\right)\frac{\Lambda(\dd r)}{r^2},\\
f(\ell)&\coloneqq\sum_{k=2}^\ell \frac{k}{\delta(k)}\log\Big( \frac{k}{k-1}\Big).
\end{align*}
The function $\delta$ was introduced in a more general setting in \cite{herriger2012conditions} to establish conditions for $\Xi$-coalescents to come down from infinity. The function $f$ is used in \cite{foucart2013impact} as an analytical tool to prove positive recurrence of the line-counting process of the ASG associated to the $\Lambda$-Wright--Fisher process with genic selection. It will be convenient to collect some known properties of these functions.
\begin{lem}[{\citet[Cor.~4.2, Eq.~(4.2)]{herriger2012conditions}}]\label{lem:propertiesspecialfct}
	\hspace{.1mm}
	\begin{itemize}
		\item The functions $n\mapsto\delta(n)$ and $n\mapsto\delta(n)/n$ are non-decreasing.
		\item $\delta(n)/n\to c(\Lambda)$ for $n\to\infty$.
	\end{itemize}
\end{lem}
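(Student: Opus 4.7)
The plan is to reduce all three statements to a single monotonicity fact about the integrand. Set
\[
g_n(r)\ :=\ \frac{1}{n}\bigl(nr-1+(1-r)^n\bigr)\ =\ r-\frac{1-(1-r)^n}{n},
\]
so that $\delta(n)/n = \int_0^1 -\log(1-g_n(r))\,\Lambda(dr)/r^2$ with
\[
1-g_n(r)\ =\ (1-r)+\frac{1-(1-r)^n}{n}.
\]
First I would check that $g_n(r)\in[0,r]$ for every $r\in[0,1]$ and $n\in\N$, so that the integrand is well-defined and non-negative: the upper bound is immediate, and the lower bound $nr\geq 1-(1-r)^n$ is Bernoulli's inequality. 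In particular, $\delta(n)/n\geq 0$ for every $n$.

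The key step is to show that, for each fixed $r\in[0,1]$, the quantity $1-g_n(r)$ is non-increasing in $n$. Setting $a:=1-r\in[0,1]$, this is equivalent to showing that $n\mapsto (1-a^n)/n$ is non-increasing. I would rewrite
\[
\frac{1-a^n}{n}\ =\ (1-a)\cdot\frac{1+a+\cdots+a^{n-1}}{n},
\]
and observe that the right-hand factor is the arithmetic mean of the non-increasing sequence $1,a,a^2,\ldots$, which is itself non-increasing in $n$. Consequently $-\log(1-g_n(r))$ is non-decreasing in $n$ pointwise, and integration against $\Lambda(dr)/r^2$ immediately yields property (2).

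Property (1) then follows from (2) combined with non-negativity: since $\delta(n)/n\geq 0$ is non-decreasing,
\[
\delta(n+1)\ =\ (n+1)\,\frac{\delta(n+1)}{n+1}\ \geq\ (n+1)\,\frac{\delta(n)}{n}\ \geq\ \delta(n).
\]
For property (3), I would apply monotone convergence: as $n\to\infty$, we have $(1-(1-r)^n)/n\to 0$ for every $r\in(0,1]$, so $g_n(r)\uparrow r$ and $-\log(1-g_n(r))\uparrow -\log(1-r)\in[0,\infty]$. The monotone convergence theorem gives
\[
\frac{\delta(n)}{n}\ \uparrow\ \int_{[0,1]} -\log(1-r)\,\frac{\Lambda(dr)}{r^2}\ =\ c(\Lambda),
\]
where the integrand at $r=0$ is handled by the convention $-\log 1=0$, and convergence holds in $[0,\infty]$ regardless of whether $c(\Lambda)$ is finite. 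I do not expect any substantive obstacle beyond the Bernoulli-type bound and the arithmetic-mean observation; the only care needed is to treat the endpoints $r=0$ and $r=1$ correctly, which is handled by the extended-real monotone convergence theorem.
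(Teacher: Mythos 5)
The paper does not prove this lemma itself; it refers the reader to Herriger and M\"ohle (2012, Lem.~3.1 and Cor.~4.2). So your proof is a genuinely new, self-contained argument rather than a reproduction of the paper's. The core of it is correct and pleasantly elementary: writing
$1-g_n(r)=a+\tfrac{1-a^n}{n}$ with $a=1-r$, then factoring $\tfrac{1-a^n}{n}=(1-a)\cdot\tfrac{1+a+\cdots+a^{n-1}}{n}$ and using that running arithmetic means of a non-increasing sequence are non-increasing, immediately gives that $-\log(1-g_n(r))$ is pointwise non-decreasing in $n$. Item (2) follows by integration, item (1) from (2) together with $\delta(n)\geq 0$, and item (3) from monotone convergence. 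This is a clean route that avoids the machinery of the cited reference.

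One point needs correction. Your parenthetical ``the integrand at $r=0$ is handled by the convention $-\log 1=0$'' is incompatible with the definition of $c(\Lambda)$ in the case $\Lambda(\{0\})>0$, and would make the conclusion of (3) false there. Indeed for $\Lambda=\delta_0$ (Kingman), the integrand of $\delta(n)/n$ with respect to $\Lambda(dr)$ is $-\log(1-g_n(r))/r^2$, whose value at $r=0$ must be read as the limit $\lim_{r\to 0^+}\bigl(-\log(1-g_n(r))/r^2\bigr)=(n-1)/2$ (since $g_n(r)\sim \tfrac{n-1}{2}r^2$); one then gets $\delta(n)/n=(n-1)/2\to\infty=c(\delta_0)$, consistent with the paper's remark that $c(\Lambda)=\infty$ in the Kingman case. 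Setting that value to $0$ instead would give $\delta(n)/n\equiv 0\neq c(\delta_0)$. The fix is cosmetic and does not affect the argument: with the continuous extension $\phi_n(0):=(n-1)/2$ and $\phi_\infty(0):=+\infty$, your pointwise monotonicity and the monotone convergence theorem go through unchanged, since $(n-1)/2\uparrow\infty$. It is worth also noting, for the well-definedness claim, that $g_n(r)<1$ for $r<1$ and $1-g_n(1)=1/n>0$, so $-\log(1-g_n(r))$ is finite on all of $[0,1]$ and $\delta(n)<\infty$.
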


\begin{lem}\label{lem:cond-cdi-coalescent}
	The function $f$ is non-negative and increasing. Further, the $\Lambda$-coalescent c.d.i.~if and only if $\lim_{n\to\infty} f(n)<\infty$.
\end{lem}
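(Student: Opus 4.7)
The plan is to handle the two assertions separately, and the heart of the argument is an asymptotic comparison to a series involving only the function $\delta$.

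\emph{Non-negativity and monotonicity.} First I would verify that $\delta(n)\in(0,\infty)$ for every $n\geq 2$ (assuming $\Lambda\neq 0$, the trivial case being immediate). Introduce $g(r):=nr-1+(1-r)^n$ on $[0,1]$. Since $g(0)=0$ and $g'(r)=n\bigl(1-(1-r)^{n-1}\bigr)\geq 0$, one has $g\geq 0$ on $[0,1]$. Moreover $g(r)/n\leq r$ (because $(1-r)^n\leq 1$), with strict inequality for $r\in(0,1)$, and $g(1)/n=1-1/n$, so $1-g(r)/n\in(0,1]$ on $(0,1]$. Hence $\log\bigl(1-g(r)/n\bigr)\leq 0$ and $\delta(n)\geq 0$; a Taylor expansion of $g$ at $r=0$ gives $g(r)/n\sim \tfrac{n-1}{2}r^2$, which ensures integrability against $r^{-2}\Lambda(dr)$ near $0$ and strict positivity of $\delta(n)$ whenever $\Lambda\neq 0$. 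Since $\log(k/(k-1))>0$ for $k\geq 2$, every summand of $f$ is positive, whence $f\geq 0$ and $f$ is strictly increasing.

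\emph{Equivalence with coming down from infinity.} For this part I would invoke the criterion of Herriger and Möhle \cite{herriger2012conditions}, which states precisely that the $\Lambda$-coalescent comes down from infinity if and only if
\[
\sum_{k\geq 2}\frac{1}{\delta(k)}<\infty.
\]
It then remains to compare this series with $\lim_{n\to\infty}f(n)=\sum_{k\geq 2}\tfrac{k\log(k/(k-1))}{\delta(k)}$. The function $k\mapsto k\log(k/(k-1))$ is continuous on $[2,\infty)$, attains its maximum $2\log 2$ at $k=2$, and satisfies $k\log(k/(k-1))\to 1$ as $k\to\infty$; in particular it is bounded above and below by positive constants on $\{k\geq 2\}$. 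Hence the two series converge or diverge together, and the equivalence follows.

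\emph{Main obstacle.} The non-trivial step is the first one — making sure the Herriger--Möhle criterion is available in the exact form $\sum 1/\delta(k)<\infty$. This is really an input from the literature rather than something to reprove here, so once that is cited the rest reduces to the elementary sign analysis of $g(r)$ and the boundedness of $k\log(k/(k-1))$. Minor care is needed for degenerate measures (e.g.\ $\Lambda=\delta_0$ where $\Lambda(\{0\})$ enters differently, or $\Lambda=\delta_1$), but these fit the same framework once $\delta$ is interpreted using the standing convention for $\Lambda$-coalescents.
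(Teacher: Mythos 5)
Your argument is correct and follows essentially the same route as the paper: cite the Herriger--M\"ohle criterion $\sum_{k\geq 2}1/\delta(k)<\infty$ for coming down from infinity and then compare it with $\lim_n f(n)=\sum_{k\geq 2}k\log(k/(k-1))/\delta(k)$. The only (harmless) differences are stylistic: for positivity of $\delta(n)$ you do a self-contained analysis of $g(r)=nr-1+(1-r)^n$, whereas the paper simply invokes the monotonicity of $\delta$ (its Lemma~\ref{lem:propertiesspecialfct}) together with $\delta(2)>0$; and for the series comparison you use that $k\log(k/(k-1))$ is bounded above and below by positive constants on $[2,\infty)$, while the paper phrases the same fact as the asymptotic equivalence $k\log(k/(k-1))/\delta(k)\sim 1/\delta(k)$.
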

\begin{proof}
	The first part of the statement follows from the fact that $\delta(n)>0$ for $n>1$ (this is easy to see once one observes that this is true for $n=2$ and $\delta(n)$ is non-decreasing by Lemma~\ref{lem:propertiesspecialfct}). \cite{herriger2012conditions} shows that the $\Lambda$-coalescent c.d.i.~if and only if $\sum_{k\geq2} 1/\delta(k)<\infty$. Since $k\log(k/(k-1))/\delta(k) \sim 1/\delta(k)$ as $k\to \infty$, this completes the proof of the second part of the proposition.
\end{proof}

The next lemma is a generalisation of \citet[Lem.~2.3]{foucart2013impact}, which corresponds to the case $\beta_2>0$, $\beta_\ell=0$ for $\ell\neq 2$, and $p_{1,2}=0$.
\begin{lem}\label{lem:generatorinequality}
	We have 
	\begin{equation}
	\Ls f(n)\leq -1+\sum_{\ell=2}^m\beta_\ell \sum_{j=n+1}^{n+\ell-1}\frac{j}{\delta(j)}.\label{eq:lemgeneratorinequality}
	\end{equation}
	Furthermore, if $b(\beta)<c(\Lambda)$, then there exists $n_0\in \N$ and $\varepsilon>0$ such that $$\Ls f(n)\leq -1+ \frac{b(\beta)}{c(\Lambda)}+\varepsilon b(\beta)<0,\qquad \forall n\geq n_0,$$
	with the usual convention that $1/\infty=0$.
\end{lem}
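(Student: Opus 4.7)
The plan is to split the generator $\mathcal{L}f(n)=\mathcal{L}_\beta f(n)+\mathcal{L}_\Lambda f(n)$, bound each piece separately, and then read off the asymptotic constant $b(\beta)/c(\Lambda)$ from Lemma~\ref{lem:propertiesspecialfct}. The choice of $f(\ell)=\sum_{k=2}^\ell \tfrac{k}{\delta(k)}\log\!\frac{k}{k-1}$ is tailored to make each of these bounds immediate.

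For the branching part, a direct computation gives
\[
\mathcal{L}_\beta f(n)=\sum_{\ell=2}^m n\beta_\ell\sum_{j=n+1}^{n+\ell-1}\frac{j}{\delta(j)}\log\!\frac{j}{j-1}.
\]
Since $\log(j/(j-1))\leq 1/(j-1)\leq 1/n$ for $j\geq n+1$, the factor $n$ absorbs the logarithm, yielding $\mathcal{L}_\beta f(n)\leq \sum_{\ell=2}^m\beta_\ell \sum_{j=n+1}^{n+\ell-1} j/\delta(j)$.

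For the coalescence part, I would prove $\mathcal{L}_\Lambda f(n)\leq -1$. Rewriting the increments of $f$ as a telescoping sum,
\[
\mathcal{L}_\Lambda f(n)=-\sum_{k=2}^n\binom{n}{k}\lambda_{n,k}\sum_{j=n-k+2}^n\frac{j}{\delta(j)}\log\!\frac{j}{j-1},
\]
the monotonicity of $j\mapsto\delta(j)/j$ (second bullet of Lemma~\ref{lem:propertiesspecialfct}) gives $j/\delta(j)\geq n/\delta(n)$ for $j\leq n$, and telescoping the logarithms reduces the task to proving the inequality
\[
\sum_{k=2}^n\binom{n}{k}\lambda_{n,k}\,\log\!\frac{n}{n-k+1}\;\geq\;\frac{\delta(n)}{n}.
\]
This is the main obstacle. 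I plan to handle it by recognising the right-hand side as an expectation under the binomial mixture defining $\lambda_{n,k}$. Writing $\lambda_{n,k}=\Lambda(\{0\})\mathbf{1}_{k=2}+\int_{(0,1]}r^{k-2}(1-r)^{n-k}\Lambda(dr)$ and setting $X_r\sim\textrm{Bin}(n,r)$, $Y_r=(X_r-1)^+$, the integrable part of the left-hand side becomes
\[
\int_{(0,1]}\mathbb{E}\!\left[-\log(1-Y_r/n)\right]\,\frac{\Lambda(dr)}{r^2},
\]
because the $k\in\{0,1\}$ contributions to the binomial expectation are null. Since $y\mapsto -\log(1-y)$ is convex on $[0,1)$, Jensen's inequality yields $\mathbb{E}[-\log(1-Y_r/n)]\geq -\log(1-\mathbb{E}[Y_r]/n)$, and $\mathbb{E}[Y_r]=nr-1+(1-r)^n$ is exactly the quantity appearing inside the logarithm in the definition of $\delta(n)$. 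For the Kingman atom at $0$, the analogous bound follows from the elementary estimate $-\log(1-1/n)\geq 1/n$, which handles the factor $\binom{n}{2}$ correctly. Integrating both contributions against $\Lambda(dr)/r^2$ delivers the required inequality.

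Combining the two estimates yields \eqref{eq:lemgeneratorinequality}. For the second statement, I would invoke the third bullet of Lemma~\ref{lem:propertiesspecialfct}: $\delta(j)/j\to c(\Lambda)$, so for every $\varepsilon>0$ there exists $n_0$ with $j/\delta(j)\leq 1/c(\Lambda)+\varepsilon$ for all $j\geq n_0+1$ (with $1/\infty:=0$ when $c(\Lambda)=\infty$). Summing gives
\[
\sum_{\ell=2}^m\beta_\ell\sum_{j=n+1}^{n+\ell-1}\frac{j}{\delta(j)}\leq\sum_{\ell=2}^m\beta_\ell(\ell-1)\!\left(\tfrac{1}{c(\Lambda)}+\varepsilon\right)=\tfrac{b(\beta)}{c(\Lambda)}+\varepsilon\, b(\beta),
\]
for $n\geq n_0$, and since $b(\beta)<c(\Lambda)$ one can pick $\varepsilon$ small enough that the resulting bound on $\mathcal{L}f(n)$ is strictly negative.
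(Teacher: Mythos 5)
Your proof is correct and follows the same overall strategy as the paper: decompose $\mathcal{L}=\mathcal{L}_\beta+\mathcal{L}_\Lambda$, bound the branching part by the stated double sum, show $\mathcal{L}_\Lambda f(n)\leq -1$, and extract the asymptotic constant from the third bullet of Lemma~\ref{lem:propertiesspecialfct}. The bound on $\mathcal{L}_\beta$ is cosmetically different (you use $n\log\tfrac{j}{j-1}\leq n/(j-1)\leq 1$, the paper uses $n\log\bigl(1+\tfrac{1}{j-1}\bigr)\leq\log\bigl((1+1/n)^n\bigr)\leq 1$) but mathematically the same. The genuine difference is in the coalescence bound: the paper simply cites Foucart's Lemma~2.3 for $\mathcal{L}_\Lambda f(n)\leq -1$, whereas you give a self-contained derivation. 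Your chain is valid: telescoping the increments of $f$, using monotonicity of $j\mapsto\delta(j)/j$ to pull out $n/\delta(n)$, reducing to the inequality $\sum_{k\geq 2}\binom{n}{k}\lambda_{n,k}\log\tfrac{n}{n-k+1}\geq\delta(n)/n$, and then verifying this by Jensen applied to the convex function $y\mapsto-\log(1-y)$ for the part of $\Lambda$ on $(0,1]$ (where indeed $\mathbb{E}[(X_r-1)^+]=nr-1+(1-r)^n$) and by the elementary bound $-\log(1-1/n)\geq 1/n$ for the Kingman atom (which, combined with $\binom{n}{2}/n=(n-1)/2$, matches the atom's contribution to $\delta(n)/n$ exactly). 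What your write-up buys over the paper is transparency and self-containment; what the paper's citation buys is brevity. Both are fine.
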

\begin{proof}
	\citet[Proof Lem.~2.3]{foucart2013impact} proves that $\Ls_{\Lambda}f(n)\leq -1.$ Hence, for the first claim it suffices to prove that $\Ls_{\beta} f(n)\leq \sum_{\ell=2}^m\beta_\ell\sum_{j=n+1}^{n+\ell-1} j/\delta(j)$. Note that
	\begin{align*}
	n[f(n+\ell-1)-f(n)]=n\sum_{j=n+1}^{n+\ell-1} \frac{j}{\delta(j)}  \log\Big(1+\frac{1}{j-1}\Big)\leq\sum_{j=n+1}^{n+\ell-1} \frac{j}{\delta(j)}  \log\Big(\Big(1+\frac{1}{n}\Big)^n\Big)\leq \sum_{j=n+1}^{n+\ell-1} \frac{j}{\delta(j)},
	\end{align*}
	where we use that $(1+1/n)^n$ is monotonically increasing to Euler's number. The first claim follows in a straightforward way. For the second claim, note that Lemma~\ref{lem:propertiesspecialfct} implies that $n/\delta(n)$ is non-increasing and $n/\delta(n)\to 1/c(\Lambda)$ as $n\to\infty$. In particular, for all $\varepsilon>0$, there exists~$n_0\in \N$ such that for all $n\geq n_0$, $$\frac{n}{\delta(n)}\leq \frac{1}{c(\Lambda)}+\varepsilon.$$ For $n\geq n_0$, we can now estimate the right-hand side of~\eqref{eq:lemgeneratorinequality} by $$-1+\sum_{\ell=2}^m\beta_\ell \sum_{j=n+1}^{n+\ell-1}\frac{j}{\delta(j)}\leq -1+ \frac{b(\beta)}{c(\Lambda)}+b(\beta)\varepsilon,$$
	which is negative for $b(\beta)<c(\Lambda)$ and $\varepsilon>0$ small enough.
\end{proof}
The following lemma gives a condition for the positive recurrence of the leaf process. In the case of genic selection, this result again agrees with \citet[Lem.~2.4]{foucart2013impact}. Define for $n\in \N$,
\[T^{(n)}\coloneqq\inf \{s\geq 0: L_s<n\}\quad \text{and} \quad \mathcal{T}_n\coloneqq\inf \{s\geq J_1: L_s=n\},\]
where $J_1$ is the time of the first jump of~$L$.

\begin{lem}\label{lem:FiniteExpectedReturnTime}
	Assume $b(\beta)<c(\Lambda).$  Then there exists $n_0$ and a constant~$\tilde{c}$ such that for all $n\geq n_0$, \[\E_n\big[T^{(n_0)}\big]\ <\ \tilde{c} f(n).\] 
\end{lem}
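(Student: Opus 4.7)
The natural strategy is to treat $f$ as a Lyapunov function for $L$ and run the standard drift/Dynkin argument: Lemma~\ref{lem:generatorinequality} (second part) produces $n_0\in\N$ and $\varepsilon>0$ such that
\[\Ls f(n)\ \leq\ -\varepsilon\qquad \text{for all } n\geq n_0,\]
and Lemma~\ref{lem:cond-cdi-coalescent} guarantees that $f\geq 0$. The goal is to deduce $\E_n[T^{n_0}]\leq f(n)/\varepsilon$, which will furnish the claim with $\tilde c = 1/\varepsilon$.

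To make the Dynkin computation rigorous, I introduce the auxiliary stopping time $\tau_N:=\inf\{s\geq0:L_s\geq N\}$ and set $\sigma_{N,t}:=T^{n_0}\wedge \tau_N\wedge t$. On $\{s<\sigma_{N,t}\}$ one has $n_0\leq L_s<N$, so both $f(L_s)$ and $\Ls f(L_s)$ are bounded and the optional sampling/Dynkin identity gives
\[\E_n\bigl[f(L_{\sigma_{N,t}})\bigr]\ =\ f(n)+\E_n\!\left[\int_0^{\sigma_{N,t}}\Ls f(L_s)\,ds\right]\ \leq\ f(n)-\varepsilon\,\E_n[\sigma_{N,t}].\]
Using $f\geq 0$ rearranges to $\E_n[\sigma_{N,t}]\leq f(n)/\varepsilon$.

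Letting $N\to\infty$ and then $t\to\infty$ and applying monotone convergence, it remains to check that $\tau_N\to\infty$ almost surely, i.e.\ that $L$ does not explode. This is immediate because the branching part of $L$ alone dominates the coalescence part (coalescences only decrease the process), so $L$ is stochastically dominated by a pure birth process in which each particle splits at total rate $|\beta|:=\sum_{\ell=2}^m\beta_\ell$ producing at most $m-1$ new particles; such a process is non-explosive. Hence $\sigma_{N,t}\uparrow T^{n_0}$ and
\[\E_n[T^{n_0}]\ \leq\ \frac{f(n)}{\varepsilon}\ =\ \tilde c\, f(n).\]

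The only delicate point is the non-explosion and the justification of Dynkin's formula for the unbounded function $f$, which is precisely why I insert the truncation $\tau_N$; everything else reduces to the drift estimate already established in Lemma~\ref{lem:generatorinequality}.
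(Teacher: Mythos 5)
Your proof is correct and follows essentially the same route as the paper: identify the drift estimate from Lemma~\ref{lem:generatorinequality} as the Lyapunov condition, apply Dynkin's formula to the stopped process (you localize via the stopping time $\tau_N$; the paper localizes via the truncated function $f_N(n)=f(n)\1_{\{n\leq N+m\}}$, which amounts to the same thing), and justify sending $N\to\infty$ by exactly the same non-explosion argument, namely stochastic domination of $L$ by a pure birth process with split rate $|\beta|$ and offspring bounded by $m-1$.
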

\begin{remark}
	In the statement analogous to Lemma~\ref{lem:FiniteExpectedReturnTime}, \citet{foucart2013impact} has the additional condition $\sum_{k=2}^{\infty}1/\delta(k)=\infty$ to assure that the process is non-explosive. As already noted in~\citep[p.~4]{BLW16} one can easily get rid of this condition.
\end{remark}
\begin{proof}[Proof of Lemma~\ref{lem:FiniteExpectedReturnTime}]
	We mimic the proof of~\citet[Lem.~2.4]{foucart2013impact}. Note that $L$ is dominated by a process that has only~$m$-branchings at rate~$\sum_{\ell=2}^m\beta_\ell$ per existing line and no coalescences. Clearly, this process is non-explosive. Hence, also $L$ is non-explosive. Next, define for $N\in \N$, $f_N(n)\coloneqq f(n)\1_{\{n\leq N+m\}}.$ By Dynkin's formula, $$\Big(f_N(L_t)-\int_0^t \Ls f_N(L_s)\dd s:\,t\geq 0\Big)$$ is a martingale. By Lemma \ref{lem:generatorinequality}, there is~$n_0\in \N$ and $\varepsilon>0$ such that for all $n\geq n_0$, 
	\begin{equation}\label{gen-neg}
	\Ls f(n)\leq -1+b(\beta)/c(\Lambda)+\varepsilon b(\beta) <0.
	\end{equation}
	Set $C\coloneqq1-b(\beta)/c(\Lambda)-\varepsilon b(\beta)$ and let $n_0\leq n \leq N$. Define $S_N\coloneqq\inf\{s\geq 0: L_s\geq N+1\}$. Applying the optional stopping theorem yields \[\E_n\big[f_N(L_{T^{(n_0)}\wedge S_N\wedge k})\big]=f_N(n)+\E_n\left[\int_0^{T^{(n_0)}\wedge S_N\wedge k}\Ls f_N(L_s)\dd s\right].\]
	Clearly, $\Ls f_N(n)=\Ls f(n)$ for $n\leq N$. Thus, using \eqref{gen-neg}, we obtain \begin{align*}
	\E_n\big[f_N(L_{{T^{({n_0})}}\wedge S_N\wedge k})\big]\leq f_N(n)-C\, \E_n\big[ T^{(n_0)}\wedge S_N\wedge k\big].
	\end{align*} Hence, $$C\,\E_n\big[T^{({n_0})}\wedge S_N\wedge k \big]\leq f_N(n)-\E_n\big[f_N(L_{T^{(n_0)}\wedge S_N\wedge k})\big]\leq f_N(n),$$
	where the last inequality is a consequence of the fact that $f_N(n)\geq 0$. Since $L$ is non-explosive, $S_N\to\infty$ almost surely as $N\to\infty$ so that \[C\,\E_n\big[\,T^{(n_0)}\wedge k\,\big]\leq f(n).\] Letting $k\to\infty$ yields the result.
\end{proof}
\begin{corollary}\label{coro:positiverecurrent}
	Assume $b(\beta)<c(\Lambda)$.
	\begin{itemize}
		\item If $\Lambda\neq \delta_1$, then $L$ is positive recurrent.
		\item If $\Lambda=\delta_1$, then $n$ is positive recurrent for~$L$ if and only if there exists $k\in \N$ such that $n=n_1+\ldots+n_k$ for $n_\ell\in \{i\in \N:\beta_{i+1}> 0\}$, $\ell\in [k]$.
	\end{itemize}
	In particular, if $\beta_2>0$, then~$L$ is positive recurrent.
\end{corollary}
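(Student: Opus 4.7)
The plan is to treat the two cases separately. In each, Lemma~\ref{lem:FiniteExpectedReturnTime} supplies the essential moment estimate on the hitting time of a finite initial segment of $\N$; the remaining work is an analysis of the communication structure of $L$, which is markedly different depending on whether $\Lambda = \delta_1$ or not.

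For $\Lambda \neq \delta_1$, I would first observe that pair coalescences occur at positive rate: $\lambda_{n,2} = \int_{[0,1]}(1-r)^{n-2}\Lambda(dr) > 0$ for every $n \geq 2$, since $\Lambda$ then puts positive mass on $[0,1)$. Combined with the fact that at least one branching rate is positive, this shows that $\N$ forms a single communication class for $L$. Positive recurrence then follows from a standard Foster--Lyapunov criterion applied to $f$: by Lemma~\ref{lem:generatorinequality}, $\Ls f(n) \leq -C < 0$ for all $n \geq n_0$, and $L$ is non-explosive (as observed in the proof of Lemma~\ref{lem:FiniteExpectedReturnTime}). Equivalently, one can combine Lemma~\ref{lem:FiniteExpectedReturnTime} with the observation that $\{1,\dots,n_0\}$ is a finite set with bounded transition rates to conclude $\E_n[\mathcal{T}_n] < \infty$ for every $n \in \N$.

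For $\Lambda = \delta_1$, the coalescence rates collapse to $\lambda_{n,k} = \1_{\{k=n\}}$, so every coalescence event is a total merger to state $1$ at rate $\binom{n}{n} = 1$. Hence from any $n \geq 2$ the chain returns to $1$ almost surely, and the recurrent class coincides with the communication class of $1$. To return to a state $n$ from $1$ one must branch up to $n$ without an intermediate coalescence, so the accessible states from $1$ are exactly $\{1 + \sum_{j=1}^{k}(\ell_j - 1) : k \geq 0,\ \beta_{\ell_j}>0\}$, which, setting $n_j := \ell_j - 1 \in \{i : \beta_{i+1} > 0\}$, matches (up to the identification $n = 1 + n_1 + \dots + n_k$) the arithmetic condition of the corollary. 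The expected return time to $1$ is finite because the assumption $b(\beta) < c(\delta_1) = +\infty$ is vacuously satisfied and Lemma~\ref{lem:FiniteExpectedReturnTime} supplies the required bound; the ``in particular'' statement is then immediate, since $\beta_2 > 0$ places $1$ in the branching-increment set and hence makes every natural number a finite sum of such increments.

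The main difficulty I anticipate is the bookkeeping in the $\Lambda = \delta_1$ case: carefully verifying that states outside the communication class of $1$ are not positive recurrent (which follows because every non-coalescent sample path is strictly increasing, so such states, once left, cannot be revisited), and precisely reconciling the arithmetic form in the corollary statement with the accessibility description via the additive semigroup generated by $\{\ell - 1 : \beta_\ell > 0\}$.
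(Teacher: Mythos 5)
Your plan is sound and the final statement is recovered, but the route you take for $\Lambda\neq\delta_1$ is genuinely different from the paper's, and your treatment of $\Lambda=\delta_1$ obscures a cleaner argument.

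For $\Lambda\neq\delta_1$, you invoke a Foster--Lyapunov criterion (drift $\Ls f(n)\leq-C$ off a finite set, plus non-explosivity and irreducibility). This is legitimate, but the paper avoids any appeal to a general Foster--Lyapunov theorem and instead does something more hands-on: it introduces an auxiliary chain $\hat{L}$ whose rates agree with those of $L$ on $\{n_0,n_0+1,\ldots\}$, but for which every re-entry from below $n_0$ is routed to the single state $n_0+m$. This gives the clean decomposition $\E_{n_0+m}[\hat{\mathcal T}_{n_0+m}]\leq\E_{n_0+m}[\hat T^{n_0}]+\sum_{k<n_0}\E_k[\hat T(n_0)]<\infty$ directly from Lemma~\ref{lem:FiniteExpectedReturnTime}, and then transfers positive recurrence of state $1$ from $\hat L$ to $L$ via a coupling $L_t\leq\hat L_t$. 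Your second, ``equivalent'' argument — that finiteness of $\E_n[T^{n_0}]$ plus finiteness of $\{1,\ldots,n_0\}$ suffices to deduce $\E_n[\mathcal T_n]<\infty$ — is exactly the place where the $\hat L$-trick earns its keep: once the chain drops below $n_0$ it will cross back above $n_0$ into one of several states, and you must sew together infinitely many such excursions with a uniform bound (using that jumps are bounded by $m-1$, so re-entry is confined to $\{n_0,\ldots,n_0+m-2\}$). As you have written it, this step is a gap; either spell out the cycle decomposition or invoke a precise Foster--Lyapunov statement with a citation.

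For $\Lambda=\delta_1$, your key observation (total merger to $1$ at rate $\binom{n}{n}\lambda_{n,n}=1$ from any $n\geq2$) is correct and is precisely what the paper uses, but you then needlessly route the finiteness of the expected return time through Lemma~\ref{lem:FiniteExpectedReturnTime}. That lemma bounds $\E_n[T^{n_0}]$ for $n\geq n_0$; what you actually need is $\E_1[\mathcal T_1]<\infty$, which follows directly and elementarily: the coalescence clock rings at constant rate $1$ whenever $L_s\geq2$ (independently of the current state), so the expected time from any state $\geq2$ to $1$ is exactly $1$, and $\E_1[\mathcal T_1]=(\sum_\ell\beta_\ell)^{-1}+1<\infty$ (non-explosivity is needed to rule out explosion before the clock rings, and you cite it correctly). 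You have also correctly flagged the off-by-one discrepancy between your accessibility computation $\{1+\sum_j(\ell_j-1)\}$ and the arithmetic condition printed in the corollary; the paper's proof simply asserts the identification without comment, so this is worth being precise about.
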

\begin{proof}
	Let~$L$ be the leaf process with parameter $(\beta,\Lambda)$ with $\Lambda\neq \delta_1$ and let $q(n,j)$ be the corresponding transition rates. Consider $n_0$ from Lemma~\ref{lem:FiniteExpectedReturnTime}. Define another Markov chain~$\hat{L}$ with transition rates 
	\[\hat{q}(n,j)\coloneqq\begin{cases} 0, &\text{if}\quad n<n_0,\, j\in \{n_0,\ldots,n_0+m-1\},\\
	\sum_{k=n_0}^{n_0+m-1}q(n,k),& \text{if}\quad n<n_0,\, j=n_0+m,\\
	q(n,j), &\text{otherwise.}
	\end{cases}\]
	Define $\hat{T}^{(n)}$ and $\hat{\mathcal{T}}_n$ for $\hat{L}$ as the analogue of $T^{(n)}$ and $\mathcal{T}_n$ for $L$. Since the transition rates of $L$ and $\hat{L}$ agree for $n\geq n_0$, we have $\E_n[\hat{T}^{(n_0)}]=\E_n[T^{(n_0)}]<\tilde{c}f(n)$. Let $T(n_0)\coloneqq\inf\{t\geq 0: L_t\geq n_0\}$. Analogously, define $\hat{T}(n_0)$ for $\hat{L}$. Clearly, $\E_k[T(n_0)]=\E_k[\hat{T}(n_0)]<\infty$ for all $k<n_0$. Hence,\[\E_{n_0+m}[\hat{\mathcal{T}}_{n_0+m}]\leq\E_{n_0+m}[\hat{T}^{(n_0)}]+\sum_{k=1}^{n_0-1}\E_k[\hat{T}(n_0)]<\infty,\] and so $n_0+m$ is positive recurrent for $\hat{L}$. By irreducibility (since $\Lambda\neq \delta_1$), $1$ is positive recurrent for~$\hat{L}$.
	
	Assume that $L_0=\hat{L}_0=n$ for some $n>n_0$. Clearly, there exists a coupling such that $L_t\leq \hat{L}_t$ for all $t\geq 0$. In particular, since state~$1$ is positive recurrent for~$\hat{L}$, $1$ is also positive recurrent for~$L$. By irreducibility, all states of~$L$ are positive recurrent. If $\Lambda=\delta_1$, then $\E_n[{\mathcal{T}}_1]<\infty$. Hence,~$1$ is positive recurrent for~$L$. The result follows since the communication class of~$1$ consists of all~$n$ for which there exists $k\in \N$ such that $n=n_1+\ldots+n_k$ for some $n_\ell\in \{i\in \N:\beta_{i+1}> 0\}$, $\ell\in [k]$.
\end{proof}

The next results agrees with~\citet[Lem.2.5]{foucart2013impact} in the case of genic selection.
\begin{lem}\label{lem:degreeprocesstransient}
	If $b(\beta)>c(\Lambda)$, then $L$ is transient.
\end{lem}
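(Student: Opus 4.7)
The plan is to construct a bounded nonnegative Lyapunov function $g:\N\to[0,\infty)$ with $g(n)\to 0$ as $n\to\infty$ and $\Ls g(n)\le 0$ for all $n$ exceeding some threshold $n_0$. Given such a $g$, transience of~$L$ follows by a standard supermartingale argument: setting $T:=\inf\{t\geq 0:L_t<n_0\}$, recurrence would force $T<\infty$ almost surely, so optional stopping of the bounded supermartingale $(g(L_{t\wedge T}))_{t\ge 0}$ would give $g(n)\ge\E_n[g(L_T)]\ge\min_{1\le k<n_0}g(k)>0$ for every $n\ge n_0$, contradicting $g(n)\to 0$.

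Motivated by the heuristic that $\log L_t$ should have asymptotic positive drift $b(\beta)-c(\Lambda)$—the selective branchings contributing $+b(\beta)$ per unit time, while the $\Lambda$-coalescent contracts $\log$ at rate $c(\Lambda)$—I would take $g(n):=(1+\log n)^{-\alpha}$ for a small $\alpha>0$. Using that $F(x):=(1+x)^{-\alpha}$ is $\alpha$-Lipschitz on $[0,\infty)$, a first-order Taylor expansion immediately gives $\Ls_\beta g(n)\sim-\alpha b(\beta)(1+\log n)^{-\alpha-1}$. For the coalescent part, the Pitman--Sagitov representation $\lambda_{n,k}=\int_0^1 r^{k-2}(1-r)^{n-k}\Lambda(dr)$ together with the law of large numbers $K/n\to r$ (for $K\sim\bindist{n}{r}$) yield, after a short manipulation, the pointwise limit
\[(1+\log n)^{\alpha+1}\bigl(g(n-K+1)-g(n)\bigr)\;=\;(1+\log n)\Bigl[\bigl((1+\log n)/(1+\log(n-K+1))\bigr)^{\alpha}-1\Bigr]\;\xrightarrow[n\to\infty]{}\;-\alpha\log(1-r),\]
which, integrated against $\Lambda(dr)/r^2$, gives $\Ls_\Lambda g(n)\sim \alpha c(\Lambda)(1+\log n)^{-\alpha-1}$. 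Combining, $\Ls g(n)\sim-\alpha(b(\beta)-c(\Lambda))(1+\log n)^{-\alpha-1}<0$ for all $n$ sufficiently large.

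The main technical obstacle is the dominated-convergence step in the coalescent computation: one has to produce a $\Lambda$-integrable dominant in $r$, uniformly in $n$, for the integrand in the last display. Near $r=0$, the elementary bound $u^{\alpha}-1\le \alpha(u-1)$ (valid for $u\ge 1$, $\alpha\in(0,1]$) combined with standard binomial moment estimates gives a dominant of order $-\log(1-r)/r^2$, which is $\Lambda$-integrable by the very hypothesis $c(\Lambda)<\infty$. Near $r=1$ the situation is more delicate; one exploits that $c(\Lambda)<\infty$ forces $\Lambda(\{1\})=0$, so $\Lambda((1-\delta,1))$ can be made arbitrarily small, which combined with a crude bound on that piece closes the argument by an $\varepsilon/2$-splitting. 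Working with $(1+\log n)^{-\alpha}$ rather than the naive $n^{-\alpha}$ is essential here, because the Lipschitz property of $F$ keeps each coalescent jump bounded uniformly in the merger size, which is what makes these estimates robust for any $\Lambda$ with $c(\Lambda)<\infty$.
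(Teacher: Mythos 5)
Your approach is the same as the paper's: a bounded Lyapunov function of order $(\log n)^{-\alpha}$ (the paper takes $\alpha=1$, $g(n)=1/\log(n+1)$) together with a supermartingale/optional-stopping argument, and your heuristic generator computation for $\Ls_\beta g$ and $\Ls_\Lambda g$ is correct. The gap is exactly in the step you flag as the main obstacle, and your proposed $\varepsilon/2$-splitting near $r=1$ does not close it. Write $h_n(r):=(1+\log n)^{\alpha+1}\E\big[(g(n-K+1)-g(n))\ind_{\{K\ge 2\}}\big]$ with $K\sim\bindist{n}{r}$, so that $(1+\log n)^{\alpha+1}\Ls_\Lambda g(n)=\int_0^1 h_n(r)\,r^{-2}\Lambda(dr)$. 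For $r=1-c/n$ one has $n-K\approx\mathrm{Poisson}(c)$, so $n-K+1=O(1)$ with probability bounded away from $0$ and hence $h_n(r)\asymp (1+\log n)^{\alpha+1}$, whereas the pointwise limit $-\alpha\log(1-r)\asymp\alpha\log n$. The ratio is thus of order $(\log n)^\alpha$, unbounded in $n$, so no $n$-independent dominant of the form $C\,(-\log(1-r))$ exists on $(1-\delta,1)$. And any crude uniform-in-$r$ bound on that piece is itself of order $(1+\log n)^{\alpha+1}$; multiplying it by the small but $n$-independent quantity $\Lambda\big((1-\delta,1)\big)$ still diverges with $n$, so the "smallness of $\Lambda$ near $1$ plus a crude bound" argument does not produce a quantity $<\varepsilon/2$ uniformly in $n$.

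What is genuinely needed is an $n$-dependent estimate near $r=1$ — for instance a one-sided bound $h_n(r)\le -\alpha\log(1-r)+o(1)$ uniformly on $(1-\delta,1-\phi(n))$ for a suitably slowly vanishing $\phi(n)$ depending on $\Lambda$, together with a separate tail estimate on $(1-\phi(n),1)$ — or else a direct proof of $\limsup_n (1+\log n)^{\alpha+1}\Ls_\Lambda g(n)\le\alpha\,c(\Lambda)$ that sidesteps dominated convergence. (The lower bound $\liminf\ge\alpha c(\Lambda)$ is the easy direction via Fatou; it is the $\limsup$ that carries the content.) This is the nontrivial part of the lemma, and the paper does not redo it: it cites Foucart (2014, Lem.~0.1) for $\Ls_\Lambda g(n)=\dfrac{c(\Lambda)+o(1)}{\log(n+1)\log(n+2)}$ and only supplies the complementary, elementary estimate $\Ls_\beta g(n)\le\dfrac{-b(\beta)+o(1)}{\log(n+1)\log(n+2)}$. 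Your supermartingale step is fine (assuming recurrence and using optional stopping of the bounded supermartingale to contradict $g(n)\to 0$), and your branching computation is fine; the coalescent domination is the part that remains open as written.
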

\begin{proof}
	Again, we mimic the proof of~\citet[Lem.~0.1]{foucart2014}. Assume that there is~$n_0\in \N$ and a bounded strictly decreasing function~$g$ that is chosen such that $\Ls g(n)<0$ for all $n\geq n_0$. The process $(g(L_{t\wedge T^{(n_0)}}):\,t\geq 0)$ started from $n\geq n_0$ is a supermartingale. By the martingale convergence theorem, $\lim_{t\to\infty} \E_n[g(L_{t\wedge T^{(n_0)}})]\leq g(n)<g(n_0)$ and so $\P_n(T^{(n_0)}<\infty)<1$. Decompose \[\P_{n_0-1} (T(n_0-1)<\infty)=\sum_{n< n_0-1}\P_{n} (T(n_0-1)<\infty)\P(L_{J_1}=n)
	+ \sum_{n\geq n_0}^{\infty}\P_{n} (T(n_0-1)<\infty)\P(L_{J_1}=n).\] Since for $n\geq n_0$, $\P_{n}(T(n_0-1)<\infty)<\P_{n}(T^{(n_0)}<\infty)<1$, it follows that $\P_{n_0-1}(T(n_0-1)<\infty)<1$, and so $L$ is transient~\citep[Thm.~4.3.2]{norris1998markov}. As we will show, the conditions are indeed satisfied for the function $g(n)\coloneqq 1/\log(n+1).$
	It is proven in~\citet[p.2]{foucart2014} that $$\Ls_{\Lambda} g(n)=\frac{1}{\log(n+1)\log(n+2)}\big(c(\Lambda)+o(1)\big).$$
	Our claim is that $\Ls_{\beta}g(n)\leq \frac{1}{\log(n+1)\log(n+2)}(-b(\beta) + o(1))$. It then follows together with the just mentioned result of~\citet{foucart2014} that \[\Ls g(n)= \frac{1}{\log(n+2)\log(n+1)}\big(c(\Lambda)-b(\beta)+o(1) \big)<0,\] for $n$ large enough. It remains to prove the claim.
	Note that, \begin{align*}
	\Ls_{\beta} g(n)&=\sum_{\ell=2}^m \beta_\ell \frac{1}{\log(n+\ell)\log(n+1)} n\log\Big(\frac{n+1}{n+\ell}\Big)\\
	&=-\sum_{\ell=2}^m\beta_\ell \frac{1}{\log(n+\ell)\log(n+1)} \big((\ell-1)+o(1)\big)\\
	&\leq -\frac{1}{\log(n+m)\log(n+1)} \big(b(\beta)+o(1)\big),
	\end{align*}
	where in the second equality, we use that $n\log((n+1)/(n+\ell))=-((\ell-1)+o(1)).$
	Finally, since $\log(n+2)/\log(n+m)=1-o(1),$ we have\[-\frac{1}{\log(n+2)\log(n+1)} \frac{\log(n+2)}{\log(n+m)}\big(b(\beta)+o(1)\big)\leq \frac{1}{\log(n+2)\log(n+1)} \big(-b(\beta)+o(1)\big),\] which proves the claim.
\end{proof}
\begin{proof}[Proof of~Theorem~\ref{thm:degreeprocessrecurrenttransient(intro)}]
	The result follows from Corollary~\ref{coro:positiverecurrent} and Lemma~\ref{lem:degreeprocesstransient}. 
\end{proof}

\subsection{Siegmund duality and Fearnhead-type recursions for the leaf process}\label{s5.2}
As mentioned in Section \ref{s2.4} the leaf process $L$ is stochastically monotone, i.e. $n\mapsto\Pb_n(L_t\geq \ell)$ is non-decreasing, see Remark~\ref{SM} below. It is well-known that this implies that $L$ admits a so-called \emph{Siegmund dual} process \cite{siegmund1976}. In this section we spell out this duality in our setting and use it as an analytical tool to derive recursions for the stationary tail-probabilities of~$L$ (in the positive recurrent case). We subsequently use these recursions to deduce that the leaf process has exponential moments if $\Lambda(\{0\})>0$.
\smallskip

Let $(\beta,\Lambda)\in \R_+^{m-1}\times\Ms_f^*([0,1])$ and consider the process $D\coloneqq(D_t:\,t\geq 0)$ on $\N\cup\{\infty\}$ with infinitesimal generator $Gg(d)\coloneqq G_{\Lambda}g(d)+G_{\beta}g(d)$, where \begin{align*}
G_{\Lambda}g(d)&\coloneqq\ \ \sum_{c\geq 1} \binom{d+c-1}{c+1}\lambda_{c+d,c+1}[g(c+d)-g(d)] + \1_{\{d\geq 2\}}\Lambda (\{1\}) [g(\infty)-g(d)],\\
G_{\beta}g(d) &\coloneqq\sum_{r=1}^{(m\wedge d)-1} \Big((d-r)\beta_{r+1} + \sum_{k=r+1}^{m-1} \beta_{k+1} \Big)[g(d-r)-g(d)],
\end{align*}
and which is absorbed the first time it reaches infinity. The latter can occur due to a jump, or an explosion of the process. The other absorbing state of~$D$ is $1$. It is well-known that in the neutral case and in the case of genic selection, the process~$D$ (shifted by $-1$) corresponds to the \emph{fixation line} \citep[Rem. 4.6]{BLW16}. Loosely speaking, the fixation line codes how a new most recent common ancestor establishes itself in the population, see \citep{achaz2018sequential, henard2015,gaiser2016,pfaffelhuber2006} for more details.

\begin{remark}[Stochastic monotonicity]\label{SM}
Consider the leaf processes $\bar{L}$ with $\bar{L}_0=\bar{n}$ and construct the process $L$ as follows. Set $L_0=n<\bar{n}$. Assume $\bar{L}_{t-}=\bar{\ell}\geq \ell=L_{t-}$. If $\bar{L}$ increases by~$j-1\in[m-1]$ at time~$t$, then set $L_t=\ell+j-1$ (resp. $L_t=\ell$) with probability $\ell/\bar{\ell}$ (resp. $1-\ell/\bar{\ell}$). If $\bar{L}$ decreases to~$\bar{\ell}-k+1$ at time~$t$ for some $k\in \, ]\bar{\ell}\,]$, then set
$L_t=\ell-K_{\ell,k}^{\bar{\ell}}+1$, where $K_{\ell,k}^{\bar{\ell}}\sim\hypdist{\bar{\ell}}{\ell}{k}$. By construction, $\bar{L}_t\geq L_t$ for all $t\geq 0$, and $L$ is a leaf process started at $n$ with the same parameters as~$\bar{L}$. This coupling shows that the leaf process is indeed stochastically monotone.
\end{remark} 

\begin{lemma}[Siegmund duality]\label{lem:siegmundDuality}
	The process $(D_t:\,t\geq 0)$ and $(L_t:\,t\geq 0)$ are Siegmund dual, i.e. \begin{equation}
	\P_\ell(d\leq L_t)=\P_d(D_t\leq \ell), \qquad  \forall\, \ell\in \N,\, d\in \N\cup\{\infty\},\, t\geq 0.
	\end{equation}
\end{lemma}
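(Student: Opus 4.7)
The plan is to invoke the standard generator criterion for Siegmund duality (e.g.\ \cite[Cor.~4.4.13]{Ku86}) with the bounded duality function $H(\ell,d):=\1_{\{d\leq\ell\}}$. This reduces the problem to verifying the pointwise identity
\[
\Ls H(\cdot,d)(\ell) \;=\; G\,H(\ell,\cdot)(d)\qquad \text{for all } \ell, d\in\N,
\]
after which the process-level duality follows because $H$ is bounded and both $L$ (non-explosive by Section~\ref{s5.1}) and $D$ (its upward rate out of $d$ amounts to a finite integral against $\Lambda$ plus the atom at $1$, while its downward rate is linear in $d$) are non-explosive.

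The generator identity splits by the sign of $d-\ell$. For $d>\ell$ we have $H\equiv 0$, so only the branching part of $\Ls$ and the downward-jump part of $G$ contribute, and the identity reduces to the algebraic equality
\[
\ell\sum_{k=d-\ell+1}^{m}\beta_k \;=\; \sum_{r=(d-\ell)\vee 1}^{(m\wedge d)-1}\Big((d-r)\beta_{r+1}+\sum_{k=r+1}^{m-1}\beta_{k+1}\Big),
\]
which I would verify by switching the order of summation on the right and checking that each $\beta_k$, $k\in[d-\ell+1,m]$, picks up net multiplicity $\ell$ on both sides. For $d\leq\ell$, $H\equiv 1$, and only the coalescence and upward-jump parts contribute; substituting $\lambda_{n,k}=\int_{[0,1]} r^{k-2}(1-r)^{n-k}\Lambda(dr)$ and interchanging sum with integral (Tonelli), the identity reduces to an integral equality against $\Lambda$ whose analytic core is the combinatorial identity
\[
\sum_{k=0}^{\ell-d+1}\binom{\ell}{k}r^{k}(1-r)^{\ell-k}\;=\;(1-r)^{d-1}\sum_{k=0}^{\ell-d+1}\binom{d+k-2}{k}r^{k},
\]
which is simply the equality of two expressions for $\P(\mathrm{Bin}(\ell,r)\leq\ell-d+1)$: directly as a tail binomial sum on the left, and via its negative-binomial re-expression on the right (the number of $r$-failures before the $(d-1)$-th $(1-r)$-success is at most $\ell-d+1$). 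Combined with $\sum_{k\geq 0}\binom{d+k-2}{k}r^k=(1-r)^{-(d-1)}$, this forces the two integrands to coincide on $(0,1)$.

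The main obstacle lies in the careful treatment of $\Lambda$ at its atoms. Near $r=0$, both integrands carry a singular $r^{-2}$ term in their closed form that cancels only after subtraction, so a Taylor expansion of the difference is required to ensure integrability against $\Lambda(\{0\})$; the remaining $\binom{\ell}{2}\1_{\{d=\ell\}}$ contribution matches on both sides. At $r=1$, every term in the series defining the right-hand integrand vanishes because of the factor $(1-r)^{d-1}$ with $d\geq 2$, so the $\Lambda(\{1\})$ mass is missed by the integral and must be restored by the explicit $\Lambda(\{1\})[g(\infty)-g(d)]$ term built into the definition of $G$. Once these book-keeping matters are settled, the lifting argument completes the proof.
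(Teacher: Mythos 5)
Your route is the same as the paper's — establish the generator identity $\Ls H(\cdot,d)(\ell)=G\,H(\ell,\cdot)(d)$ for $H(\ell,d)=\1_{\{d\le\ell\}}$ and then lift — but you carry out more of the computation from scratch. Where the paper disposes of the coalescence/upward-jump block in one sentence by citing the neutral Siegmund duality between the leaf process and the fixation line from \citep[Eq.~(4.1), Lem.~4.5]{BLW16}, you re-derive that block, reducing it after the substitution $\lambda_{n,k}=\int r^{k-2}(1-r)^{n-k}\Lambda(dr)$ to the tail identity $\P(\mathrm{Bin}(\ell,r)\le\ell-d+1)=\P(\mathrm{NegBin}(d-1,1-r)\le\ell-d+1)$. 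That identity is correct, and your book-keeping at the atoms is right: on each side the $\Lambda(\{0\})$ mass contributes only the $\binom{\ell}{2}\1_{\{d=\ell\}}$ term, while the $\Lambda(\{1\})$ mass that the integrand on the $D$-side misses (because of the factor $(1-r)^{d-1}$) is exactly restored by the $\Lambda(\{1\})[g(\infty)-g(d)]$ summand built into $G_\Lambda$. Your $\beta$-side identity for $d>\ell$ also checks out: after swapping the order of summation, every $\beta_j$ with $j\ge d-\ell+1$ collects total coefficient $\ell$ on the right-hand side. The payoff of your version is a self-contained proof that does not lean on the external neutral-case result; the cost is length.

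One flaw in the lifting step: you justify it by asserting that $D$ is non-explosive because its rates are finite, but bounded exit rates at each state do not rule out explosion. In fact the upward rate of $D$ out of $d$ grows like $\binom{d}{2}\Lambda([0,1])$, so $D$ may well accumulate jumps and reach $\infty$ in finite time whenever the underlying $\Lambda$-coalescent comes down from infinity. This is harmless — $\infty$ is already a state of $D$, $1$ is absorbing, and $H(\ell,\infty)=0$ — but the non-explosion claim for $D$ should be dropped; justify the lift as the paper does via \citep[Prop.~1.2]{Jaku}, or argue directly that $D$ is a bona fide Markov process on the compactified space $\N\cup\{\infty\}$ and $L$ is non-explosive on $\N$, which together with boundedness of $H$ suffices.
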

\begin{proof}
	For $d,\ell\in \N$, set $\hat{H}(d,\ell)=\1_{\{d\leq \ell\}}$. We will show that $\Ls \hat{H}(d,\cdot)(\ell) =G\hat{H}(\cdot,\ell)(d)$. The result then follows by~\citep[Prop.~1.2]{Jaku}. First, note that the result in the neutral case (see~ \citep[Eq. (4.1), Lem.~4.5]{BLW16}) implies that the processes generated by $\Ls_{\Lambda}$ and $G_{\Lambda}$ are dual with respect to~$\hat{H}$. Hence, $\Ls_{\Lambda}\hat{H}(d,\cdot)(\ell)=G_{\Lambda}\hat{H}(\cdot,\ell)(d)$. Thus, it remains to prove $\Ls_{\beta} \hat{H}(d,\cdot)(\ell) =G_{\beta}\hat{H}(\cdot,\ell)(d)$. Clearly, for $d\leq \ell$ we have $\Ls_{\beta} \hat{H}(d,\cdot)(\ell)=0 =G_{\beta}\hat{H}(\cdot,\ell)(d)$. In addition, for $d>\ell,$ a straightforward calculation yields
	\begin{align*}
	G_{\beta}\hat{H}(\cdot,\ell)(d)&-\Ls_{\beta}\hat{H}(d,\cdot)(\ell)\\
	=&\sum_{r=2}^{(d\wedge m)-1} (d-r)\beta_{r+1}\1_{\{d\leq \ell+r\}} + \sum_{r=2}^{m-1}\beta_{r+1}(\ell+d\wedge r-d)\1_{\{d<\ell+d\wedge r\}}-\ell\sum_{r=2}^{m-1}\beta_{r+1}\1_{\{d\leq \ell+r\}}. \end{align*}
	The result follows by inspecting the cases $m\leq d$ and $m>d$.
\end{proof}
For the remainder of this section, we assume~$b(\beta)<c(\lambda)$ so that~$L$ is positive recurrent (in the communication class of~$1$). Let $L_\infty$ be a random variable distributed according to the stationary distribution of $L$, and define for $n\in \Nb$, $a_n\coloneqq\Pb(L_\infty>n)$. By the Siegmund duality we deduce that $a_n=\Pb_{n+1}(D \textrm{ absorbs in } 1)$. \citep{BLW16} already exploited this relation to obtain a generalisation of the Fearnhead recursion~\citep{fearnhead2002common} for a $\Lambda$-Wright--Fisher process with genic selection. We further generalise this recursions to our setting. 

\begin{proposition}\label{prop:fearnhead}
	The tail-probabilities are the unique solution to the system of equations 
	\begin{equation}\label{eq:fearnhead-recursion}
	\sum_{c\geq 2} \binom{n+c-1}{c}\lambda_{c+n,c}[a_n-a_{c+n-1}] + \Lambda(\{1\})a_n= \sum_{r=1}^{(m-1)\wedge n} \! \! \! \! \Big((n+1-r)\beta_{r+1} + \sum_{k=r+1}^{m-1} \beta_{k+1} \Big) [a_{n-r}-a_{n}]
	\end{equation}
	with boundary conditions $a_0=1$ and $\lim_{n\to\infty} a_n=0$.
\end{proposition}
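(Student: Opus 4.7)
The plan is to use the Siegmund duality (Lemma~\ref{lem:siegmundDuality}) to identify the tail probabilities $a_n$ with absorption probabilities of the dual chain $D$, and then to rephrase the harmonicity equation for those absorption probabilities as the announced recursion.

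First, I would argue the identification $a_n = h(n+1)$, where $h(d):=\P_d(\tau_1<\infty)$ and $\tau_1$ is the hitting time of the absorbing state~$1$ of~$D$. Since $b(\beta)<c(\Lambda)$, Corollary~\ref{coro:positiverecurrent} yields positive recurrence of~$L$, so $L_t \xRightarrow{(d)} L_\infty$ as $t\to\infty$. Writing the Siegmund duality at time~$t$ and sending $t\to\infty$ with $\ell=1$ gives
\[ a_n \ = \ \P(L_\infty\geq n+1) \ = \ \lim_{t\to\infty}\P_{n+1}(D_t\leq 1) \ = \ \P_{n+1}(\tau_1<\infty) \ = \ h(n+1),\]
where I used that $1$ is absorbing for~$D$. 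The boundary conditions follow at once: $a_0=\P(L_\infty\geq 1)=1$ since $L_\infty\in\N$, and $\lim_n a_n=0$ by positive recurrence.

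Second, a standard first-step analysis gives that $h$ is harmonic for $G$ on $\{d\geq 2\}$, i.e. $Gh(d)=0$, with $h(1)=1$ and $h(\infty)=0$. I would substitute $d=n+1$ into $Gh(d)=0$, use $h(n+1)=a_n$, $h(n+1-r)=a_{n-r}$, $h(\infty)=0$, and reindex the $G_\Lambda$-sum via $c\mapsto c-1$, so that $\binom{d+c-1}{c+1}\lambda_{c+d,c+1}$ becomes $\binom{n+c-1}{c}\lambda_{c+n,c}$ and the running index starts at $c=2$. Using the identity $(m\wedge(n+1))-1=(m-1)\wedge n$ in the upper bound of the $G_\beta$-sum and rearranging signs, the equation $Gh(n+1)=0$ coincides exactly with~\eqref{eq:fearnhead-recursion}.

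For uniqueness, I would let $(b_n)$ be any solution of \eqref{eq:fearnhead-recursion} satisfying the boundary conditions, set $g(d):=b_{d-1}$ for $d\in\N$ and $g(\infty):=0$, and observe that the recursion precisely encodes $Gg(d)=0$ for $d\geq 2$. The conditions $b_0=1$, $\lim_n b_n=0$ together with finiteness on finite sets make $g$ bounded. Since $L$ is positive recurrent, Siegmund duality applied to the indicators $\1_{\{L_t\geq d\}}$ forces $\P_d(D_t\in \N)\to \P(L_\infty\geq d)+\P_d(\tau_1<\infty)-\P(L_\infty\geq d)$, and in particular $D$ is almost surely absorbed at $1$ or at~$\infty$. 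Optional stopping applied to the bounded martingale $(g(D_{t\wedge\tau}))_{t\geq 0}$ with $\tau=\tau_1\wedge\tau_\infty$ then yields $g(d)=\P_d(\tau_1<\infty)=h(d)$, so $b_n=a_n$.

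The main obstacle I anticipate is Step~2: one must keep careful track of indices (the reindexing of the $G_\Lambda$-sum, the binomial coefficient shift, and the equality $(m\wedge(n+1))-1=(m-1)\wedge n$) so as to land precisely on \eqref{eq:fearnhead-recursion} rather than on a merely equivalent rearrangement. The uniqueness step is conceptually routine but requires the a priori boundedness of $g$, which one obtains from the prescribed boundary condition $\lim_n b_n=0$.
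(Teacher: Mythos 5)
Your Step 1 (identifying $a_n = \P_{n+1}(D\text{ absorbs in }1)$ via Siegmund duality) and Step 2 (harmonicity of $h$ for $G$) are exactly the paper's route; the paper compresses the index-chasing to ``$Gh(n)=0$, and Eq.~\eqref{eq:fearnhead-recursion} follows,'' but the content is the same. The boundary conditions are also handled identically (the paper simply notes they hold because $a_n$ is a tail probability).

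Your uniqueness step, however, is structured differently from the paper's and contains a genuine gap. You stop the bounded martingale $(g(D_t))$ at $\tau = \tau_1\wedge\tau_\infty$. When $\Lambda(\{1\})=0$, the dual $D$ never reaches $\infty$ in finite time (no generator rate sends a finite state to $\infty$), so $\tau_\infty=\infty$ a.s.\ and $\tau=\tau_1$, which is infinite with positive probability (equal to $1-\P(L_\infty\geq d)>0$). Optional stopping then does not apply as stated, and you would instead need martingale convergence plus the almost-sure path statement ``$D_t\to\infty$ on $\{\tau_1=\infty\}$''; the Siegmund-duality computation you sketch only gives convergence in distribution of $D_t$, not that pathwise escape. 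Closing this gap requires a transience argument for the finite states $\{2,\dots,k\}$. The paper sidesteps this neatly: it considers the \emph{difference} $g$ of two solutions (so $g(1)=0$ and $g(n)\to 0$), applies optional stopping to the finite stopping times $T_{1,k}=\inf\{t: D_t\in\{1\}\cup\{k,k+1,\dots\}\}$ (finite a.s.\ since $\{2,\dots,k-1\}$ is a finite escapable set), and lets $k\to\infty$ using $g(D_{T_{1,k}})\to 0$ and dominated convergence. You should either adopt that device or supply the missing transience lemma for $D$ restricted to $\{\tau_1=\infty\}$.
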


\begin{proof}
	Clearly, the boundary conditions hold because $a_n$ is a tail probability. Set $h(n)\coloneqq a_{n-1}$, $n\in\Nb$. Lemma~\ref{lem:siegmundDuality} implies that
	$h(n)=\Pb_{n}(D \textrm{ absorbs in } 1)$. Hence, $h$ is harmonic for $G$, i.e. $Gh(n)=0$, and Eq.~\eqref{eq:fearnhead-recursion} follows. For the uniqueness we follow the proof of~\citep[Thm. 2.4]{BLW16}. Denote by $a'=(a'_{n})_{n\geq 0}$ another solution of the recursion and set $g(n)\coloneqq a_{n-1}-a_{n-1}'$ for~$n\in\Nb$. Hence, $g(1)=0$, and $\lim_{n\to\infty}g(n)=0$. Since $n\mapsto a_{n-1}$ and $n\mapsto a'_{n-1}$ are both harmonic for~$G$, also~$g$ is harmonic for~$G$. In particular, $(g(D_t):\,t\geq 0)$ is a bounded martingale. Let $T_{1,k}\coloneqq\inf\{t\geq 0: D_t\in\{1,k,k+1,\ldots\}\}$. Then $T_{1,k}$ is almost surely finite for every $k\in\Nb$. If $D_0=d$, the optional stopping theorem yields $g(d)=\E_{d}[g(D_{T_{1,k}})]$ for all~$k\in\Nb$. But $g(D_{T_{1,k}})\to 0$ as $k\to\infty$. Hence, by the dominated convergence theorem, $g(d)=0$ for all~$d\in \N$, and so $a'=a$.  
\end{proof}
\begin{remark}
	In the case of genic selection, \citep{CM19} gives a general approach to solve \eqref{eq:fearnhead-recursion} and provides explicit solutions for the Kingman case, the star-shaped case, and the Bolthausen-Sznitman case.  
\end{remark}

\begin{corollary}\label{coro:exponentialMoments}
	If $\Lambda(\{0\})>0$, then $L_{\infty}$ has exponential moments of all orders, i.e. $\E[\exp(xL_{\infty})]<\infty$ for all $x\in \Rb$.
\end{corollary}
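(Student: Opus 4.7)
Since $\Lambda(\{0\})>0$ forces $c(\Lambda)=\infty$, the hypothesis $b(\beta)<c(\Lambda)$ (under which $L_\infty$ is defined) is automatic, and Corollary~\ref{coro:positiverecurrent} gives positive recurrence of $L$ with stationary law $\pi=\mathrm{Law}(L_\infty)$. My strategy is the standard Foster--Lyapunov approach: fix $x>0$, take the Lyapunov function $f(n):=e^{xn}$, and show that $\mathcal{L}f(n)\le -c_x f(n)$ for all $n$ outside a finite set. The key observation is that the binary Kingman coalescence alone contributes a \emph{quadratic} negative term, which dominates the \emph{linear} positive contribution from branching.

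\textbf{Main computation.} Applying the generator~\eqref{eq:generatordegreeprocess} to $f$ yields
\begin{equation*}
\mathcal{L}f(n) \ =\ e^{xn}\!\left[\, n\sum_{\ell=2}^m\beta_\ell(e^{x(\ell-1)}-1) \ +\ \sum_{k=2}^n\binom{n}{k}\lambda_{n,k}\bigl(e^{-x(k-1)}-1\bigr)\right].
\end{equation*}
Set $B(x):=\sum_{\ell=2}^m\beta_\ell(e^{x(\ell-1)}-1)<\infty$. Every term in the coalescence sum is non-positive, so keeping only $k=2$ and using $\lambda_{n,2}\ge \Lambda(\{0\})$ gives
\begin{equation*}
\mathcal{L}f(n)\ \le\ e^{xn}\!\left[\, nB(x) \ -\ \binom{n}{2}\Lambda(\{0\})(1-e^{-x})\right] \ =\ n\,e^{xn}\!\left[B(x)-\tfrac{(n-1)\Lambda(\{0\})(1-e^{-x})}{2}\right].
\end{equation*}
Since $\Lambda(\{0\})(1-e^{-x})>0$, the bracket is strictly negative as soon as $n>n_0(x):=1+2B(x)/(\Lambda(\{0\})(1-e^{-x}))$, and a crude bound gives a constant $c_x>0$ with $\mathcal{L}f(n)\le -c_x f(n)$ for all $n\ge n_0(x)$. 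Since $\mathcal{L}f(n)$ is a finite sum of finite quantities for each individual $n$, it is also bounded by some $M_x<\infty$ on the finite set $\{1,\dots,n_0(x)-1\}$, so one has the global Foster--Lyapunov inequality
\begin{equation*}
\mathcal{L}f(n)\ \le\ -c_x f(n)\ +\ D_x\,\mathbf{1}_{\{n<n_0(x)\}}.
\end{equation*}

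\textbf{From the Lyapunov inequality to integrability.} To extract $\pi(f)<\infty$ despite $f$ being unbounded, I would truncate: set $f_N(n):=f(n)\wedge e^{xN}$ (which is bounded, so in the domain), apply Dynkin's formula and optional stopping to the martingale $f_N(L_{t\wedge\tau_A})-\int_0^{t\wedge\tau_A}\mathcal{L}f_N(L_s)\,ds$, where $A:=\{1,\dots,n_0(x)\}$ and $\tau_A$ is the hitting time of $A$. The pointwise Lyapunov bound (which transfers to $f_N$ for $n\le N$ up to boundary corrections that vanish as $N\to\infty$) yields excursion-wise estimates $\mathbb{E}_n\bigl[\int_0^{\tau_A}f(L_s)\,ds\bigr]\le c_x^{-1}f(n)$. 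Combined with Kac's formula
\begin{equation*}
\pi(f)\ =\ \frac{\mathbb{E}_{n^\ast}\!\left[\int_0^{\tau_{n^\ast}^+}f(L_s)\,ds\right]}{\mathbb{E}_{n^\ast}[\tau_{n^\ast}^+]}
\end{equation*}
for any fixed $n^\ast\in A$, and the positive recurrence of $L$, this gives $\pi(f)<\infty$ after a monotone passage $N\to\infty$. As $x>0$ is arbitrary, all exponential moments of $L_\infty$ are finite.

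\textbf{Main obstacle.} The algebraic input is clean — binary Kingman mergers yield a strictly negative $O(n^2)$ drift of $f$, dwarfing the $O(n)$ branching contribution. The only delicate point is the passage from the pointwise inequality $\mathcal{L}f\le -c_xf+D_x\mathbf{1}_{\{n<n_0\}}$ to $\pi$-integrability of the unbounded $f$; this is the standard truncation/excursion step, and it is robust here because positive recurrence and finiteness of $\mathcal{L}f_N$ are already in hand. An alternative route via the Fearnhead-type recursion~\eqref{eq:fearnhead-recursion} is conceivable (using $\lambda_{n+2,2}\ge\Lambda(\{0\})$ to compare $a_n$ to a geometric sequence), but the Lyapunov route is more transparent.
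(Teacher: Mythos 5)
Your proof is correct, and it takes a genuinely different route from the paper's. The paper proves this corollary as a by-product of the Fearnhead-type recursion~\eqref{eq:fearnhead-recursion}: starting from Eq.~\eqref{eq:recursionProbabilitiesLeafprocess}, it isolates the $q_{n+1}$ term on the left (bounding its coefficient from below by $\tfrac{n+1}{2}\Lambda(\{0\})$) and bounds the right-hand side by $m|\beta|\,q_{r(n+1)}$ with $r(n+1)\in\{n-m+2,\dots,n\}$, from which an iteration yields a factorial-type decay $q_n\lesssim \tfrac{1}{\hat d(n)!}\bigl(\tfrac{2m|\beta|}{\Lambda(\{0\})}\bigr)^{\hat d(n)}$ with $\hat d(n)\geq n/m-1$. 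Your Foster--Lyapunov argument, applying the generator~\eqref{eq:generatordegreeprocess} to $f(n)=e^{xn}$, retaining only the binary-Kingman term $\binom{n}{2}\Lambda(\{0\})(1-e^{-x})$, and combining the resulting drift inequality with positive recurrence and Kac's formula, avoids the recursion entirely and is more in the spirit of the Lyapunov machinery already used for Lemma~\ref{lem:FiniteExpectedReturnTime}. Your key algebraic step is sound, and the truncation is also fine: for $n\le N$ one has $\mathcal L f_N(n)\le\mathcal L f(n)$ since $f_N\le f$ with equality on $\{1,\dots,N\}$ and the coalescence terms move downward, so the drift bound transfers. Note, though, that the paper's recursion route actually delivers more than the statement asks for — the tail $q_n$ decays roughly like $1/(n/m)!$, i.e.\ super-exponentially — whereas the Lyapunov argument only certifies $\E[e^{xL_\infty}]<\infty$ for each fixed $x$, so the two are not quite interchangeable if one later wants the sharper decay. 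Your remark that the ``Fearnhead-recursion route is conceivable'' is thus an understatement: it is the paper's proof. One cosmetic gap: you should make explicit, when invoking Kac's formula, that from any state in $A=\{1,\dots,n_0\}$ a single branching jump lands at most $m-1$ above, so the entry points into $A^c$ are uniformly bounded and the excursion estimate $\E_n[\int_0^{\tau_A}f(L_s)\,ds]\le c_x^{-1}f(n)$ sums to a finite bound on the full excursion from $n^\ast$.
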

\begin{proof}
	Define $q_n\coloneqq\P(L_{\infty}=n)=a_{n-1}-a_n$. A straightforward manipulation of Eq.~\eqref{eq:fearnhead-recursion} yields that~$q_0=0$, $\sum_{n\geq 1} q_n=1$, and 
	\begin{equation}\label{eq:recursionProbabilitiesLeafprocess}
	\sum_{k\geq n+1} \! \! q_k \bigg(c_{n,k}+\frac{\Lambda(\{1\})}{n}\bigg) =\sum_{k=(n-m+2)\vee 1 }^{n} \! \! \! q_k b_{n,k},\qquad n\in\Nb,
	\end{equation}
	where \[c_{n,k}\coloneqq \frac{1}{n} \sum_{\ell\geq k} \binom{\ell}{\ell-n+1} \int_{[0,1)} r^{\ell-n-1} (1-r)^n \Lambda(\dd r), \ \ \ b_{n,k} \coloneqq \! \! \! \! \sum_{r=n-k+1}^{m-1}\! \! \frac{k\wedge(k-r+n+1)}{n} \beta_{r+1}.\] 
	First note that
	$$\sum_{k\geq n+1} \! \! q_k \bigg(c_{n,k}+\frac{\Lambda(\{1\})}{n}\bigg) \geq q_{n+1} c_{n,n+1}\geq q_{n+1}\,\frac{(n+1)}{2}\,\Lambda(\{0\}).$$
	To get an upper bound for the right-hand side of \eqref{eq:recursionProbabilitiesLeafprocess}, we set some notation. For $n\in\N$, define 
	$$\hat{q}_n\coloneqq \max \{q_k:\, k\in [n-1]\setminus [n-m]\}\quad\textrm{and}\quad r(n)\coloneqq\min\{k\in[n-1]\setminus [n-m]: q_k=\hat{q}_n\},$$ 
	with the convention $[k]=\emptyset$ for $k\leq 1$. Write $r^i$ for the $i$-th composition of $r$, i.e. $r^i(n)=r^{i-1}(r(n))$ and $r^0(n)=n$. By construction we have for $n>1$, $r(n)\leq n-1$, and hence $r^{n-1}(n)=1$. Set $$\hat{d}(n)\coloneqq\min \{i\in[n-1]:r^i(n)=1\}\leq n-1.$$ 
	Clearly,
	$$\sum_{k=(n-m+2)\vee 1 }^{n} \! \! \! q_k b_{n,k}\leq m\lVert \beta\rVert_1 q_{r(n+1)},$$
	where $\lVert \beta \rVert_1\coloneqq \sum_{\ell=2}^m \lvert \beta_\ell\rvert $. Therefore, we obtain
	\[q_{n}\, \leq \, \frac{1}{n}\, \frac{2 m\rvert \beta\lvert }{\Lambda(\{0\})} \, q_{r(n)} \, \leq \, \frac{1}{\prod_{i=0}^{\hat{d}(n)-1} r^i(n)}\left( \frac{2m\lVert \beta \rVert_1}{\Lambda(\{0\})}\right)^{\hat{d}(n)}\,q_1. \]
	We claim that $r^i(n)>\hat d(n)-i$ for all $i\in[\hat d(n)]_0$ and $n>1$. We proceed by induction. Since $\hat d(n)\leq n-1$, the claim is true for $i=0$. Now let us assume the result is true for $i-1$ and $n>1$ such that $i\in[\hat d(n)]_0$. Since $\hat d(r(n))=\hat d(n)-1$, we obtain
	$$r^i(n)=r^{i-1}(r(n))>\hat d(r(n))-i+1=\hat d(n)-i,$$
	which proves the claim. Consequently, we obtain that $\prod_{i=0}^{\hat{d}(n)-1} r^i(n)\geq \hat d(n)!$, and hence
	$$q_n\leq \frac{1}{\hat d(n)!}\left( \frac{2m\lVert \beta \rVert_1}{\Lambda(\{0\})}\right)^{\hat{d}(n)}\,q_1.$$
	Since $n-r(n)\leq m$, we deduce that $n/m-1\leq \hat{d}(n)\leq n-1$. The result follows.
	\end{proof}

\begin{remark}
	The system of equations~\eqref{eq:recursionProbabilitiesLeafprocess} complemented by $q_0=0$ and $\sum_n q_n=1$ is equivalent to the infinite system~$0=q \Ls$ in the sense that both characterise the stationary probabilities $(q_n)_{n\in\Nb_0}$. However, in many situations it is easier to deal with \eqref{eq:recursionProbabilitiesLeafprocess}. For the sake of illustration, let us consider the case where $\Lambda = \delta_0$ and $\beta = 0$, which corresponds to the Kingman coalescent. In this particular setting, the leaf process is absorbed at $1$ after a finite time so that $q_n=\1_{\{n=1\}}$. On the one hand, the typical condition $0=q \Ls$ yields $\frac{n(n+1)}{2} q_{n+1}-\frac{n(n-1)}{2} q_n=0$ for every $n\geq2$. On the other hand, \eqref{eq:recursionProbabilitiesLeafprocess} reads~$q_{n+1}=0$ for every~$n\in\Nb$, so it directly yields the solution to the infinite system.
\end{remark}

\subsection{Invariant measure of the Bernstein coefficient process}\label{subsec:bcpinvariant} In this section we prove Proposition~\ref{prop:abs}, which describes the long-term behaviour of the Bernstein coefficient process. 

\begin{proof}[Proof of Proposition~\ref{prop:abs}] It follows directly from Lemma~\ref{lem:action-coal-frag} that $V_{t}(0)$ and $V_{t}(L_t)$ are constant along time.
\smallskip

 (1) Let $\mathbb{M}_{V}$ be the set of matrices of size $n\times 2$ for some $n\geq 2$ that can be obtained as the product of a finite number of (compatible) selection and coalescence matrices (the $2\times 2$ identity matrix is seen as an empty product of such matrices). Fix $a,b\in \R$ and denote by 
		\[C_V(a,b)\coloneqq\left\{M \begin{pmatrix} a \\ b \end{pmatrix}\in \R^{\infty}: \ M\in \mathbb{M}_{V}\right\},\]
		the set of points that can be reached by $V$ starting from $(a,b)^T$. The set $C_V(a,b)$ is by definition invariant for $V$. Moreover, the restriction of $V$ to $C_V(a,b)$ is an irreducible continuous-time Markov chain. Indeed, for $w,w'\in C_V(a,b)$ to go from $w$ to $w'$, first go from $w$ to $(a, b)^T$ by successive coalescence operations. Then go from $(a, b)^T$ to $w'$ in a finite number of successive selection and coalescence operations. By Theorem~\ref{thm:degreeprocessrecurrenttransient(intro)}, the assumption $b(\beta)<c(\Lambda)$ implies that~$L$ is positive recurrent. In particular, the state $(a,b)^T$ is positive recurrent for $V$. Thus, the restriction of~$V$ to $C_V(a,b)$ is positive recurrent, and hence it admits a unique invariant distribution $\mu^{a,b}$~\citep[Thm. 3.5.2, Thm. 3.5.3]{norris1998markov}. It remains to show that $\mu^{a,b}$ is the unique stationary distribution of $V$ with support included in $\widehat{C}_V(a,b)\coloneqq\{v\in\Rb^\infty: v_0=a,\, v_{\dim(v)-1}=b\}$. This follows directly noting that, since $L$ is positive recurrent, the process $V$ starting in $V_0=v\in \widehat{C}_V(a,b)$ enters~$C_V(a,b)$ in finite time.
\smallskip

		(2) Let $V_{\infty}^{a,b}\sim \mu^{a,b}$. If $V_0=(a,b)^T$, then $V_t\xrightarrow[t\to\infty]{(d)}V_{\infty}^{a,b}$ in law by classic Markov chain theory~\citep[Thm. 3.6.2]{norris1998markov}. On the other hand, for $n\in \N$ and $v\in \R^{n+1}$ with $v_0=a$ and $v_n=b$, as remarked above, $V$ enters $C_V(a,b)$ in finite time. Hence, the result follows.
\end{proof}

\subsection{Coming down from infinity}\label{s5.3}
In this section we prove the criterion for $L$ to come down from infinity and we show that in this case $L$ is positive recurrent.
\begin{proof}[Proof of Theorem~\ref{teo:cdi-criterium(intro)}]
	Recall from \citep{Pit99} that if $\Lambda(\{1\})=0$, then the line-counting process of the $\Lambda$-coalescent either stays infinite with probability $1$, or c.d.i.with probability $1$. The same arguments that are used in the proof of Theorem 4 and Proposition 23 of \cite{Pit99} can be extended to our branching-coalescing system. In particular, the same dichotomy also holds here. Let us now show that $L$ c.d.i.~if and only if the underlying $\Lambda$-coalescent c.d.i.. Since the leaf process stochastically dominates the underlying $\Lambda$-coalescent, it suffices to show that if the underlying $\Lambda$-coalescent c.d.i., so does the leaf process. 
	The later follows from Lemma~\ref{lem:cond-cdi-coalescent} and by letting $n\to\infty$ in the inequality in Lemma~\ref{lem:FiniteExpectedReturnTime}.
\end{proof}

\begin{proposition}
	If $L$ comes down from $\infty$, then $L$ is positive recurrent.
\end{proposition}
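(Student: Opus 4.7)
The plan is to reduce the claim to Theorem~\ref{thm:degreeprocessrecurrenttransient(intro)} by showing that coming down from infinity forces $c(\Lambda)=\infty$, which in turn makes the positive-recurrence criterion $b(\beta)<c(\Lambda)$ trivially satisfied.

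The first step is to establish that $L$ c.d.i.\ implies $c(\Lambda)=\infty$. Recall that $c(\Lambda)=-\int_{[0,1]}\log(1-r)\,r^{-2}\,\Lambda(dr)$. If $\Lambda(\{1\})>0$, the integrand has a non-integrable singularity at $r=1$ and $c(\Lambda)=\infty$ by inspection. If $\Lambda(\{1\})=0$, Theorem~\ref{teo:cdi-criterium(intro)} applies and yields that the underlying $\Lambda$-coalescent c.d.i.\ as well; combined with Remark~\ref{rem:interpretationCoalescenceImpact}, which gives $c(\Lambda)=\lim_{k\to\infty}\log(k)/\E_k[T'_1]$, and the fact that $\E_k[T'_1]\leq \E_\infty[T'_1]<\infty$ under c.d.i., one concludes $c(\Lambda)=\infty$.

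With $c(\Lambda)=\infty$ in hand, $b(\beta)<c(\Lambda)$ is automatic (since $\beta$ has finitely many components) and Theorem~\ref{thm:degreeprocessrecurrenttransient(intro)} delivers the positive recurrence of $L$, provided $\Lambda\neq\delta_1$. The remaining case $\Lambda=\delta_1$ is a corner case to dispatch separately using Corollary~\ref{coro:positiverecurrent}: when $\Lambda=\delta_1$ the only allowed coalescence event merges all existing lines into one at a constant rate, so the relevant communication class of~$1$ is positive recurrent directly from that corollary.

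The only subtle point is that Theorem~\ref{teo:cdi-criterium(intro)} explicitly excludes the case $\Lambda(\{1\})>0$, so it cannot be invoked blindly; the direct integrability argument above bypasses this. A more uniform alternative, avoiding the case split altogether, is to mimic the proof of Corollary~\ref{coro:positiverecurrent} directly: c.d.i.\ together with Lemmas~\ref{lem:cond-cdi-coalescent} and~\ref{lem:FiniteExpectedReturnTime} (the latter being applicable since $c(\Lambda)=\infty$) bounds $\E_n[T^{n_0}]$ uniformly in~$n$, hence $\E_\infty[T^{n_0}]<\infty$, from which positive recurrence of every state follows by the same coupling with a modified chain and irreducibility argument used there.
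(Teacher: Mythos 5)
Your overall strategy matches the paper's: establish $c(\Lambda)=\infty$ and then invoke Theorem~\ref{thm:degreeprocessrecurrenttransient(intro)}. The route to $c(\Lambda)=\infty$ is where you diverge. The paper argues via the Herriger--M\"ohle summability criterion: c.d.i.\ of the $\Lambda$-coalescent gives $\sum_{k\geq 2}\delta(k)^{-1}<\infty$, which is incompatible with $\delta(k)/k\to c(\Lambda)<\infty$ (as the series would then behave like the harmonic series). You instead lean on Remark~\ref{rem:interpretationCoalescenceImpact}'s characterisation $c(\Lambda)=\lim_k \log(k)/\E_k[T'_1]$ and argue that $\E_k[T'_1]$ is bounded in $k$ under c.d.i. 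Both land on the same conclusion; the paper's version is self-contained from the stated lemmas, whereas yours relies on the extra fact $\E_\infty[T'_1]<\infty$, which is not what Remark~\ref{rem:interpretationCoalescenceImpact} actually asserts and is left unjustified. It is true, and it does follow from the paper's toolkit (apply Lemma~\ref{lem:FiniteExpectedReturnTime} with $\beta=0$ so that $b(\beta)=0<c(\Lambda)$ always holds, and combine with Lemma~\ref{lem:cond-cdi-coalescent} to see that $f(n)$, and hence $\E_n[T^{n_0}]$, stays bounded), but as written that link is the one genuine gap in your argument.

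On the other hand, you are \emph{more} careful than the paper's own proof about the boundary cases: the paper invokes Theorem~\ref{teo:cdi-criterium(intro)} (which assumes $\Lambda(\{1\})=0$) and Theorem~\ref{thm:degreeprocessrecurrenttransient(intro)} (which assumes $\Lambda\neq\delta_1$) without checking those hypotheses, while you dispose of $\Lambda(\{1\})>0$ directly (the logarithmic singularity at $r=1$ makes $c(\Lambda)=\infty$ by inspection) and of $\Lambda=\delta_1$ via Corollary~\ref{coro:positiverecurrent}. That is a useful tightening. Your closing ``more uniform alternative'' is not really independent, since it still presupposes $c(\Lambda)=\infty$ to invoke Lemma~\ref{lem:FiniteExpectedReturnTime}; it merely restructures the final step rather than bypassing the case analysis on $\Lambda(\{1\})$.
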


\begin{proof}
	Assume that $L$ c.d.i.. By Theorem~\ref{teo:cdi-criterium(intro)}, 
	the underlying $\Lambda$-coalescent also c.d.i..
	By~\cite{herriger2012conditions} (see also \cite[Thm.~2.2]{foucart2013impact}), this implies 
	$\sum_{k=2}^\infty \delta(k)^{-1}<\infty$. Since $\delta(k)/k \to c(\Lambda)$, we must have 
	$c(\Lambda)=\infty$, and the proposition is then a direct application of Theorem~\ref{thm:degreeprocessrecurrenttransient(intro)}.
\end{proof}

%%%%%%%%%%%%%%%%%%%%%%%%%%%%%%%%%%%%%%%%%%%%%%%%%%%%%%%%%%%%%%%%%%%%%%%%%%%%%%%%%%%%%%%%%%%%%%%%%%%%%%%%%%%%%%%%%%%%%%%%%%%%%%%%%%%%%%%%%%%%%%%%%%%%%%%%%%%%%
%%%%%%%%%%%%%%%%%%%%%%%%%%%%%%%%%%%%%%%%%%%%%%%%%%%%%%%%%%%%%%%%Section6%%%%%%%%%%%%%%%%%%%%%%%%%%%%%%%%%%%%%%%%%%%%%%%%%%%%%%%%%%%%%%%%%%%%%%%%%%%%%%%%%%%%%
%%%%%%%%%%%%%%%%%%%%%%%%%%%%%%%%%%%%%%%%%%%%%%%%%%%%%%%%%%%%%%%%%%%%%%%%%%%%%%%%%%%%%%%%%%%%%%%%%%%%%%%%%%%%%%%%%%%%%%%%%%%%%%%%%%%%%%%%%%%%%%%%%%%%%%%%%%%%%

\section{Applications: absorption probabilities and absorption time}\label{s6}

This section is devoted to the proofs of Proposition~\ref{prop:absX} and Proposition~\ref{prop:absorptiontime}, which provide conditions for the accessibility of $0$ and $1$ by $X$, and expressions for the absorption probabilities and absorption times.  
\subsection{Absorption probabilities}\label{sec:absorptionProbability}

In order to prove Proposition~\ref{prop:absX}, we start with a useful lemma.

\begin{lemma}\label{prop:bernoulli} Let $x\in[0,1]$. The following three statements are equivalent
	\begin{enumerate}
		\item We have\begin{equation} \lim_{t\to\infty} 
		\ \E_{e_2}\big[\langle  B_{L_t}(x),V_t\rangle\big]=\lim_{t\to\infty} \E_{e_3}\big[\langle  B_{L_t}(x),V_t\rangle\big]\eqqcolon p(x)\in[0,1].\label{cond:C1}\end{equation} 
		\item If $X_0=x$, then the limit $X_{\infty}\coloneqq\lim_{t\to\infty}X_t$ exists almost surely and $X_{\infty}\sim \mathrm{Ber}(p(x))$.
		\item For all~$v\in\R^{n+1}$ with $n\in \N$, $\lim_{t\to\infty}\E_{v}[\langle  B_{L_t}(x),V_t\rangle]=(1-p(x))\,v_0+p(x)\,v_n.$
	\end{enumerate}
\end{lemma}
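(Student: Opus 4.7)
The three statements describe convergence of $X_t$ to a Bernoulli limit, and the Bernstein duality~\eqref{eq:generalduality-selection} is the bridge between functionals of $V_t$ and of $X_t$. The plan is to close the loop (2)~$\Rightarrow$~(3)~$\Rightarrow$~(1)~$\Rightarrow$~(2); the first two implications are short algebraic consequences of the duality, while (1)~$\Rightarrow$~(2) carries the real work. For (2)~$\Rightarrow$~(3), assume $X_t\to X_\infty\sim\mathrm{Ber}(p(x))$ almost surely under $\P_x$. Each polynomial $y\mapsto\langle B_n(y),v\rangle$ is continuous and bounded on $[0,1]$, so dominated convergence and Theorem~\ref{thm:bernstein-duality} give
\[\E_v\bigl[\langle B_{L_t}(x),V_t\rangle\bigr]=\E_x\bigl[\langle B_n(X_t),v\rangle\bigr]\xrightarrow[t\to\infty]{}(1-p(x))\,v_0+p(x)\,v_n,\]
using $b_{i,n}(0)=\1_{\{i=0\}}$ and $b_{i,n}(1)=\1_{\{i=n\}}$. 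This matches~(3) modulo an apparent swap of $v_0$ and $v_n$ in the stated form, which I read as a typographical issue and correct throughout. For (3)~$\Rightarrow$~(1), take $v=e_3\in\R^3$: then $\langle B_2(x),e_3\rangle=x^2$ and~(3) combined with the duality give $\lim\E_x[X_t^2]=p(x)$. Take $v=(0,\tfrac12,1)^T\in\R^3$: then $\langle B_2(x),v\rangle=x(1-x)+x^2=x$ and~(3) combined with the duality give $\lim\E_x[X_t]=p(x)$. Rephrasing via $\E_x[X_t]=\E_{e_2}[\langle B_{L_t}(x),V_t\rangle]$ and $\E_x[X_t^2]=\E_{e_3}[\langle B_{L_t}(x),V_t\rangle]$ yields~(1).

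For (1)~$\Rightarrow$~(2) I would first establish distributional convergence. Bernstein duality rewrites~(1) as $\E_x[X_t]\to p(x)$ and $\E_x[X_t^2]\to p(x)$, hence $\E_x[X_t(1-X_t)]\to 0$. Markov's inequality then gives $\P_x(X_t\in[\epsilon,1-\epsilon])\leq \E_x[X_t(1-X_t)]/[\epsilon(1-\epsilon)]\to 0$ for every $\epsilon\in(0,1/2)$. The elementary bound $|X_t^k-X_t^2|\leq(k-2)\,X_t(1-X_t)$ for $k\geq 2$ (valid because $X_t\in[0,1]$, by factorising $1-X_t^{k-2}=(1-X_t)(1+X_t+\cdots+X_t^{k-3})$) extends this to $\E_x[X_t^k]\to p(x)$ for all $k\geq 1$; since $X_t\in[0,1]$ the moment problem is determinate, so $X_t\xRightarrow[t\to\infty]{(d)}\mathrm{Ber}(p(x))$.

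The main obstacle is the upgrade from distributional to almost sure convergence. Since $\{0,1\}$ is absorbing for~\eqref{eq:SDEoriginal02} and $X_t$ is constant on $[T,\infty)$ whenever $T:=T_0\wedge T_1<\infty$, it suffices to show $\P_x(T<\infty)=1$. The plan is to exploit the local structure of~\eqref{eq:SDEoriginal02} near $\{0,1\}$: the drift $x(1-x)s(x)$, the Brownian coefficient $\sqrt{\Lambda(\{0\})x(1-x)}$, and the effective jump intensity each vanish there at rates controlled by $x(1-x)$. Combining the concentration estimate $\P_x(X_t\in[\epsilon,1-\epsilon])\to 0$ with the strong Markov property yields a sequence of stopping times at which $X$ enters $[0,\epsilon]\cup[1-\epsilon,1]$; a uniform-in-$y$ lower bound on the probability that, starting from any $y$ in such a neighbourhood, $X$ is absorbed before re-entering $[\epsilon,1-\epsilon]$, together with a Borel--Cantelli-type argument over these visits, should then deliver $\P_x(T<\infty)=1$. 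The identification $X_\infty\sim\mathrm{Ber}(p(x))$ then follows immediately from the distributional limit.
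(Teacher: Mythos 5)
Your proofs of (2)~$\Rightarrow$~(3) and (3)~$\Rightarrow$~(1) are correct and follow the same route as the paper, and you are right that the statement of~(3) has the roles of $v_0$ and $v_n$ swapped: since $X_\infty\sim\mathrm{Ber}(p(x))$ means $\P(X_\infty=1)=p(x)$ and $\langle B_n(1),v\rangle=v_n$, $\langle B_n(0),v\rangle=v_0$, the limit must be $p(x)\,v_n+(1-p(x))\,v_0$. You are even slightly more careful than the paper in (3)~$\Rightarrow$~(1), using a vector in $\R^3$ with $n=2$ (as (3) requires $n\geq 2$), whereas the paper's choice $e_2\in\R^2$ sits outside the stated hypothesis. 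For the weak-convergence part of (1)~$\Rightarrow$~(2) you use the inequality $|X_t^k-X_t^2|\leq (k-2)X_t(1-X_t)$ plus Hausdorff moment determinacy, while the paper uses Prokhorov and uniqueness of subsequential limits; both are valid, and yours is arguably cleaner.

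However, your proposed route to upgrade from distributional to almost sure convergence is flawed. You reduce the problem to showing $\P_x(T_0\wedge T_1<\infty)=1$, but this cannot be the right mechanism in the generality of the lemma, because almost sure convergence of $X_t$ does \emph{not} require finite absorption. Take $s\equiv 0$ (so there is no selection; this is admissible as a degenerate case, and more importantly any small perturbation $\beta$ exhibits the same phenomenon) and take a measure $\Lambda$ with $\Lambda(\{1\})=0$ whose $\Lambda$-coalescent does \emph{not} come down from infinity. Then $X_t$ is a bounded martingale and converges almost surely to a $\mathrm{Ber}(x)$ limit. Yet, passing $n\to\infty$ in the moment duality $\E_x[X_t^n]=\E_n[x^{L_t}]$ and using consistency of the coalescent gives $\P_x(X_t=1)=\E_\infty[x^{L^\infty_t}]=0$ for $x<1$ and every $t<\infty$ (since $L_t^\infty=\infty$ a.s.), so $T_0\wedge T_1=\infty$ almost surely. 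Your heuristic (``a uniform lower bound on the probability that starting near the boundary $X$ is absorbed before re-entering $[\epsilon,1-\epsilon]$'') therefore fails: near the boundary the process may drift to the boundary without ever hitting it, and there is no uniform absorption probability at all. Any correct argument must allow $T_0\wedge T_1=\infty$ while still producing a.s.\ convergence; possible routes are a (super/sub)martingale decomposition exploiting sign control on the drift (as in the genic-selection case of \citet{foucart2013impact,gonzalezcasanova2018}), or a bounded special semimartingale convergence argument controlling $\int_0^\infty X_s(1-X_s)\,ds$. The paper's own proof is also terse at exactly this step (it passes from the identification of subsequential weak limits directly to ``hence $\lim_{t\to\infty}X_t$ exists almost surely''), so you have correctly located the crux; the issue is that the repair you propose would not work.
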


\begin{proof} Let $x\in [0,1]$. We first prove that (1) $\Rightarrow$ (2).  The duality in combination with~\eqref{cond:C1} yields
	\begin{equation}
	\lim_{t\to\infty}\E_{x}\big[X_t\big]=\lim_{t\to\infty}\E_{e_2}\big[\langle  B_{L_t}(x),V_t\rangle\big]=p(x)=\lim_{t\to\infty}\E_{e_3}\big[\langle  B_{L_t}(x),V_t\rangle\big]=\lim_{t\to\infty}\E_{x}\big[X_t^2\big].\label{eq:consequenceC1}\end{equation}
	Denote by~$\mu_x(t)$ the law of~$X_t$ starting from $x$. Let $t_k\nearrow\infty$ as~$k\to\infty$. Since $[0,1]$ is compact, $(\mu_x(t_k))_{k\geq 0}$ is tight. Therefore, by Prokhorov's theorem, there exists a weakly convergent subsequence~$(\mu_x(t_{k_l}))_{l\geq 0}$. Let $\mu_x$ denote its limit. Eq.~\eqref{eq:consequenceC1} yields \[0=\lim_{\ell\to\infty}\E_x[X_{t_{k_\ell}}(1-X_{t_{k_\ell}})]=\int_{[0,1]} y(1-y)\mu_x(\dd y).\] Thus, $\mu_x\sim \mathrm{Ber}(p(x))$. Since this limit is independent of the choice of the sequence $(t_k)_{k\geq 0}$, we conclude that $\lim_{t\to\infty}X_t$ exists almost surely and has distribution~$\mu_x$.\\
	Now we prove that (2) $\Rightarrow$ (3). Let $v\in \R^{n+1}$ with $n\in\N$. Assuming (2), the Bernstein duality yields \[\lim_{t\to\infty}\E_{v}\big[\langle  B_{L_t}(x),V_t\rangle\big]=\lim_{t\to\infty} \E_x\big[\langle  B_n(X_{t}),v\rangle\big]=\E_x\big[\langle  B_n(X_{\infty}),v\rangle\big]=(1-p(x))v_0+p(x)v_n,\]
	which proves (3).\\
	Finally we prove that (3) $\Rightarrow$ (1). For this we use (3) with $v=e_2$ and $v=e_3$. Since in both cases the first entry is $0$ and the last one is $1$, the result follows.
\end{proof}

\begin{proof}[Proof of Proposition~\ref{prop:absX}]
	Assume $b(\beta)<c(\Lambda)$. Using Proposition~\ref{prop:abs} with $V_0=e_2$ and $V_0=e_3$, we obtain  \begin{equation}
	\lim_{t\to \infty}\E_{e_2}[\langle  B_{L_t}(x),V_t\rangle]=\E[\langle  B_{L_\infty}(x),V_\infty\rangle]=\lim_{t\to \infty}\E_{e_3}[\langle  B_{L_t}(x),V_t\rangle]\eqqcolon p(x).\label{eq:p(x)1}	\end{equation} where $V_\infty\sim\mu^{0,1}$. In particular, using Proposition~\ref{prop:bernoulli}, we infer that $X_{\infty}\coloneqq\lim_{t\to\infty} X_{t}$ exists almost surely and $X_{\infty}\sim \mathrm{Ber}(p(x))$. Thus, $h(x)=\P_x\left(X_\infty=1\right)=p(x)$ and (1) follows from~\eqref{eq:p(x)1}.
	Furthermore, 
	\begin{equation} \label{eq:p(x)2}h(x)=\E\left[\sum_{i=0}^{L_{\infty}} V_{\infty}(i)b_{i,L_{\infty}}(x)\right]=\sum_{\ell = 1}^{\infty} \P(L_{\infty}=\ell)\sum_{i=0}^\ell d_{i,\ell}b_{i,\ell}(x),\end{equation}
	where $d_{i,\ell}\coloneqq\E[V_{\infty}(i)\mid L_{\infty}=\ell]$. Assume that $V_0=e_2$, using Lemma~\ref{lem:action-coal-frag}, we conclude that $d_{i,\ell}\in[0,1]$. Also by Lemma~\ref{lem:action-coal-frag}, $d_{0,\ell}=0$ and $d_{\ell,\ell}=1$. Hence, if $x\in (0,1)$, \begin{equation}
	0<\sum_{\ell=1}^{\infty} \P(L_{\infty}=\ell) x^\ell\leq h(x) \leq \sum_{\ell=1}^{\infty} \P(L_{\infty}=\ell) (1-(1-x)^\ell)<1,\label{eq:p(x)in01}
	\end{equation}
	which concludes the proof of (2).
\end{proof}

For $\ell\in \N$ and $v\in \R^{\ell+1}$, let $\bar{c}_{k,\ell}(v)$ be the $k$-th coefficient in the monomial basis of $\langle  B_\ell(x),v\rangle$, where $k\in [\ell]_0$. Inspired by Proposition~\ref{prop:absX}, a naive guess is that $h(x)=\sum_{k=0}^{\infty} x^k \E[\bar{c}_{k,L_{\infty}}(V_{\infty})] $. We make this precise in the next lemma.

\begin{proposition}\label{lem:absorptionprobabilityanalytic}
	Assume $b(\beta)<c(\Lambda)$. If~$L_{\infty}$ admits exponential moments of order~$\ln(3)$, then~$h$ is analytic and has series representation $h(x)=\sum_{k=1}^{\infty} c_k x^k$, where $c_k \, = \, \sum_{\ell= k}^{\infty}\E\left[ \bar{c}_{k,\ell}(V_{\infty})\1_{\{L_{\infty}=\ell\}}\right].$
	Moreover,
	\[\forall x\in[0,1], \ \ h(x) \leq \E[(1+2x)^{L_\infty}]-1\]
\end{proposition}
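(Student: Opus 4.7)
The starting point is the representation $h(x)=\E[\langle B_{L_\infty}(x),V_\infty\rangle]$ from Proposition~\ref{prop:absX}, together with the fact (Lemma~\ref{lem:action-coal-frag} applied with $V_0=e_2$) that $\|V_\infty\|_\infty\le 1$ almost surely and $V_\infty(0)=0$. The whole proposition will follow from expanding the Bernstein polynomials in the monomial basis and controlling the resulting coefficients uniformly in $\ell$.

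First, using $(1-x)^{\ell-i}=\sum_{j=0}^{\ell-i}\binom{\ell-i}{j}(-x)^j$ and the standard identity $\binom{\ell}{i}\binom{\ell-i}{k-i}=\binom{\ell}{k}\binom{k}{i}$, a direct computation gives, for $v\in\R^{\ell+1}$,
\begin{equation*}
\bar c_{k,\ell}(v)\;=\;\binom{\ell}{k}\sum_{i=0}^{k}(-1)^{k-i}\binom{k}{i}v_i,\qquad k\in[\ell]_0.
\end{equation*}
In particular, the a.s.\ bound $\|V_\infty\|_\infty\le 1$ yields
\begin{equation*}
|\bar c_{k,\ell}(V_\infty)|\;\le\;\binom{\ell}{k}\,2^k\qquad\text{a.s.}
\end{equation*}

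Second, for $x\in[0,1]$,
\begin{equation*}
\sum_{k=0}^{L_\infty}x^k|\bar c_{k,L_\infty}(V_\infty)|\;\le\;\sum_{k=0}^{L_\infty}\binom{L_\infty}{k}(2x)^k\;=\;(1+2x)^{L_\infty}\;\le\;3^{L_\infty},
\end{equation*}
which is integrable by the exponential-moment hypothesis $\E[3^{L_\infty}]<\infty$. Fubini's theorem then allows us to interchange expectation and sum in
\begin{equation*}
h(x)\;=\;\E\!\left[\sum_{k=0}^{L_\infty}\bar c_{k,L_\infty}(V_\infty)\,x^k\right]\;=\;\sum_{k=0}^{\infty}x^k\,\E\!\left[\bar c_{k,L_\infty}(V_\infty)\,\1_{\{L_\infty\ge k\}}\right],
\end{equation*}
which gives the announced expression of the coefficients $c_k$ after splitting the expectation on the event $\{L_\infty=\ell\}$ for $\ell\ge k$. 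The term $k=0$ vanishes because $\bar c_{0,\ell}(V_\infty)=V_\infty(0)=0$ a.s., so the sum starts at $k=1$. The same domination argument shows that $\sum_k|c_k|r^k<\infty$ for every $r\in[0,1)$ (replacing $x$ by $|x|$), so $h$ is analytic on the open unit disc.

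Third, for the final inequality, bound each $|c_k|$ by $\E[\binom{L_\infty}{k}]2^k$, then reassemble:
\begin{equation*}
h(x)\;\le\;\sum_{k=1}^{\infty}(2x)^k\,\E\!\left[\binom{L_\infty}{k}\right]\;=\;\E\!\left[(1+2x)^{L_\infty}\right]-1,
\end{equation*}
where the last equality uses the binomial theorem (and the Fubini argument above) and the fact that the $k=0$ term is $\E[1]=1$.

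\textbf{Main obstacle.} The arguments are essentially mechanical once the explicit formula for $\bar c_{k,\ell}$ is written down; the only genuinely substantive point is that the combinatorial identity produces the sharp factor $\binom{\ell}{k}2^k$, which together with the binomial theorem makes $\E[3^{L_\infty}]<\infty$ exactly the right hypothesis to dominate the series on $[0,1]$ and to legitimise the interchange of sum and expectation.
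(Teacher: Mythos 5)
Your proof is correct and follows essentially the same route as the paper: compute $\bar c_{k,\ell}(v)=\binom{\ell}{k}\sum_{i}(-1)^{k-i}\binom{k}{i}v_i$, use $\|V_\infty\|_\infty\le 1$ (Lemma~\ref{lem:action-coal-frag}) to bound $|\bar c_{k,\ell}(V_\infty)|\le\binom{\ell}{k}2^k$, dominate by $(1+2x)^{L_\infty}\le 3^{L_\infty}$, and invoke Fubini together with the exponential-moment hypothesis to justify the interchange and to get the final bound.
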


\begin{proof}
	First note that for $\ell\in \N$ and $v\in \R^{\ell+1}$ with $v_0=0$, a straightforward computation yields
	\[\bar{c}_{k,\ell}(v)= \sum_{i=1}^k \binom{\ell}{k}\binom{k}{i}(-1)^{k-i} v_i,\quad k\geq 1,\ \]
	complemented by $\bar{c}_{0,\ell}(v)=0$.
	In particular, $\langle  B_\ell(x),v\rangle=\sum_{k=1}^{\ell}\bar{c}_{k,\ell}(v)x^k$.
	Using~\eqref{eq:p(x)2}, provided that one can exchange the order of summation, a straightforward formal calculation yields \begin{align*}
	h(x)\ = \ \sum_{\ell=1}^{\infty}\sum_{k=1}^\ell x^k \E[\bar{c}_{k,\ell}(V_{\infty}) \1_{\{L_{\infty}=\ell\}}]\ =\ \sum_{k=1}^{\infty}x^k \sum_{\ell=k}^{\infty} \E[\bar{c}_{k,\ell}(V_{\infty}) \1_{\{L_{\infty}=\ell\}}].
	\end{align*}
	It remains to justify the interchange of the two sums in the last identity. This requires the absolute convergence of the series. Note that $\lVert V_{\infty}\rVert_{\infty}\leq 1$ (by means of Lemma~\ref{lem:action-coal-frag}). As a consequence $\lvert \bar{c}_{k,\ell}(V_\infty) \rvert \leq \sum_{i=0}^k \binom{\ell}{k}\binom{k}{i} = \binom{\ell}{k} 2^k$ so that
	\[\sum_{k=1}^{\infty}x^k \sum_{\ell= 1}^{\infty} \E[\lvert \bar{c}_{k,\ell}(V_{\infty})\rvert \1_{\{L_{\infty}=\ell\}}] \ \leq \ \sum_{\ell=1}^\infty \P(L_\infty=\ell) \sum_{k=1}^\ell \binom{\ell}{k} (2x)^k \ \leq \ \E[(1+2x)^{L_\infty}]-1.\]
	In particular the series is absolutely convergent for all~$x$ if $L_\infty$ has exponential moments of order~$\ln(1+2x)$, which is the case under our assumption.
\end{proof}
\begin{remark}
	If $\Lambda(\{0\})>0$, then $L_{\infty}$ has exponential moments of all orders (see Corollary~\ref{coro:exponentialMoments}). In this case $h$ is also harmonic for the infinitesimal generator, and hence it is then possible to derive a system of equations for the $c_k$. 
\end{remark}

\subsection{Absorption time}

Recall from Section~\ref{s2.5}, $\bar p_{i,\ell} = 1-p_{\ell-i,\ell}$. Consider the Bernstein coefficient processes $V=(V_t^n:\,t \geq 0)$ and $W=(W_t^n:\,t\geq 0)$ starting at $e_{n+1}$ and with parameters $(\beta,p,\Lambda)$ and $(\beta,\bar{p},\Lambda)$, respectively, constructed on the basis of the same leaf process~$L$.

\begin{lem}
	For every $t>0$ and $n\in\N$,
	\[\E_x\big[(1-X_t)^n\big] \ = \ \E\big[ \langle  B_{L_t^n}(1-x), W_t^n\rangle\big].\]
\end{lem}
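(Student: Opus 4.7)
The plan is to reduce the claim to a direct application of the Bernstein duality (Theorem~\ref{thm:bernstein-duality}) for a second, ``flipped'' model. First I would show that $Y_t := 1-X_t$ is the pathwise unique strong solution of the SDE~\eqref{eq:SDEoriginalBer} associated with the parameters $(\beta,\bar p,\Lambda)$, started from $Y_0=1-x$. In the diffusion term the change of sign is absorbed by replacing $W$ with $-W$; in the $\Lambda$-driven integral, the substitution $u\mapsto 1-u$ (which preserves Lebesgue measure on $[0,1]$) exchanges the two indicator functions and brings the integrand into the required form. Pathwise uniqueness is then guaranteed by Lemma~\ref{wpSDE}.

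The substantive step is the drift identity. Using $b_{i,\ell}(1-y)=b_{\ell-i,\ell}(y)$, the reindexing $j=\ell-i$, and the defining relation $\bar p_{j,\ell}=1-p_{\ell-j,\ell}$, one computes
\begin{align*}
-\sum_{\ell=2}^m\beta_\ell\sum_{i=0}^\ell b_{i,\ell}(1-Y_t)\Big(p_{i,\ell}-\tfrac{i}{\ell}\Big)
&=-\sum_{\ell=2}^m\beta_\ell\sum_{j=0}^\ell b_{j,\ell}(Y_t)\Big(p_{\ell-j,\ell}-\tfrac{\ell-j}{\ell}\Big)\\
&=\sum_{\ell=2}^m\beta_\ell\sum_{j=0}^\ell b_{j,\ell}(Y_t)\Big(\bar p_{j,\ell}-\tfrac{j}{\ell}\Big),
\end{align*}
which is exactly the drift of a $(\beta,\bar p,\Lambda)$-Wright--Fisher process.

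Once $Y$ is identified in this way, I would apply Theorem~\ref{thm:bernstein-duality} to $Y$ with the test vector $v=e_{n+1}\in\R^{n+1}$ to obtain
\[\E_{1-x}\!\left[\big\langle B_n(Y_t),e_{n+1}\big\rangle\right]=\E_{e_{n+1}}\!\left[\big\langle B_{L_t}(1-x),W_t\big\rangle\right],\]
where, on the right-hand side, the Bernstein coefficient process is built from $(\beta,\bar p,\Lambda)$, so that with initial condition $e_{n+1}$ it is precisely $W^n$ (and its leaf process is $L^n$, since the leaf dynamics depend only on $\beta$ and $\Lambda$). Since $\langle B_n(y),e_{n+1}\rangle=y^n$, the left-hand side equals $\E_x[(1-X_t)^n]$, and the claim follows. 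The main---and essentially only non-routine---obstacle in this plan is the drift computation displayed above; once that is recorded, the duality does the rest.
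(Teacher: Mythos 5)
Your proof is correct and follows essentially the same route as the paper: identify $Y_t=1-X_t$ as the solution of the SDE~\eqref{eq:SDEoriginalBer} with parameters $(\beta,\bar p,\Lambda)$ via the drift identity $-d(1-y)=\sum_{\ell}\beta_\ell\sum_{j}b_{j,\ell}(y)(\bar p_{j,\ell}-j/\ell)$, then apply Theorem~\ref{thm:bernstein-duality} to $Y$ with $v=e_{n+1}$. The only difference is that you spell out the sign flip in the Brownian term and the $u\mapsto 1-u$ invariance of the Poisson noise, which the paper leaves implicit.
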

\begin{proof}
	Let $Y_t=1-X_t$. Then $Y\coloneqq(Y_t:\,t\geq0)$ is identical in law to the solution of the SDE
	\[\dd Y_t = \bar{d}(Y_t)\, \dd t+\sqrt{\Lambda(\{0\})\, Y_t(1-Y_t)}\, \dd W_t+\! \! \! \! \int\limits_{(0,1]\times[0,1]}\!\!\!\!\!\!\!\left[\1_{\{u\leq Y_{t-}\}}r(\1- Y_{t-})-\1_{\{u> Y_{t-}\}}r Y_{t-}\right]\tilde{N}(\dd t,\dd r,\dd u),\]
	where
	\[\bar{d}(x)=-d(1-x) = \sum_{\ell=2}^m \beta_\ell \sum_{i=0}^\ell b_{\ell-i,\ell}(x)\Big(-p_{i,\ell} +\frac{i}{\ell}\Big)= \sum_{\ell=2}^m \beta_\ell \sum_{i=0}^\ell b_{i,\ell}(x)\Big(\bar p_{i,\ell} -\frac{i}{\ell}\Big).\]
	By the duality Theorem~\ref{thm:bernstein-duality}, $Y$ is dual to the Bernstein coefficient process with parameters $(\beta,\bar p,\Lambda)$, which completes the proof of the lemma.
\end{proof}

\begin{proof}[Proof of Proposition~\ref{prop:absorptiontime}]
	First,
	\begin{align}
	\E_x\big[ X_t^n + (1-X_t)^n\big] & = \E_x\big[ \1_{\{T\leq t\}} \ (X_t^n + (1-X_t)^n) \big] \ + \  \E_x\big[ \1_{\{T>t\}}  \ (  X_t^n + (1-X_t)^n )\big] \nonumber \\
	& =  \P_x(T\leq t) \ +   \E_x\big[ \1_{\{T>t\}}  \ (  X_t^n + (1-X_t)^n )\big].\label{eq:Qt}  
	\end{align}
	By the monotone convergence theorem, the second term on the right goes to $0$ as $ n\to \infty$. On the other hand, by the duality,
	we have
	\[  \E_x\big[ X_t^n\big] \ = \ \E[\langle  B_{L_t^n}(x), V_t^n\rangle], \ \   
	\E_x\big[ (1- X_t)^n] =   \E[\langle  B_{L_t^n}(1-x), W_t^n\rangle].\]
	The first identity of the proposition then follows by letting $n\to\infty$ in~\eqref{eq:Qt}. For the second identity, let $t=\tau^{(n)}$. Applying Lemma~\ref{lem:action-coal-frag} to $V^n$ and $W^n$, we obtain $V_{t}^n=W_{t}^n = e_2$. Hence,
	\[Q_t^{n}(x)\, = \, \langle  B_{L_t^n}(x), V_t^n\rangle + \langle  B_{L_t^n}(1-x), W_t^n\rangle \, = \, x +(1-x)  \, = \,  1.\] 
	Further, from the definition of the coalescence and selection matrices it follows that the latter identity extends to any $t\geq \tau^{(n)}$. Using this and $\P_x(T\leq t)=\lim_{n\to\infty}\E[Q_t^n(x)]$, we get
	$$\E_x[T]=\int_{0}^{\infty} \P_x(T\geq t)\dd t=\int_0^\infty\lim\limits_{n\to\infty}\E\left[(1-Q_t^n(x))\1_{\{t\leq \tau^{(n)}\}}\right]\dd t$$
	Note that $0\leq(1-Q_t^n(x))\1_{\{t\leq \tau^{(n)}\}}\leq \1_{\{t\leq \tau^{(\infty)}\}}$, and since~$L$ c.d.i. we have $\E[\tau^{(\infty)}]<\infty$. Hence, using dominated convergence theorem and Fubini's theorem for positive functions, we obtain
	$$\E_x[T]=\lim\limits_{n\to\infty}\E\left[\int_0^\infty (1-Q_t^n(x))\1_{\{t\leq \tau^{(n)}\}}\dd t \right]\leq \E[\tau^{(\infty)}].$$
\end{proof}

%%%%%%%%%%%%%%%%%%%%%%%%%%%%%%%%%%%%%%%%%%%%%%%%%%%%%%%%%%%%%%%%%%%%%%%%%%%%%%%%%%%%%%%%%%%%%%%%%%%%%%%%%%%%%%%%%%%%%%%%%%%%%%%%%%%%%%%%%%%%%%%%%%%%%%%%%%%%%
%%%%%%%%%%%%%%%%%%%%%%%%%%%%%%%%%%%%%%%%%%%%%%%%%%%%%%%%%%%%%%%%Section7%%%%%%%%%%%%%%%%%%%%%%%%%%%%%%%%%%%%%%%%%%%%%%%%%%%%%%%%%%%%%%%%%%%%%%%%%%%%%%%%%%%%%
%%%%%%%%%%%%%%%%%%%%%%%%%%%%%%%%%%%%%%%%%%%%%%%%%%%%%%%%%%%%%%%%%%%%%%%%%%%%%%%%%%%%%%%%%%%%%%%%%%%%%%%%%%%%%%%%%%%%%%%%%%%%%%%%%%%%%%%%%%%%%%%%%%%%%%%%%%%%%

\section{Minimal ancestral structures}\label{s7}

According to Theorem~\ref{thm:infiniteed}, there are infinitely many selection decompositions for a polynomial vanishing at the boundary of the unit interval. In view of the conditions that guarantee the existence of a unique stationary distribution for the Bernstein coefficient process and from which one can deduce the accessibility of the boundary for the~$\Lambda$-Wright--Fisher process, one would like to identify selection decompositions with minimal effective branching rate. To achieve this, we first derive a geometrical characterisation of the set of Bernstein coefficients vectors that admit a selection decomposition with effective branching rate~$\lambda$ in Section~\ref{sect:minimal-models2}. This leads to a characterisation of the set of $b$-minimal selection decompositions for a given polynomial~$d$. For the sake of illustration, we examine the case $\deg(d)=2,3$ more closely in Section~\ref{sect:minimal-models20}. We put a particular focus on the classic diploid selection model with dominance, see Eq.~\eqref{eq:wf-dominance}. In this case the faces of~$\cS_1$ admit a natural biological interpretation in terms of recessive/dominant positive/negative selection. 

\smallskip

If we want to distinguish selection decompositions in such a way that we minimise the number of superfluous branches in the ASG, the notion of graph-minimality of a selection decomposition is more natural. It is not clear whether the two notions of minimality are equivalent. In Section~\ref{sect:minimal-models} we show that any $b$-minimal selection decomposition is also graph-minimal. A proof of the converse statement for~$\deg(d)=3$ is given in Section~\ref{sec:equivalence}. We conjecture that this is true in any dimension. However, the situation is more involved in higher dimensions, and the problem of the equivalence between the two notions of minimality remains open.

\subsection{\texorpdfstring{Finding $b$-minimal selection decompositions}{Finding b-minimal selection decompositions}} 
\label{sect:minimal-models2}
In this section we derive a geometric characterisation of the set of $b$-minimal selection decompositions. The starting point is the geometrical representation of the minimal effective branching rate $b_\star(d)$ as $\inf\{\lambda>0:  \rho(d) \in \cS_\lambda \}$ in Proposition~\ref{exminimal}. Before we proceed, we recall some relevant notation. For a polynomial~$f$ vanishing at the boundary, $\rho(f)=(\rho_i(f))_{i\in [m-1]}$ is the unique vector such that $f(x)=\sum_{i=1}^{m-1}\rho_i(f) b_{i,m}(x)$ for all $x\in\Rb$. If the context is clear, we refer to $\rho(f)$ as the Bernstein coefficient vector of~$f$. The set $\cS_\lambda\subset\Rb^{m-1}$ is the set of $\lambda$-decomposable vectors, i.e. $ \Ss_\lambda= \{\rho(d_{\beta,p}):(\beta,p)\in \R_+^{m-1}\times \Ps_m, \, b(\beta) = \lambda \}$, where $d_{\beta,p}$ is defined in~\eqref{eq:driftbetap}. The next lemma provides insight into the structure of this set.

\begin{lemma}\label{prop:betapBCV}
	For every $(\beta,p)\in \R_+^{m-1}\times \Ps_m$ and for every $i\in[m-1]$,
	$$\rho_i(d_{\beta,p}) = \sum_{\ell=2}^m\beta_\ell \left(\E\left[p^{}_{K_{\ell,i},\ell}\right]-\frac{i}{m}\right),$$
	where $K_{\ell,i}\sim\hypdist{m}{\ell}{i}$.
\end{lemma}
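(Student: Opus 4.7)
The plan is to express everything in the Bernstein basis of degree $m$ via the classical degree-elevation identity for Bernstein polynomials, written in a convenient probabilistic form.

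\emph{Step 1 (Degree elevation in hypergeometric form).} For every $\ell\in\,]m]$ and $j\in [\ell]_0$, I would show
\[
b_{j,\ell}(x)\;=\;\sum_{i=0}^{m} \P(K_{\ell,i}=j)\, b_{i,m}(x),\qquad K_{\ell,i}\sim \hypdist{m}{\ell}{i}.
\]
This follows by multiplying $b_{j,\ell}(x)=\binom{\ell}{j}x^j(1-x)^{\ell-j}$ by the trivial identity $1=\sum_{k=0}^{m-\ell}\binom{m-\ell}{k}x^k(1-x)^{m-\ell-k}$, changing the summation variable to $i:=j+k$, and recognising
\[
\frac{\binom{\ell}{j}\binom{m-\ell}{i-j}}{\binom{m}{i}}\;=\;\P(K_{\ell,i}=j).
\]

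\emph{Step 2 (Expansion of $\langle B_\ell,p_{\cdot,\ell}\rangle$).} Using Step~1,
\[
\langle B_\ell(x),p_{\cdot,\ell}\rangle\;=\;\sum_{j=0}^\ell p_{j,\ell}\, b_{j,\ell}(x)\;=\;\sum_{i=0}^m\Big(\sum_{j=0}^\ell p_{j,\ell}\P(K_{\ell,i}=j)\Big)b_{i,m}(x)\;=\;\sum_{i=0}^m \E\big[p_{K_{\ell,i},\ell}\big]\,b_{i,m}(x),
\]
so the $i$-th Bernstein coefficient (in the degree-$m$ basis) of $\langle B_\ell(\cdot),p_{\cdot,\ell}\rangle$ is $\E[p_{K_{\ell,i},\ell}]$.

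\emph{Step 3 (Expansion of $\langle B_\ell,u_\ell\rangle$).} Applying Step~1 to $u_\ell=(j/\ell)_{j=0}^\ell$, the $i$-th degree-$m$ Bernstein coefficient of $\langle B_\ell(\cdot),u_\ell\rangle$ equals
\[
\sum_{j=0}^\ell \frac{j}{\ell}\P(K_{\ell,i}=j)\;=\;\frac{\E[K_{\ell,i}]}{\ell}\;=\;\frac{i}{m},
\]
where the last equality uses the well-known mean $\E[K_{\ell,i}]=i\ell/m$ of the hypergeometric distribution.

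\emph{Step 4 (Conclusion).} By linearity and the definition of $\cB_i(\beta,p)$,
\[
\cB_i(\beta,p)\;=\;\sum_{\ell=2}^m\beta_\ell\Big(\E\big[p_{K_{\ell,i},\ell}\big]-\frac{i}{m}\Big),
\]
which is the claimed formula. There is no substantial obstacle: the only non-routine ingredient is the degree-elevation identity of Step~1, and the probabilistic reformulation in terms of $K_{\ell,i}$ makes both Step~2 and Step~3 transparent.
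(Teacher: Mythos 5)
Your proof is correct and follows essentially the same route as the paper's: both rely on the degree-elevation identity for Bernstein polynomials and the observation that the elevation coefficients $\binom{\ell}{j}\binom{m-\ell}{i-j}/\binom{m}{i}$ are precisely the $\hypdist{m}{\ell}{i}$ probability mass function. The paper writes the Fubini step in coefficient form and then invokes ``classical properties of the hypergeometric distribution,'' which is exactly what your Steps~2--3 (the identification of $\E[p_{K_{\ell,i},\ell}]$ and the mean $\E[K_{\ell,i}]=i\ell/m$) make explicit.
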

\begin{proof}
	Recall that $$d_{\beta,p}(x)=\sum_{\ell=2}^{m} \beta_\ell\sum_{j=0}^\ell b_{j,\ell}(x) \left(p^{}_{j,\ell}-\frac{j}{\ell}\right).$$
	Since
	\begin{equation}\label{eq:degreeelevation}
		b_{j,\ell}(x)=\sum_{i=j}^{m-\ell+j}\frac{\binom{\ell}{j}\binom{m-\ell}{i-j}}{\binom{m}{i}}b_{i,m}(x),\end{equation}
	applying Fubini's theorem yields for $i\in[m-1]$,
	$$\rho_i(d_{\beta,p})=\sum_{\ell=2}^m\beta_{\ell} \sum_{j=0\vee (i+\ell-m)}^{i\wedge\ell}\frac{\binom{\ell}{j}\binom{m-\ell}{i-j}}{\binom{m}{i}} \left(p^{}_{j,\ell}-\frac{j}{\ell}\right).$$
	The result follows from classical properties of the hypergeometric distribution.
\end{proof}
For $\ell\in\, ]m]$, consider $(\beta,p)\in \R_+^{m-1}\times \Ps_m$ such that $\beta_\ell>0$ and for $i\neq \ell$, $\beta_i=0$, i.e. there are only $\ell$-interactions. The representation of Lemma~\ref{prop:betapBCV} then reads \begin{equation}
	\rho(d_{\beta,p})=\beta_\ell (\theta_\ell (p_{\cdot,\ell})-u_m),\label{eq:thetabetal}
\end{equation} where $u_m\coloneqq (i/m)_{i=1}^{m-1}$ and $$\theta_{\ell}:\{0\}\times[0,1]^{\ell-1}\times\{1\}\to[0,1]^{m-1},\qquad p\mapsto\theta_{\ell}(p)\coloneqq\Big(\E\big[p^{}_{K_{\ell,i}}\big]\Big)_{i=1}^{m-1}.$$ 
We show in the forthcoming proposition that for this special choice of $\beta$, \eqref{eq:thetabetal} leads to a representation of $\Ss_\lambda$. Before stating the result we introduce some definitions. For every $\ell\in\,]m]$, define
\[\cS_{\lambda}^{\ell} \ \coloneqq \ \big\{\rho(d_{\beta,p}):(\beta,p)\in \R_+^{m-1}\times \Ps_m, \ b(\beta)=\lambda, \ \forall i\in\,]m]\setminus\{\ell\}, \ \ \beta_i=0  \big\},\]
i.e. $\Ss_\lambda^\ell$ are the $\lambda$-decomposable vectors consisting only of $\ell$-interactions. Next, define
$\Ps_{0,1}^\ell\coloneqq\{0\}\times\{0,1\}^{\ell-1}\times\{1\}$, i.e. the set of deterministic $\ell$-colouring rules (cf. Remark~\ref{rem:determiisticvoting(intro)}). For~$\ell\in\,]m]$, we say that $(\beta,p)\in \R^{m-1}\times \Ps_m$ is \emph{$\ell$-extremal} if \begin{itemize}
	\item $\beta_{\ell} >0$, and for all $i\in\,]m]\setminus\{\ell\}, \beta_i=0$.
	\item $p_{\cdot,k}\in \Ps_{0,1}^{k}$ for every $k\in\,]m]$.
\end{itemize}
Hence, an $\ell$-extremal selection decomposition has only $\ell$-interactions and deterministic colouring rules. Note that if an $\ell$-extremal selection decomposition has effective branching rate~$\lambda$, then $\beta_\ell=\lambda/(\ell-1)$. Recall that for $K\subseteq\R^{m-1}$, $\conv{K}$ denotes the convex hull of~$K$.

\begin{proposition}[Characterisation of $\cS_\lambda$]\label{prop:description-s-lambda}
	We have
	\[\cS_{\lambda} \ = \ \conv{\left\{\frac{\lambda}{\ell-1}\big(\theta_\ell(p) -u_m \big):\ell\in\,]m], p\in \Ps_{0,1}^\ell\right\}  }. \]
	Furthermore, 
	\[\cS_{\lambda}^\ell \ = \ \conv{ \left\{\frac{\lambda}{\ell-1}\big(\theta_\ell(p) -u_m \big): p \in \Ps_{0,1}^\ell \right\}}. \]
\end{proposition}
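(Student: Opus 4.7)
The strategy is to combine the explicit formula of Lemma~\ref{prop:betapBCV} with the observation that the map $p_{\cdot,\ell}\mapsto\theta_\ell(p_{\cdot,\ell})$ is affine, then reparametrise the branching rates so that the constraint $b(\beta)=\lambda$ turns into a probability-vector constraint.

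First, rewrite Lemma~\ref{prop:betapBCV} in vector form:
\begin{equation*}
\cB(\beta,p) \ = \ \sum_{\ell=2}^{m} \beta_\ell\bigl(\theta_\ell(p_{\cdot,\ell}) - u_m\bigr).
\end{equation*}
On $\{b(\beta)=\lambda\}$, introduce $\alpha_\ell := \beta_\ell(\ell-1)/\lambda$, so that $\alpha=(\alpha_\ell)_{\ell=2}^m \in \Delta_{m-2}$ and
\begin{equation*}
\cB(\beta,p) \ = \ \sum_{\ell=2}^{m}\alpha_\ell\,\frac{\lambda}{\ell-1}\bigl(\theta_\ell(p_{\cdot,\ell})-u_m\bigr).
\end{equation*}

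Next, observe from the integral expression of $\theta_\ell$ (or from Lemma~\ref{prop:betapBCV}) that each coordinate of $p_{\cdot,\ell}\mapsto\theta_\ell(p_{\cdot,\ell})$ is a linear functional of $p_{\cdot,\ell}$; hence $\theta_\ell$ is an affine map. Since $\{0\}\times[0,1]^{\ell-1}\times\{1\}$ is a polytope with extreme point set $\Ps_{0,1}^\ell$, the image $\theta_\ell(\{0\}\times[0,1]^{\ell-1}\times\{1\})$ equals $\conv{\theta_\ell(\Ps_{0,1}^\ell)}$. Therefore, for each $\ell$ there exists a probability vector $(\mu_{\ell,q})_{q\in\Ps_{0,1}^\ell}$ such that $\theta_\ell(p_{\cdot,\ell})=\sum_{q\in\Ps_{0,1}^\ell}\mu_{\ell,q}\,\theta_\ell(q)$. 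Plugging this back,
\begin{equation*}
\cB(\beta,p) \ = \ \sum_{\ell=2}^{m}\sum_{q\in\Ps_{0,1}^\ell}(\alpha_\ell\mu_{\ell,q})\,\frac{\lambda}{\ell-1}\bigl(\theta_\ell(q)-u_m\bigr),
\end{equation*}
with weights $\alpha_\ell\mu_{\ell,q}\geq 0$ summing to $\sum_\ell\alpha_\ell=1$. This gives the inclusion $\cS_\lambda\subset\conv{\{\tfrac{\lambda}{\ell-1}(\theta_\ell(q)-u_m):\ell\in\,]m],\,q\in\Ps_{0,1}^\ell\}}$.

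For the reverse inclusion, it suffices to show that each vertex $\tfrac{\lambda}{\ell-1}(\theta_\ell(q)-u_m)$ lies in $\cS_\lambda$ (then the convex hull is covered, because $\cS_\lambda$ is convex; this convexity follows, e.g. from Proposition~\ref{prop:s-polytope}, or can be checked directly by mixing two selection decompositions $(\beta,p)$ and $(\beta',p')$ with $b(\beta)=b(\beta')=\lambda$ via a time-sharing argument). To realise the vertex, set $\beta_\ell=\lambda/(\ell-1)$, $\beta_j=0$ for $j\neq\ell$, take $p_{\cdot,\ell}=q$, and fill the remaining colouring coordinates arbitrarily; then $b(\beta)=\lambda$ and $\cB(\beta,p)=\beta_\ell(\theta_\ell(q)-u_m)$ is exactly the target vertex.

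The same argument, with the $\ell$-sum reduced to a single term, yields the statement for $\cS_\lambda^\ell$. I do not foresee a genuine obstacle: the proof is essentially bookkeeping around Lemma~\ref{prop:betapBCV}; the only point requiring care is the passage from general $p_{\cdot,\ell}\in\{0\}\times[0,1]^{\ell-1}\times\{1\}$ to deterministic rules, for which the affineness of $\theta_\ell$ is the key property.
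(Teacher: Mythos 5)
Your proof is correct and follows essentially the same route as the paper's: both rest on the vector form of Lemma~\ref{prop:betapBCV} and on the fact that $\cB$ is linear in $\beta$ and affine in $p$, so that $\cS_\lambda$ is the convex hull of the images of the extreme points of $G_\lambda\times\Ps_m$. The only difference is that you spell out the two inclusions (and the convexity of $\cS_\lambda$) explicitly, whereas the paper invokes them more tersely.
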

\begin{remark}
	According to Proposition~\ref{prop:description-s-lambda}, every Bernstein coefficient vector admitting a selection decomposition with effective branching rate~$\lambda$ and consisting only of~$\ell$-interactions is a convex combination of Bernstein coefficient vectors of $\ell$-extremal selection decompositions with effective branching rate~$\lambda$. More generally, any Bernstein coefficient vector that admits a selection decomposition with effective branching rate~$\lambda$ is a convex combination of Bernstein coefficient vectors of extremal selection decompositions with effective branching rate~$\lambda$. 
\end{remark}
\begin{proof}[Proof of Proposition~\ref{prop:description-s-lambda}]
	We use the notation of Proposition~\ref{prop:s-polytope}. Recall that $G_\lambda=\{\beta\in \R_+^{m-1}:b(\beta)=\lambda\}$ and
	$\cS_\lambda$ is the image of $G_\lambda\times \Ps_m$
	under the linear-affine map $(\beta,p)\mapsto \rho(d_{\beta,p})$. Consequently, $\cS_\lambda$
	is the convex hull of the image of the extreme points of $G_\lambda\times \Ps_m$. The extreme points are the extremal selection decompositions with effective branching rate~$\lambda$. 
	By Proposition \ref{prop:betapBCV}, for an $\ell$-extremal $(\beta,p)$, $\rho(d_{\beta,p})=\lambda\left(\theta_\ell(p_{\cdot,\ell}) -u_m \right)/(\ell-1)$. This proves the first identity. The identity for $\cS_\lambda^\ell$ is proved along the same lines.
\end{proof}

Let $\ell\in\,]m]$. Fix a branching rate vector $\beta$ with effective branching rate~$\lambda>0$ and only~$\ell$-branchings, i.e. $\beta_\ell=\lambda/(\ell-1)$ and for $i\neq \ell$, $\beta_i=0$. With such~$\beta$ fixed, \eqref{eq:thetabetal} defines a map that associates to every colouring rule an element in $\R^{m-1}$. If this map were injective, we could associate to every $\bar{\rho}\in \R^{m-1}$ a unique selection decomposition consisting only of $\ell$-interactions and having effective branching rate~$\lambda$. The map in~\eqref{eq:thetabetal} is injective if and only if~$\theta_\ell$ is injective.

\begin{lemma}\label{lem:thetaInjective}
	$\theta_\ell$ is an injective map. 
\end{lemma}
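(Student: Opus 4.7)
The plan is to identify $\theta_\ell(p)$, together with the prescribed boundary values $p_0=0$ and $p_\ell=1$, with the Bernstein coefficient vector (in degree~$m$) of the polynomial $P(x):=\sum_{j=0}^\ell p_j\, b_{j,\ell}(x)$, and then invoke the basis property of the Bernstein polynomials.

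Concretely, I would first inspect $\E[p_{K_{\ell,i}}]$ for $i=0$ and $i=m$. Since $K_{\ell,0}$ and $K_{\ell,m}$ are deterministic (equal to $0$ and $\ell$ respectively, because $\binom{m-\ell}{i-j}$ vanishes unless $j=i$ at these extremes), the ``missing'' coordinates are forced to be $p_0$ and $p_\ell$. Combined with the degree elevation formula~\eqref{eq:degreeelevation}, namely
\[
b_{j,\ell}(x)\;=\;\sum_{i=j}^{m-\ell+j}\frac{\binom{\ell}{j}\binom{m-\ell}{i-j}}{\binom{m}{i}}\, b_{i,m}(x),
\]
a swap of summation (as in the proof of Proposition~\ref{prop:betapBCV}) yields
\[
P(x)\;=\;p_0\, b_{0,m}(x)\;+\;\sum_{i=1}^{m-1}\theta_\ell(p)_i\, b_{i,m}(x)\;+\;p_\ell\, b_{m,m}(x),
\]
so the full Bernstein coefficient vector of $P$ in degree~$m$ is $(p_0,\theta_\ell(p)_1,\dots,\theta_\ell(p)_{m-1},p_\ell)$.

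Now suppose $p,q\in\{0\}\times[0,1]^{\ell-1}\times\{1\}$ satisfy $\theta_\ell(p)=\theta_\ell(q)$. Since $p_0=q_0=0$ and $p_\ell=q_\ell=1$, the identity above shows that $P$ and $Q:=\sum_{j=0}^\ell q_j\, b_{j,\ell}(x)$ have the same Bernstein coefficients in degree~$m$, hence $P=Q$ as polynomials. Because $\{b_{j,\ell}\}_{j=0}^\ell$ is a basis of the space of polynomials of degree at most~$\ell$, the map $p\mapsto P$ is injective, and therefore $p=q$, proving injectivity of~$\theta_\ell$.

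There is no real obstacle here: the only thing that needs a moment of care is the bookkeeping at the endpoints $i\in\{0,m\}$, where the hypergeometric distribution degenerates, so that $p_0$ and $p_\ell$ (which are not part of $\theta_\ell(p)$ by convention) reappear naturally as the missing Bernstein coefficients of~$P$ in degree~$m$.
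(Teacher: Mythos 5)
Your proposal is correct, and it takes a genuinely different route from the paper. The paper extends $\theta_\ell$ to a linear map on $\{0\}\times\R^\ell$ and shows the kernel is trivial by a triangular induction on the coordinates: the support of $K_{\ell,i}$ is contained in $[i]_0$, so once $p_0=\dots=p_k=0$, the equation $\E[p_{K_{\ell,k+1}}]=0$ collapses to $\binom{\ell}{k+1}p_{k+1}/\binom{m}{k+1}=0$, forcing $p_{k+1}=0$. You instead recognise $(p_0,\theta_\ell(p)_1,\dots,\theta_\ell(p)_{m-1},p_\ell)$ as the degree-$m$ Bernstein coefficient vector of $P(x)=\sum_j p_j b_{j,\ell}(x)$ via the degree elevation formula~\eqref{eq:degreeelevation} (exactly the computation already done in Proposition~\ref{prop:betapBCV}), and then invoke that degree elevation preserves the polynomial together with the basis property of $\{b_{j,\ell}\}$. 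Your version is more conceptual and ties injectivity to a named structural fact (degree elevation is a change of representation, not of polynomial), reusing machinery the paper already sets up; the paper's version is more elementary and self-contained, avoiding the reduction through polynomial identity. The only step that needs the care you flagged — checking that the $i\in\{0,m\}$ coordinates degenerate to $p_0$ and $p_\ell$ — is handled correctly, since $K_{\ell,0}=0$ and $K_{\ell,m}=\ell$ deterministically. Both arguments are sound.
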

\begin{proof}
	Note that $\theta_\ell$ can be extended to a linear map on $\{0\}\times \R^{\ell}$. Hence, it suffices to show that if $\theta_\ell(p)=0$, then~$p=0$. Assume $p=(p_0,\ldots,p_{\ell})\in\{0\}\times \R^{\ell}$ is such that~$\theta_\ell(p)=0$. By assumption, $p_0=0$. Since $\E[\,p_{K_{\ell,1}}]=p_1\ell!/m!=0$, it follows that $p_1=0$. We proceed by induction. Assume $p_i=0$ for all $i\leq k$ for some~$k<\ell$. By assumption and by the induction hypothesis, \[0=\E[\,p^{}_{K_{\ell,k+1}}]=\sum_{j=0\vee (\ell+k+1-m)}^{k+1} \frac{\binom{\ell}{j}\binom{m-\ell}{k+1-j}}{\binom{m}{k+1}}p_j=\frac{\binom{\ell}{k+1}}{\binom{m}{k+1}}p_{k+1}.\]
	It follows that also $p_{k+1}=0$. Altogether, $p= 0$.
\end{proof}

Hence, $\theta_\ell^{-1}$ leads to a unique selection decomposition consisting of only~$\ell$-interactions and having effective branching rate~$\lambda$. In a next step we associate to~$\bar{\rho}\in \Ss_\lambda$ a general selection decomposition. To do so we consider for each~$\ell\in\,]m]$, $v_\ell \in \Ss_\lambda^\ell$ (not necessarily extremal) such that we can write $\bar{\rho}$ as a convex combination of $\{v_\ell\}_{\ell\in\,]m]}$. We construct the general selection decomposition for $\bar{\rho}$ by combining the selection decompositions that are associated to each element in $\{v_\ell\}_{\ell\in\,]m]}$. To end up with a general selection decomposition with effective branching rate~$\lambda$, we decrease the rate of the $\ell$-interaction by the weight of $v_\ell$ in the convex combination of $\bar{\rho}$. In particular, a general selection decomposition is then parametrised by the effective branching rate~$\lambda$, a set of vectors $\{v_\ell\}_{\ell\in\,]m]}$, and their convex weights $\alpha\in \Delta_{m-2}$, where for $m\geq2$, $\Delta_{m-2}$ is the $m-2$-simplex defined via $$\Delta_{m-2}\coloneqq\left\{\alpha\coloneqq(\alpha_\ell)_{\ell=2}^m\in[0,1]^{m-1}:\sum_{\ell=2}^{m} \alpha_\ell=1\right\}.$$ This leads to the following definition.
\begin{definition}[$\lambda$-convex decompositions]\label{def:lambdaconvexdecompositions}
	Let~$\lambda>0$. The \emph{set of~$\lambda$-convex decompositions} is defined as $\Cs^\lambda\coloneqq\left(\prod_{\ell=2}^m \cS_{\lambda}^\ell \right)\times \Delta_{m-2} $. For $\bar{\rho}\in \R^{m-1}$, we say that $(\vec{v},\alpha)\in \Cs^\lambda$ is a \emph{$\lambda$-convex decomposition} of~$\bar{\rho}$ if $\bar{\rho}=\sum_{\ell=2}^{m}\alpha_\ell v_\ell$. Define \begin{equation}
		\Cs \coloneqq\left\{(\lambda,\vec{v},\alpha)\in \R_+\times \left(\prod_{\ell=2}^m \Rb^{m-1}\right)\times\Delta_{m-2} : (\vec{v},\alpha)\in \Cs^\lambda \right\}.
	\end{equation}
	Furthermore, define $\varphi:\Cs \to \R_+^{m-1}\times \Ps_m$ as $\varphi(\lambda,\vec{v},\alpha)=(\beta^{(\lambda,\vec{v},\alpha)},p^{(\lambda,\vec{v},\alpha)}),$
	where  $$\beta^{(\lambda,\vec{v},\alpha)}_{\ell}\coloneqq\frac{\lambda}{\ell-1}\alpha_{\ell}\qquad \text{and}\qquad p_{\cdot,\ell}^{(\lambda,\vec{v},\alpha)}\coloneqq\theta^{-1}_{\ell}\left(\frac{\ell-1}{\lambda}v_\ell+u_m\right),\qquad \ell\in\,]m].$$
\end{definition}

The next result states that the reasoning preceding Definition~\ref{def:lambdaconvexdecompositions} is correct, i.e we can indeed associate to each convex combination of $\bar{\rho}\in \Ss_\lambda$ with elements in $\Ss_\lambda^\ell$ (for $\ell\in\,]m]$) a unique selection decomposition.
\begin{proposition}[Embedding]\label{lem:varphiBijection}
	$\varphi$ is a bijection from $\Cs$ to $\R_+^{m-1}\times \Ps_m$ 
\end{proposition}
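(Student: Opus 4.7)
The plan is to prove bijectivity by exhibiting an explicit inverse $\psi:E^m\to\Cs$ and checking the two composition identities. Given $(\beta,p)\in E^m$ with $\beta\neq 0$, I will set
\[
\lambda := b(\beta) = \sum_{\ell=2}^m (\ell-1)\beta_\ell,\quad
\alpha_\ell := \frac{\ell-1}{\lambda}\beta_\ell,\quad
v_\ell := \frac{\lambda}{\ell-1}\bigl(\theta_\ell(p_{\cdot,\ell}) - u_m\bigr),
\]
and take $\psi(\beta,p):=(\lambda,\vec v,\alpha)$. (The degenerate case $\beta=0$, if admitted by the definition of $\Cs$, can be handled separately or excluded as in the paper's convention.)

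First I would verify that $\psi$ lands in $\Cs$ and that $\varphi$ is well-defined on $\Cs$. For $\psi$: clearly $\alpha_\ell\geq 0$ and $\sum_\ell \alpha_\ell = \lambda^{-1}\sum_\ell(\ell-1)\beta_\ell = 1$, so $\alpha\in\Delta_{m-2}$. To see $v_\ell\in\cS_\lambda^\ell$, note that by Proposition~\ref{prop:description-s-lambda} together with the affinity of $\theta_\ell$,
\[
\cS_\lambda^\ell \;=\; \tfrac{\lambda}{\ell-1}\bigl(\theta_\ell(\conv{\Ps_{0,1}^\ell})-u_m\bigr) \;=\; \tfrac{\lambda}{\ell-1}\bigl(\theta_\ell(\{0\}\times[0,1]^{\ell-1}\times\{1\})-u_m\bigr),
\]
since $\{0\}\times[0,1]^{\ell-1}\times\{1\}=\conv{\Ps_{0,1}^\ell}$. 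For $\varphi$: given $(\lambda,\vec v,\alpha)\in\Cs$, the vector $\frac{\ell-1}{\lambda}v_\ell+u_m$ lies in $\theta_\ell(\{0\}\times[0,1]^{\ell-1}\times\{1\})$ by the same identity, and Lemma~\ref{lem:thetaInjective} guarantees that $\theta_\ell$ restricts to a bijection of $\{0\}\times[0,1]^{\ell-1}\times\{1\}$ onto its image, so the preimage $p_{\cdot,\ell}^{(\lambda,\vec v,\alpha)}\in\Ps_{\cdot,\ell}$ is uniquely determined. Thus both $\varphi$ and $\psi$ are well-defined maps between $\Cs$ and $E^m$.

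It remains to check $\varphi\circ\psi=\mathrm{id}_{E^m}$ and $\psi\circ\varphi=\mathrm{id}_{\Cs}$, which is direct substitution. For $\varphi\circ\psi$: one has $\beta_\ell^{(\psi(\beta,p))}=\tfrac{\lambda}{\ell-1}\alpha_\ell=\beta_\ell$ and $p_{\cdot,\ell}^{(\psi(\beta,p))}=\theta_\ell^{-1}\bigl(\tfrac{\ell-1}{\lambda}\cdot\tfrac{\lambda}{\ell-1}(\theta_\ell(p_{\cdot,\ell})-u_m)+u_m\bigr)=p_{\cdot,\ell}$. For $\psi\circ\varphi$: the total branching rate of $\beta^{(\lambda,\vec v,\alpha)}$ equals $\sum_\ell(\ell-1)\tfrac{\lambda}{\ell-1}\alpha_\ell=\lambda\sum_\ell\alpha_\ell=\lambda$, which recovers the first coordinate and hence $\alpha$; for $v_\ell$ one computes $\tfrac{\lambda}{\ell-1}(\theta_\ell(\theta_\ell^{-1}(\tfrac{\ell-1}{\lambda}v_\ell+u_m))-u_m)=v_\ell$. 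The only genuine ingredient beyond bookkeeping is Lemma~\ref{lem:thetaInjective} combined with the geometric identification of $\cS_\lambda^\ell$ from Proposition~\ref{prop:description-s-lambda}; once these are in hand, the rest is algebra. The main thing to watch is ensuring that the argument of $\theta_\ell^{-1}$ in the definition of $\varphi$ really does lie in the image of $\Ps_{\cdot,\ell}$ under $\theta_\ell$, which is precisely what the characterisation of $\cS_\lambda^\ell$ provides.
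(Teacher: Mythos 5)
Your proof is correct and takes essentially the same route as the paper: the same explicit inverse $\psi$ (defined by $\lambda=b(\beta)$, $\alpha_\ell=(\ell-1)\beta_\ell/\lambda$, $v_\ell=\tfrac{\lambda}{\ell-1}(\theta_\ell(p_{\cdot,\ell})-u_m)$), relying on Lemma~\ref{lem:thetaInjective} for injectivity of $\theta_\ell$ and on Proposition~\ref{prop:description-s-lambda} for identifying $\cS_\lambda^\ell$ with $\tfrac{\lambda}{\ell-1}\bigl(\theta_\ell(\{0\}\times[0,1]^{\ell-1}\times\{1\})-u_m\bigr)$. You are slightly more explicit than the paper in checking that $\varphi$ is well-defined (that $\tfrac{\ell-1}{\lambda}v_\ell+u_m$ lies in the image of $\theta_\ell$ so that $\theta_\ell^{-1}$ applies), but the underlying argument is the same.
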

\begin{proof}
	The injectivity of~$\varphi$ follows from the injectivity of $\theta_\ell$. 
	For the surjectivity, consider~$(\beta,p)\in \R_+^{m-1}\times \Ps_m$. Set $\lambda\coloneqq b(\beta)$, $\alpha_\ell\coloneqq\beta_\ell(\ell-1)/\lambda$, and $v_\ell\coloneqq\lambda (\theta_\ell(p_{\cdot,\ell})-u_m)/(\ell-1)$. Clearly, $\lambda\in \R_+$ and $\alpha\in \Delta_{m-1}$. We claim that $\vec{v}\coloneqq(v_\ell)_{\ell=2}^m\in \prod_{\ell=2}^m \cS_\lambda^{\ell}$. To see this, first note that $p_{\cdot,\ell}$ can be written as a convex combination of elements in the extreme set $\Ps_{0,1}^\ell$. Since~$(\beta,p)\mapsto \rho(d_{\beta,p})$ is affine in the second argument (see proof of Proposition~\ref{prop:s-polytope}), the claim follows by the characterisation of~$\cS_\lambda^{\ell}$ given in the second part of Proposition~\ref{prop:description-s-lambda}. Finally, note that indeed $\varphi(\lambda,\vec{v},\alpha)=(\beta,p)$.
\end{proof}
Ultimately, we search for all selection decompositions of the drift polynomial in~\eqref{eq:SDEoriginal02}. For a polynomial~$d$ with $\deg(d)\leq m$ and $d(0)=d(1)=0$ define 
\[\cC_d \coloneqq \left\{ (\lambda,\vec{v}, \alpha) \in \cC \ : \ (\vec{v},\alpha)
\ \text{is a}\ \lambda\text{-convex decomposition of}\ \rho(d) \right\},\]
i.e. $\cC_d$ is the set of convex decompositions of~$\rho(d)$. The next result relates~$\cC_d$ to the set of selection decompositions of~$d$.
\begin{corollary}[Geometric characterisation of $\ccS_d$]\label{coro:geometriccharacterisationced}
	$\ccS_d$ coincides with $\varphi(\cC_d)$.
\end{corollary}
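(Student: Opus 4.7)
The plan is to exploit Proposition~\ref{lem:varphiBijection}, which already states that $\varphi$ is a bijection from $\cC$ onto $E^m$. Since $\cE_d \subset E^m$ and $\cC_d \subset \cC$, it suffices to show that for any $(\beta,p) \in E^m$ with preimage $(\lambda,\vec v,\alpha) = \varphi^{-1}(\beta,p)$, one has the equivalence
\[
 (\beta,p)\in\cE_d \ \Longleftrightarrow\ (\lambda,\vec v,\alpha)\in\cC_d.
\]
The first condition is $\cB(\beta,p)=\rho(d)$, while the second is $\sum_{\ell=2}^m \alpha_\ell v_\ell = \rho(d)$. Hence the whole corollary reduces to the identity
\[
 \cB(\beta,p)\ =\ \sum_{\ell=2}^m \alpha_\ell\, v_\ell.
\]

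To establish this identity I would plug in the explicit formulas from Definition~\ref{def:lambdaconvexdecompositions}: $\alpha_\ell = \beta_\ell(\ell-1)/\lambda$ and $v_\ell = \lambda(\theta_\ell(p_{\cdot,\ell}) - u_m)/(\ell-1)$. These multiply to give $\alpha_\ell v_\ell = \beta_\ell (\theta_\ell(p_{\cdot,\ell}) - u_m)$, so that
\[
 \sum_{\ell=2}^m \alpha_\ell v_\ell\ =\ \sum_{\ell=2}^m \beta_\ell\bigl(\theta_\ell(p_{\cdot,\ell}) - u_m\bigr).
\]
By Lemma~\ref{prop:betapBCV}, the $i$-th coordinate of the right-hand side is precisely $\cB_i(\beta,p)$, using the definition $\theta_\ell(p_{\cdot,\ell})_i = \E[p_{K_{\ell,i},\ell}]$ and $(u_m)_i = i/m$. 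This closes the loop.

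There is essentially no obstacle: once the bijectivity of $\varphi$ is in hand and Lemma~\ref{prop:betapBCV} supplies the coordinate expression of $\cB$, the equivalence is a direct substitution. The only small point to verify carefully is the consistency of the factor $(\ell-1)/\lambda$ between the definitions of $\alpha_\ell$ and $v_\ell$ so that their product cleanly reproduces the summand $\beta_\ell(\theta_\ell(p_{\cdot,\ell}) - u_m)$ appearing in the formula for $\cB(\beta,p)$.
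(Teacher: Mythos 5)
Your proof is correct and follows essentially the same route as the paper: both reduce the corollary to the bijectivity of $\varphi$ (Proposition~\ref{lem:varphiBijection}) combined with the identity $\cB(\varphi(\lambda,\vec v,\alpha))=\sum_{\ell=2}^m\alpha_\ell v_\ell$. The only difference is that the paper asserts this identity without writing out the verification, whereas you spell it out by substituting the explicit formulas for $\alpha_\ell$ and $v_\ell$ and invoking Lemma~\ref{prop:betapBCV}, which is a welcome bit of additional rigour rather than a genuinely different argument.
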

\begin{proof}
	$(\beta,p)\in\ccS_d$ if and only if $\rho(d)=\rho(d_{\beta,p})$. Moreover, for $(\lambda,\vec{v},\alpha)\in \cC_d$, \[\rho(d_{\beta^{(\lambda,\vec{v},\alpha)},p^{(\lambda,\vec{v},\alpha)}})=\sum_{\ell=2}^{m} \alpha_\ell v_\ell=\rho(d),\]
	where the last equality holds, since $(\vec{v},\alpha)$ is a~$\lambda$-convex decomposition of~$\rho(d)$. Since $\varphi$ is bijective, the result follows.
\end{proof}
\begin{figure}[t!]
	\begin{minipage}{0.4\textwidth}
		\begin{center}
			\scalebox{0.4}{
				\begin{tikzpicture}
				%\Ss^3
				%\fill [gray,opacity=0.2] (-2.5,-5) rectangle (5,2.5);
				%\fill [gray,opacity=0.2] (-1.66,-3.33) rectangle (3.33,1.66);
				\fill [gray,opacity=0.4] (-3.5,-7) rectangle (7,3.5);
				%-1.02x+0.16=y
				%-2/5+x=y
				%-147/55x+391/550=y
				%Box

				%\Ss^2
				%\draw[line width=.5mm, opacity=0.2] (-3.33,-3.33) -- (3.33,3.33);
				%\draw[line width=.5mm, opacity=0.2] (-5,-5) -- (5,5);
				\draw[line width=.5mm, opacity=0.4] (-7,-7) -- (7,7);

				%\draw[line width=.2mm, gray, dotted] (-5,5) -- (8.5,-8.5);

				\node[above] at (2,5.5) {\scalebox{2.6}{$\Ss_\lambda$}};
				
				% \Ss_\lambdas
				\draw[line width=.5mm] (-7,-7) -- (-3.5 ,3.5) -- (7,7) -- (7,-7) -- (-7,-7);
				
				\node at (3.33,-1.8) {\scalebox{2.6}{$\times$}};
				\node[right] at (3.6,-1.5) {\scalebox{2.6}{$\rho(d)$}};
				\node at (.55,1.2) {\scalebox{2.6}{$v_2$}};
				\node at (5.6,-3.7) {\scalebox{2.6}{$v_3$}};
				\node at (2,-1) {\scalebox{2.6}{$l_2$}};
				\node at (4.1,-3) {\scalebox{2.6}{$l_3$}};
				\node[left] at (-7,-7) {\scalebox{2.6}{\color{red}}};
				%\node[left] at (1.7,-.75) {\scalebox{2.6}{$(0,0)$}};
				\node[right] at (7,7) {\scalebox{2.6}{\color{red} }};
				\node[left] at (-3.5,3.5) {\scalebox{2.6}{}};
				\node[right] at (7,3.5) {\scalebox{2.6}{}};
				\node[right] at (7,-7) {\scalebox{2.6}{}};
				\node[below] at (-3.5,-7) {\scalebox{2.6}{}};
				
				%\node at (2.4,1.9) {\scalebox{1.5}{$v_2'$}};
				%\node at (4,-4.45) {\scalebox{1.5}{$v_3'$}};
				%\node[right] at (3.38,-1.7) {\scalebox{1.5}{$v^{i,\ell}$}};
				
				\draw[line width=.5mm, black] (5,-3.5) -- (3.33,-1.8) -- (.79,.79);
				%\draw[line width=.5mm, black] (4.25,-4.25) -- (3.33,-1.8) -- (1.935,1.935);
				\node[left,opacity=0] at (0,-7) {\scalebox{2.6}{$(-\frac{1}{3},-\frac{1}{3})$}};
				\end{tikzpicture}}
		\end{center}
	\end{minipage}~~ ~~\begin{minipage}{0.45\textwidth}
		\begin{center}
			\scalebox{0.4}{
				\begin{tikzpicture}
				%\Ss^3
				%\fill [gray,opacity=0.2] (-2.5,-5) rectangle (5,2.5);
				%\fill [gray,opacity=0.2] (-1.66,-3.33) rectangle (3.33,1.66);
				\fill [gray,opacity=0.4] (-3.5,-7) rectangle (7,3.5);
				%-1.02x+0.16=y
				%-2/5+x=y
				%-147/55x+391/550=y
				%Box
				
				\draw[line width=.5mm, opacity=0.4] (-7,-7) -- (7,7);
				%\Ss^2

				\node[above] at (2,5.5) {\scalebox{2.6}{$\Ss_1$}};
				
				% \Ss_\lambdas
				\draw[line width=.5mm] (-7,-7) -- (-3.5 ,3.5) -- (7,7) -- (7,-7) -- (-7,-7);
				\node at (0,0) {\scalebox{2.8}{$\times$}};
				\node[left] at (-7,-7) {\scalebox{2.6}{$(-\frac{1}{3},-\frac{1}{3})$}};
				\node[left] at (1.9,-.75) {\scalebox{2.6}{$(0,0)$}};
				\node[right] at (7,7) {\scalebox{2.6}{}};
				\node[left] at (-3.5,3.5) {\scalebox{2.6}{$(-\frac{1}{6},\frac{1}{6})$}};
				\node[right] at (7,3.5) {\scalebox{2.6}{}};
				\node[right] at (7,-7) {\scalebox{2.6}{}};
				\node[below] at (-3.5,-7) {\scalebox{2.6}{}};
				
				\draw[line width=.4mm, gray, dotted] (0,0) -- (-10,-3);
				\node at (-10,-3) {\scalebox{2.8}{$\times$}};
				\node at (-5.2,-1.5) {\scalebox{2.8}{$\times$}};
				\node at (-5.8,-0.5) {\scalebox{2.6}{$\rho(\bar{d})$}};
				\node[left] at (-9.7,-2.3) {\scalebox{2.6}{$\rho(d)$}};

				\end{tikzpicture}}
		\end{center}
	\end{minipage} ~~ ~~ ~~ 
	\caption{Left: Representation of some~$(\lambda,\vec{v},\alpha)\in \Cs_d$ for~$m=3$. Here, $l_2=\lVert v_2-\rho\rVert_2$ and~$l_3=\lVert v_3-\rho\rVert_2$. $\alpha_2=l_3/(l_2+l_3)$ and $\alpha_3=l_2/(l_2+l_3)$. Right: $\mathbb{L}_d$ and $\cS_1$ intersect on the west face of the polygon at the point $ \rho(\bar{d})$. Since $1$-convex decompositions must involve a point in $\cS_1^2$ (the diagonal segment inside $\Ss_1$), the unique $1$-convex decomposition of $ \rho(\bar{d})$ is obtained by taking the $1$-convex decomposition of $\rho(d)$ involving the two extreme points of the west face.}
	\label{fig:exampleCsdm200}
\end{figure}

\begin{remark}\label{rem:interpretationconvexm3} We now explain the case~$m=3$ in more detail. The objects of Definition~\ref{def:lambdaconvexdecompositions} admit a clear graphical interpretation. Consider a selection decomposition of $(\beta,p)\in\ccS_d$. We recover $(\lambda,\vec{v},\alpha)=\phi^{-1}(\beta,p)$ directly from Figs.~\ref{fig:exampleCsdm200} and~\ref{fig:exampleCsdm2}. The first entry~$\lambda$ corresponds to the effective branching rate and it fixes~$\Ss_\lambda^2$ (Fig.~\ref{fig:exampleCsdm2}, red diagonal) and~$\Ss_\lambda^3$ (Fig.~\ref{fig:exampleCsdm2}, grey square). Next, $\vec{v}=(v_2,v_3)$ where $v_2$ and $v_3$ are Bernstein coefficient vectors given by
	\[v_\ell = \beta_\ell\left(\theta_\ell(p_{\cdot,\ell})-u_3\right), \ \ \ell=2,3,\]
	correspond to points in~$\Ss_\lambda^2$ and $\Ss_\lambda^3$, respectively. To read of the convex weight, we set $l_2=\lVert v_2-\rho(d_{\beta,p})\rVert_2$ and~$l_3=\lVert v_3-\rho(d_{\beta,p})\rVert_2$. Then $\alpha_2=l_3/(l_2+l_3)$ and $\alpha_3=l_2/(l_2+l_3)$ are the convex weights in the combination of $v_2$ and $v_3$ leading to $\rho(d_{\beta,p})$. 
\end{remark}

For a given polynomial~$d$ vanishing at the boundary, it remains to determine the set of selection decompositions that have a minimal effective branching rate. We start with the following observation. Denote by~$O$ the origin in $\Rb^{m-1}$. Let $\mathbb{L}_d\coloneqq\{v\in \R^{m-1}: v=\lambda \rho(d)\ \text{for some} \ \lambda>0\}$, i.e. the half line passing through the origin and $\rho(d)$, with extremity~$O$. Since $\cS_1$ contains~$O$, $\mathbb{L}_d$ intersects with $\cS_1$ in a unique point; for an illustration if $m=3$ see Fig.~\ref{fig:exampleCsdm200} (right). Let $ \rho(\bar{d})$ be this point and let $\bar{d}$ be the polynomial, i.e. $\bar d(x)  =  \langle  B_m(x), \rho(\bar{d}) \rangle.$ Note that $\rho(\bar{d})$ lies on the boundary of $\Ss_1$, and hence $b_\star(\bar d)=1$.
\begin{proposition}[Scaling]\label{prop:scaling-polu}
	~ \begin{enumerate}
		\item[(i)] Let $i\in\,]m-1]$ such that $\rho_i(\bar{d})\neq 0$. Then
		\[b_{\star}(d) = \frac{\rho_i(d)}{\rho_i(\bar{d})}.\]
		\item[(ii)] $(\beta,p)\in \ccS_{\bar{d}}$ if and only if $(b_{\star}(d) \beta,p)\in \ccS_d.$
	\end{enumerate}
\end{proposition}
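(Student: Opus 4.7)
The proposition is essentially a direct consequence of two facts already established in the paper: the scaling relation $\cS_\lambda = \lambda\cS_1$ (Proposition~\ref{prop:s-polytope}) and the linearity of the map $\beta \mapsto \cB(\beta,p)$ in the first argument (this linearity is evident from Definition~\ref{def:SelectionDecomposition}, since $\cB_i(\beta,p)$ is the $i$-th Bernstein coefficient of $\sum_\ell \beta_\ell \langle B_\ell,p_{\cdot,\ell}-u_\ell\rangle$, which is linear in $\beta$ for fixed $p$).

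For (i), I would first argue that the number $b_\star(d)$ coincides with the scaling factor between $\rho(d)$ and $\bar\rho(d)$. Since $\bar\rho(d) \in L_d$ and $\bar\rho(d)\neq O$ (as otherwise $\rho(d) = 0$ and $d \equiv 0$, contrary to the fact that $L_d$ is a half-line emanating from $O$), there is a unique $\mu > 0$ with $\rho(d) = \mu\, \bar\rho(d)$. By Proposition~\ref{prop:s-polytope}, $\cS_\mu = \mu\cS_1$, so $\rho(d) \in \partial \cS_\mu$. On the other hand, $L_d$ only crosses the boundary of the nested family of polytopes $\{\cS_\lambda\}_{\lambda>0}$ at one point (uniqueness of $\bar\rho(d)$), so $\rho(d)\notin\cS_\lambda$ for $\lambda < \mu$, which gives $b_\star(d) = \mu$ by the representation \eqref{eq:minimal-branching}. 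Reading off the $i$-th coordinate for any $i$ with $\bar\rho_i(d)\neq 0$ then yields $b_\star(d) = \rho_i(d)/\bar\rho_i(d)$.

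For (ii), using that $\cB(\cdot,p)$ is linear, the identity $\cB(b_\star(d)\beta,p) = b_\star(d)\,\cB(\beta,p)$ holds. Combined with $\rho(d) = b_\star(d)\,\bar\rho(d) = b_\star(d)\,\rho(\bar d)$, one obtains immediately that $\cB(\beta,p) = \rho(\bar d)$ is equivalent to $\cB(b_\star(d)\beta,p) = \rho(d)$, i.e.\ $(\beta,p) \in \Es_{\bar d}$ iff $(b_\star(d)\beta,p) \in \Es_d$.

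No step requires serious work; the only subtle point is to make sure that $\bar\rho(d)$ being the unique intersection of $L_d$ with $\cS_1$ does translate into $\rho(d) \in \partial \cS_{b_\star(d)}$ (so that $b_\star(d)$ is attained and equals $\mu$). This relies exactly on the dilation property of the polytopes $\cS_\lambda$ and on \eqref{exmSD} from the proof of Proposition~\ref{exminimal}.
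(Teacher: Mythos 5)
Your argument is correct, and it is essentially the route the paper intends: the one-line proof in the paper cites the scaling relation $\cS_\lambda=\lambda\cS_1$, which is exactly what you use, together with the linearity of $\beta\mapsto\cB(\beta,p)$ (already noted in the proof of Proposition~\ref{prop:s-polytope}). The only minor point worth tightening in part~(i) is the step ``$\rho(d)\notin\cS_\lambda$ for $\lambda<\mu$'': this does follow from the nestedness $\cS_\lambda\subset\cS_{\lambda'}$ for $\lambda<\lambda'$ (itself a consequence of $\cS_\lambda=\lambda\cS_1$, convexity, and $O\in\cS_1$) combined with the fact that $\rho(d)\in\partial\cS_\mu$ while $\cS_1$ contains a neighbourhood of $O$, so the Minkowski gauge is exactly $\mu$; your phrasing in terms of the ray crossing the nested boundaries once is a slightly informal but valid way to say the same thing.
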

\begin{proof}
	This easily follows from the previous result and the scaling relation $\cS_\lambda = \lambda \cS_1$.
\end{proof}

Finally, we identify the $b$-minimal selection decompositions for~$d$. By the above discussion and by Proposition~\ref{prop:scaling-polu}, it is enough to determine the $b$-minimal selection decomposition of $\bar d$. More precisely, $\bar{d}$ is determined by the point of intersection~$\rho(\bar{d})$ of the line~$\mathbb{L}_d$ and the polygon~$\cS_1$. Finding a point of intersection of a line and a polygon is a classic problem in computational geometry, see e.g.~\cite{foley1996computer,cyrus1978generalized,liang1984new}. We now provide an algorithm to find a $b$-minimal selection decomposition taking into account the following points. The effective branching rate of the $b$-minimal selection decompositions of~$d$ can be recovered using Proposition~\ref{prop:scaling-polu}{-(i)}. The $b$-minimal selection decompositions of $\bar{d}$ correspond to the image under~$\varphi$ of the convex combinations of $\rho(\bar{d})$ in~$\Ss_1$. The $b$-minimal selection decompositions of~$d$ are recovered from the $b$-minimal selection decompositions of $\bar{d}$ by scaling with $b_\star(d)$.
\begin{algorithm}[Finding $b$-minimal selection decompositions]\label{teo:algorithm}
	\hspace{2em}
	\begin{enumerate}
		\item[Step 1.] Compute the point of intersection $\rho(\bar{d})$.
		\item[Step 2.] Compute $b_{\star}(d)$ via Proposition~\ref{prop:scaling-polu}-(i).
		\item[Step 3.] Determine $\cC_{\bar{d}}^{\min}\coloneqq\{(\lambda,\vec{v},\alpha)\in \cC_{\bar{d}}:\lambda=1\}$ and set $\ccS_{\bar d}^{\min} \coloneqq \varphi(\cC_{\bar d}^{\min})$. 
		\item[Step 4.] Finally $\ccS_d^{\min}\coloneqq\{(b_{\star}(d)\beta,p): (\beta,p)\in \ccS_{\bar d}^{\min} \}$. 
	\end{enumerate}
\end{algorithm}
Then $\ccS_d^{\min}$ is the set of $b$-minimal selection decompositions.

\subsection{\texorpdfstring{Minimal selection decompositions if \boldmath{$m=2,3$}}{Minimal selection decompositions if m=2,3}}\label{sec:minimalSDm3}
\label{sect:minimal-models20}

\begin{figure}[t!]
	\begin{center}
		\scalebox{0.4}{
			\begin{tikzpicture}
			%\Ss^3
			%\fill [gray,opacity=0.2] (-2.5,-5) rectangle (5,2.5);
			%\fill [gray,opacity=0.2] (-1.66,-3.33) rectangle (3.33,1.66);
			\fill [gray,opacity=0.4] (-3.5,-7) rectangle (7,3.5);
			%-1.02x+0.16=y
			%-2/5+x=y
			%-147/55x+391/550=y
			%Box

			%\Ss^2
			%\draw[line width=.5mm, opacity=0.2] (-3.33,-3.33) -- (3.33,3.33);
			%\draw[line width=.5mm, opacity=0.2] (-5,-5) -- (5,5);
			\draw[line width=.5mm, opacity=0.4, red] (-7,-7) -- (7,7);

			%\draw[line width=.2mm, gray, dotted] (-5,5) -- (8.5,-8.5);

			\node[above] at (2,5.5) {\scalebox{2.6}{$\Ss_\lambda$}};
			
			% \Ss_\lambdas
			\draw[line width=.5mm] (-7,-7) -- (-3.5 ,3.5) -- (7,7) -- (7,-7) -- (-7,-7);

			\node[left] at (-7.4,-7) {\scalebox{2.6}{\color{red}$\lambda(-\frac{1}{3},-\frac{1}{3})$}};
			\node[right] at (7.4,7) {\scalebox{2.6}{\color{red} $\lambda(\frac{1}{3},\frac{1}{3})$}};
			\node[left] at (-3.9,3.5) {\scalebox{2.6}{$\lambda(-\frac{1}{6},\frac{1}{6})$}};
			\node[right] at (7.4,3.5) {\scalebox{2.6}{$\lambda(\frac{1}{3},\frac{1}{6})$}};
			\node[right] at (7.4,-7.4) {\scalebox{2.6}{$\lambda(\frac{1}{3},-\frac{1}{3})$}};
			\node[below] at (-3.5,-7.4) {\scalebox{2.6}{$\lambda(-\frac{1}{6},-\frac{1}{3})$}};

			\draw (-7,-7) circle (1.5 mm)  [fill=red];
			\draw (7,7) circle (1.5 mm)  [fill=red];
			\draw (-3.5,-7) circle (1.5 mm)  [fill=black!100];
			\draw (-3.5,3.5) circle (1.5 mm)  [fill=black!100];
			\draw (7,3.5) circle (1.5 mm)  [fill=black!100];
			\draw (7,-7) circle (1.5 mm)  [fill=black!100];
			\end{tikzpicture}}
	\end{center}
	\caption{Points $(\rho_1,\rho_2)$ with red coordinates (resp. black coordinates) are the extreme points $(\rho_1,\rho_2)^T$ of $\cS_\lambda^2$ (resp. $\cS_\lambda^3$), and they correspond to $2$-extremal selection decompositions (resp. $3$-extremal). $\cS_\lambda^2$ is the red line, $\cS_\lambda^3$ is the grey square, and $\cS_\lambda$ is their convex hull.}
	\label{fig:exampleCsdm2}
\end{figure}

\begin{figure}[b]
	\begin{center}
		\scalebox{0.4}{
			\begin{tikzpicture}
			\draw[pattern=fivepointed stars ,draw=none,pattern color=red!60] plot[tension=1] coordinates{(0,0) (-6.5,6.5) (-8.5,-8.5) (0,0)};
			\draw[pattern=fivepointed stars,draw=none,,pattern color=green!60] plot[tension=4] coordinates{(0,0) (8,8) (8.5,-8.5) (0,0)};
			\draw[pattern=dots ,draw=none,pattern color=green!60] plot[tension=4] coordinates{(0,0) (-6.5,6.5) (8,8) (0,0)};
			\draw[pattern=dots,draw=none,pattern color=red!60] plot[tension=4] coordinates{(0,0) (8.5,-8.5) (-8.5,-8.5) (0,0)};

			\fill [gray,opacity=0.4] (-3.5,-7) rectangle (7,3.5);
			%-1.02x+0.16=y
			%-2/5+x=y
			%-147/55x+391/550=y
			%Box

			%\draw[line width=.2mm, gray, dotted] (-5,5) -- (8.5,-8.5);
			
			\node[right] at (-10,0) {\scalebox{2.8}{\color{black} $\dns$}};
			\node[right] at (+10,0) {\scalebox{2.8}{\color{black} $\dps$}};
			\node[right] at (0,-9) {\scalebox{2.8}{\color{black} $\rns$}};
			\node[right] at (0,+8) {\scalebox{2.8}{\color{black} $\rps$}};
			
			%\node[left] at (8,-8.5) {\scalebox{2}{$\Ds_3$}};
			
			\node[above] at (2,5.5) {\scalebox{2.6}{$\Ss_1$}};
			
			% \Ss_\lambdas
			\draw[line width=.5mm] (-7,-7) -- (-3.5 ,3.5) -- (7,7) -- (7,-7) -- (-7,-7);
			
			\node[left] at (-7.5,-7) {\scalebox{2.6}{$v^{2,2}$}};
			\node[above] at (0,.5) {\scalebox{2.6}{$O$}};
			\node[right] at (7.5,7) {\scalebox{2.6}{$v^{1,2}$}};
			\node[left] at (-4,3.5) {\scalebox{2.6}{$v^{1,3}$}};
			\node[right] at (7.5,3.5) {\scalebox{2.6}{$v^{4,3}$}};
			\node[right] at (7.5,-7) {\scalebox{2.6}{$v^{2,3}$}};
			\node[below] at (-3.5,-7.5) {\scalebox{2.6}{$v^{3,3}$}};

			\draw[line width=.4mm, blue] (-6.5,6.5) -- (8.5,-8.5);
			\draw[line width=.4mm, blue] (-8.5,-8.5) -- (8,8);
			\node[right] at (-8,-8.5) {\scalebox{3}{$\color{blue}\Ds_2$}};
			\node[left] at (8,-8.5) {\scalebox{3}{$\color{blue}\Ds_3$}};
			\end{tikzpicture}}
	\end{center}
	\caption{The lines $\cD_{2}=\{a v^{1,2}:\, a\in \R\}$ and $\cD_3=\{a v^{1,3}:\, a\in \R\}$ delimit the plane into $4$ regions. These regions have a natural biological interpretation in terms of dominant($\star$)/recessive($\cdot$) positive(green)/negative(red) selection in a diploid model.}
	\label{fig:exampleCsdm22}
\end{figure}
For the sake of illustration we examine here the case~$m=3$ more closely. The polynomial $d$~is of the form 
\[d(x) = x(1-x)(Ax+B). \]
Our goal is to explicitly construct from such a polynomial the set of its $b$-minimal selection decompositions. The basic idea is to follow Algorithm~\ref{teo:algorithm}. We first determine the extremal selection decompositions. In this setting the deterministic colouring rules are 
\begin{center}
	\begin{minipage}{0.3\textwidth}
		\begin{align*}
			p^{1,2}&\coloneqq\left(0,1,1\right), \\
			p^{2,2}&\coloneqq\left(0,0,1\right),
	\end{align*}\end{minipage} \begin{minipage}{0.15\textwidth} and
	\end{minipage}  \begin{minipage}{0.4\textwidth} \begin{align*}
			p^{1,3}&\coloneqq\left(0,0,1,1\right),  \qquad &\ p^{3,3}\coloneqq\left(0,0,0,1\right),  \\
			p^{2,3}&\coloneqq\left(0,1,0,1\right), \qquad &p^{4,3}\coloneqq \left(0,1,1,1\right).
		\end{align*}
	\end{minipage}
\end{center} Proposition~\ref{prop:description-s-lambda} with~$\lambda=1$ yields the Bernstein coefficient vectors of the extremal selection decompositions. More precisely, for $\ell\in\,]3]$ and $i\in[2^{\ell-1}]$, $v^{i,\ell}\coloneqq (\theta_\ell(p^{i,\ell})-u_m)/(\ell-1)$ so that

\begin{center}
	\begin{minipage}{0.3\textwidth}
		\begin{align*}
			v^{1,2}&\coloneqq\left(\ \ \frac{1}{3},\ \ \frac{1}{3}\right), \\
			v^{2,2}&\coloneqq\left(-\frac{1}{3},-\frac{1}{3}\right),
	\end{align*}\end{minipage} \begin{minipage}{0.15\textwidth} and
	\end{minipage}  \begin{minipage}{0.4\textwidth} \begin{align*}
			v^{1,3}&\coloneqq\left(-\frac{1}{6},\ \ \frac{1}{6}\right),  \qquad &\ v^{3,3}\coloneqq\left(-\frac{1}{6},-\frac{1}{3}\right),  \\
			v^{2,3}&\coloneqq\left(\ \ \frac{1}{3},-\frac{1}{3}\right), \qquad &v^{4,3}\coloneqq \left(\ \ \frac{1}{3},\ \ \frac{1}{6}\right).
		\end{align*}
	\end{minipage}
\end{center}
Hence, by Proposition~\ref{prop:description-s-lambda},
\begin{align*}
	\cS_1^2=\conv{v^{1,2},v^{2,2}},\qquad \cS_1^3=\conv{v^{1,3},v^{2,3},v^{3,3},v^{4,3}},\quad \text{and }\quad \cS_1=\conv{v^{1,2},v^{2,2},v^{1,3},v^{2,3} }.
\end{align*}
See also Fig.~\ref{fig:exampleCsdm2} for an illustration of~$\Ss_\lambda$, and the just-defined vectors.

\smallskip

To derive the $b$-minimal selection decompositions, it is convenient to split the analysis into the following parameter regions. 
\begin{align*}
	\rps&\coloneqq\{(x,y)\in \R^2: y\geq \lvert x\rvert \},\qquad  && \rns\coloneqq\{(x,y)\in \R^2: y\leq -\lvert x\rvert \},\\
	\dps&\coloneqq\{(x,y)\in \R^2: \lvert y \rvert\leq x \}, \qquad  &&\dns\coloneqq\{(x,y)\in \R^2:\lvert y\rvert\leq -x\}.
\end{align*}
See also Fig.~\ref{fig:exampleCsdm22} for an illustration of these sets. We refer to the regions as \emph{recessive-positive}~($\rps$), \emph{recessive-negative}~($\rns$),
\emph{dominant-positive}~($\dps$), and
\emph{dominant-negative}~($\dns$), respectively. The terminology will be justified in Section~\ref{subsec:classicexamplesregions}, where we relate the regions to diploid Wright--Fisher models. A straightforward computation yields for the Bernstein coefficient vector \[\rho(d)=(a,b)\quad \text{with}\quad a= \frac{B}{3}\quad \text{and}\quad b = \frac{A+B}{3}.\]
We now distinguish different cases depending on the region that contains $\rho(d)$.

\subsubsection{Region $\dns$, $\rps$}\label{sec:uniqueregions}  Assume $(a,b)\in \dns$. Then $\mathbb{L}_d\subset \dns$, and $\mathbb{L}_d$ intersects the face of $\Ss_1$ in $\dns$. In particular, $\rho(\bar{d})$ belongs to this face. More specifically, a straightforward calculation yields \[\rho(\bar{d}) = \frac{-2}{9a-3b}(a,b).\]
By Proposition~\ref{prop:scaling-polu} (alternatively Step $1$ and Step $2$ of Algorithm \ref{teo:algorithm}), the minimal effective branching rate is
\[b_{\star}(d) \ = \ \frac{3}{2}(b-3a).\]
Let us now proceed to Step 3. The only $1$-convex decomposition 
$(\vec{v},\alpha)$ such that 
\[\rho(\bar{d}) = \alpha_2 v_2 + (1-\alpha_2) v_3,\]
for $v_\ell\in \cS_1^\ell$, $\ell=2,3$, arises if $v_\ell$ is $\ell$-extremal in the face of $\Ss_1$ in $\dns$, i.e. $v_2=v^{2,2}$ and $v_3=v^{1,3}$, see Fig.~\ref{fig:exampleCsdm200}. Using this and the expression for~$\rho(\bar{d})$ yields
\[\alpha_2 = \frac{a+b}{3a-b}, \ \ \alpha_3=1-\alpha_2.\]
Hence, $\ccS_{\bar{d}}^{\min}=\{(\beta^{(1,\vec{v},\alpha)},p^{(1,\vec{v},\alpha)})\}$, where \[\beta^{(1,\vec{v},\alpha)}=\left(\alpha_2,\frac{\alpha_3}{2}\right)\quad \text{and}\quad  p^{(1,\vec{v},\alpha)}=(p^{2,2},p^{1,3}).\]
Finally, by Algorithm \ref{teo:algorithm} we have $\ccS_d^{\min}=\{(\beta,p)\}$ with 
\[ \beta_\ell = b_{\star}(d)\left(\alpha_2,\frac{\alpha_3 }{2}\right) \quad \text{and}\quad  p=(p^{2,2},p^{1,3}).\]
If~$(a,b)$ belongs to $\rps$, a symmetry argument exposes that $b_{\star}(d)$ and $\alpha$ are obtained from the case when the point belongs to~$\dns$ by the transformation $(a,b)\mapsto(-b,-a)$ and by using $v^{1,2}$ and $v^{1,3}$ for the convex combination of $\rho(\bar{d})$.

\subsubsection{Region $\rns$, $\dps$}\label{sec:nonuniqueregions}  Assume $(a,b)\in \rns$. Then $\mathbb{L}_d\subset \rns$ and $\rho(\bar{d})$ belongs to the face of $\Ss_1$ in $\rns$. More precisely, 
\[\rho(\bar{d}) = -\frac{1}{3} \ \left(\frac{a}{b},1\right).\]
We use again Proposition~\ref{prop:scaling-polu} and obtain the minimal effective branching rate as
\[b_{\star}(d) \ = \ -\frac{b}{3}.\]
In contrast to $\rho(d)\in \dns\cup\rps$, here there are multiple $1$-convex decompositions of $\rho(\bar{d})$.
The set of $1$-convex decompositions of $\rho(\bar{d})$ is of the form
\[v_2 = v^{2,2}, \ \ v_3 \ = \ \left(x_0,-\frac{1}{3}\right) \quad \text{with} \quad x_0\in \left[\left(-\frac{1}{6}\vee -\frac{b}{3a}\right), \frac{1}{3}\right], \]
and 
\[\alpha_2=\frac{1}{a}\frac{b + 3 ax_0}{1+ 3 x_0}.\]
For fixed $x_0$, using $\theta_3^{-1}$ we obtain $p^{x_0}=(0,2 x_0+\frac{1}{3},0,1)$. Consequently, the corresponding selection decomposition is given by
\[ \beta_\ell = b_{\star}(d)\left( \alpha_2, \frac{\alpha_3 }{2}\right)\quad \text{and} \quad p=\left(p^{1,2},p^{x_0}\right). \] 
For $(a,b)\in \dps$, one easily recovers the $b$-minimal selection decomposition by symmetry.

\subsubsection{Classic examples for m=2,3}\label{subsec:classicexamplesregions}
Let us consider several examples from the literature that clarify the names of the various regions.  

\bigskip

The \emph{haploid Wright--Fisher diffusion with genic selection} arises from \eqref{eq:SDEoriginal02} if $\Lambda=\delta_0$ and \[d(x)  = -\sigma x(1-x),\] with $\sigma>0$. Here, type~$a$ (resp. type~$A$) is called unfit (resp. fit); and the (absolute) fitness difference is given by~$\sigma$. Note that $\rho(d)= \sigma v^{2,2}$ belongs to the~$1$-dimensional subspace ${\dps}\cap \, \rps$. Following Algorithm~\ref{teo:algorithm}, we obtain $\rho(\bar{d})=v^{2,2}$ and $b_{\star}(d)=\sigma$. In particular, $\rho(\bar{d})=v_2\alpha_2$ with
	$v_2=v^{2,2}$ and $\alpha_2=1$. Hence, the $b$-minimal selection decomposition is unique and given by $(\beta,p)\in \R_+\times \Ps_2$ with $\beta_2=\sigma$ and $p_2=(0,0,1)$. This is consistent with the classical ASG of~\citeauthor{KroNe97}~\citep{KroNe97,NeKro97}.

\bigskip

The \emph{diploid Wright--Fisher diffusion} arises from \eqref{eq:SDEoriginal02} if $\Lambda=\delta_0$ and the drift term is
	\begin{equation}\label{eq:wf-dominance}
		d(x) = \sigma x(1-x)(x+h(1-2x)),
	\end{equation}
	where~$\sigma,h\in \R$. In particular, $\rho(d)=(0,\sigma h,\sigma(1-h),0)/3$. This process approximates the frequency of type~$a$ in the large population limit of a diploid Wright--Fisher model with weak selection, where the pairs $aa$, $aA$ (and $Aa$), and $AA$ have relative fitness~$\sigma$, $\sigma h$, and $0$, respectively, see e.g.~\citep[Ch.~5]{ewens2004mathematical}. If $\sigma>0$, then type~$a$ is fitter than~$A$, and hence type~$a$ is subject to positive selection. On the other hand, if $\sigma<0$, type~$a$ is less fit than type~$A$ and hence is subject to negative selection. $\lvert \sigma \rvert$ yields the selection strength, and~$h$ is the dominance parameter, i.e. $h$ quantifies the contribution of $a$ to the fitness of an heterozygote. When $h=1/2$, selection is said to be additive (and agrees with the drift in the case of genic selection). This is sometimes also referred to as the case with \emph{no dominance}, because none of the two alleles dominates the other one. If $h<1/2$, then $a$ is \emph{recessive}. (It is completely (resp. partially) recessive if $h=0$ (resp. $h\in (0,1/2)$.) If $h>1/2$, then $a$ is \emph{dominant}. (It is completely (resp. partially) dominant if $h=1$ (resp. $h\in (1/2,1)$.) A direct calculation shows that when $\sigma<0$, $\rho(d)$ belongs to region $\rns$ if $h<1/2$, and to region $\dns$ if $h>1/2$. When $\sigma>0$, $\rho(d)$ belongs to region $\rps$ if $h<1/2$ and it belongs to region $\dps$ if $h>1/2$. In conclusion, each face of the polytope~$\Ss_1$ corresponds to a $b$-minimal selection decomposition of a model with recessive/dominant positive/negative selection. Note that the analysis in Sections~\ref{sec:uniqueregions} and~\ref{sec:nonuniqueregions} implies that in the regime of recessive positive, and dominant negative selection, there is a unique $b$-minimal selection decomposition; whereas there are infinitely many possible choices in the regime of dominant positive and recessive negative selection. It would be interesting to investigate if there is any biological meaning of this multiplicity.

\bigskip

A special case of the diplod model is the \emph{Wright--Fisher diffusion with balancing selection}. It has the drift term
	\[d(x)=x(1-x)(1-2x).\]
	Here, $\rho(d)=v^{2,3}$ so that $v_3=v^{2,3}$ and $\alpha_3=1$. This leads to $\beta=(0,1)$ and $p_3=(0,1,0,1)$. This is consistent with the duality obtained in \citet{Ne99}. The particular form of the colouring rule~$p$ is also called the minority rule. \cite{Ne99} shows that the ASG generated from this selection decomposition has a natural interpretation in terms of the genealogy of a diploid population model.
\begin{remark}
	If $m=3$, there is always a $b$-minimal selection decomposition with a deterministic colouring rule. In regions $\dns$ and $\rps$, this is automatic since $v_2$ and $v_3$ are Bernstein coefficient vectors of selection decompositions with deterministic colouring rules. In region $\rns$ (resp. $\dps$), this corresponds to choose as $v_3$ one of the corners of $\Ss_1^3$, i.e. either $v_3=v^{2,3}$ or $v_3=v^{3,3}$ (resp. $v_3=v^{4,3}$) (if the latter is permitted, i.e. when $\rho(\bar{d})$ lies in between $v^{2,2}$ and $v^{3,3}$). Then we have a deterministic colouring rule, i.e. the colour of the roots are a deterministic function of the ASG and the colouring of the leaves. It is tempting to conjecture that also in higher dimensions there is always a $b$-minimal selection decomposition with deterministic colouring rules. According to Step 3 of our algorithm, this is equivalent to say that each face of $\Ss_1$ contains at most one vertex on each $\Ss_1^\ell$, $\ell\in\,]m]$ (because each extreme point of a given face represents a deterministic $\ell$-colouring rule, with a distinct $\ell$). However, numerical simulations for $m=4$ suggest that this is not the case, see Fig.~\ref{fig:convexhull-m=4}.
\end{remark}
\begin{figure}[t]
	\hspace{-3cm}
	\begin{minipage}{0.45\textwidth}
		\begin{center} \scalebox{.55}{\includegraphics{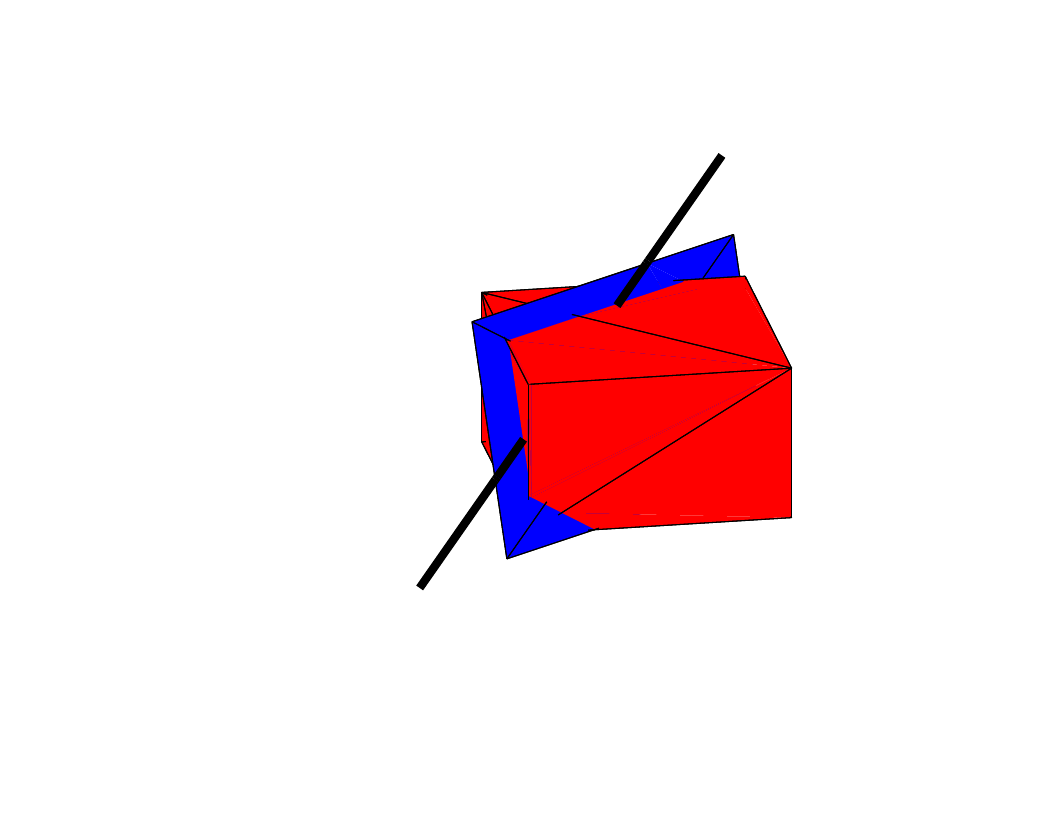}}\end{center}
	\end{minipage}\hspace{1.5cm}
	\begin{minipage}{0.5\textwidth}
		\begin{center} \scalebox{.55}{\includegraphics{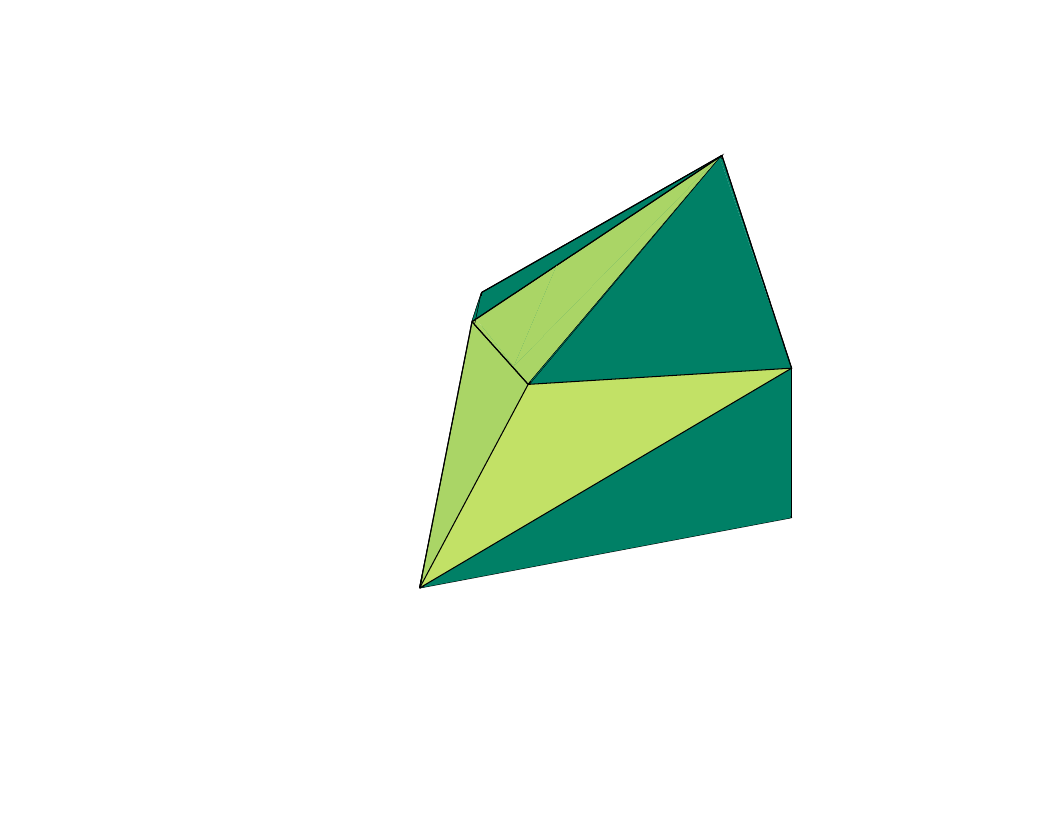}}\end{center}%\hspace{6cm}
	\end{minipage} 
	\vspace{-2.5cm}
	
	\caption{Left: $\cS_1^2, \cS^3_1, \cS^4_1$ when $m=4$.  Right: $\cS_1$ when $m=4$. The front triangular face (bottom, dark green) has two vertices corresponding to a deterministic colouring rule of order $4$, i.e. in $\Ps_{0,1}^4$. As a consequence, if $\rho(d)$ belongs to the interior of this face, there is no $b$-minimal selection decomposition with deterministic colouring rules.}
	\label{fig:convexhull-m=4}
\end{figure}
\subsection{Graph-minimal selection decompositions}
\label{sect:minimal-models}
A possible order of ASGs arises from an order of the underlying branching-coalescing systems. At least intuitively, one way to order the systems is by saying that a system is smaller than another one if it arises by removing lines from the larger one. In this section we formalise this intuition and prove that such thinnings induce an order of branching-coalescing systems. We also prove that this order induces a partial order of the corresponding effective branching rates. Throughout this subsection $\Lambda\in \Ms_f^*([0,1])$ is fixed.

\smallskip

Recall that a thinning mechanism acts on a branching rate vector~$\beta\in\R_+^{m-1}$ via
\begin{equation}\label{eq:linear-thinning}
	(\beta)_{\ell=2}^m\mapsto (\Tb \beta)_{\ell=2}^m,\quad\textrm{with}\quad (\Tb \beta)_\ell= 
	\sum_{k=\ell}^m  \beta_k \Tb_{k,\ell},
\end{equation}
where $\Tb=\{\Tb_{{k,i}}\}_{k,i=1}^m$ is a lower-triangular stochastic matrix, see Definition \ref{def:thinning}. The following definition formalises the idea of removing lines in a branching-coalescing system.
\begin{definition}[$\Tb$-thinning of a branching-coalescing system]
	Let $\Gs=(\Gs_t: \, t\geq 0)$ be the branching-coalescing system with parameters $(\beta,\Lambda)$, see Section~\ref{s4}, and let $\Tb\coloneqq\{\Tb_{{k,i}}\}_{k,i=1}^m$ be a thinning mechanism. Conditional on~$\Gs$, define the branching-coalescing system $\Tb \Gs\coloneqq(\Tb \Gs_t:\, t\geq 0)$ dynamically according to the following random procedure. Independently at every $k$-branching in $\Gs$, for $i\in[k]$ with probability $\Tb_{k,i}$, remove $k-i$ particles chosen uniformly at random among the $k-1$ new ones. The marked particle giving rise to the branching event is never removed and the remaining particles are kept. We call~$\Tb\Gs$ the \emph{$\Tb$-thinned} version of~$\Gs$. 
\end{definition}
It follows from the consistency of the rates of the $\Lambda$-coalescent that~$\Tb \Gs$ is distributed as a branching-coalescing system with (unchanged) coalescence mechanism $\Lambda$ and branching mechanism $(\Tb\beta_\ell)_{\ell=2}^m$. The next result formalises this.
\begin{proposition}\label{prop:graphminimal}
	Let $(\Gs_t:\, t\geq0)$ and $(\Gs_t^\star:t\geq0)$ be the branching-coalescing particle systems with parameters $(\beta,\Lambda)$ and $(\Tb\beta,\Lambda)$, respectively. If both branching-coalescing systems start with the same number of particles, then 
	\[ \forall t\geq0, \ \ \cG_t^\star\, \overset{(d)}{=}\,\Tb \cG_t \subseteq \cG_t.\]
\end{proposition}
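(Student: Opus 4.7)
The plan is to verify that the process $(\Tb \Gs_t)_{t\ge 0}$ is itself Markovian with infinitesimal rates coinciding with those of a branching-coalescing system with parameters $(\Tb \beta, \Lambda)$. Combined with equal initial conditions (same number of particles), this forces $\Tb \Gs \overset{(d)}{=} \Gs^\star$. The inclusion $\Tb \Gs_t \subseteq \Gs_t$ is immediate from the construction, since thinning only removes particles from $\Gs$.

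I would first handle the branching side, which is straightforward. Consider a particle $x$ currently in $\Tb \Gs_t$. In $\Gs$, it initiates an $\ell$-branching at rate $\beta_\ell$; independently of the past, the thinning mechanism turns this into an $i$-branching in $\Tb \Gs$ with probability $\Tb_{\ell,i}$ (all $\ell-i$ removals are chosen uniformly at random among the freshly born particles). Aggregating over $\ell\geq i$, the rate at which $x$ experiences an $i$-branching in $\Tb \Gs$ is $\sum_{\ell\geq i}\beta_\ell \Tb_{\ell,i}=(\Tb\beta)_i$, matching the $i$-branching rate prescribed by $\Gs^\star$. Events in $\Gs$ affecting only particles outside $\Tb \Gs$ do not contribute.

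The coalescence side is the delicate part. Suppose $\Tb \Gs_t$ contains $n'$ particles while $\Gs_t$ contains $n\geq n'$. For a fixed $k$-subset $S$ of $\Tb \Gs_t$, a coalescence of exactly $S$ in $\Tb \Gs$ corresponds to some $j$-coalescence in $\Gs$, with $j\geq k$, whose participating $j$ particles consist of $S$ together with $j-k$ particles drawn from $\Gs_t\setminus\Tb \Gs_t$. Since each specific $j$-tuple in $\Gs$ coalesces at rate $\lambda_{n,j}$, the total rate at which $S$ merges in $\Tb \Gs$ equals
\[ \sum_{j=k}^{n}\binom{n-n'}{j-k}\lambda_{n,j}. \]
Using $\lambda_{n,j}=\Lambda(\{0\})\1_{\{j=2\}}+\int_{(0,1]}r^{j-2}(1-r)^{n-j}\Lambda(dr)$ and the binomial identity $\sum_{m=0}^{n-n'}\binom{n-n'}{m}r^m(1-r)^{(n-n')-m}=1$, the integral part collapses to $\int_{(0,1]}r^{k-2}(1-r)^{n'-k}\Lambda(dr)$, while the Kingman part is trivially invariant in $n$. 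Hence the sum equals $\lambda_{n',k}$, i.e.\ the sampling consistency of the $\Lambda$-coalescent. Crucially, the rate depends only on $n'$, not on $n$.

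To wrap up, since the thinning choices are independent of the history of $\Gs$, the above rate computations depend on $\Tb \Gs_t$ alone. Thus $\Tb \Gs$ is a Markov process whose generator coincides with that of the $(\Tb\beta,\Lambda)$-branching-coalescing system. Equality in law with $\Gs^\star$, starting from the same number of particles, follows at once. The main obstacle is precisely the coalescence-rate computation above: one must check that the extra $j-k$ "invisible" lineages drawn from $\Gs_t\setminus\Tb\Gs_t$ do not leak into the effective rate seen by $\Tb\Gs$, and this is what the sampling consistency of $\lambda_{n,k}$ guarantees.
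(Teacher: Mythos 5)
Your argument is correct and follows the same route as the paper, which asserts the result without a formal proof by invoking ``the consistency of the rates of the $\Lambda$-coalescent''; you make precise what that invocation hides. Specifically, your branching-rate aggregation $\sum_{\ell\ge i}\beta_\ell\Tb_{\ell,i}=(\Tb\beta)_i$ and your verification of the sampling consistency $\sum_{j=k}^{n}\binom{n-n'}{j-k}\lambda_{n,j}=\lambda_{n',k}$ (using the binomial theorem inside the integral and the trivial invariance of the Kingman atom) are exactly the facts the paper is appealing to, together with the observation that thinning decisions are independent of the past so that the transition rates of $\Tb\Gs$ depend only on its own current state.
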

Using Proposition~\ref{prop:graphminimal}, the notion of graph-minimality of selection decompositions given in Definition~\ref{def:graphminimal} can be expressed at the level of the branching-coalescing systems.
In particular, the selection decomposition $(\beta,p)\in\ccS_d$ is graph-minimal if and only if there are no superfluous (dummy) branches in the corresponding ASG.  

\smallskip 

The next proposition states that thinnings of branching-coalescing systems are closely related to a particular partial order $\preceq$ on $\R_+^{m-1}$. A selection decomposition is smaller than another one if the branching-coalescing system generated by the smaller selection decomposition can be obtained by a $\Tb$-thinning from the system corresponding to the larger selection decomposition. More precisely, for any $\beta, \beta' \in \R^{m-1}_+$, write 
$\beta'  \preceq \beta$ if and only if for all $k\in \, ]m]$ 
\begin{equation} \label{eq:ineq-betas}
	\ \  \sum_{j=k}^m \beta_j'  \ \leq \  \sum_{j=k}^m \beta_j.
\end{equation}
Moreover, we write $\beta'\prec \beta$ if and only if $\beta'\preceq \beta$ and $\beta'\neq \beta$. Loosely speaking, for two population models with interaction rates $\beta'$ and $\beta$, respectively, $\beta'\prec \beta$ means that in the $\beta'$-population model for every $k\in\,]m]$, individuals participate in an interactive event that includes~$k$ or more individuals less frequently than in a $\beta$-population model. The next proposition relates the partial order with thinnings.
\begin{proposition}\label{prop:order-beta-order-graph}
	Let $\beta',\beta \in \R^{m-1}_+$, $\Lambda\in\Ms_f^*([0,1])$, and~$n\in \N$. Let $\Gs'$ and~$\Gs$ be the branching-coalescing particle systems constructed from the pairs $(\beta',\Lambda)$
	and $(\beta,\Lambda)$, both system starting with $n$ particles. Then
	$\beta' \prec \beta$ if and only if there exists a thinning mechanism $\Tb$ different from the identity such that 
	$\Gs' \ \overset{(d)}{=} \ \Tb \Gs.$
\end{proposition}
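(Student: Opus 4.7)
The plan is to prove the two implications separately.

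For the implication ``$\Gs' \overset{(d)}{=} \Tb\Gs$ for some non-trivial $\Tb$ implies $\beta'\prec\beta$'', I would start from the observation that $\Tb\Gs$ is itself a branching-coalescing system with the same coalescence measure $\Lambda$ and with thinned branching rates $(\Tb\beta)_\ell = \sum_{k=\ell}^m \beta_k \Tb_{k,\ell}$, $\ell\in\,]m]$. Indeed, each $k$-branching event of $\Gs$ is independently converted into a $j$-branching with probability $\Tb_{k,j}$ for $j\in[k]$ (with $j=1$ corresponding to complete erasure), so by Poisson thinning/superposition the resulting $j$-branchings form a Poisson point process with the claimed thinned rate per particle. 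The distributional identity $\Gs'\overset{(d)}{=}\Tb\Gs$ then forces $\beta'=\Tb\beta$. Fubini together with the row-stochasticity of $\Tb$ yields
\[
\sum_{j=k}^m \beta'_j \ = \ \sum_{i=k}^m \beta_i \sum_{j=k}^i \Tb_{i,j} \ \leq \ \sum_{i=k}^m \beta_i, \qquad k\in\,]m],
\]
so that $\beta'\preceq\beta$, and non-triviality of $\Tb$ (in the sense that $\Tb\Gs\overset{(d)}{\neq}\Gs$) produces strict inequality in at least one of the above displays.

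For the converse, given $\beta'\prec\beta$, I would construct a thinning matrix $\Tb$ explicitly via a transportation-type argument. View $\beta_k$ as a mass located at position $k\in\,]m]$, which may be dispatched only to positions $j\leq k$, with position $1$ acting as a sink for erased branchings. The hypothesis $\beta'\preceq\beta$ is precisely the feasibility condition for the transportation problem consisting in transporting $\beta$ onto $\beta'$ with the residual absorbed by the sink. Such a transport can be exhibited explicitly via, e.g., a north-west corner rule running from $k=m$ downward: at each step, transport the maximum compatible amount of the remaining mass from the largest available source index to the largest unsatisfied target index. Setting $\Tb_{k,j}$ equal to the fraction of mass transported from $k$ to $j$, with $\Tb_{k,1}$ the residual so that rows sum to $1$, produces a lower-triangular stochastic matrix with $\Tb\beta=\beta'$. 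The distributional identity $\Gs'\overset{(d)}{=}\Tb\Gs$ then follows by the same Poisson thinning argument used in the first direction, now applied in reverse.

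The main obstacle is mostly bookkeeping: one must verify that the greedy procedure outputs $\Tb_{k,j}\in[0,1]$ and leaves a non-negative residual $\Tb_{k,1}$ at every row, which uses the full family of inequalities defining $\preceq$ and not merely the total-mass inequality at $k=2$. Non-triviality of the constructed $\Tb$ then follows from the strict relation $\beta'\prec\beta$, since $\Tb\beta\neq\beta$ forces at least one off-diagonal or sink entry to be positive. A subsidiary subtlety worth flagging in the write-up is that ``non-trivial thinning'' must be interpreted as producing an actual distributional change of $\Gs$ (equivalently $\Tb\beta\neq\beta$), as rows $\Tb_{k,\cdot}$ with $\beta_k=0$ can be freely perturbed without affecting $\Tb\Gs$.
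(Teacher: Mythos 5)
Your first implication is essentially the paper's one-liner: $\Gs' \overset{(d)}{=} \Tb\Gs$ forces $\beta' = \Tb\beta$, and Fubini plus row-stochasticity of $\Tb$ yields $\beta' \preceq \beta$, with non-triviality supplying strictness. For the converse, however, your route differs from the paper's. The paper builds $\Tb$ as a product $\Tb^{(2)}\cdots\Tb^{(m)}$ of elementary thinnings: $\Tb^{(k)}$ acts only on $k$-branchings, keeps exactly the target rate $\beta_k'$ at size $k$, dumps the residual onto sizes $k-1$ (up to $\beta_{k-1}'$) and $k-2$, and the invariant $\sum_{j=i}^{k-1}\beta_j' \le \sum_{j=i}^{k-1}\beta_j^{(k)}$ is re-verified before iterating. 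You instead build $\Tb$ in one pass via a corner-rule transport, recognizing $\preceq$ as exactly the feasibility condition of the constrained transportation problem in which source $k$ may feed target $j$ only when $j\le k$. Both are correct. The paper's iterated construction avoids appealing to transportation feasibility and propagates the invariant by explicit computation; yours is shorter and more conceptual, at the price of having to verify (as you rightly flag) that the greedy staircase never drops below the diagonal $k\ge j$, which is precisely where the full family of inequalities $\sum_{i\ge k}\beta_i' \le \sum_{i\ge k}\beta_i$ enters: the key point is that when the staircase reaches $(k,k)$, the remaining demand at $k$ equals $\sum_{i\ge k}\beta_i'-\sum_{i>k}\beta_i\le\beta_k$, so one always exits sideways rather than downward. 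Your remark on the meaning of non-triviality is also well taken: the paper only defines a non-trivial thinning as $\Tb$ differing from the identity, under which the ``only if'' direction would technically fail on rows $k$ with $\beta_k=0$; the correct reading is, as you note, that $\Tb$ must induce a genuine distributional change of $\Gs$, equivalently $\Tb\beta\neq\beta$.
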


\begin{proof}
	It is immediate to check from \eqref{eq:linear-thinning} that $\Tb \beta\prec\beta$ if $\Tb$ is not the identity. Let us now show the converse, i.e. assuming that 
	$\beta'\prec \beta$, we construct a thinning (different from the identity) of $\Gs$ of the branching-coalescing system generated by~$(\beta,\Lambda)$ such that it is distributed as the system $\Gs'$ generated by $(\beta',\Lambda)$ for~$\Lambda\in \Ms_f^*([0,1])$.
	Fix $\beta',\beta\in \R_+^{m-1}$ with $\beta'\prec \beta$. We construct a sequence of thinning mechanisms $(\Tb^{(k)})_{k=0}^{m-1}$, and a sequence of selection rate vectors $(\beta^{(k)})_{k=0}^{m-1}$ with $\beta^{(0)}\coloneqq \beta$ and $\Tb^{(0)}$ the identity such that for $k\in [m-1]$, (1) $\beta^{(k)}=\Tb^{(k)}\beta^{(k-1)}$, (2) $\Tb^{(k)}$ acts only on $(m-k+1)$-branchings, and (3) $\beta_i^{(k)}=\beta_i'$ for $i\geq m-k+1$. Once we finished the construction, the result follows since $\beta'=\Tb\beta$ with $\Tb\coloneqq\Tb^{(m-1)}\cdots \Tb^{(0)}$.
	We define the sequences recursively as follows. For $k\in[m-1]$, set
	\begin{align*}
	\Tb^{(k)}_{m-k+1,m-k+1}  \coloneqq  \frac{\beta_{m-k+1}'}{\beta_{m-k+1}^{(k-1)}},\quad \Tb^{(k)}_{m-k+1,m-k}  \coloneqq  \bigg(1-\frac{\beta_{m-k+1}'}{\beta_{m-k+1}^{(k-1)}}\bigg),\quad \Tb^{(k)}_{j,j}=1 \text{ if }j\neq m-k+1, 
	\end{align*}
	and all other entries of $\Tb^{(k)}$ are zero. Define $\beta^{(k)}$ according to (1). $\Tb^{(k)}$ thins at most one branch in every $m-k+1$ branching, thus increasing the rate of~$m-k$ branchings. Hence, (2) is satisfied by construction. It remains to verify (3) and that $\Tb^{(k)}$ is a thinning mechanism. We verify this via induction by proving that for all $k\in [m-1]$, for $i>m-k+1$, $\beta_i'=\beta_i^{(k)}$ and $\beta_{m-k+1}'\leq \beta_{m-k+1}^{(k-1)}$. For $k=1$, we have by definition, $\beta_m'=\beta_m^{(1)}$, and by \eqref{eq:ineq-betas}, $\beta_m'\leq \beta_m$. Assume the claim holds for $k<m-1$. First note that, for $i>m-k$, $\beta_i^{(k+1)}=\beta_i^{(k)}=\beta_i'$. Furthermore, from the definition of $\Tb^{(k+1)}$, we have  $$\beta_{m-k}^{(k+1)}=(\Tb^{(k+1)}\beta^{(k)})_{m-k}=\beta_{m-k}'.$$ It remains to prove that $\beta_{m-k}'\leq \beta_{m-k}^{(k)}$. By the induction hypothesis, $\Tb^{(k)}$ is a thinning. Hence, $\beta^{(k+1)}\preceq \beta^{(k)}$. In particular, $$\sum_{j=m-k}^m \beta_j'=\sum_{j=m-k}^m \beta_j^{(k+1)}\leq \sum_{j=m-k}^m \beta_j^{(k)}=\beta^{(k)}_{m-k}+\sum_{j=m-k+1}^m \beta_j',$$
	which proves the claim.
\end{proof}
\begin{proof}[Proof of Theorem \ref{prop:graph:minimal}]
	By Proposition \ref{prop:order-beta-order-graph}, it suffices to show that $\beta'\prec \beta$ implies $b(\beta')<b(\beta)$. 
	This is easily seen by summing the inequalities in \eqref{eq:ineq-betas}. 
\end{proof}

\subsection{\texorpdfstring{Equivalence of minimality if m=3}{Equivalence of minimality if m=3}}\label{sec:equivalence}
\label{sect:minimal-models3}
Theorem \ref{prop:graph:minimal} states that every $b$-minimal selection decomposition is graph-minimal. In dimension $m=2$, i.e. when $\deg(d)=2$, it is straightforward to see that both notions of minimality agree. In higher dimensions the question of equivalence of the two notions is more involved. In this section we prove the equivalence in dimension $m=3$. Throughout, let $d$ be a polynomial vanishing at the boundary of the unit interval.

\smallskip

We begin with an observation that is applicable in any dimension $m\geq 2$. Consider $\lambda,\lambda^{\star}\in\R_+$ with $\lambda^{\star}<\lambda$ and $\vec{v}=(v_2,\ldots, v_m)\in\prod_{\ell=2}^m \Ss_{\lambda^{\star}}^\ell$. By Proposition~\ref{prop:s-polytope} we also have $\vec{v}\in\prod_{\ell=2}^m \Ss_{\lambda}^\ell$. In a convex combination of~$\rho\in \Ss_{\lambda^{\star}}$ encoded by $(\lambda,\vec{v},\alpha)\in \Cs$, i.e. $\rho=\sum_{\ell=2}^m \alpha_\ell v_\ell$, the extremal points of $\Ss_\lambda^{\ell}$ act as the reference points. This leads to a (unique) selection decomposition with effective branching rate~$\lambda$. We can represent $\rho$ also as a convex combination using the same vectors $\vec{v}$, but with the extremal points of the (smaller) sets $\Ss_{\lambda^\star}^\ell$ acting as reference points. The latter convex combination is encoded by $(\lambda^{\star},\vec{v},\alpha)\in \Cs$, leading to a selection decomposition with effective branching rate~$\lambda^{\star}$. By shrinking the reference frame, we decrease the effective branching rate and at the same time obtain a thinner ancestral structure. See Fig.~\ref{fig:shrikingMoving} (left) for an illustration of this shrinking operation. The next result formalises this idea.

\begin{proposition}[Shrinking the polygon] \label{prop:shrinkinglamd}
	Consider $(\lambda,\vec{v},\alpha)\in\Cs_d$. Set 
	$$\lambda^{\star}(\vec{v})\coloneqq\inf\{\gamma\geq 0: v_\ell \in \Ss_{\gamma}^{\ell},\,\,\forall \ell\in \, ]m]\}.$$ Then $(\lambda^{\star}(\vec{v}),\vec{v},\alpha)\in \Cs_d$ and $\beta^{(\lambda^{\star}(\vec{v}),\vec{v},\alpha)}\preceq \beta^{(\lambda,\vec{v},\alpha)}$.
\end{proposition}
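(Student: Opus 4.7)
The plan hinges on three observations: (i) the scaling $\cS_\gamma^\ell = \gamma\,\cS_1^\ell$ which follows directly from the explicit description in Proposition~\ref{prop:description-s-lambda}; (ii) the fact that $0 \in \cS_1^\ell$ for every $\ell\in\,]m]$; and (iii) the compactness of $\cS_1^\ell$ (a convex hull of finitely many points).

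First I would verify the key point $0 \in \cS_1^\ell$. A direct computation gives $\theta_\ell(u_\ell)_i = \E[K_{\ell,i}/\ell] = i/m = (u_m)_i$, i.e.\ $\theta_\ell(u_\ell) = u_m$. Writing $u_\ell \in \Ps_\ell$ as a convex combination $u_\ell = \sum_j c_j\, p^{(j)}$ of its vertices $p^{(j)} \in \Ps_{0,1}^\ell$ and invoking that $\theta_\ell$ extends to a linear map (as used in the proof of Lemma~\ref{lem:thetaInjective}), one gets $u_m = \sum_j c_j\,\theta_\ell(p^{(j)})$. Consequently $0 = \sum_j c_j\,(\theta_\ell(p^{(j)})-u_m)/(\ell-1)$ belongs to $\cS_1^\ell$ by the second part of Proposition~\ref{prop:description-s-lambda}.

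Combining (i) and (ii), the map $\gamma \mapsto \cS_\gamma^\ell$ is non-decreasing: given $v\in \cS_\gamma^\ell$ with $0<\gamma<\gamma'$, write $v = \gamma w$ with $w\in\cS_1^\ell$; then $v/\gamma' = (\gamma/\gamma')w + (1-\gamma/\gamma')\cdot 0 \in \cS_1^\ell$, hence $v\in\cS_{\gamma'}^\ell$. Moreover, thanks to (iii), if $v\in\cS_{\gamma_n}^\ell$ with $\gamma_n \downarrow \gamma > 0$, then $v/\gamma_n \in \cS_1^\ell$ admits, by compactness, a cluster point in $\cS_1^\ell$, so $v \in \cS_\gamma^\ell$. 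Treating $\gamma=0$ separately (where $\cS_0^\ell=\{0\}$), it follows that for each $\ell$ the set $\{\gamma\geq 0 : v_\ell\in\cS_\gamma^\ell\}$ is a closed half-line $[\gamma_\ell(v_\ell),\infty)$. Hence $\lambda^{\star}(\vec{v}) = \max_{\ell\in\,]m]}\gamma_\ell(v_\ell)$ is attained, and $v_\ell \in \cS_{\lambda^{\star}(\vec v)}^\ell$ for every $\ell$.

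With this, the membership $(\lambda^{\star}(\vec{v}),\vec{v},\alpha)\in\Cs_d$ is immediate, since $\alpha\in\Delta_{m-2}$ and the identity $\rho(d)=\sum_{\ell=2}^m \alpha_\ell v_\ell$ are unchanged. Finally, the inequality $\lambda^{\star}(\vec v)\leq \lambda$ together with the definition $\beta^{(\gamma,\vec v,\alpha)}_\ell = \gamma\alpha_\ell/(\ell-1)$ yields the componentwise bound $\beta^{(\lambda^{\star}(\vec v),\vec v,\alpha)}_\ell \leq \beta^{(\lambda,\vec v,\alpha)}_\ell$ for all $\ell$, which is strictly stronger than the partial order $\preceq$ from~\eqref{eq:ineq-betas}. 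The only step that is not entirely routine is the verification of $0\in\cS_1^\ell$; once the identity $\theta_\ell(u_\ell)=u_m$ is spotted, everything else reduces to convexity, scaling, and compactness.
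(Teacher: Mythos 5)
Your proof is correct and, in fact, more careful than the paper's own argument, which consists of a single sentence asserting that $v_\ell\in\cS_{\lambda^\star(\vec v)}^\ell$ without explaining why the infimum defining $\lambda^\star(\vec v)$ is attained. The substance of your contribution is exactly this attainment: you reduce it to the scaling $\cS_\gamma^\ell=\gamma\cS_1^\ell$, the nestedness of $\gamma\mapsto\cS_\gamma^\ell$, and compactness of $\cS_1^\ell$, and you correctly identify that nestedness requires $0\in\cS_1^\ell$. Your verification of this last point via the identity $\theta_\ell(u_\ell)=u_m$ (using $\E[K_{\ell,i}]=i\ell/m$ for $K_{\ell,i}\sim\hypdist{m}{\ell}{i}$, together with linearity of the extension of $\theta_\ell$) is a genuine addition: the paper only shows, in the proof of Theorem~\ref{thm:infiniteed}, that $0$ lies in a neighbourhood contained in $\cS_1^m\subset\cS_1$, which does not by itself give $0\in\cS_1^\ell$ for every $\ell\in\,]m]$. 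Once $v_\ell\in\cS_{\lambda^\star(\vec v)}^\ell$ is established, the remainder of your argument agrees with the paper's: membership in $\cC_d$ is immediate because $\alpha$ and the convex decomposition of $\rho(d)$ are unchanged, and the inequality $\beta^{(\lambda^\star(\vec v),\vec v,\alpha)}_\ell\le\beta^{(\lambda,\vec v,\alpha)}_\ell$ for every $\ell$ (componentwise, hence a fortiori in the order $\preceq$) follows from $\lambda^\star(\vec v)\le\lambda$ and $\beta^{(\gamma,\vec v,\alpha)}_\ell=\gamma\alpha_\ell/(\ell-1)$. You also rightly observe that this last inequality is componentwise, a strictly stronger conclusion than the stated $\preceq$; the paper in fact uses this observation later, in its final remark of Section~\ref{sec:equivalence}.
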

\begin{proof}
	Assume $\lambda\neq \lambda^{\star}(\vec{v})$. Since, $v_\ell\in \Ss_{\lambda^{\star}(\vec{v})}^{\ell}$ and $\rho=\sum_{\ell=2}^{m}\alpha_\ell v_{\ell}$ the first statement follows. The second statement follows by the definition of $\beta^{(\lambda,\vec{v},\alpha)}$.
\end{proof}

For the remaining part, assume $m=3$. We now describe a technique that allows to identify for a given selection decomposition, a smaller one (with respect to $\prec$) if $\deg(d)=3$. We continue to use the notation of Section~\ref{sec:minimalSDm3}. In particular, $v^{1,2},v^{2,2}\in \Ss_1^2$ and $v^{1,3},v^{2,3}\in \Ss_1^3$ are the extremal points of~$\Ss_1$. Denote by 
$$\Ds_2\coloneqq\{av^{1,2}:\,a\in \Rb\}\quad\textrm{and}\quad \Ds_3\coloneqq\{av^{1,3}:\,a\in \Rb\},$$
see also Fig.~\ref{fig:regions}. 
Consider $\rho=\rho(d)\in \Ss_1$ and $\vec{v}=(v_2,v_3)\in \Ss_{\lambda^\star}^2\times \Ss_{\lambda^\star}^3$ for some $\lambda^\star>1$ such that $(\lambda,\vec{v},\alpha)\in \Cs_d$. 
Recall from Remark~\ref{rem:interpretationconvexm3} that the weight~$\alpha_2$ of a convex combination of $\rho$ encoded by $(\lambda,\vec{v},\alpha)$ can be read off from our figures (Figs.~\ref{fig:exampleCsdm200}
and~\ref{fig:shrikingMoving}) as the relative distance between $v_3$ and $\rho$ compared to the distance between $v_3$ and $v_2$. If we shift $v_3$ along the parallel of~$\Ds_2$ that passes through~$v_3$, we either hit~$\Ds_3$ or $\Ss_1$. Denote by $v_3'$ the first hitting point of one of these two sets. We can move $v_2$ along $\Ds_2$ to a new location $v_2'$ such that the relative distance between $v_2'$ and $\rho$ compared to the distance between~$v_2'$ and~$v_3'$ is again $\alpha_2$, i.e. $\rho=\alpha_2v_2+\alpha_3v_3=\alpha_2v_2'+\alpha_3v_3'$. We can shift the vectors in such a way that allows us to change the reference frame of the convex combination to a frame with smaller effective branching rate. More precisely, we can shift the points such that the shrinking procedure of Proposition~\ref{prop:shrinkinglamd} becomes applicable. For an illustration of this shifting operation, see also Fig.~\ref{fig:shrikingMoving} (right). The next result formalises this idea in the case $v_3$ is not a corner point of $\Ss_{\lambda^\star}$. \begin{proposition}[Shifting~$\vec{v}$]\label{prop:shiftingVec}
	Consider $(\lambda,\vec{v},\alpha)\in\Cs_d$. Assume that $b_{\star}(d)=1$ and that $\lambda^\star\coloneqq\lambda^\star(\vec{v})>1$. If $v_3\notin\{\lambda^{\star}v^{1,3},\lambda^{\star}v^{2,3}\}$, then there exists~$\lambda'<\lambda^{\star}$ and~$\vec{v}'$ such that
	\begin{itemize}
		\item $(\lambda',\vec{v}',\alpha)\in \Cs_d,$
		\item $\beta^{(\lambda',\vec{v}',\alpha)}\preceq \beta^{(\lambda^{\star},\vec{v},\alpha)}$,
		\item either $\vec{v}'\in \{\lambda' v^{1,3},\lambda' v^{2,3}\}$ or $v_i'\in F_\rho$ for $i\in\,]3]$,
	\end{itemize}
	where $F_\rho$ is the face of $\Ss_1$ containing $\rho(d)$. 
	
\end{proposition}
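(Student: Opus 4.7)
The plan is to parameterise all $1$-convex decompositions of $\rho(d)$ with the fixed weights $\alpha$, to reduce the problem to minimising a convex piecewise-affine function on $\Rb$, and then to read off the geometry at the minimum. First, by Proposition~\ref{prop:shrinkinglamd} applied to $(\lambda,\vec{v},\alpha)$, I may replace $\lambda$ by $\lambda^\star=\lambda^\star(\vec v)$ without enlarging $\beta$, so I assume from now on that $\lambda=\lambda^\star>1$. Since $\deg(d)=3$, the set $\cS_1^2$ lies on the line $\cD_2$, hence every admissible $v_2'$ is of the form $v_2(t):=tv^{1,2}$ with $t\in\Rb$; and the identity $\rho(d)=\alpha_2 v_2'+\alpha_3 v_3'$ forces
\[ v_3(t)\ :=\ \frac{\rho(d)-\alpha_2\, t\, v^{1,2}}{\alpha_3}. \]
Let $t_0$ be the parameter value corresponding to $\vec v$. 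Setting $\lambda_\ell(v):=\inf\{\gamma>0:v\in\gamma\cS_1^\ell\}$ for $\ell=2,3$, I note that $\lambda_2(v_2(t))=|t|$ and that $\lambda_3\circ v_3$ is the Minkowski gauge of the axis-aligned rectangle $\cS_1^3$ evaluated along the affine line $t\mapsto v_3(t)$. Both are convex piecewise-affine, hence so is
\[ g(t)\ :=\ \lambda^\star(\vec v(t))\ =\ \max\bigl(|t|,\, \lambda_3(v_3(t))\bigr), \]
with $g(t_0)=\lambda^\star$ and $g(t)\to\infty$ as $|t|\to\infty$.

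The key analytic step is to show that $g$ attains a value strictly below $\lambda^\star$. By convexity it suffices to prove that $g$ admits a direction of strict decrease at~$t_0$. The direction of $v_3(t)$ as $t$ varies is collinear with $v^{1,2}=(1/3,1/3)$, so as $t$ moves, $v_3(t)$ slides along a line of slope~$1$ in the $(x,y)$-plane. A direct inspection of the four edges of $\cS_{\lambda^\star}^3=[-\lambda^\star/6,\lambda^\star/3]\times[-\lambda^\star/3,\lambda^\star/6]$ shows that, at each vertex other than $\lambda^\star v^{1,3}$ and $\lambda^\star v^{2,3}$ and at every relative interior point of any edge, moving $v_3$ in at least one of the two directions $\pm(1,1)$ strictly relaxes the binding side(s) of the rectangle, and hence strictly decreases $\lambda_3\circ v_3$. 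This is precisely the content of the hypothesis $v_3\notin\{\lambda^\star v^{1,3},\lambda^\star v^{2,3}\}$: the excluded vertices are exactly the two corners where both binding constraints lie ``on the same side'' with respect to the $(1,1)$ direction, making both motions along the line worsen $\lambda_3$. Combining this with the slack available on whichever of $\lambda_2,\lambda_3$ is not binding at $t_0$ (and treating the doubly-binding case by choosing the direction toward $t=0$), one obtains a small $s>0$ with $g(t_0+s\delta)<\lambda^\star$ for a suitable $\delta\in\{\pm1\}$.

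Since $g$ is convex and coercive it attains its infimum on a closed interval; let $t'$ be the endpoint of this interval lying on the same side of $t_0$ as the direction of decrease identified above, and set $\lambda':=g(t')<\lambda^\star$ and $\vec v':=\vec v(t')$. Then $(\lambda',\vec v',\alpha)\in\cC_d$ by construction, and the inequality $\beta^{(\lambda',\vec v',\alpha)}\preceq\beta^{(\lambda^\star,\vec v,\alpha)}$ is automatic because $\beta_\ell^{(\lambda,\vec v,\alpha)}=\lambda\alpha_\ell/(\ell-1)$ depends only on~$\lambda$ (and $\alpha$ is unchanged), so $\lambda'<\lambda^\star$ implies each partial sum decreases. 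What remains is to describe the stopping configuration, and this is the main obstacle. By convexity, at a boundary point of $\mathrm{argmin}\,g$, either the two pieces of $g$ swap which one dominates and $v_3(t')$ must be located at a vertex of $\cS_{\lambda'}^3$ where all one-sided directional derivatives of $\lambda_3\circ v_3$ are non-negative; the analysis of the previous paragraph shows that such a vertex can only be $\lambda'v^{1,3}$ or $\lambda'v^{2,3}$ (giving the first alternative); or else $g$ is constant on a whole interval at value~$\lambda'$, which forces $|t|$ to be constant there, hence $t'=0$ is impossible and the only remaining possibility is that $v_2(t')$ and $v_3(t')$ simultaneously stay on faces of $\cS_{\lambda'}$ that abut a common vertex of the hexagon, which geometrically in dimension $m=3$ means both lie on a single face of $\cS_1$, and this face must contain $\rho(d)=\alpha_2 v_2(t')+\alpha_3 v_3(t')$, i.e.\ it is $F_\rho$, yielding the second alternative. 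The delicate point in this step is the exhaustive case distinction on which edge of $\partial\cS_{\lambda^\star}^3$ the point $v_3$ lies on and which of the four faces $\mathrm{F}_{\mathrm{UD}\pm},\mathrm{F}_{\mathrm{OD}\pm}$ of $\cS_1$ contains $\rho(d)$; each combination must be checked to verify that the minimiser $t'$ indeed lands in one of the two alternatives and does not leave the feasible stratification.
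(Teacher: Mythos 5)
Your optimisation setup is a genuine departure from the paper's construction: you parameterise the one-parameter family of decompositions compatible with $\rho(d)$ and the fixed weights $\alpha$, introduce the convex piecewise-affine map $g(t)=\max(|t|,\lambda_3(v_3(t)))$, and aim to take a minimiser $t'$; the paper instead shifts $v_3$ directly to $\cD_3$ (case~(i)) or to the line carrying $F_\rho$ (case~(ii)) and then invokes the shrinking lemma. Both arguments stall at the same place, and there your proof has a genuine gap. The claim that \emph{one obtains a small $s>0$ with $g(t_0+s\delta)<\lambda^\star$ for a suitable $\delta\in\{\pm1\}$} fails in the doubly-binding case: when $|t_0|=\lambda_3(v_3(t_0))=\lambda^\star$ and $v_3(t_0)$ lies on a face of $\cS_{\lambda^\star}^3$ along which the $\lambda_3$-decreasing direction in $\pm(1,1)$ points \emph{away} from $t=0$, the one-sided slopes of the two active branches of $g$ at $t_0$ have opposite signs, so $t_0$ is a strict minimiser of $g$ and no direction of decrease exists; your suggested fix of moving towards $t=0$ makes $\lambda_3\circ v_3$, and hence $g$, increase. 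A concrete instance: take $\rho(d)=(13/48,\,5/16)$ (in the relative interior of a face of $\cS_1$, so $b_\star(d)=1$, $\deg(d)=3$), $\alpha=(1/4,3/4)$, $v_2=(1/2,1/2)=(3/2)\,v^{1,2}$, $v_3=(7/36,1/4)$. Then $\lambda^\star(\vec v)=3/2$ with both $\lambda_2$ and $\lambda_3$ binding, $v_3\notin\{\lambda^\star v^{1,3},\lambda^\star v^{2,3}\}$, yet for $\lambda'<3/2$ the conditions $|t|\le\lambda'$ and $v_3(t)\in\cS_{\lambda'}^3$ are incompatible (they force $t\ge 15/4-3\lambda'/2>\lambda'$), so there is no $(\lambda',\vec v')$ with $\lambda'<\lambda^\star$ and $(\lambda',\vec v',\alpha)\in\cC_d$ at all.

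The obstruction is not specific to your route. In case~(ii) the paper's proof asserts, by analogy to case~(i), that $(\lambda^\star,\vec v(a_1),\alpha)\in\cC_d$ and that $\lambda^\star(\vec v(a_1))<\lambda^\star$, and neither assertion is verified; in the example above $a_1=1/6$ gives $v_3(a_1)=(1/4,11/36)$ with $\lambda_3(v_3(a_1))=11/6>3/2$, so both fail. The statement apparently needs an extra hypothesis (for instance that $v_2$ lies strictly inside $\cS_{\lambda^\star}^2$, i.e. only $v_3$ is binding at $\lambda^\star$), or the freedom to change the weights $\alpha$, as is in fact used in case~(3) of the proof of Proposition~\ref{prop:equivalenceminimality1}. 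Finally, even granting a direction of decrease, your closing paragraph characterising the minimiser $t'$ explicitly defers the exhaustive case distinction; as written, that is exactly where the remaining work lies and the argument is not yet complete there either.
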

\begin{remark}
	$v_3\notin\{\lambda^{\star}v^{1,3},\lambda^{\star}v^{2,3}\}$ means that $v_3\in \cS_{\lambda^{\star}}^3$ is not a corner point $\cS_{\lambda^{\star}}$.
\end{remark}
\begin{proof}[Proof of Proposition~\ref{prop:shiftingVec}]
	Since $\lambda^\star>1$, it follows that $\alpha_2\in(0,1)$. For $a\in\Rb$, define $\vec{v}(a)\coloneqq(v_2(a),v_3(a))$ via
	$$v_3(a)\coloneqq v_3+a v^{1,2}\quad\textrm{and}\quad v_2(a)\coloneqq v_2-\frac{\alpha_3 }{\alpha_2} av^{1,2}.$$
	Note that $\alpha_2 v_2(a)+\alpha_3 v_3(a)=\rho(d)$. Hence,  $(\lambda^\star,\vec{v}(a),\alpha)\in\Cs_d$ if and only if $\vec{v}(a)\in\Ss_{\lambda^\star}^2\times\Ss_{\lambda^\star}^3$. Since $\Ds_2\perp\Ds_3$ and $v_3\notin\{\lambda^{\star}v^{1,3},\lambda^{\star}v^{2,3}\}$, there is a unique $a_0\in\Rb\setminus\{0\}$ such that $v_3(a_0)\in\Ds_3\cap \mathrm{int}(\Ss_{\lambda^\star}^3)$. We now split the analysis in two cases: (i) $v_3(a_0)\notin \Ss_1^3$, and (ii) $v_3(a_0)\in \Ss_1^3$.
	In case (i), since any line from $\Ds_3\cap (\Ss_1^3)^C$ to $(\Ss_1^2)^C$ lies outside of $\Ss_1$, and $\rho(d)\in \Ss_1$ is a convex combination of $v_2(a_0)$ and $v_3(a_0)$, we infer that $v_2(a_0)\in \Ss_1^2$.
	In particular, $\vec{v}(a_0)\in\Ss_{\lambda^\star}^2\times\Ss_{\lambda^\star}^3$. Hence, $(\lambda^\star,\vec{v}(a_0),\alpha)\in\Cs_d$ and $\beta^{(\lambda^\star,\vec{v}(a_0),\alpha)}=\beta^{(\lambda^\star,\vec{v},\alpha)}$. Moreover, $v_i(a_0)\in\mathrm{int}(\Ss_{\lambda^\star}^i)$ for $i\in\,]3]$. Therefore,  $\lambda'\coloneqq\lambda^\star(\vec{v}(a_0))<\lambda^\star$, and then setting $\vec{v}'\coloneqq\vec{v}(a_0)$, the result follows from Proposition \ref{prop:shrinkinglamd}. In case (ii), there is a unique $a_1\in\Rb$ such that $v_3(a_1)\in F_\rho$. Since $F_{\rho}$ is the face that contains~$\rho(d)$, we conclude that also~$v_2(a_1)\in F_{\rho}$. As before, we obtain that $(\lambda^\star,\vec{v}(a_1),\alpha)\in\Cs_d$ and $\beta^{(\lambda^\star,\vec{v}(a_1),\alpha)}=\beta^{(\lambda^\star,\vec{v},\alpha)}$. The result follows from Proposition \ref{prop:shrinkinglamd} by setting $\lambda'\coloneqq\lambda^\star(\vec{v}(a_1))<\lambda^\star$ and $\vec{v}'\coloneqq\vec{v}(a_1)$.
\end{proof}

\begin{figure}[t]
	\begin{minipage}{0.45\textwidth}
		\begin{center}
			\scalebox{0.4}{
				\begin{tikzpicture}
				%\Ss^3
				\fill [gray,opacity=0.2] (-2.5,-5) rectangle (5,2.5);
				\fill [gray,opacity=0.2] (-1.66,-3.33) rectangle (3.33,1.66);
				\fill [gray,opacity=0.2] (-3.5,-7) rectangle (7,3.5);
				%-1.02x+0.16=y
				%-2/5+x=y
				%-147/55x+391/550=y
				%Box

				%\Ss^2
				\draw[line width=.5mm, opacity=0.2] (-3.33,-3.33) -- (3.33,3.33);
				\draw[line width=.5mm, opacity=0.2] (-5,-5) -- (5,5);
				\draw[line width=.5mm, opacity=0.2] (-7,-7) -- (7,7);

				\draw[line width=.4mm, gray, dotted] (-6.5,6.5) -- (8.5,-8.5);
				\draw[line width=.4mm, gray, dotted] (-8.5,-8.5) -- (8,8);
				\node[right] at (-8,-8.5) {\scalebox{2.6}{$\Ds_2$}};
				\node[left] at (8,-8.5) {\scalebox{2.6}{$\Ds_3$}};
				
				\node[above] at (2,5.5) {\scalebox{2.6}{$\Ss_\lambda$}};
				\node[above] at (2.04,4.1) {\scalebox{2.6}{$\Ss_{\lambda^{\star}}$}};
				\node[above] at (1.9,2.9) {\scalebox{2.6}{$\Ss_{1}$}};
				
				% \Ss_\lambdas
				\draw[line width=.5mm] (-7,-7) -- (-3.5 ,3.5) -- (7,7) -- (7,-7) -- (-7,-7);
				\draw[line width=.5mm] (-5,-5) -- (-2.5 ,2.5) -- (5,5) -- (5,-5) -- (-5,-5);
				\draw[line width=.5mm] (-3.33,-3.33) -- (-1.66,1.66) -- (3.33,3.33) -- (3.33,-3.33) -- (-3.33,-3.33);
				
				\node at (3.33,-1.8) {\scalebox{2.8}{$\times$}};
				\node[right] at (3.43,-1.6) {\scalebox{2.3}{$\rho(d)$}};
				\node at (.25,1) {\scalebox{2.6}{$v_2$}};
				\node at (5.6,-3.5) {\scalebox{2.6}{$v_3$}};
				
				\draw[line width=.5mm, black] (5,-3.5) -- (3.33,-1.8) -- (.79,.79);
				%\draw[line width=.5mm, black] (4.25,-4.25) -- (3.33,-1.8) -- (1.935,1.935);

				\draw [line width=.2mm,-{Latex[length=5mm,width=4mm]}] (-7,-7) -- (-5,-5);
				\draw [line width=.2mm,-{Latex[length=5mm,width=4mm]}] (7,7) -- (5,5);
				\draw [line width=.2mm,-{Latex[length=5mm,width=4mm]}] (7,-7) -- (5,-5);
				\draw [line width=.2mm,-{Latex[length=5mm,width=4mm]}] (-3.5,3.5) -- (-2.5,2.5);
				\end{tikzpicture}}
		\end{center}
	\end{minipage}
	\begin{minipage}{0.45\textwidth}
		\begin{center}
			\scalebox{0.4}{
				\begin{tikzpicture}
				\fill [gray,opacity=0.2] (-2.5,-5) rectangle (5,2.5);
				\fill [gray,opacity=0.2] (-2.5,-5) rectangle (5,2.5);
				\fill [gray,opacity=0.2] (-1.66,-3.33) rectangle (3.33,1.66);
				
				%-1.02x+0.16=y
				%-2/5+x=y
				%-147/55x+391/550=y
				%Box
				
				\draw[line width=.5mm, opacity=0.2] (-3.33,-3.33) -- (3.33,3.33);
				\draw[line width=.5mm, opacity=0.2] (-5,-5) -- (5,5);
				
				\draw[line width=.4mm, gray, dotted] (-6.5,6.5) -- (8.5,-8.5);
				\draw[line width=.4mm, gray, dotted] (-8.5,-8.5) -- (8,8);
				\node[right] at (-8,-8.5) {\scalebox{2.6}{$\Ds_2$}};
				\node[left] at (8,-8.5) {\scalebox{2.6}{$\Ds_3$}};
				
				\draw[opacity=0] (-5,-8.5) -- (-5,7);
				
				% fsm with rate 1
				\draw[line width=.5mm] (-5,-5) -- (-2.5 ,2.5) -- (5,5) -- (5,-5) -- (-5,-5);
				\draw[line width=.5mm] (-3.33,-3.33) -- (-1.66,1.66) -- (3.33,3.33) -- (3.33,-3.33) -- (-3.33,-3.33);
				
				\node at (3.33,-1.8) {\scalebox{2.8}{$\times$}};
				\node[right] at (3.42,-1.6) {\scalebox{2.3}{$\rho(d)$}};
				\node at (0.2,1) {\scalebox{2.6}{$v_2$}};
				\node at (5.55,-3.5) {\scalebox{2.6}{$v_3$}};
				\node at (2.6,1.9) {\scalebox{2.6}{$v_2'$}};
				\node at (3.7,-4.45) {\scalebox{2.6}{$v_3'$}};
				%\node[right] at (3.38,-1.7) {\scalebox{1.5}{$v^{i,\ell}$}};
				
				\draw[line width=.5mm, black] (5,-3.5) -- (3.33,-1.8) -- (.79,.79);
				\draw[line width=.5mm, black] (4.25,-4.25) -- (3.33,-1.8) -- (1.935,1.935);
				
				\node[above] at (2.05,4.1) {\scalebox{2.6}{$\Ss_{\lambda^{\star}}$}};
				\node[above] at (1.9,2.9) {\scalebox{2.6}{$\Ss_{1}$}};
				
				\draw [line width=.2mm,-{Latex[length=5mm,width=4mm]}] (5,-3.5) -- (4.25,-4.25);
				\draw [line width=.2mm,-{Latex[length=5mm,width=4mm]}] (0.79,0.79) -- (1.935,1.935);
				
				\end{tikzpicture}}
		\end{center}
	\end{minipage}
	\caption{Left: Shrinking the Polygon~$\Ss_\lambda$ to $\Ss_{\lambda^{\star}}$ while keeping $\vec{v}$ constant. Right: Moving~$\vec{v}$ into $\mathrm{int}(\Ss_{\lambda^{\star} } )$ along the direction of~$\Ds_2$. By the intercept theorem, this keeps the relative distances to~$\rho(d)$ constant.}
	\label{fig:shrikingMoving}
\end{figure} 

We now combine all possible ways to obtain a smaller selection decomposition (with respect to $\prec$). Furthermore, we generalise Proposition~\ref{prop:shiftingVec} to the case when $v_3$ is a corner point of $\Ss_{\lambda^\star}$.
\begin{proposition}\label{prop:equivalenceminimality1}
	Assume $b_{\star}(d)=1$. Let $(\beta,p)\in \ccS_d$ be a selection decomposition with $b(\beta)=\lambda>1$. Then there exists $(\beta',p')\in \ccS_d$ such that $\beta'\preceq\beta$ and $b(\beta')=1$.
\end{proposition}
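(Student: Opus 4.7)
Via the bijection $\varphi$ of Proposition~\ref{lem:varphiBijection}, the SD $(\beta,p)$ corresponds to a triple $(\lambda,\vec v,\alpha)\in\Cs_d$ with $\lambda>1=b_\star(d)$. My aim is to construct $(1,\vec v',\alpha')\in\Cs_d$ with $\beta^{(1,\vec v',\alpha')}\preceq\beta$; then $(\beta',p'):=\varphi(1,\vec v',\alpha')\in\Es_d$ is the required SD. The strategy is to iterate the two geometric reductions proved just above: \emph{shrinking} (Proposition~\ref{prop:shrinkinglamd}), which replaces $\lambda$ by $\lambda^\star(\vec v)\leq\lambda$ while fixing $\vec v,\alpha$; and \emph{shifting} (Proposition~\ref{prop:shiftingVec}), which translates $v_3$ parallel to $\Ds_2$ with a compensating translation of $v_2$, strictly decreasing the scale. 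Both operations produce new triples in $\Cs_d$ whose $\beta$-vectors are $\preceq$ the previous one.

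Starting from $(\lambda,\vec v,\alpha)$, first shrink to obtain $(\lambda^\star,\vec v,\alpha)$. If $\lambda^\star=1$, stop. Otherwise I iterate the shrink--shift cycle: at each step, if $v_3\notin\{\lambda^\star v^{1,3},\lambda^\star v^{2,3}\}$, Proposition~\ref{prop:shiftingVec} applies and delivers a triple at a strictly smaller scale. Its two possible outputs either place $v_3'$ at one of the two extreme vertices of $\Ss_{\lambda'}$ (the \emph{corner case}) or place both $v_2',v_3'$ on the face $F_\rho\subset\partial\Ss_1$ through $\rho(d)$. In the latter ``face'' output, $v_2'$ must coincide with the unique intersection point $v^{j,2}$ of $F_\rho$ with $\Ds_2$, and a single further shift drives $v_3$ onto $\Ds_3$ (using $\Ds_2\perp\Ds_3$ and $F_\rho\cap\Ds_3$ being a single vertex), i.e.\ into the corner case. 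Since $\lambda^\star$ strictly decreases at every shift, after at most a few iterations either $\lambda^\star=1$ is reached or the corner case holds at some scale $\lambda^\star>1$.

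The corner case, which is the principal technical step, is then handled by an explicit rescaling. Assume $v_3=\lambda^\star v^{j,3}$ for some $j\in\{1,2\}$. The key observation is that $\Ds_2$ and $\Ds_3$ are orthogonal, so using $v_2\in\Ds_2$ the $\Ds_3$-component of $\rho(d)=\alpha_2 v_2+\alpha_3 v_3$ equals $\alpha_3\lambda^\star v^{j,3}$. Since $\rho(d)\in\Ss_1$ and $\Pi_{\Ds_3}(\Ss_1)=[v^{2,3},v^{1,3}]$, this forces $\alpha_3\lambda^\star\leq 1$. A parallel projection onto $\Ds_2$, combined with the kite-shape of $\Ss_1$ (whose boundary satisfies a coupled $\Ds_2$--$\Ds_3$ inequality), yields the sharper bound $|\alpha_2 c|\leq 1-\alpha_3\lambda^\star$, where $v_2=c\,v^{1,2}$. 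Set
\[\alpha_3':=\alpha_3\lambda^\star,\qquad\alpha_2':=1-\alpha_3',\qquad v_3':=v^{j,3},\qquad v_2':=(\alpha_2/\alpha_2')\,v_2;\]
the first bound ensures $(\alpha_2',\alpha_3')\in\Delta_1$, the second ensures $v_2'\in\Ss_1^2$, and by construction $\alpha_2'v_2'+\alpha_3'v_3'=\rho(d)$, so $(1,(v_2',v_3'),(\alpha_2',\alpha_3'))\in\Cs_d$. A direct computation using $\beta_\ell=\lambda\alpha_\ell/(\ell-1)$ and $\lambda^\star>1$ then yields $\beta^{(1,\vec v',\alpha')}\preceq\beta^{(\lambda^\star,\vec v,\alpha)}\preceq\beta$, completing the proof.

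The main obstacle I expect is establishing the coupled bound $|\alpha_2 c|\leq 1-\alpha_3\lambda^\star$ in the corner case, which requires a careful use of the specific non-rectangular shape of $\Ss_1$ rather than the easier separate projections onto $\Ds_2$ and $\Ds_3$. A secondary concern is a clean termination argument for the shrink--shift loop, ensuring it reaches the corner case in finitely many steps; I plan to bypass this using the observation above that a ``face'' output of a shift is already essentially the corner case after one more shift.
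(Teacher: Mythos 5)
Your proposal reproduces the paper's proof: identify $(\beta,p)$ with $(\lambda,\vec v,\alpha)\in\Cs_d$ via $\varphi$, apply the shrink and (a single) shift operation, and resolve the corner case $v_3=\lambda^\star v^{j,3}$ by rescaling; your explicit choice $\alpha_3':=\alpha_3\lambda^\star$, $\alpha_2':=1-\alpha_3'$, $v_3':=v^{j,3}$, $v_2':=(\alpha_2/\alpha_2')v_2$ is arithmetically identical to the paper's $(1,(u_2,u_3),\hat\alpha)$ in case~(3). The inequality you single out as the main obstacle, $\lvert\alpha_2 c\rvert\le 1-\alpha_3\lambda^{\star}$, is in fact automatic and even an equality, and needs no bespoke ``kite'' estimate: since $b_\star(d)=1$ puts $\rho(d)$ on a face $F_\rho\subset\partial\Ss_1$ whose endpoints lie on $\Ds_2$ and $\Ds_3$, the representation $\rho(d)=\alpha_2 c\,v^{1,2}+\alpha_3\lambda^\star v^{j,3}$ in the orthogonal basis $\{v^{1,2},v^{j,3}\}$ must coincide with the unique affine representation of $\rho(d)$ on $F_\rho$, which forces $\lvert\alpha_2 c\rvert+\alpha_3\lambda^\star=1$ --- this is exactly the ``linear independence'' step in the paper's case~(3). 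Your termination worry is likewise moot: by Proposition~\ref{prop:shiftingVec}, a single shift after the initial shrink already lands you either in the corner case or with $v_2',v_3'$ both on $F_\rho$, and in the latter case $\lambda'=\lambda^\star(\vec v')=1$, so the triple is already minimal and no iteration is required.
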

\begin{proof}
	Lemma~\ref{lem:varphiBijection} allows us to identify $(\beta,p)$ with $(\lambda,\vec{v},\alpha)\in \Cs_d$. 
	We make the following case distinctions: (1) $v_\ell \in \textrm{int}(\Ss_\lambda^{\ell})$ for all $\ell\in \, ]3]$, (2) there is $\ell$ such that $v_\ell\in \partial\Ss_\lambda^\ell$, but $v_3\notin\{\lambda v^{1,3},\lambda v^{2,3}\}$, and (3) $v_3\in\{\lambda v^{1,3},\lambda v^{2,3}\}$. 
	In case (1), apply the shrinking operation of Proposition~\ref{prop:shrinkinglamd}. This leads to a new triple $(\lambda^{\star},\vec{v},\alpha)$ with $\beta^{(\lambda^{\star},\vec{v},\alpha)}\preceq \beta^{(\lambda,\vec{v},\alpha)}$. In particular, $(\lambda^{\star},\vec{v},\alpha)$ falls now into case (2) or (3). In case (2), apply the shift-operation of Proposition~\ref{prop:shiftingVec}. This leads to a new triple~$(\lambda',\vec{v}',\alpha)$ with $\beta^{(\lambda',\vec{v}',\alpha)}\preceq\beta^{(\lambda^{\star},\vec{v},\alpha)}.$ Moreover, either $\vec{v}'\in \{\lambda' v^{1,3},\lambda' v^{2,3}\}$ or $v_i'\in F_\rho$ for $i=2,3$. In the first case we get into (3), and in the second case the result directly follows by setting $(\beta',p')=\phi(\lambda',\vec{v}',\alpha)$. Hence, it only remains to prove~(3). Assume $v_3\in \{\lambda v^{1,3},\lambda v^{2,3}\}$. In particular, $u_3\coloneqq v_3/\lambda$ has to be an extremal point of the face $F_\rho$, and $v_2\in \Ss_1^2$ (otherwise the connection $v_3$ to~$\cD_2$ does not intersect~$F_{\rho}$). Similarly, if $u_2\in \Ss_1^2$ is the other extremal point of $F_\rho$, then $v_2=\gamma u_2$ for some $\gamma\in[0,1]$. Therefore, $\rho(d)=\alpha_2\gamma u_2+\alpha_3\lambda u_3$. Since $u_2$ and $u_3$ are the extremal points of $F_\rho$, there is $\hat{\alpha}\in\Delta_1$ such that $\rho(d)=\hat\alpha_2 u_2+\hat\alpha_3 u_3$, i.e. $(1,(u_2,u_3),\hat{\alpha})\in \Cs_d$. Since $u_2$ and $u_3$ are linearly independent, we conclude that $\alpha_2 \gamma=\hat \alpha_2$ and $\alpha_3\lambda=\hat\alpha_3$. It follows that $\beta^{(1,(u_2,u_3),\hat{\alpha})}\preceq \beta^{(\lambda,\vec{v},\alpha)}$, which proves the result.
	
\end{proof}

Finally, we prove that for every selection decomposition that is not $b$-minimal, there is a thinning mechanism $\Tb$ different from the identity and a colouring rule $p'$ such that $(\Tb\beta,p')\in\ccS_d$.

\begin{proof}[Proof of Proposition~\ref{prop:equivalenceminimality(intro)}]
	The proof follows by Proposition~\ref{prop:equivalenceminimality1} together with the scaling property described in Proposition~\ref{prop:scaling-polu}.
\end{proof}

There is a third notion of minimality we have not considered so far, namely, the one induced by the component-wise ordering, i.e. $\beta\preceq_{\textrm{cw}}\beta'$ if and only if $\beta_\ell\leq\beta_{\ell}'$ for all $\ell\in\,]m]$. Clearly,  $$\beta\preceq_{\textrm{cw}}\beta'\Rightarrow \beta\preceq\beta'\Rightarrow b(\beta)\leq b(\beta').$$
The three notions are equivalent for~$m=2$. For $m\geq 3$, this is not any more the case. However, by inspection of the proofs of Propositions~\ref{prop:shrinkinglamd}, \ref{prop:shiftingVec}, and~\ref{prop:equivalenceminimality1}, we see that for $m=3$, we have proved the following stronger version of Proposition~\ref{prop:equivalenceminimality(intro)}.  
\begin{proposition}
	Assume that $\deg(d)=3$. For any $(\beta,p)\in \ccS_d$ with $b(\beta)>b_\star(d)$, there is $(\beta',p')\in\ccS_d$ such that $b(\beta')=b_\star(d)$ and $\beta'\preceq_{\textrm{cw}}\beta$. 
	
\end{proposition}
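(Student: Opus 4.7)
The plan is to revisit the three geometric operations---shrinking, shifting, and the case~(3) replacement---used in the proofs of Propositions~\ref{prop:shrinkinglamd}, \ref{prop:shiftingVec}, and~\ref{prop:equivalenceminimality1}, and to verify that each of them already yields the stronger component-wise inequality $\preceq_{\textrm{cw}}$, not merely $\preceq$. Since Proposition~\ref{prop:scaling-polu}(ii) reduces the statement to the normalised case $b_\star(d)=1$ via multiplication by a positive scalar (which trivially preserves $\preceq_{\textrm{cw}}$), I would work throughout under that normalisation.

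The shrinking step is immediate to upgrade: because $\beta^{(\gamma,\vec v,\alpha)}_\ell=\gamma\,\alpha_\ell/(\ell-1)$ is coordinate-wise linear in $\gamma$, replacing $\lambda$ by $\lambda^\star(\vec v)\leq \lambda$ with $\vec v$ and $\alpha$ frozen proportionally decreases every coordinate. Hence $\beta^{(\lambda^\star(\vec v),\vec v,\alpha)}\preceq_{\textrm{cw}}\beta^{(\lambda,\vec v,\alpha)}$. The shifting step of Proposition~\ref{prop:shiftingVec} keeps the pair $(\lambda^\star,\alpha)$ untouched while only displacing $\vec v$ along $\Ds_2$, so the vector $\beta$ itself is left unchanged at that moment; the actual decrease is produced by the subsequent shrinking, and is therefore component-wise by the previous point.

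The substantive bookkeeping lies in case~(3) of Proposition~\ref{prop:equivalenceminimality1}, where $v_3=\lambda u_3$ with $u_3$ an extreme point of the face $F_\rho$, $v_2=\gamma u_2$ for some $\gamma\in[0,1]$, and the linear independence of $u_2$ and $u_3$ forces the new coefficients $\hat\alpha\in\Delta_1$ with $\rho(d)=\hat\alpha_2 u_2 + \hat\alpha_3 u_3$ to satisfy $\hat\alpha_2=\alpha_2\gamma$ and $\hat\alpha_3=\alpha_3\lambda$. I would then close the proof with the explicit comparison: setting $(\beta',p'):=\varphi(1,(u_2,u_3),\hat\alpha)$, one obtains
\[
\beta'_2 \,=\, \hat\alpha_2 \,=\, \alpha_2\gamma \,\leq\, \alpha_2 \,\leq\, \lambda\alpha_2 \,=\, \beta_2,
\qquad
\beta'_3 \,=\, \tfrac{1}{2}\hat\alpha_3 \,=\, \tfrac{\lambda\alpha_3}{2} \,=\, \beta_3,
\]
so that $\beta'\preceq_{\textrm{cw}}\beta$ and $b(\beta')=1=b_\star(d)$, as required.

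The main obstacle is not technical but notational: I need to follow the triples $(\lambda,\vec v,\alpha)$ through the composition of cases~(1)--(3) in the proof of Proposition~\ref{prop:equivalenceminimality1} and to certify that $\alpha$ is left invariant at every intermediate step (up to the terminal replacement in case~(3), where $\hat\alpha_\ell/\alpha_\ell\leq \lambda$ with equality precisely at $\ell=3$). Once this bookkeeping is explicit, the three inequalities chained along the way remain component-wise, and the upgrade of Proposition~\ref{prop:equivalenceminimality(intro)} from $\preceq$ to $\preceq_{\textrm{cw}}$ becomes automatic.
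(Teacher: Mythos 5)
Your proposal is correct and follows exactly the route the paper itself indicates (the paper leaves the proof as ``by inspection of the proofs of Propositions~\ref{prop:shrinkinglamd}, \ref{prop:shiftingVec} and~\ref{prop:equivalenceminimality1}''). Your verification that shrinking is coordinate-wise monotone in $\lambda$ with $\alpha$ frozen, that shifting leaves $\beta$ untouched, and the explicit case~(3) comparison $\beta_2'=\alpha_2\gamma\leq\lambda\alpha_2=\beta_2$, $\beta_3'=\lambda\alpha_3/2=\beta_3$, is precisely the bookkeeping the authors have in mind.
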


\addtocontents{toc}{\protect\setcounter{tocdepth}{0}}

\section*{Acknowledgements}
We are grateful to A.~Gonz{\'a}lez Casanova for many interesting discussions, and to Ellen Baake for clarifying to us some aspects of general diploid selection models. Thanks to two anonymous reviewers for their helpful suggestions to improve the manuscript. F. Cordero and S. Hummel received financial support from Deutsche Forschungsgemeinschaft~(CRC 1283 ``Taming Uncertainty'', Project~C1).

\addtocontents{toc}{\protect\setcounter{tocdepth}{2}}
\bibliographystyle{abbrvnat}
\bibliography{Literature}

\end{document}